\documentclass[11pt,oneside]{amsart}

\usepackage[
    left=1in,
    right=1in,
    top=1in,
    bottom=1in
]{geometry}
\usepackage{graphicx}
\usepackage{amssymb}
\usepackage{color}
\usepackage{bbm, dsfont}
\usepackage[hidelinks]{hyperref}

\hypersetup{
	colorlinks=true,
	pdfborder={0 0 0},
	pdfborderstyle={/S/U/W 0},
}

\numberwithin{equation}{section}

\usepackage{verbatim}
\usepackage{verbatim}
\usepackage{amsmath,amssymb,amsthm}  
\allowdisplaybreaks
\usepackage{amssymb}            
\usepackage{amsfonts}            
\usepackage{mathrsfs}          
\usepackage{amsthm}
\usepackage{algorithm}  
\usepackage{algorithmicx}  
\usepackage{algpseudocode}  
\usepackage{mathtools}
\usepackage{commath}

\usepackage{physics}
\let\exp\exponential

\usepackage{bm}
\usepackage{graphicx}
\usepackage{float}
\usepackage{listings}
\usepackage{subfigure}
\usepackage{multirow}
\usepackage{color}
\usepackage{bbm}
\usepackage[export]{adjustbox}
\usepackage{enumerate}
\usepackage{bbm}
\usepackage{amsmath}
\usepackage{mathrsfs}
\usepackage{amsmath}
\usepackage{mathrsfs}
\usepackage[dvipsnames]{xcolor}
\usepackage{cases}

\usepackage{caption}
\usepackage{subcaption}

\usepackage{ulem}
\newtheorem{theorem}{Theorem}[section]
\newtheorem{lemma}[theorem]{Lemma}
\newtheorem{corollary}[theorem]{Corollary}
\newtheorem{proposition}[theorem]{Proposition}

\theoremstyle{definition}
\newtheorem{definition}[theorem]{Definition}

\theoremstyle{remark}
\newtheorem{remark}[theorem]{Remark}
\newtheorem*{remark*}{Note}

\numberwithin{equation}{section}

\newcommand{\Pb}{\mathbb P}

\newcommand{\Eb}{\mathbb E}

\newcommand{\Ecal}{\mathcal E}

\newcommand{\RNum}[1]{\uppercase\expandafter{\romannumeral #1\relax}}

\newcommand{\specificthanks}[1]{\@fnsymbol{#1}}

\DeclareFontFamily{OML}{rsfs}{\skewchar\font'177}
\DeclareFontShape{OML}{rsfs}{m}{n}{ <5> <6> rsfs5 <7> <8> <9>
	rsfs7 <10> <10.95> <12> <14.4> <17.28> <20.74> <24.88> rsfs10 }{}
\DeclareMathAlphabet{\mathfs}{OML}{rsfs}{m}{n}

\newcounter{cnstcnt}

\newcommand{\cref}[1]{\ensuremath{c_{\ref*{#1}}}}

\newcounter{newcnstcnt}

\newcommand{\Cref}[1]{\ensuremath{C_{\ref*{#1}}}}

\DeclareFontFamily{U}{mathx}{}
\DeclareFontShape{U}{mathx}{m}{n}{<-> mathx10}{}
\DeclareSymbolFont{mathx}{U}{mathx}{m}{n}
\DeclareMathAccent{\widehat}{0}{mathx}{"70}
\DeclareMathAccent{\widecheck}{0}{mathx}{"71}

\begin{document}
	
\title{The conclave process}

\author{Itai Benjamini$^1$}
\author{Zhenhao Cai$^1$}\thanks{$^1$Faculty of Mathematics and Computer Science, Weizmann Institute of Science}
\author{Guanyi Chen$^2$}\thanks{$^2$School of Mathematical Sciences, Peking University}
\author{Shuyang Gong$^2$}
\author{Zhangsong Li$^2$}

\begin{abstract}
We introduce a stochastic model for the papal conclave in which $n$ cardinals vote repeatedly among themselves until one cardinal receives all the votes. In each round, the probability that a cardinal votes for a given candidate is proportional to the $\alpha$-th power of that candidate's vote count in the preceding round. For $\alpha=1$, the model reduces to the Wright-Fisher model and is dual to Kingman's n-coalescent.

We reveal a sharp transition in the absorption time \(\mathcal{T}\) at $\alpha=1$. It was known that when $\alpha=1$, $\mathcal{T}$ is typically of order $n$. We prove that for $\alpha>1$, it drops to order 
$$\textit{loglog n.}$$
In contrast, for $\alpha<1$, $\mathcal{T}$ is typically at least $\exp(\Omega(n))$.

We also prove a sharp phase transition in the identity of the winner when $\alpha>1$. For every positive integer $k$, if $2^{1/k}<\alpha<2^{1/(k-1)}$ (where we write $2^{1/0} = +\infty$), with probability tending to 1 as $n\to\infty$, the eventual winner is the unique leader after round $k$. These results show that reinforced voting processes reach consensus remarkably quickly even for large electorates.
\end{abstract}

\keywords{Reinforced stochastic processes; Wright-Fisher model; coalescent processes; absorption time.}



\maketitle

\section{Introduction}

When the papacy becomes vacant, a new pope is elected through a procedure known as the {\it 
conclave}. Cardinal-electors from around the world gather in the Sistine Chapel and vote in successive rounds. In each round, every elector casts one ballot for a candidate. If some candidate receives at least two-thirds of the votes, the conclave ends and that candidate is elected pope; otherwise, the ballots are burned and another round is held. In recent conclaves, the number of electors has been on the order of one hundred, while the election has typically concluded in fewer than ten rounds.

The purpose of this paper is to introduce a stochastic model for the conclave and to establish several basic properties of this model. In particular, we obtain rigorous asymptotics for the absorption time and identify a phase transition in the predictability of the eventual winner. As we discuss below, the model is closely related to several classical stochastic processes, including the Wright-Fisher model and Kingman's $n$-coalescent. Precisely, for any $n\in \mathbb{N}^+$, define
\begin{equation}{\label{eq-def-Omega-n}}
    \Omega_n = \Big\{ \mathbf x=(\mathbf x_1,\ldots,\mathbf x_n): \mathbf x_i \in \mathbb Z_{\geq 0},\ \sum_{i=1}^{n} \mathbf x_i=n \Big\} \,.
\end{equation}
Consider a strictly increasing function $\mu:\mathbb{R}_{\geq 0} \to \mathbb{R}_{\geq 0}$ such that $\mu(0)=0$. The model is formally defined as follows. 
\begin{definition}{\label{def-conclave}}
    The random process $\mathbf{X}_k=(\mathbf{X}_k^{(1)},...,\mathbf{X}_k^{(n)})$ for $k \in \mathbb Z_{\geq 0}$ (with size $n$ and weight function $\mu$) is a Markov process on $\Omega_n$ defined by the following evolution:
    \begin{itemize}
        \item For $k=0$, set $\mathbf{X}_0:=(1,1,...,1)$. 
        \item For $k\ge 1$, given that $\mathbf{X}_{k-1}=(a_{k-1}^{(1)},...,a_{k-1}^{(n)})\in \Omega_n$, sample i.i.d.\ random variables 
        \begin{align}
            \{\mathbf{B}_{k,j}\}_{j=1}^{n} \mbox{ such that } \mathbb{P}(\mathbf{B}_{k,j}=i)=\frac{\mu(a_{k-1}^{(i)})}{\sum_{1\le l\le n}\mu(a_{k-1}^{(l)})}  \label{eq-vote-law}
        \end{align}
        for all $i,j\in [n]:=\{1,2,...,n\}$, and then define $\mathbf{X}_{k}^{(i)}:=\sum_{j=1}^{n} \mathbf{1}_{ \{ \mathbf{B}_{k,j}=i \} }$ for $i\in [n]$. 
    \end{itemize}
\end{definition}
In the context of the conclave, the labels $1,2,\ldots,n$ correspond to the
$n$ cardinals where $n \geq 2$ is a positive integer, and $\mathbf X_k^{(i)}$ represents the number of votes received by cardinal $i$ in the $k$-th round. Moreover, the event $\{\mathbf B_{k,j}=i\}$ means that, in round $k$, cardinal $j$ votes for cardinal $i$. The central idea behind the model is to incorporate a reinforcement mechanism: cardinals who receive more votes in one round become increasingly likely to receive votes in the next. The strength of this reinforcement is encoded by the weight function $\mu$. The conclave process belongs to the broad class of reinforced stochastic processes; see \cite{pemantle2007survey} for a general survey. Related Pólya-type dynamics on hypergraphs are studied in \cite{alves2023p}; while reinforcement also plays a central role in preferential attachment models; see \cite{van2024random} for a comprehensive treatment. In this paper, we focus on the power-law case where $\mu$ is a power function, i.e., $\mu(m)=m^{\alpha}$ with $\alpha>0$, and regard $\alpha$ as the main parameter of the model. We denote by $\mathbb P_n^\alpha$ the law of the process $\{\mathbf X_k\}_{k\in\mathbb Z_{\geq 0}}$ with this choice of parameter.
Note that the absorbing states of this Markov process are the $n$-tuples in $\Omega_n$ with one coordinate equal to $n$ and all remaining coordinates equal to zero. We define the {\it absorption time} $\mathcal T$ as the smallest $k \in \mathbb Z_{\geq 0}$ such that $\mathbf{X}_k$ is in an absorbing state. One of the main goals of this paper is to estimate $\mathcal T$.

\begin{remark}
Note that the absorption time is defined by the first time when one cardinal receives all votes. This definition is not fully consistent with the real-world rule of the papal election, which requires some candidate to receive at least two-thirds of the votes. However, it can be derived following the same methods in our paper that under our defined voting dynamics, the expected time a candidate wins two-thirds of the votes is of the same order of $\mathbb E[\mathcal T]$, and is $(1+o(1))\mathbb E[\mathcal T]$ when $\alpha > 1$.
\end{remark}

As we discuss in Section~\ref{section1.1_Kingman}, when $\alpha=1$ the conclave process is a special case of the Wright-Fisher model and, under time reversal, corresponds exactly to Kingman's $n$-coalescent \cite{kingman1982genealogy}. This connection allows for a precise analysis of the distribution of $\mathcal T$ in the case $\alpha=1$. However, the correspondence relies on a special feature of the linear case that the partition function $\mathbf{Z}_k:=\sum_{1\le i\le n}\mu(\mathbf{X}_k^{(i)})$
remains constant. For general $\alpha\neq 1$, this property is lost, and the analysis of $\mathcal T$ requires new ideas.

\subsection{Main results}\label{section1.2_main_result}

In this subsection, we will introduce the main results we will derive. Our first result settles the asymptotics of $\mathcal T$ when $\alpha>1$. Intuitively, compared with the $\alpha=1$ case (where $\mathcal{T}$ is typically of order $n$), $\mathcal{T}$ is expected to be much smaller for $\alpha>1$ and much larger for $\alpha<1$. Our result shows that when $\alpha>1$, $\mathcal{T}$ is asymptotically of order $\log\log(n)$. In other words, even if the entire human race were to participate in the vote ($n\approx 10^{10}$), the election would still conclude in a few rounds, as in a real conclave ($n\approx 100$). 
\begin{theorem}{\label{thm-rapid-ordering-supercritical}}
    Define $\lambda_k = 2^{1/k}$ for $k \geq 1$ and $\lambda_0=+\infty$. Suppose that $k \geq 2$ and $\alpha\in(\lambda_k,\lambda_{k-1}]$. Then for every $\epsilon > 0$ we have
    \begin{equation}{\label{eq-order-mathcal-T-non-critical-small-alpha}}
 		\lim\limits_{n\to \infty} \mathbb{P}_n^{\alpha}\left( \left| \frac{ \log(\alpha) \cdot \mathcal T}{ 2\log\log(n) } -1 \right|<\epsilon \right) =1 \,. 
 	\end{equation}
    In addition, suppose that $\alpha>2$ (corresponding to $k=1$ and $\alpha\in(\lambda_1,\lambda_0)$), and denote by $E_{\mathsf{unique}}$ the event that $\{ \mathbf X_1^{(i)}: 1 \leq i \leq n \}$ has a unique maximum. Then
    \begin{align}
        & \lim\limits_{n\to \infty} \mathbb{P}_n^{\alpha}\left( \left| \frac{ \log(\alpha) \cdot \mathcal T}{ 2\log\log(n) } -1 \right|<\epsilon \mid E_{\mathsf{unique}} \right) =1 \,; \label{eq-order-mathcal-T-alpha>2-case-1} \\
        & \lim\limits_{n\to \infty} \mathbb{P}_n^{\alpha}\left( \left| \frac{ \log(\alpha) \cdot \mathcal T}{ (1+\frac{\alpha}{2})\log\log(n) } -1 \right|<\epsilon \mid E_{\mathsf{unique}}^c \right) =1 \,. \label{eq-order-mathcal-T-alpha>2-case-2}
    \end{align}
\end{theorem}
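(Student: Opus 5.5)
The plan is to track, round by round, the vote count of the current leader, the count of the runner-up, and the partition function $\mathbf Z_k=\sum_i(\mathbf X_k^{(i)})^\alpha$. I would then split the evolution into three phases and show that, with high probability, each phase follows a deterministic doubly-exponential recursion up to $(1+o(1))$ errors in the exponents. Conditionally on $\mathbf X_{k-1}$, the vector $\mathbf X_k$ is multinomial, so every step can be controlled by Chernoff bounds (or Poissonization) together with union bounds over at most $n$ candidates. The required error probabilities are $o(1/\log\log n)$ per round, which is easily affordable. A heuristic caveat up front: the bookkeeping below puts about $\log\log n/\log\alpha$ rounds into Phase~I and only $O(1)$ rounds into Phase~III, which gives a total of order $\log\log n/\log\alpha$ rather than the stated $2\log\log n/\log\alpha$. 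I have not located the missing factor $2$, and identifying it is part of the main obstacle.

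\emph{Phase I (growth of the top).} After round $1$ the counts are approximately i.i.d.\ Poisson$(1)$. Hence $\mathbf Z_1=(1+o(1))\,n\,\mathbb E[\mathrm{Poi}(1)^\alpha]$, and the maximum is $M_1=(1+o(1))\log n/\log\log n$. Inductively, as long as the leader's weight $M_{j}^\alpha$ is $o(\mathbf Z_j)$ and the bulk keeps $\mathbf Z_j\asymp n$, the leader receives about $nM_j^\alpha/\mathbf Z_j\asymp M_j^\alpha$ votes. This gives $\log M_{j}=(1+o(1))\alpha^{j-1}\log M_1$. I would also show that the bulk stays of order $n$ in this sense: it consists of many candidates with $O(1)$ votes, whose total weight changes by only constant factors per round. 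A concentration argument for $\sum_i (\mathbf X^{(i)})^\alpha$ over the bulk gives this. Phase I ends at the first $j$ with $M_j^\alpha\ge n^{1-o(1)}$, so after $(1+o(1))\log\log n/\log\alpha$ rounds.

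\emph{Emergence of a unique leader and Phase III (elimination).} For the identity of the leader, I would compare the top two counts $M_j> M'_j$. Writing $M_j=M'_j(1+\delta_j)$, the deterministic drift gives $(1+\delta_{j+1})\approx(1+\delta_j)^\alpha$, while the multinomial noise perturbs $\delta$ by about $M_{j+1}^{-1/2}\approx M_j^{-\alpha/2}$. Starting from $\delta_1\approx 1/M_1$ (a gap of one vote), the drift $\alpha^{j}/M_1$ beats the noise $M_1^{-\alpha^j/2}$ from round $k$ on precisely when $\alpha^k>2$, that is $\alpha>\lambda_k$. So the leader after round $k$ is, with high probability, never overtaken.

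Once the leader holds a share $1-\varepsilon_j$ with $\varepsilon_j$ polylogarithmically small, the non-leader mass evolves as $\varepsilon_{j+1}\approx \varepsilon_j\cdot(\text{typical non-leader count})^{\alpha-1}/n^{\alpha-1}$. This contracts doubly exponentially, so absorption follows within $O(1)$ further rounds, i.e.\ $o(\log\log n)$. I expect the factor $2$ in \eqref{eq-order-mathcal-T-non-critical-small-alpha} to come from an accounting I have not yet pinned down. One candidate is that the bulk's weight $\mathbf Z_j$ does not stay of order $n$, so that Phase I is slower than the recursion above suggests; the other is that the elimination recursion is not the one written above and itself contributes a further $\log\log n/\log\alpha$ rounds. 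Either way, matching upper and lower bounds then follow by running the recursions with $(1\pm\epsilon)$ exponents.

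\emph{The case $\alpha>2$ and the main obstacle.} On $E_{\mathsf{unique}}$ the leader is fixed from round $1$, and the argument above applies verbatim. On $E_{\mathsf{unique}}^c$, two or more candidates tie at $M_1$. By symmetry they then grow together with relative gap only of the noise size $\delta\approx M_j^{-1/2}$, which is too small to be resolved by the Phase I drift. These co-leaders split the macroscopic mass, so the doubly-exponential elimination is replaced by a slower duel between macroscopic candidates. Tracking this duel to get the extra $\tfrac{\alpha}{2}\log\log n/\log\alpha$ rounds, and showing its length concentrates, is the main obstacle. I would attack it with a coupled recursion for $(\log M_j,\log(1/\delta_j))$ and a matching lower bound on the noise, giving anti-concentration of $\delta$ via a local CLT for the multinomial. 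I would work out this duel in detail before finalizing the bookkeeping of the other phases, since the same accounting should also locate the missing factor $2$.
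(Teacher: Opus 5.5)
Your proposal does not prove the statement, and the missing factor $2$ traces to a concrete error in your Phase~III.

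\textbf{The clock.} Doubly exponential contraction from a polylogarithmically small (let alone constant) value does not finish in $O(1)$ rounds. Iterating $x\mapsto x^{\alpha}$ from $x_0=(\log n)^{-C}$, you reach $n^{-1}$ only when $\alpha^j\approx \log n/(C\log\log n)$, i.e.\ after $(1+o(1))\log\log n/\log\alpha$ rounds. $O(1)$ rounds suffice only once $x\le n^{-c}$, which is what Lemmas~\ref{lem-prelim-gap-est-final-constant-rounds} and \ref{lem-prelim-gap-est-very-final-rounds} handle.

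Moreover, the quantity to track is not a ``typical non-leader count'' but the runner-up, whose count is $(1-\delta)M$. At the end of your Phase~I the gap has only been amplified to $\delta_j\approx\alpha^j\delta_0$, which is at most of order one, so the runner-up still holds at least a constant fraction of the leader's votes.

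The missing idea is the scale-free recursion $1-\delta_{t+1}\approx(1-\delta_t)^{\alpha}$ for the ratio of the top two counts. It holds whatever $\mathbf Z_t$ is, because both counts are divided by the same $\mathbf Z_t$. So you never need $\mathbf Z_t\asymp n$ over $\Theta(\log\log n)$ rounds; the paper controls $\mathbf Z_t$ only for $O(1)$ rounds and afterwards uses only $\mathbf Z\le M^\alpha+(n-M)((1-\delta)M)^{\alpha-1}$. The recursion takes $\log(1/\delta_0)/\log\alpha$ rounds to bring $\delta$ from $\delta_0$ to order one. It then takes a further $\log\log n/\log\alpha$ rounds to bring $1-\delta$ from order one to $n^{-\Theta(1)}$. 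Hence $\mathcal T=(1+o(1))(\log\log n+\log(1/\delta_0))/\log\alpha$, which is Proposition~\ref{lem-repeat-used}. Your Phase~I length matched the first of these two phases only because $\delta_0^{-1}$ and $M_1$ are both $(\log n)^{1+o(1)}$.

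On $E_{\mathsf{unique}}^c$ with $\alpha>2$, no separate slower duel is needed. The tied co-leaders separate in round $2$ by Poisson fluctuations of relative size $(\log n)^{-\alpha/2+o(1)}$, while lower levels lose $(\log n)^{\alpha-1}\gg(\log n)^{\alpha/2}$ in drift. Later noise $M_j^{-1/2}$ is far smaller than the amplified gap, so the same formula gives $(1+\frac{\alpha}{2})\log\log n/\log\alpha$.

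\textbf{The initial gap.} The second gap is the two-sided estimate $\delta_0=(\log n)^{-1+o(1)}$ for $1<\alpha\le 2$, which is what fixes the constant $2$. An upper bound on $\delta_0$ gives $\mathcal T\ge(2-\epsilon)\log\log n/\log\alpha$, and a lower bound gives the matching upper bound.

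Your justification ``$\delta_1\approx 1/M_1$, a gap of one vote'' is not the mechanism in this range. When $\alpha^t<2$, the round-$(t+1)$ leader does not descend from the round-$t$ leader. It comes from a deep level set $\mathcal A_t(\lambda)$ with $\exp((\log n)^{\Theta(1)})$ members; in round $2$ the depth is $\lambda=(\log n)^{2-\alpha+o(1)}$. The reason is that the gain $\sqrt{2\mathbf H_t(\lambda)\log|\mathcal A_t(\lambda)|}$ from maximizing over many Poisson variables beats the drift loss. Your drift-versus-noise comparison omits this $\sqrt{\log(\#\text{competitors})}$ factor; it happens to reproduce the threshold $\alpha^k>2$, but not the gap.

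The round-$t$ gap is instead an extreme-value spacing of order $\sqrt{\mathbf H/\log m}$ for $m$ competitors of mean about $\mathbf H$, which equals $(\log n)^{\alpha^{t-1}-1+o(1)}$. Turning this into $\delta=(\log n)^{-1+o(1)}$ requires three things:
\begin{itemize}
\item an upper bound on the top spacing of many inhomogeneous Poisson variables;
\item an anti-concentration lower bound on that same spacing;
\item inductive two-sided control of all level-set sizes, including the critical case $\alpha^{k-1}=2$ that the statement allows.
\end{itemize}
This is the content of Lemmas~\ref{lem-maximum-core-level-properties} and \ref{lem-Poisson-maximum-1st-minus-2nd} and the induction in Lemma~\ref{lem-inductive-ecal-h-t}. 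A multinomial local CLT, which you invoke only for the $E_{\mathsf{unique}}^c$ case, does not cover these extreme-value spacings.
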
 
\begin{remark}
    We can see that $\Pb_n^{\alpha}(E_{\mathsf{unique}})=\Omega(1)$ if and only if $N!/C \leq n\leq C N!$ for some fixed constant $C$ and a positive integer $N = N(n)$ by the Poisson coupling method we will derive later in Section~\ref{subsec:coupling}. Therefore, there is a non-trivial phase transition of the distribution of $\mathcal T$ with respect to the number-theoretic properties of $n$.
\end{remark}
By contrast, for $\alpha<1$, we show that the absorption time $\mathcal{T}$ grows at least exponentially with $n$. Thus, $\mathcal{T}$ exhibits an unexpectedly high sensitivity to $\alpha$ when $\alpha$ is near $1$. 
\begin{theorem}{\label{thm-slow-ordering-sub-critical}}
    For any $0<\alpha<1$ and $M \in \mathbb Z_{>0}$, there exists $C_{\ref{thm-slow-ordering-sub-critical}}=C_{\ref{thm-slow-ordering-sub-critical}}(\alpha)>0$ such that
    \begin{equation}{\label{eq-order-mathcal-T-sub-critical}}
  	    \mathbb{P}_n^{\alpha}\big( \mathcal{T} > M \big) \ge \big(1-e^{-C_{\ref{thm-slow-ordering-sub-critical}}n} \big)^{M} \,. 
    \end{equation}  
    As a result, $\mathcal{T}$ is at least $\exp(\Omega_{\alpha}(n))$ with probability $1-o(1)$.
\end{theorem}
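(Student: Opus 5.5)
Since every state of $\Omega_n$ has all its coordinates in $\{0,\ldots,n\}$, it suffices to prove a one-step estimate: there is $C=C(\alpha)>0$ such that from any non-absorbing state $\mathbf x\in\Omega_n$, the next state is absorbing with probability at most $e^{-Cn}$. Granting this, the Markov property gives
\[
\mathbb P_n^\alpha(\mathcal T>M)=\mathbb E\Big[\prod_{k=1}^{M}\big(1-p(\mathbf X_{k-1})\big)\Big]\ge (1-e^{-Cn})^M ,
\]
where $p(\mathbf x)$ is the probability of absorbing in one step from $\mathbf x$ (note $\mathbf X_0$ is not absorbing). Taking $M=e^{Cn/2}$ makes the right-hand side $1-o(1)$, which gives the final claim of Theorem~\ref{thm-slow-ordering-sub-critical}.

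For the one-step estimate, fix a non-absorbing $\mathbf x$. The probability that all $n$ votes go to candidate $i$ is $q_i^n$ with $q_i=x_i^\alpha/\sum_l x_l^\alpha$, so $p(\mathbf x)=\sum_{i: x_i>0} q_i^n$. The key point is to bound $\max_i q_i$ away from $1$ uniformly. Let $i$ maximize $x_i$, say $x_i=n-r$ with $r\ge1$. The remaining $r$ votes lie among other candidates, each holding at most $r$ votes. Because $\alpha<1$ makes $t\mapsto t^\alpha$ concave with value $0$ at $0$, it is subadditive, so $\sum_{l\ne i}x_l^\alpha\ge r^\alpha$, with the worst case being all $r$ votes on a single candidate.

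Hence
\[
1-q_i\ge \frac{r^\alpha}{(n-r)^\alpha+r^\alpha}\ge \frac{1}{(n-1)^\alpha+1}.
\]
This lower bound is only of order $n^{-\alpha}$, so it alone yields $q_i^n\le \exp(-n^{1-\alpha}/2)$. That is superpolynomially small but not $e^{-Cn}$. So the crude bound is insufficient, and this step is the main obstacle.

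To reach $e^{-Cn}$, I would not require absorption to happen in a single step from $\mathbf x$. Instead I would bound the probability of the path that reaches absorption. Absorption at step $k$ requires the previous state $\mathbf X_{k-1}$ to be non-absorbing, and then all $n$ votes must land on one candidate. One option is to reinterpret $\mathcal T>M$ differently, but the statement is as given, so I would argue as follows. If $r\ge \delta n$, then $1-q_i\ge c(\delta,\alpha)>0$ and the one-step bound $e^{-Cn}$ holds directly. If $r<\delta n$, the state is already nearly absorbed. Reaching it from a state with $r\ge\delta n$ costs probability: a step from $\mathbf y$ lands on $r'$ minority votes with probability $\binom{n}{r'}q^{\,n-r'}(1-q)^{r'}$. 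Since $1-q$ is of order $(r/n)^\alpha\gg r/n$, the minority count tends to grow, with drift $\approx n^{1-\alpha}r^\alpha>r$.

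I would therefore use a large-deviation (Chernoff) bound for the Binomial minority count. This shows that from any state with $\delta n\le r$, the chance of entering $\{r<\delta n\}$ in one step is at most $e^{-Cn}$. It also shows that from $\{r<\delta n\}$ the process returns above $\delta n$ quickly, except with tiny probability. To make this rigorous while keeping the clean product form $(1-e^{-Cn})^M$, I would verify that $p(\mathbf x)\le e^{-Cn}$ holds at least along states reachable with non-negligible probability. Equivalently, I would bound $\mathbb P(\mathcal T=k\mid \mathcal T>k-1)\le e^{-Cn}$ by conditioning on $\mathbf X_{k-2}$ and combining the two one-step estimates. This combination of estimates is the delicate part of the argument.
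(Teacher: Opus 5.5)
Your opening reduction is false, as you notice yourself. From $\mathbf x=(n-1,1,0,\ldots,0)$ one has $1-q_1\approx n^{-\alpha}$, so $p(\mathbf x)\approx\exp(-n^{1-\alpha})$, and no bound $p(\mathbf x)\le e^{-Cn}$ holds uniformly over non-absorbing states. The repair you sketch does not close this gap. Bounding $\mathbb P(\mathcal T=k\mid\mathcal T>k-1)$ by conditioning on $\mathbf X_{k-2}$ fails for the same reason, one step earlier: nothing prevents $\mathbf X_{k-2}$ from being nearly absorbed itself. From $\mathbf X_{k-2}=(n-1,1,0,\ldots,0)$, the minority count at time $k-1$ is typically of order $n^{1-\alpha}$. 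Absorption at time $k$ then has probability at least $e^{-O(n^{1-\alpha^2})}$, which is still far larger than $e^{-Cn}$. Any fixed amount of look-back has the same defect. Controlling that hazard would require showing that the law of the chain given survival puts little mass near the absorbing states. That is a global statement, not a combination of two one-step estimates. Your ``returns above $\delta n$ quickly'' estimate is therefore beside the point. Also, your displayed product formula $\mathbb E\bigl[\prod_{k}(1-p(\mathbf X_{k-1}))\bigr]$ is not an identity under the original chain. Only the inequality obtained inductively, by conditioning on the last step, is valid.

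The missing idea is that you never need to control absorption itself, and you already state the right one-step estimate. Call $\mathbf x$ \emph{small} if $\max_i x_i\le (1-\delta)n$, with $\delta=1/4$ say. Subadditivity of $t\mapsto t^\alpha$ gives $\mathbf Z_{\mathbf x}\ge x_i^\alpha+(n-x_i)^\alpha$. Hence every weight satisfies $q_i\le(1+3^{-\alpha})^{-1}<3/4$, uniformly over small states, and this is exactly where $\alpha<1$ enters. A binomial Chernoff bound plus a union bound over the $n$ candidates then shows that from any small state, the next state is small except with probability at most $e^{-Cn}$. Since $\mathbf X_0$ is small and absorbing states are not, $\{\mathcal T>M\}\supseteq\{\mathbf X_0,\ldots,\mathbf X_M\text{ all small}\}$. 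The Markov property applied to this event (condition on $\mathbf X_{M-1}$ and induct) gives $\mathbb P(\mathcal T>M)\ge(1-e^{-Cn})^M$ directly. This is the paper's proof, with threshold $3n/4$. Replacing the absorption hazard by the hazard of leaving the small region is what removes the near-absorbed states from the computation entirely.
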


Our next result shows a surprising ``all-or-nothing'' transition phenomenon in the survival probability of the leader in a specific round. Precisely, for each $k \in \mathbb Z_+$ we define the set of leaders at time $k$ to be 
\begin{equation}{\label{eq-def-leader-set}}
    \mathfrak L_k:= \left\{ 1 \leq i \leq n: \mathbf X^{(i)}_k = \max\left\{ \mathbf X_k^{(1)},\ldots,\mathbf X_k^{(n)} \right\} \right\} \,.
\end{equation}
Also, define $\mathfrak L_{0}=\emptyset$. Next, for any $\mathbf x = (\mathbf x_1\,, \ldots\,, \mathbf x_n) \in \Omega_n$ we define $\mathcal L(\mathbf x) = \{i\in[n]:\mathbf x_i = \max_{j \in [n]}\mathbf x_j\}$ to be the set of leaders in $\mathbf x$. In addition, recall the definition of the absorption time $\mathcal T$ and define $\mathcal W \in [n] \mbox{ such that } \mathbf X_{\mathcal T}^{(\mathcal W)} =n$.
The following result shows the following transition in the probability that $\mathcal W \in \mathfrak L_k$.
\begin{theorem}{\label{thm-all-or-nothing-transition}}
    Define $\lambda_0=+\infty$ and define $\lambda_k= 2^{1/k}$ for each $k \geq 1$. Then for any $k \geq 1$ and $\alpha \in (\lambda_k,\lambda_{k-1})$, we have
    \begin{align*}
        \Pb_n^{\alpha}\big( \mathcal W \in \mathfrak L_k \big)=1-o(1) \mbox{ and } \Pb_n^{\alpha}\big( \mathcal W \in \mathfrak L_{k-1} \big) = o(1) \,.
    \end{align*} 
\end{theorem}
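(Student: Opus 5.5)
We plan to work round by round through the first $k$ rounds, using a Poisson approximation for the multinomial vote counts. In round $1$, $\mathbf X_1$ is multinomial with uniform weights, so the counts are approximately i.i.d.\ Poisson$(1)$. The maximum $M_1$ is then of order $\log n/\log\log n$. The number of cardinals with exactly $m$ votes is roughly $n e^{-1}/m!$, which is why the sizes $N!$ appear in the earlier remark.

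In round $j+1$, a cardinal holding $a$ votes receives approximately Poisson$(n a^\alpha/\mathbf Z_j)$ votes, where $\mathbf Z_j=\sum_i (\mathbf X_j^{(i)})^\alpha$. In the early rounds $\mathbf Z_j$ is dominated by the bulk of cardinals with $O(1)$ votes, so $\mathbf Z_j\asymp n$ with a deterministic constant. Hence a cardinal at level $a$ gets about $c\,a^\alpha$ votes, and the top level evolves as $M_{j+1}\approx c M_j^\alpha$, so $M_j\approx (\log n)^{\alpha^{j-1}}$ up to lower-order factors.

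The quantity that controls who survives is the gap between the leader and the runner-up. The fluctuation of a Poisson count with mean $\Lambda$ is $\sqrt\Lambda$. After round $j$, the top cardinals are separated by a relative gap $\delta_j$. That gap then becomes $\alpha\delta_j$ in the means, which must be compared with the relative noise $M_{j+1}^{-1/2}$.

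In round $1$ the integer gap is $1$, so $\delta_1\approx 1/M_1$. Propagating one round multiplies the relative gap by $\alpha$, so the relative gap stays of order $\alpha^{j}/\log n$ up to $\log\log$ corrections. Meanwhile the relative noise after round $j+1$ is $M_{j+1}^{-1/2}\approx(\log n)^{-\alpha^{j}/2}$. The ordering set at round $j$ is therefore frozen once $\alpha^{j}/2>1$, i.e.\ $\alpha^{j}>2$.

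The noise injected at step $j$ has relative size $(\log n)^{-\alpha^{j-1}/2}$. It is comparable to or larger than the existing gap $1/\log n$ exactly when $\alpha^{j-1}\le 2$. This gives the threshold: with $\alpha\in(\lambda_k,\lambda_{k-1})$, we have $\alpha^{k}>2>\alpha^{k-1}$. Round $k$ is then the last round whose randomness dominates the order, and after round $k$ the order is frozen.

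The steps, in order, are the following.

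\textbf{Step 1 (coupling).} Couple each round's multinomial with independent Poissons, conditioned on the total, and control $\mathbf Z_j$ by a law of large numbers. This lets us track $\mathbf Z_j/n$ deterministically for $j\le k+O(1)$.

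\textbf{Step 2 (evolution of the top levels).} Show that for $j\le k$ the top order statistics satisfy $M_j=(\log n)^{\alpha^{j-1}+o(1)}$. Show also that the number of near-leaders, meaning cardinals within relative distance $(\log n)^{-\alpha^{j-1}/2}$ of $M_j$, tends to infinity in probability when $\alpha^{j-1}<2$. This uses extreme-value statistics of Poisson mixtures: near the top, many distinct earlier levels feed into the maximum.

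\textbf{Step 3 (negative statement).} From Step 2 at $j=k$, the round-$k$ leader is essentially uniform among growing families of competitors. Conditioning on $\mathcal F_{k-1}$, any fixed member of $\mathfrak L_{k-1}$ has probability $o(1)$ to remain the leader after round $k$. We need it to lose leadership permanently, so we combine this with Step 4.

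\textbf{Step 4 (positive statement).} After round $k$, if the leader has relative gap $\gg (\log n)^{-\alpha^{k}/2}$, then by a union bound over rounds $k+1,\dots,\mathcal T$ the leader stays the leader. The relative gap grows geometrically, because the factor $\alpha$ acts in the exponent and $\mathbf Z$ becomes dominated by the top. Eventually the leader captures all votes. Since $\mathcal T=O(\log\log n)$ by Theorem~\ref{thm-rapid-ordering-supercritical}, we only need to sum over $O(\log\log n)$ rounds with super-polynomially small error per round.

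The main obstacle is Step 4 at the first frozen round. We must show that after round $k$ the gap between the top two cardinals exceeds the next-round noise with probability $1-o(1)$. The gap arises from Poisson fluctuations of size $\sqrt{M_k}$, while the required separation is about $M_k\cdot M_{k+1}^{-1/2}\approx M_k^{1-\alpha/2}$. This is $\ll\sqrt{M_k}$ exactly when $\alpha>1$.

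So we need an anti-concentration bound: the probability that the two largest round-$k$ counts lie within $M_k^{1-\alpha/2}$ of each other is $o(1)$. We would try a local limit theorem for the top two order statistics of the Poisson family, using that their spacing is of order $\sqrt{M_k}/\sqrt{\log(\#\text{competitors})}$. The required comparison then reduces to the strict inequality $\alpha^k>2$, and the logarithmic corrections must be tracked carefully near this threshold.
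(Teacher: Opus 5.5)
Your thresholds and the late-phase picture ($1-\delta'\approx(1-\delta)^\alpha$ once a clear leader exists) match the paper. What is missing is the ingredient that drives both halves of the statement: a two-sided, quantitative profile of the level sets, propagated round by round. Write $\mathcal A_t(\lambda)$ for the cardinals $\lambda$ votes below the round-$t$ maximum. One needs $\log|\mathcal A_t(\lambda)|\approx\lambda(\log n)^{1-\alpha^{t-1}}$ up to powers of $\log\log n$, with upper bounds for all $\lambda$ and lower bounds on a range, re-derived for round $t+1$ from round $t$. This is the event $\mathcal E_{4,t}$ in Lemma~\ref{lem-inductive-ecal-h-t}, obtained by applying Lemma~\ref{lem-maximum-core-level-properties} level set by level set.

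Your Step~2 only says the number of near-leaders tends to infinity, and Step~3 does not follow from that. Conditional on $\mathcal F_{k-1}$, cardinals are exchangeable only within a level set. The round-$(k-1)$ leaders form the top level, which is small and has the largest Poisson mean, so no symmetry argument makes their chance of leading round $k$ small.

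What actually happens is an enumeration effect. A level at depth $\lambda$ loses about $\lambda(\log n)^{\alpha^{k-1}-\alpha^{k-2}}$ in mean, but its maximum gains about $\sqrt{2\mathbf H\log|\mathcal A_{k-1}(\lambda)|}$, where $\mathbf H\approx(\log n)^{\alpha^{k-1}}$. With the profile above, the net gain is optimized at depth $\lambda\approx(\log n)^{1+\alpha^{k-2}-\alpha^{k-1}}$. There the maximum exceeds the top level's mean by order $\log n$ (up to $\log\log n$ factors). This beats the old leader's fluctuation $(\log n)^{\alpha^{k-1}/2}$ exactly because $\alpha^{k-1}<2$. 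The old leader therefore ends up far below $M_k$ and is not even among your near-leaders. Making this rigorous needs the lower bound on $|\mathcal A_{k-1}(\lambda)|$ at that depth and upper bounds at all other depths.

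The same mechanism shows that your propagation $\delta_j\approx\alpha^j/\log n$ through the means is not what happens when $\alpha^j<2$. The leader changes, and the new gap is the extreme-value spacing $\sqrt{\mathbf H/\log m}$ of the winning level set, with $\log m=(\log n)^{2-\alpha^j+o(1)}$. That this also gives relative gap $(\log n)^{-1+o(1)}$ is a coincidence of exponents, not a propagated gap.

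The positive half has the same hole. Anti-concentration of the top two round-$k$ counts (which the paper gets from the mapping argument of Lemma~\ref{lem-Poisson-maximum-1st-minus-2nd}) only disposes of the runner-up. In round $k+1$ a deeper level can still overtake the leader by enumeration unless $\lambda^2(\log n)^{\alpha^k-2\alpha^{k-1}}\gg\log|\mathcal A_k(\lambda)|$ for every depth $\lambda$, which is the upper half of the round-$k$ profile. Your spacing estimate also presupposes $\log(\#\text{competitors})=(\log n)^{2-\alpha^{k-1}+o(1)}$, i.e.\ knowing which level set the round-$k$ leader was drawn from; that again is the profile. Note also that the failure probability in round $k+1$ is only about $\exp(-(\log n)^{\alpha^k-2+o(1)})$, not super-polynomially small, so the margin is thin near $\alpha=\lambda_k$.

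Finally, for $k=1$ your Step~4 is false as stated. The maximum of $\mathbf X_1$ is attained more than once with probability bounded away from $0$, so the two largest round-$1$ counts are not separated with high probability. You must instead show two things. First, the whole top level $\mathfrak L_1$ beats every lower level in round $2$: the mean loss $(\log n)^{\alpha-1+o(1)}$ dominates both the fluctuation and the enumeration gain, which are $(\log n)^{\alpha/2+o(1)}$. Second, a unique leader with relative gap $(\log n)^{-\alpha/2+o(1)}$ emerges from within $\mathfrak L_1$. Only then does the late-phase argument apply.
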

\begin{remark}{\label{rmk-uniqueness-leader}}
    The reader may wonder whether $\mathfrak L_k$ consists of a single element when $\alpha\in(\lambda_k,\lambda_{k-1})$. We will show later that, for $k \geq 2$ and $\alpha\in( \lambda_k,\lambda_{k-1} )$, 
    \begin{align*}
        \Pb_n^{\alpha}\left( |\mathfrak L_k|=1 \right)=1-o(1) \,.
    \end{align*}
    In other words, with high probability there is a unique leader at round $k$, and this leader is also the eventual winner.

    The analogous statement, however, does not hold for $k=1$ when $\alpha\in (\lambda_1,\lambda_0)=(2,+\infty)$. Indeed, since $\mathbf{X}_0=(1,1,\ldots,1)$, the distribution of $\mathbf{X}_1$ is the same as that obtained by throwing $n$ balls independently and uniformly into $n$ bins. This is the classical occupancy problem, which has been extensively studied. In particular, it is known that with high probability
    \begin{align*}
        \max_{i\in [n]} \left\{ \mathbf{X}_1^{(i)} \right\} = O\left( \frac{\log n}{\log\log n} \right)
    \end{align*}
    and that the difference between the maximum and the second maximum among $\mathbf{X}_1^{(i)}$ is typically $O(1)$ (see e.g., \cite{raab1998balls}). Consequently, round $1$ need not have a unique leader with high probability. Nevertheless, we will prove that in this regime, with high probability there is a unique leader by round $2$, and that this leader is the eventual winner. 
\end{remark}

\subsection{Simulations}{\label{subsec:simulations}}

To give a better understanding of how our theorems can be applied, we carried out three numerical simulations on the conclave model and summarize the results in Figure~\ref{fig:simulations}, where:

\begin{itemize}
\item Figure (a) (in our first simulation) shows how the average value of $\min\{\mathcal T, 100\}$ over $1000$ runs changes with $n$ under different $\alpha$'s. In the subcritical regime $\alpha < 1$, $\mathcal T$ grows exponentially and $\Eb[\min\{\mathcal T, 100\}] \to 100$ as we expected, which is a finite-sample evidence consistent with our claim in Theorem~\ref{thm-slow-ordering-sub-critical}. In the critical regime $\alpha = 1$, $\Eb[\min\{\mathcal T, 100\}]$ grows linearly before being close to $100$ as shown by our figure. In the supercritical regime $\alpha > 1$, $\Eb[\min\{\mathcal T, 100\}]$ grows at an $o(\log n)$ rate, and never becomes close to $100$ as shown in the figure, which is consistent with our claim in Theorem~\ref{thm-rapid-ordering-supercritical}.

\item Figures (b) and (c) (in our second simulation) show how the probability $\Pb_n^\alpha(\mathcal{W} \in \mathfrak L_k)$ changes with $k$ for different $n$ and fixed $\alpha$. Note that $\alpha = 1.9 \in (\lambda_2 , \lambda_1)$ in Figure (b), where $\Pb_{n}^\alpha(\mathcal{W} \in \mathfrak L_k)$ tends to 1 as $n \to +\infty$ when $k \geq 2$, and 0 when $k = 1$. Also, $\alpha = 2.5 \in (\lambda_1 , \lambda_0)$ in Figure (c), where $\Pb_{n}^\alpha(\mathcal{W} \in \mathfrak L_k)$ tends to $1$ as $n \to +\infty$ for all $k \geq 1$. Therefore, the simulation results are consistent with Theorem~\ref{thm-all-or-nothing-transition}, which claims that for large enough $n$, with high probability the eventual winner is determined in the second round when $\alpha = 1.9$, and the eventual winner belongs to the first-round leader set when $\alpha = 2.5$.

\end{itemize}

\begin{figure}[htbp]
    \centering
    \begin{tabular}{cc}
        \includegraphics[width=0.45\textwidth]{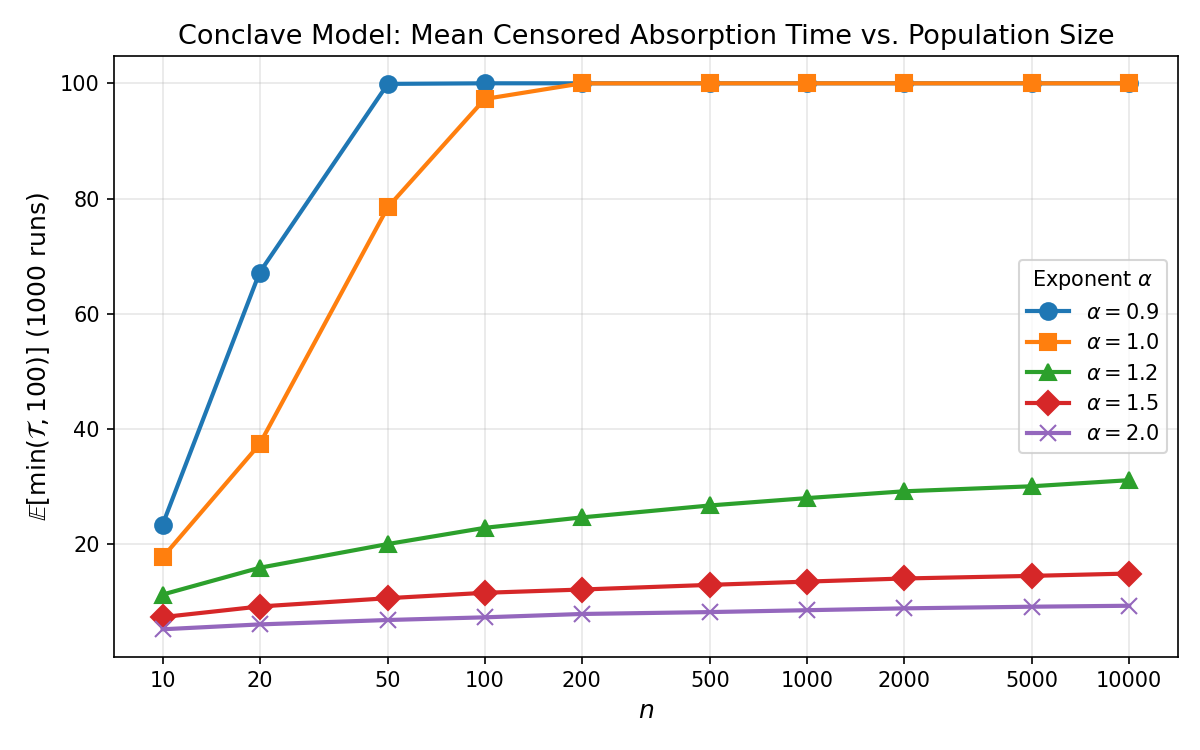} \\ 
        (a) Plot of $\Eb[\min\{\mathcal T, 100\}]$  \\
        \includegraphics[width=0.45\textwidth]{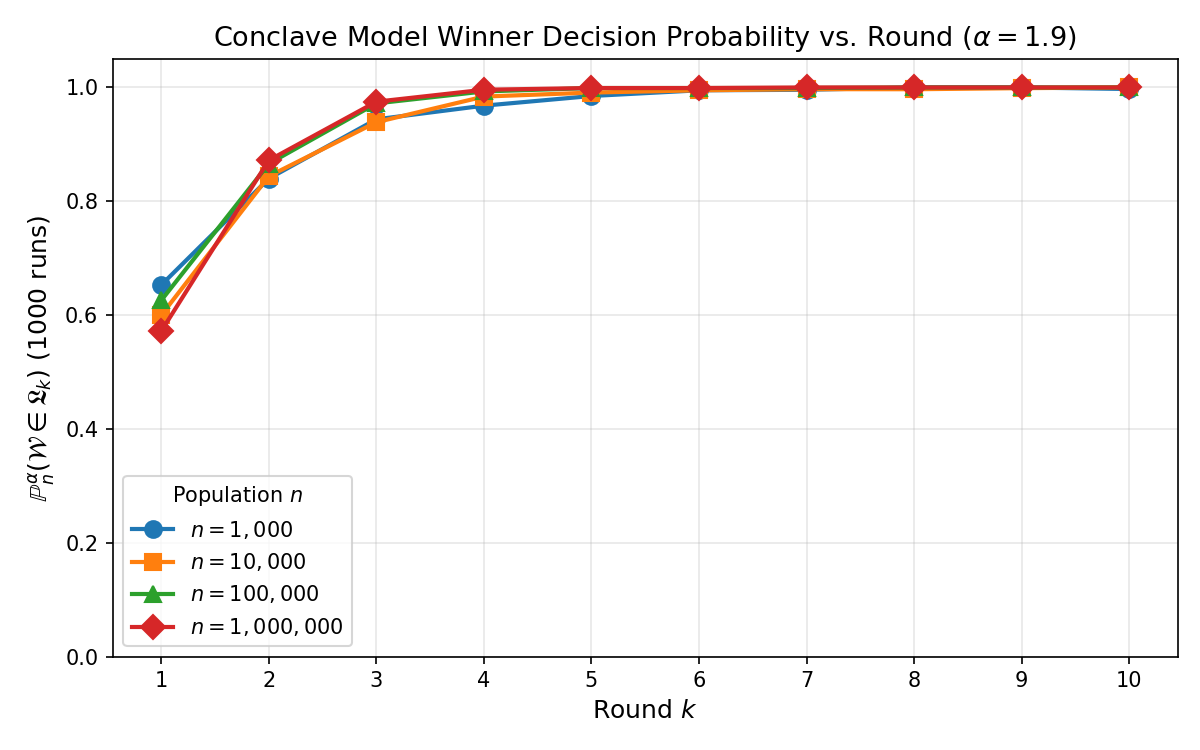}  \includegraphics[width=0.45\textwidth]{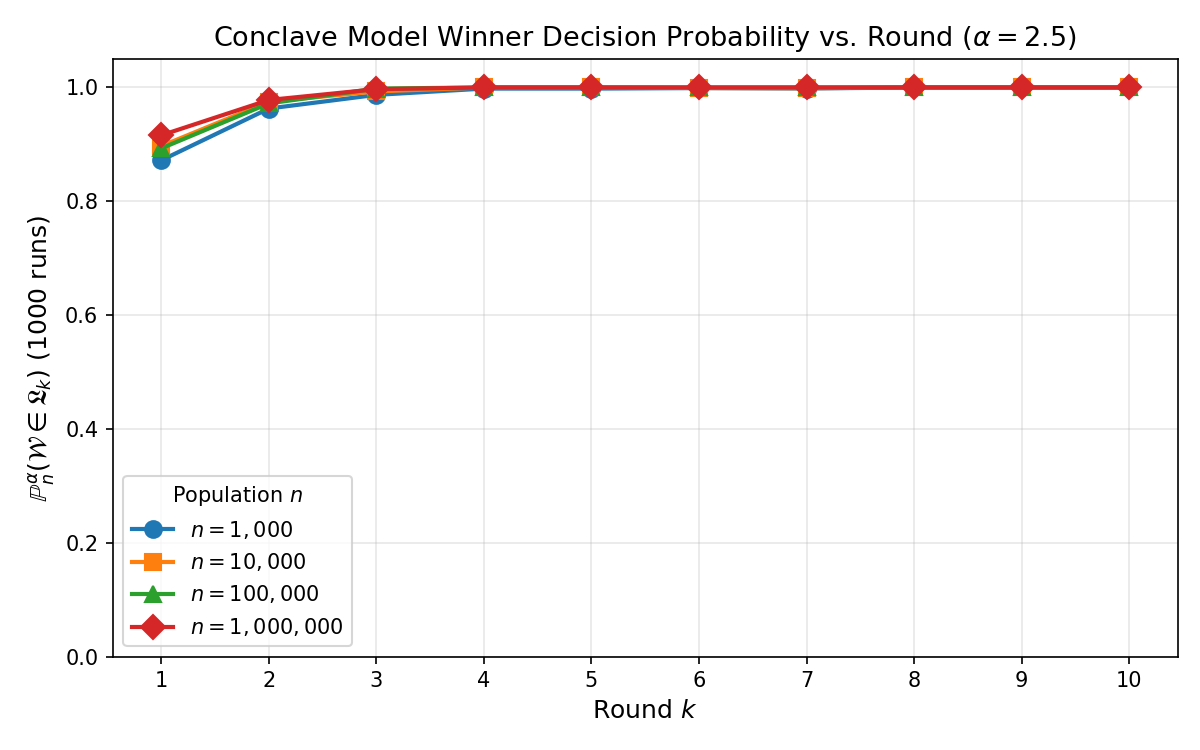} \\
        (b) Plot of $\Pb_{n}^\alpha(\mathcal{W} \in \mathfrak L_k)$ ($\alpha = 1.9$) \quad\quad (c) Plot of $\Pb_{n}^\alpha(\mathcal{W} \in \mathfrak L_k)$  ($\alpha = 2.5$)  \\
    \end{tabular}
    \caption{\centering Simulation results on the conclave model.}
    \label{fig:simulations}
\end{figure}

\subsection{Open problems}

Our work reiterates a number of future research directions as we discuss below.

\underline{The critical case $\alpha=\lambda_k$.} In comparison to Theorem~\ref{thm-rapid-ordering-supercritical}, Theorem~\ref{thm-all-or-nothing-transition} does not include the result when $\alpha = \lambda_k$. Our proof shows that, in this case,
\begin{align*}
    \Pb^{\alpha}_n\left( \mathcal W \in \mathfrak L_{k+1} \right)=1-o(1), \quad \Pb_n^{\alpha}\left( \mathcal W \in \mathfrak L_{k-1} \right)=o(1) \,.
\end{align*}
We conjecture that one also has 
\begin{align*}
    \Pb_n^{\alpha}\left( \mathcal W \in \mathfrak L_{k} \right)=o(1)\,, \quad \Pb_n^{\alpha}\left(| \mathfrak L_{k}| = 1 \right) = 1 - o(1)
\end{align*}
so that the eventual winner is uniquely determined only by the leader at round $k+1$. This conjecture can be proved in the special case $k=1,\alpha=2$. We leave the general critical case $\alpha=2^{1/k}, k \geq 2$ as an intriguing open problem.

\underline{Settling sharp convergence rate.} The convergence rates in Figures (b) and (c) are relatively slow, since the $(\log n)^{-\Omega(1)}$ terms in the remainder converge very slowly when $\alpha$ is close to $\lambda_k$ for some $k \geq 1$. We leave the lower bound on the convergence rate as an interesting open problem.

\subsection{The $\alpha=1$ case}\label{section1.1_Kingman}

We briefly mention the $\alpha=1$ case. Genetic drift, namely the random fluctuation of allele frequencies in a population, is a central topic in evolutionary biology and has been modeled in many different ways. A fundamental model in this direction is the Wright--Fisher model, named after Sewall Wright and Ronald Fisher, which has become one of the most widely studied extensions of Mendelian genetics. In its classical form, the model considers a sequence of non-overlapping generations, each consisting of a fixed number $n/2$ of diploid individuals, or equivalently $n$ alleles (where $n$ is even). If a generation contains $m$ copies of a given allele, then the number of copies of this allele in the next generation has distribution $\operatorname{Bin}(n,\frac{m}{n})$,
that is, the sum of $n$ independent Bernoulli random variables with success probability $m/n$. Note that the evolutions of the counts of different alleles are correlated because the total population size is fixed. Under $\mathbb P_n^1$, the conditional distribution of $\mathbf X_k^{(i)}$ given $\{\mathbf X_{k-1}^{(i)}=m\}$ is precisely of this form. Thus, the conclave model with $\alpha=1$ can be viewed as the Wright-Fisher model started from $n$ distinct alleles.

In \cite{kingman1982genealogy}, Kingman observed that this model admits a particularly simple backward-in-time description. In the forward Wright-Fisher dynamics, each allele in the new generation chooses its parent independently and uniformly from the $n$ alleles in the previous generation. Consequently, when the process is viewed backward in time, each ancestral lineage independently chooses a uniformly random parent lineage in the preceding generation, and any pair of lineages coalesces in one step with probability $1/n$. This backward process is Kingman's $n$-coalescent. A natural question is how many generations are needed for all $n$ lineages to coalesce into a single common ancestor. By the correspondence described above, this coalescence time, denoted by $\mathfrak T_n$, has the same distribution as the absorption time $\mathcal T$ under the law $\mathbb P_n^1$. 

More recently, Kingman's $n$-coalescent has been reformulated in terms of compositions of random functions and studied in a series of works \cite{adler2003coalescing, dalal2002compositions, goh2002iterating, hitczenko2005central, mcsweeney2008expected}. Let $\{f_i\}_{i\in\mathbb N^+}$ be a sequence of i.i.d. random functions, each chosen uniformly from the $n^n$ functions from $[n]$ to $[n]$. For $k\in\mathbb N^+$, define $g_k := f_k\circ f_{k-1}\circ \cdots \circ f_1$,
and let $R_k := |\operatorname{Im}(g_k)|$ be the cardinality of its image. Clearly, $(R_k)_{k\ge 1}$ is non-increasing. Moreover, it is immediate from the backward representation that $R_k$ has the same distribution as the number of ancestral lineages remaining after tracing Kingman's $n$-coalescent back for $k$ generations. In particular, the first time $k$ at which $g_k$ is a constant function, that is, $g_k(i)=g_k(j)$ for all $i,j\in[n]$, coincides in distribution with $\mathfrak T_n$. In fact, the sequence $\{R_k\}_{k\ge 1}$ is a Markov process with explicit transition probabilities, from which the order of $\mathbb{E}[\mathfrak{T}_n]$ follows by a straightforward computation. Precisely, when the range of $g_k$ is sufficiently small, say $R_k=m = o(\sqrt{n})$, the range decreases in the next composition with probability $\Theta(\tbinom{m}{2}\cdot n^{-1})$; moreover, when it does decrease, it typically drops by $1$. In other words, the expected holding time between successive drops is of order $\frac{n}{m^2}$. Thus,
\begin{align*}
    \mathbb{E}[\mathfrak{T}_n] \propto \sum\nolimits_{2\le m \le o(\sqrt{n})} \frac{n}{m^2}\propto n \,.
\end{align*}
A more refined analysis \cite{dalal2002compositions} showed that $\mathbb{E}[\mathfrak{T}_n]\sim 2n$, where $h_1\sim h_2$ means that $\frac{h_1(n)}{h_2(n)}$ converges to $1$ as $n\to \infty$. Furthermore, the asymptotic behavior of the tail probability of $\mathfrak{T}_n$ was established in \cite{goh2002iterating}: for any $a>0$, 
\begin{equation*}
    \lim\limits_{n\to \infty} \mathbb{P}\big( \mathfrak{T}_n \le an \big) = \int_{0}^{a} h(t) \mathrm{d}t,
\end{equation*}  
where 
\(
    h(t)=\sum_{k\ge 2}(-1)^k \binom{k}{2}(2k-1) e^{-\binom{k}{2}t} \,.
\)
Moreover, the central limit theorem for the cardinality of $\{R_{k}\}_{1\le k\le \mathfrak{T}_n}$ was established in \cite{hitczenko2005central}. The generalized version of this iteration model, where $f_i$ is a map $r:[n]\to [n]$ with probability $\prod_{1\le i\le n}p_{r(i)}$ for a predetermined probability vector $(p_1,...,p_n)$ (not necessarily $(\frac{1}{n},...,\frac{1}{n})$), was studied in \cite{adler2003coalescing, mcsweeney2008expected}. 

However, when $\alpha\neq 1$, the backward process of $\{ \mathbf{X}_k \}_{k\in \mathbb{N}}\sim \mathbb{P}_n^{\alpha}$ no longer admits the tractable form as the case $\alpha=1$. For example, suppose that $\mathbf{X}_k^{(i)}=b_i$ for $i\in [n]$; to imitate the backward process, one needs $\sum_{1\le i\le n}b_i^{\alpha}$ ``voters'' in the previous step in a heuristic sense, but this quantity (i.e., the partition function $\mathbf{Z}_k$) is not deterministic. To this end, estimating $\mathcal{T}$ for $\alpha\neq 1$ requires new ideas beyond those used in the $\alpha=1$ case.

\subsection{Notation}{\label{subsec:notation}}

We record some notation conventions in this subsection. We denote by $\mathsf{Ber}(p)$ the Bernoulli distribution with parameter $p$, denote by $\mathsf{Bin}(n,p)$ the binomial distribution with $n$ trials and success probability $p$, and denote by $\mathsf{Mult}(n;p_1,\ldots,p_k)$ the multinomial distribution with $n$ trials and parameters $p_1,\ldots,p_k$ such that $p_1+\ldots+p_k=1$. In addition, we denote by $\mathcal{N}(\mu, \sigma^2)$ the normal distribution with mean $\mu$ and variance $\sigma^2$, and denote by $\mathcal{N}(\mu,\Sigma)$ the multivariate normal distribution with mean $\mu$ and covariance matrix $\Sigma$. Recall Definition~\ref{def-conclave}. Viewing $(\mathbf X_k)_{k=1}^{\infty}$ as a Markov chain, we define $\mathcal X:\Omega_n \times \Omega_n \to \mathbb R$ to be the transition matrix of this Markov chain. For any $\mathbf X\in\Omega_n$, define $\nu_{\mathbf X}:\Omega_n \to [0,1]$ such that 
\begin{equation}{\label{eq-def-mu}}
    \nu_{\mathbf X}(\mathbf X')=\Pb_{n}^{\alpha}( \mathbf X_2=\mathbf X' \mid \mathbf X_1 = \mathbf X ) \,.
\end{equation}

We use standard asymptotic notations: for two sequences $a_n$ and $b_n$ of positive numbers, we write $a_n = O(b_n)$, if $a_n<Cb_n$ for an absolute constant $C$ and for all sufficiently large $n$ (similarly we use the notation $O_h$ if the constant $C$ is not absolute but depends only on $h$); we write $a_n = \Omega(b_n)$, if $b_n = O(a_n)$; we write $a_n = \Theta(b_n)$, if $a_n =O(b_n)$ and $a_n = \Omega(b_n)$; we write $a_n = o(b_n)$ or $b_n = \omega(a_n)$, if $a_n/b_n \to 0$ as $n \to \infty$. Since $\alpha$ is assumed to be fixed given a particular regime we consider, we omit the subscript $\alpha$ for the asymptotic symbols above when there is dependence on $\alpha$. For two real numbers $a$ and $b$, we let $a \vee b = \max \{ a,b \}$ and $a \wedge b = \min \{ a,b \}$. For two sets $A$ and $B$, we define $A\sqcup B$ to be the disjoint union of $A$ and $B$ (so the notation $\sqcup$ only applies when $A, B$ are disjoint). The indicator function of a set $A$ is denoted by $\mathbf{1}_{A}$. In addition, we use both $|A|$ and $\#A$ to denote the cardinality of $A$.

\section{Proof Strategies}{\label{sec:proof-strats}}

In this section, we introduce the strategies for proving Theorems~\ref{thm-rapid-ordering-supercritical}, \ref{thm-slow-ordering-sub-critical}, and \ref{thm-all-or-nothing-transition}. Our proof is based on Poisson approximation for the partition function, extreme value analysis for the multinomial distribution, and a recursive augment on the evolution of the first and second leader.

\subsection{Proof strategies of Theorems~\ref{thm-rapid-ordering-supercritical} and \ref{thm-all-or-nothing-transition}}{\label{subsec:strat-thm-rapid}}

To give a heuristic for the absorption time $\mathcal T$, a useful observation is to simultaneously investigate the evolution of the largest and second-largest vote counts. Without loss of generality, suppose that the first candidate gets the largest number of votes and the second candidate gets the second largest number of votes. Also, suppose that the current vote vector $\mathbf X$ has maximum $\mathbf X^{(1)} =\mathbf M$, second maximum $\mathbf X^{(2)} = (1-\delta)\mathbf M$, and partition function $\mathbf Z$. Then, we evolve $\mathbf X$ into $\mathbf X'$ by our voting dynamic. It can be seen that
\begin{align*}
\Eb[(\mathbf X')^{(1)}] = \frac{n\mathbf M^\alpha}{\mathbf Z}\,, \quad \Eb[(\mathbf X')^{(2)}] = \frac{n\mathbf M^\alpha(1-\delta)^\alpha}{\mathbf Z}\,.
\end{align*}
Therefore, given that $\mathbf M$ and $(1-\delta)\mathbf M$ are sufficiently large, and that $\mathbf Z$ is controlled, we expect $(\mathbf X')^{(1)}$ and $(\mathbf X')^{(2)}$ concentrate near their expected values, and will not be surpassed by other candidates (which will be shown by Proposition~\ref{lem-repeat-used}). Therefore, when $\delta$ is small enough, the new relative gap $\delta'$ between the maximum and the second maximum will approximately satisfy $1-\delta' = (1-\delta)^\alpha$.
Therefore, if we assume that the ``initial'' $\delta$ is $\delta_0$, and the ``resulting'' delta at a time when someone wins $n-\sqrt{n}$ votes (in which case the voting process reaches absorption after constant steps with high probability, as we will show in the proof) is $1-\frac{1}{\sqrt{n}}$, we have
\begin{align*}
\frac{1}{\sqrt{n}} \approx (1-\delta_0)^{\alpha^\mathcal T} \Longleftrightarrow \delta_0 \cdot \alpha^\mathcal T = \Theta(\log n) \Longleftrightarrow \mathcal T = (1+o(1))\cdot\frac{\log(\delta^{-1}) + \log \log n}{\log \alpha}\,.
\end{align*}
Next, we show that after the first few rounds, $\mathbf M$ and $(1-\delta)\mathbf M$ are sufficiently large for our heuristic to apply. The complete proof of the validity of our heuristics is carried out in Section~\ref{sec:determining-T-and-W}.

Later in this section, we call the regime in which the above heuristics apply \textit{the heuristic zone}, see \eqref{eq-assum-lem-2.4} for a rigorous definition. To describe precisely what happens before the voting dynamics enters the heuristic zone (especially determining the order of $\delta_0$, and the time at which the eventual winner is decided), we can split into two cases:

\textbf{Case 1: $\alpha \geq 2$.} By coupling $\mathsf{Mult}(n;\frac{1}{n}, \ldots , \frac{1}{n})$, with $n$ independent Poisson random variables with means $1$, we can show that $\mathbf M_{\mathbf X_1} = O(\frac{\log n}{\log \log n})$ and $\mathbf Z_{\mathbf X_1} = O(n\log \log n)$ with high probability (see Lemma~\ref{lem-first-round-distribution-features}). Without loss of generality assume $\mathbf X_1^{(1)} = \mathbf M_{\mathbf X_1}$. Then, for $i$ such that $\mathbf X_1^{(i)} = \mathbf M_{\mathbf X_1} - 1$ we have
\begin{align*}
\mathbf X_2^{(1)} - \mathbf X_2^{(i)} &= \frac{n}{\mathbf Z_{\mathbf X_1}} \cdot (\mathbf M_{\mathbf X_1}^\alpha - (\mathbf M_{\mathbf X_1} - 1)^\alpha) + O(1) \cdot \sqrt{\tfrac{n\mathbf M_{\mathbf X_1}^\alpha}{\mathbf Z_{\mathbf X_1}}} \\
&= \Theta((\log n)^{\alpha - 1+o(1)}) + O((\log n)^{\tfrac{\alpha}{2} + o(1)})
\end{align*}
with high probability. Since $\alpha - 1 \geq \frac{\alpha}{2}$, the loss of expected value for $\mathbf X_2^{(i)}$ dominates the fluctuation (and strict domination holds when $\alpha > 2$), and therefore determines the resulting relative gap $\delta_*$ between $\mathbf X_2^{(i)}$ and $\mathbf X_2^{(1)}$. Since $\mathbf M_{\mathbf X_2} = \Theta(\frac{n\mathbf M_{\mathbf X_1}^\alpha}{\mathbf Z_{\mathbf X_1}}) = (\log n)^{\alpha+o(1)}$, we have $\delta_* = (\log n)^{\alpha - 1 - \alpha + o(1)} = (\log n)^{-1+o(1)}$\,. Therefore, we have the following subcases:
\begin{itemize}
\item When $E_{\mathsf{unique}}$ holds, no other elements can contribute to the maximum and second maximum (under proper control of the sizes of level sets below $\mathbf M_{\mathbf X_1}$), which gives 
\begin{align*}
\delta_0 = \delta_* = (\log n)^{-1 + o(1)}\Longrightarrow \mathcal T = (1+o(1))\cdot\frac{2\log \log n}{\log \alpha}\,.
\end{align*}

\item When $E_{\mathsf{unique}}^c$ holds, all elements equal to $\mathbf M_{\mathbf X_1}$ will evolve into $\frac{n\mathbf M_{\mathbf X_1}^\alpha}{\mathbf Z_{\mathbf X_1}}+O(\sqrt{ n\mathbf M_{\mathbf X_1}^\alpha/\mathbf Z_{\mathbf X_1} })$ with relative gaps $\delta_{**} = \frac{(\log n)^{o(1)}}{\sqrt{ n\mathbf M_{\mathbf X_1}^\alpha/\mathbf Z_{\mathbf X_1} }} = (\log n)^{-\alpha / 2 + o(1)} \leq \delta_*$. Therefore, the maximum and the second maximum in the second round are contributed solely by elements equal to $\mathbf M_{\mathbf X_1}$ in the first round with high probability (under proper control of the sizes of level sets below $\mathbf M_{\mathbf X_1}$), which gives
\begin{align*}
\delta_0 = \delta_{**} = (\log n)^{-\tfrac{\alpha}{2} + o(1)}\Longrightarrow \mathcal T = (1+o(1))\cdot\frac{(\tfrac{\alpha}{2} + 1)\log \log n}{\log \alpha}\,.
\end{align*}
\end{itemize}

Combining the above subcases, we have proposed intuitive arguments supporting Theorem~\ref{thm-rapid-ordering-supercritical} when $\alpha \geq 2$. Also, the above analysis implies that the winner is determined in the first round, which shows Theorem~\ref{thm-all-or-nothing-transition} when $\alpha > 2$ (note that the domination is not strict when $\alpha = 2$). 

To actually apply the above ideas, however, we need to solve technical barriers such as giving a sufficient growth rate of $\#\{i : \mathbf X_1^{(i)} = \mathbf M_{\mathbf X_1} - j\}$ with respect to $j \geq 1$, dealing with the conditioning on $E_{\mathsf{unique}}$ or $E^c_{\mathsf{unique}}$, or handling the critical case $\alpha = 2$ where $\alpha - 1 = \frac{\alpha}{2}$. In our actual proof, we would first derive Poisson coupling techniques in Lemmas~\ref{lem-Poisson-coupling} and \ref{lem-Poisson-coupling-further}, and then apply the Poisson coupling techniques to prove Lemma~\ref{lem-first-round-distribution-features}, which describes the features of vote distribution after the first round and deals with conditioning. Next, we use these features to control the second round vote distribution conditioned on $E_{\mathsf{unique}}$ or $E^c_{\mathsf{unique}}$ in Lemma~\ref{lem-alpha>2-second-round}. Also, to deal with the $\alpha = 2$ case, we prove Lemma~\ref{lem-Poisson-maximum-1st-minus-2nd}, which deals with the gap between the maximum and the second maximum among inhomogeneous Poisson variables by a mapping method. Finally, starting from the second round, we use union bounds (see Lemma~\ref{lem-prelim-gap-est-time-2-to-200}) to prove that the voting dynamics enters our heuristic zone in $10$ steps in Proposition~\ref{cor-core-lemma-alpha>=2}, which shows Theorems~\ref{thm-rapid-ordering-supercritical} and \ref{thm-all-or-nothing-transition} when $\alpha \geq 2$. The full proof is carried out in Section~\ref{subsec:case-k=1}.

\textbf{Case 2: $1 <\alpha < 2$.} This case is quite different from the $\alpha \geq 2$ case, due to the fact that $(\log n)^{\alpha - 1+o(1)}$ does not dominate $(\log n)^{\frac{\alpha}{2}+o(1)}$, invalidating our previous analysis. Therefore, we need to carefully investigate the contribution to the maximum from each level set. However, we can still show that $\mathbf M_{\mathbf X_t} = (\log n)^{\alpha^{t-1}+o(1)}$ and $\mathbf Z_{\mathbf X_t} = \Theta(n)$ with high probability for constant $t$ by a more delicate analysis in Lemma~\ref{lem-good-event-E-1}. If we define $\mathcal A_{j,\beta} = \{i : \mathbf X_j^{(i)} = \mathbf M_{\mathbf X_j} - (\log n)^{\beta + o(1)}\}$ for $j \in \mathbb Z_+$ (note that this definition is rough and $(\log n)^{\beta + o(1)}$ refers to a positive integer that matches this magnitude), then by Lemma~\ref{lem-first-round-distribution-features} which describes the first round voting distribution, we have $\log |\mathcal A_{1,\beta}| = (\log n)^{\beta + o(1)}$.
Therefore, if we approximate Poisson variables by Gaussian variables we have
\begin{align}
\max_{i \in \mathcal A_{1,\beta}}\mathbf X_2^{(i)} &\approx \frac{n(\mathbf M_{\mathbf X_1} - (\log n)^{\beta + o(1)})^\alpha}{\mathbf Z_{\mathbf X_1}} + \sqrt{2\log |\mathcal A_{1,\beta}| \cdot \frac{n(\mathbf M_{\mathbf X_1} - (\log n)^{\beta + o(1)})^\alpha}{\mathbf Z_{\mathbf X_1}} }\nonumber \\
&= \frac{n\mathbf M_{\mathbf X_1}^\alpha}{\mathbf Z_{\mathbf X_1}} - (\log n)^{\beta + \alpha - 1 + o(1)} + (\log n)^{\tfrac{\beta + \alpha}{2}+o(1)}\,,\label{rough-analysis-max-second-round}
\end{align}
therefore, when $\alpha + \beta < 2$, we have $\tfrac{\beta + \alpha}{2} > \beta + \alpha - 1$, which means the gain of $\max_{i \in \mathcal A_{1,\beta}}\mathbf X_2^{(i)}$ due to enumeration is actually larger than the loss of $\max_{i \in \mathcal A_{1,\beta}}\mathbf X_2^{(i)}$ due to expected value; When $\alpha + \beta > 2$, the converse holds and $\max_{i \in \mathcal A_{1,\beta}}\mathbf X_2^{(i)} < \frac{n\mathbf M_{\mathbf X_1}^\alpha}{\mathbf Z_{\mathbf X_1}}$ with high probability. This suggests that the maximum of $\max_{i \in \mathcal A_{1,\beta}}\mathbf X_2^{(i)}$ is attained when $\beta = 2 - \alpha$. Therefore, with high probability the winner in the second round comes from $\mathcal A_{1,2-\alpha}$.

Based on this fact, we continue our analysis. To analyze what happens in the third step, we first need an estimate on $\log |\mathcal A_{2,\beta}|$. Note that for any $\beta' > 0$, the layer $\mathcal A_{1,\beta'}$ has a disadvantage of $(\log n)^{\beta' + \alpha - 1}$ on the expected value in the second round. Therefore, intuitively we have $|\mathcal A_{2,\beta}| \approx |\mathcal A_{1,\beta + 1 - \alpha}|$ when $\beta \geq \alpha - 1$ and $|\mathcal A_{2,\beta}| = (\log n)^{o(1)}$ when $0 < \beta < \alpha - 1$. This gives
\begin{align*}
\delta \mbox{ at second step }&= \frac{(\log n)^{\alpha - 1 + o(1)}}{\mathbf M_{\mathbf X_2}} = (\log n)^{-1 + o(1)}\,, \\
\log |\mathcal A_{2,\beta}| &= (\log n)^{0\vee(\beta + 1 - \alpha) + o(1)}\mbox{ for }\beta > 0\,,
\end{align*}
Applying an analysis similar to \eqref{rough-analysis-max-second-round}, we have
\begin{align*}
\max_{i \in \mathcal A_{2,\beta}}\mathbf X_3^{(i)} &\approx \frac{n(\mathbf M_{\mathbf X_2} - (\log n)^{\beta + o(1)})^\alpha}{\mathbf Z_{\mathbf X_2}} + \sqrt{2\log |\mathcal A_{2,\beta}| \cdot \frac{n(\mathbf M_{\mathbf X_2} - (\log n)^{\beta + o(1)})^\alpha}{\mathbf Z_{\mathbf X_2}} } \\
&= \frac{n\mathbf M_{\mathbf X_2}^\alpha}{\mathbf Z_{\mathbf X_2}} - (\log n)^{\beta + \alpha^2 - \alpha + o(1)} + (\log n)^{\frac{\alpha^2 + 0\vee (\beta+1-\alpha)}{2}}\,,
\end{align*}
and by comparing the loss and gain terms we get that the critical value $\beta_* = 1 + \alpha - \alpha^2$. When $\alpha^2 < 2$, we have $\beta_* > \alpha - 1$, in which case the winner in the third round comes from $\mathcal A_{2,\beta^*}$. However, when $\alpha^2 > 2$, we actually have $\beta_* < \alpha - 1$. Therefore, $\max_{i \in \mathcal A_{2,\beta}}\mathbf X_3^{(i)}$ is decreasing in $\beta$ for $\beta \geq \alpha - 1$, and has expected values lower than $\max_{i \in \mathcal A_{2,0}}\mathbf X_3^{(i)}$. Thus, when $\sqrt{2} < \alpha < 2$ the second round winner wins the third round, while when $\alpha < \sqrt{2}$ this is not the case. Also, we can similarly update our $\delta$ and sizes of level sets by
\begin{align*}
\delta \mbox{ at third step }&= \frac{(\log n)^{\alpha^2 - 1 + o(1)}}{\mathbf M_{\mathbf X_3}} = (\log n)^{-1 + o(1)}\,, \\
\log |\mathcal A_{3,\beta}| &= (\log n)^{0\vee(\beta + \alpha - \alpha^2 - (\alpha - 1)) + o(1)}\mbox{ for }\beta > 0\,.
\end{align*}
Therefore, if we continue our analysis inductively, we will find that the eventual winner is determined in round $k$ when $\alpha \in (2^{\frac{1}{k}}, 2^{\frac{1}{k-1}}) = (\lambda_k , \lambda_{k-1})$ for $k \geq 2$, which matches our Theorem~\ref{thm-all-or-nothing-transition} when $1 < \alpha < 2$. Also, the initial $\delta_0$ is always equal to $(\log n)^{-1 + o(1)}$ before the voting dynamics enters the heuristic zone. Thus
\begin{align*}
\mathcal T = (1+o(1))\cdot \frac{\log(\delta_0^{-1}) +\log \log n}{\log \alpha} = (1+o(1))\cdot \frac{2 \log \log n}{\log \alpha}\,, \quad 1 < \alpha < 2\,,
\end{align*}
which matches our Theorem~\ref{thm-rapid-ordering-supercritical}.

In our actual proof, we need to carry out a similar analysis that simultaneously tracks the level set sizes $\log |\mathcal A_{t,\beta}|$, the relative gap $\delta$, the orders of $\mathbf M_{\mathbf X_t}$ and $\mathbf Z_{\mathbf X_t}$, and the critical parameter $\beta_*$ that maximizes $\max_{i \in \mathcal A_{t,\beta}}\mathbf X_{t+1}^{(i)}$. This analysis will be carried out by Lemma~\ref{lem-inductive-ecal-h-t}, which is the most technically difficult lemma in our paper. To rigorously prove Lemma~\ref{lem-inductive-ecal-h-t} by our intuitive analysis above, we need Lemma~\ref{lem-maximum-core-level-properties}, which is the core lemma that characterizes the empirical distribution of i.i.d. Poisson variables and describes the sizes of each level set. This lemma will be proved by a precise mass estimate of Poisson variables near their expected values (see Lemma~\ref{lem-Poisson-dedicate}). Also, to deal with critical values of $\alpha$ (i.e. $\alpha = \lambda_k$ for some $k \geq 2$) in Theorem~\ref{thm-rapid-ordering-supercritical}, we need Lemma~\ref{lem-Poisson-maximum-1st-minus-2nd} again that characterizes the relative gap between the maximum and the second maximum for heterogeneous Poisson random variables. This lemma will also be useful in analyzing the relative gaps for non-critical values of $\alpha$. The full proof is carried out in Section~\ref{subsec:case-k-geq-2}.

\subsection{Proof strategies of Theorems~\ref{thm-slow-ordering-sub-critical}}{\label{subsec:strat-thm-slow}}

The proof strategy for Theorem~\ref{thm-slow-ordering-sub-critical} is relatively direct. To show an exponential absorption time, it suffices to observe that when a candidate has at most $\frac{3}{4}n$ votes, the expected number of votes it will receive in the next round is $(\frac{3}{4} - \Omega(1))n$ since $\alpha < 1$. Therefore, there is only exponentially small probability that it gets more than $\frac{3}{4}n$ votes in the next round, which completes our proof. The full proof is carried out in Section~\ref{sec:proof-subcritical-slow}.

\section{Preliminary Lemmas}{\label{sec:prelims}}

In this section, we will introduce preliminary lemmas that are ingredients for proving our main theorems.

\subsection{Delicate properties of Poisson distributions}{\label{subsec:estim-Poissons}}

In this subsection, we give some delicate properties of Poisson distributions. The next lemma gives precise estimates on the probability of a Poisson variable on certain points, along with the ratio of probability on different points. The proof follows from the standard Stirling's formula, so we will omit further details for simplicity.
\begin{lemma}{\label{lem-Poisson-dedicate}}
    Suppose $\lambda=\omega(1)$, $X \sim \mathsf{Pois}(\lambda)$ and $|t|=o(\sqrt\lambda)$. 
    \begin{enumerate}
        \item[(1)] If $\lambda + t\sqrt{\lambda} \in \mathbb Z$, then
        \begin{align*}
             \tfrac{1}{10\sqrt{\lambda}} \exp\Big(- \big( 1+O\big( \tfrac{|t|+1}{\sqrt \lambda} \big) \big) \tfrac{t^2}{2}\Big) \leq \Pb\left( X=\lambda+t\sqrt{\lambda} \right) \leq \tfrac{10}{\sqrt{\lambda}} \exp\Big(- \big( 1+O\big( \tfrac{|t|+1}{\sqrt \lambda} \big) \big) \tfrac{t^2}{2} \Big) \,.
        \end{align*} 
        \item[(2)] If $\omega(1)\leq s\leq t$ and $\lambda + s\sqrt{\lambda},\lambda + t\sqrt{\lambda} \in \mathbb Z$, then
        \begin{align*}
            \exp \left( \left(1-\tfrac{t}{\sqrt\lambda} \right) \tfrac{t^2-s^2}{2}\right) \leq \frac{\Pb(X=\lambda + s \sqrt{\lambda})}{\Pb(X=\lambda + t \sqrt{\lambda})} \leq \exp \left( \left(1+\tfrac{t}{\sqrt\lambda} \right) \tfrac{t^2-s^2}{2} \right) \,.
        \end{align*}
    \end{enumerate}
\end{lemma}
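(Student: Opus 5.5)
We use Stirling's formula for part (1) and the exact ratio of consecutive Poisson masses for part (2).

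For part (1), set $m=\lambda+t\sqrt\lambda$; since $|t|=o(\sqrt\lambda)$ we have $m=\lambda(1+o(1))\to\infty$. Write
\[
\Pb(X=m)=e^{-\lambda}\lambda^m/m!,
\]
and apply Stirling in the form $m!=\sqrt{2\pi m}\,(m/e)^m e^{\theta_m}$ with $0<\theta_m<1/(12m)$. This gives
\[
\Pb(X=m)=\frac{e^{-\theta_m}}{\sqrt{2\pi m}}\exp\bigl(-\lambda h(t/\sqrt\lambda)\bigr),\qquad h(x)=(1+x)\log(1+x)-x .
\]
The prefactor is $\frac{1+o(1)}{\sqrt{2\pi\lambda}}$, which lies in $[\frac{1}{10\sqrt\lambda},\frac{10}{\sqrt\lambda}]$ for large $\lambda$.

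For the exponent, Taylor-expand $h(x)=\frac{x^2}{2}-\frac{x^3}{6}+O(x^4)$, so $h(x)=\frac{x^2}{2}(1+O(|x|))$ for $|x|\le 1/2$. Hence
\[
\lambda h(t/\sqrt\lambda)=\frac{t^2}{2}\Bigl(1+O\bigl(\tfrac{|t|}{\sqrt\lambda}\bigr)\Bigr).
\]
The extra $+1$ in the error term $O(\frac{|t|+1}{\sqrt\lambda})$ only makes the claimed bound weaker, so it is harmless. The one bookkeeping point is that the relative error $O(|t|/\sqrt\lambda)$ multiplies $t^2/2$ inside the exponential. This is exactly the form stated, so no absolute error needs to be absorbed into the prefactor.

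For part (2), write $a=\lambda+s\sqrt\lambda$ and $b=\lambda+t\sqrt\lambda$ with $a\le b$. Then
\[
\frac{\Pb(X=a)}{\Pb(X=b)}=\prod_{j=a+1}^{b}\frac{j}{\lambda},
\]
so its logarithm is $\sum_{j=a+1}^{b}\log(j/\lambda)$. Each summand is $\log(1+u_j)$ with $u_j=(j-\lambda)/\lambda\in(s/\sqrt\lambda,\,t/\sqrt\lambda]$. We use $u-\frac{u^2}{2}\le\log(1+u)\le u$ and $u_j\le t/\sqrt\lambda$, which give
\[
u_j\Bigl(1-\tfrac{t}{2\sqrt\lambda}\Bigr)\le\log(1+u_j)\le u_j .
\]
Summing, $\sum_j u_j=\frac{1}{\lambda}\sum_{j=a+1}^{b}(j-\lambda)$, which is an arithmetic sum:
\[
\sum_j u_j=\frac{(b-a)(a+b+1-2\lambda)}{2\lambda}=\frac{t^2-s^2}{2}+\frac{t-s}{2\sqrt\lambda}.
\]

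The main (mild) obstacle is absorbing the discretization term $\frac{t-s}{2\sqrt\lambda}$. Because $s=\omega(1)$ and $t\ge s$, we have $t+s=\omega(1)$, so
\[
\frac{t-s}{2\sqrt\lambda}=\frac{t^2-s^2}{2}\cdot\frac{1}{(t+s)\sqrt\lambda}=o\Bigl(\frac{t^2-s^2}{2}\cdot\frac{t}{\sqrt\lambda}\Bigr).
\]
Thus for the upper bound the total is at most $(1+\frac{t}{\sqrt\lambda})\frac{t^2-s^2}{2}$ once $(t+s)t\ge 1$, which holds eventually. For the lower bound we get at least $(1-\frac{t}{2\sqrt\lambda})\frac{t^2-s^2}{2}\ge(1-\frac{t}{\sqrt\lambda})\frac{t^2-s^2}{2}$. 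This is where the hypothesis $s=\omega(1)$ is used. Exponentiating gives both inequalities.
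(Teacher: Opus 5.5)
Your proof is correct. Part (1) is precisely the Stirling computation the paper invokes (the paper omits all details), and your reduction of the exponent to $\lambda h(t/\sqrt\lambda)$ with $h(x)=(1+x)\log(1+x)-x=\frac{x^2}{2}(1+O(|x|))$ is the right bookkeeping.

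For part (2) you take a slightly different, more elementary route than dividing two Stirling approximations. You use the exact identity $\Pb(X=a)/\Pb(X=b)=\prod_{j=a+1}^{b} j/\lambda$. This leaves no Stirling remainder and no $\sqrt{b/a}$ prefactor to absorb. It also isolates the single discretization term $\frac{t-s}{2\sqrt\lambda}$, which is exactly where $s=\omega(1)$ enters, through $t(t+s)\ge 1$ in the upper bound.

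One small clarification on the placement of your remark about $s=\omega(1)$: the lower bound only needs two things. First, $s\ge 0$, so that $u_j>0$, which is what licenses $\log(1+u)\ge u-u^2/2$ and $1-u_j/2\ge 1-\frac{t}{2\sqrt\lambda}$. Second, $t=o(\sqrt\lambda)$, to keep $1-\frac{t}{2\sqrt\lambda}$ positive when you discard the nonnegative discretization term.
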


As a corollary, we get a concentration inequality for the maximal value of i.i.d. Poisson variables:
\begin{corollary}{\label{cor-est-maximum-Poisson}}
    Suppose $m=\omega(1) \in \mathbb Z_+$, $m' \in [m]$ and we have $m$ independent random variables $X_1,\ldots,X_{m} \sim \mathsf{Pois(\lambda)}$ where $\lambda = \omega(1)$. Then, for $h > 3$ such that $\frac{\lambda}{h\log \lambda}>\log m$ and $h < \sqrt{\log \lambda}$ we have
    \begin{equation}{\label{eq-rough-Poisson-maximum}}
        \begin{aligned}
            &\Pb\left( \max_{1 \leq i \leq m} \{ X_i \} \leq \lambda + \sqrt{\lambda\log m} \right) \leq \exp(-m^{1/3}) \,; \\
            &\Pb\left( \max_{1 \leq i \leq m'} \{ X_i \} \geq \lambda + \sqrt{\lambda h\log m} \right) \leq \frac{1}{m^{(h-3)/2}} \,.
        \end{aligned}
    \end{equation}
\end{corollary}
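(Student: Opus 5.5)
Both bounds follow from independence together with a single-variable tail estimate read off from Lemma~\ref{lem-Poisson-dedicate}. Write $t_0=\sqrt{\log m}$ for the first bound and $t_1=\sqrt{h\log m}$ for the second. The hypothesis $\frac{\lambda}{h\log\lambda}>\log m$ gives $h\log m<\lambda/\log\lambda$, so $t_1=o(\sqrt\lambda)$, and hence also $t_0=o(\sqrt\lambda)$. Therefore part (1) of Lemma~\ref{lem-Poisson-dedicate} applies at every integer point $\lambda+t\sqrt\lambda$ with $|t|\le 2t_1$, and the correction factor is $1+O((|t|+1)/\sqrt\lambda)=1+o(1)$. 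It is also of size $O(t^2\cdot t/\sqrt\lambda)$ in the exponent, which I will need to be $o(1)$ or at least small compared with $\log m$. I will check this from $t^3/\sqrt\lambda\le (h\log m)^{3/2}/\sqrt\lambda$ together with $h<\sqrt{\log\lambda}$. This is the delicate bookkeeping point.

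\textbf{Lower-tail bound (first inequality).} By independence, $\Pb(\max X_i\le\lambda+\sqrt{\lambda\log m})=(1-p)^m\le e^{-pm}$, where $p=\Pb(X_1>\lambda+t_0\sqrt\lambda)$. It therefore suffices to show $p\ge m^{-2/3}$. Lower bound $p$ by summing the point masses at the integers in the window $[\lambda+t_0\sqrt\lambda,\ \lambda+(t_0+1/t_0)\sqrt\lambda]$. There are about $\sqrt\lambda/t_0$ such integers, and by part (1) each has mass at least $\frac{1}{10\sqrt\lambda}\exp(-(1+o(1))(t_0+1/t_0)^2/2)$. Summing gives
\begin{align*}
p\ge \tfrac{1}{20t_0}\exp\big(-(1+o(1))\tfrac{\log m}{2}-O(1)\big)=m^{-1/2-o(1)},
\end{align*}
which exceeds $m^{-2/3}$ because $m=\omega(1)$ and $t_0=\sqrt{\log m}$ contributes only an $m^{o(1)}$ factor. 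Thus $pm\ge m^{1/3}$, which gives the first bound.

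\textbf{Upper-tail bound (second inequality).} Use a union bound: $\Pb(\max_{i\le m'}X_i\ge\lambda+t_1\sqrt\lambda)\le m\,\Pb(X_1\ge \lambda+t_1\sqrt\lambda)$. To bound the single tail, note that the successive ratios $\Pb(X=j+1)/\Pb(X=j)=\lambda/(j+1)\le(1+t_1/\sqrt\lambda)^{-1}$ for $j\ge\lambda+t_1\sqrt\lambda$. So the tail is at most a geometric series, bounded by $(1+\sqrt\lambda/t_1)$ times the point mass at the first integer $\ge\lambda+t_1\sqrt\lambda$. By part (1), that point mass is at most $\frac{10}{\sqrt\lambda}\exp(-(1-o(1))h\log m/2)$. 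Combining,
\begin{align*}
\Pb(X_1\ge\lambda+t_1\sqrt\lambda)\le \tfrac{20}{t_1}\,m^{-(1-o(1))h/2}\le m^{-(h-1)/2}
\end{align*}
for large $m$, provided the $o(1)$ error in the exponent times $h\log m/2$ is at most $\log m/2$ (absorbing the constant). That error is $O(t_1^3/\sqrt\lambda)$, so the proviso requires $(h\log m)^{3/2}/\sqrt\lambda\lesssim\log m$, i.e. $h^{3/2}\sqrt{\log m}\lesssim\sqrt\lambda$, which follows from $\log m<\lambda/(h\log\lambda)$ and $h<\sqrt{\log\lambda}$. Multiplying by $m$ gives $m^{-(h-3)/2}$, with an extra factor of $m$ to spare.

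The main obstacle is exactly the control of the $O((|t|+1)/\sqrt\lambda)$ error in Lemma~\ref{lem-Poisson-dedicate} uniformly in the relevant range, together with the unknown integrality offset of $\lambda+t\sqrt\lambda$. For the offset I shift $t$ by at most $1/\sqrt\lambda$, which changes $t^2/2$ by $O(t/\sqrt\lambda)=o(1)$, so it is harmless. The generous slack, namely $-2/3$ versus $-1/2$ in the first bound and $h-3$ versus $h-1$ in the second, is what absorbs all these errors.
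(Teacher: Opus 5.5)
Your lower-tail argument is fine. Summing over a window of roughly $\sqrt{\lambda}/t_0$ integers cancels the $1/\sqrt{\lambda}$ prefactor of a single point mass; this is more robust than the paper's single-point-mass bound, which leaves that prefactor uncancelled. There the Stirling error $O\big((\log m)^{3/2}/\sqrt{\lambda}\big)$ really is $o(\log m)$, since $\log m/\lambda<1/(h\log\lambda)\to 0$.

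The gap is in the upper tail, exactly at the point you flagged as delicate. Let $C$ be the unspecified constant hidden in the $O\big(\frac{|t|+1}{\sqrt{\lambda}}\big)$ of Lemma~\ref{lem-Poisson-dedicate}. Your argument gives
\[
\Pb\big(X_1\ge\lambda+t_1\sqrt{\lambda}\big)\le\frac{20}{t_1}\exp\Big(-\frac{h\log m}{2}+C\,\frac{(t_1+1)\,t_1^2}{2\sqrt{\lambda}}\Big).
\]
To reach $m^{-(h-1)/2}$, the error term must be at most about $\frac{\log m}{2}$ with that exact constant, not merely up to a multiplicative constant. The hypotheses only give
\[
\frac{t_1^3}{\sqrt{\lambda}}=h\log m\,\sqrt{\frac{h\log m}{\lambda}}<\frac{h}{\sqrt{\log\lambda}}\,\log m ,
\]
and $h/\sqrt{\log\lambda}$ is allowed to be arbitrarily close to $1$. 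So your ``$\lesssim$'' step closes only if $C\le 1$, or if $h$ stays bounded (as in the paper's later applications with $h=9$). For $h\asymp\sqrt{\log\lambda}$ it does not follow from the lemma as stated.

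Relatedly, there is no ``extra factor of $m$ to spare''. Indeed $m\cdot m^{-(h-1)/2}=m^{-(h-3)/2}$ exactly, and the total slack over the heuristic $m\cdot m^{-h/2}$ is only $m^{1/2}$, i.e.\ $\frac12\log m$ in the exponent.

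The conditions $h>3$, $h<\sqrt{\log\lambda}$, $\log m<\lambda/(h\log\lambda)$ are calibrated to a tail bound with explicit constants, which is what the paper uses. The Poisson Chernoff/Bernstein bound $\Pb(X\ge\lambda+x)\le\exp\big(-\frac{x^2}{2(\lambda+x)}\big)$ with $x=\sqrt{\lambda h\log m}$ gives
\[
\Pb\big(X_1\ge\lambda+\sqrt{\lambda h\log m}\big)\le\exp\Big(-\frac{h\log m}{2\big(1+\sqrt{h\log m/\lambda}\big)}\Big)\le\exp\Big(-\frac{h\log m}{2\big(1+\frac{1}{h-1}\big)}\Big)=m^{-(h-1)/2},
\]
using $\sqrt{h\log m/\lambda}<1/\sqrt{\log\lambda}<1/(h-1)$. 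The union bound over $m'\le m$ indices then finishes the proof.

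If you prefer to keep your geometric-series structure, replace Lemma~\ref{lem-Poisson-dedicate} by the explicit Stirling bound
\[
\Pb\big(X=\lambda(1+u)\big)\le(2\pi\lambda)^{-1/2}\exp(-\lambda\phi(u)),\qquad \phi(u)=(1+u)\log(1+u)-u\ge\frac{u^2}{2(1+u/3)}\quad(u\ge 0).
\]
This makes your effective $C$ equal to $\frac13$. The exponent error is then at most $\frac{t_1^3}{6\sqrt{\lambda}}<\frac{h\log m}{6\sqrt{\log\lambda}}<\frac{\log m}{6}$. The prefactor $(2\pi\lambda)^{-1/2}(1+\sqrt{\lambda}/t_1)$ is below $1$, so your bound $m^{-(h-1)/2}$ goes through.
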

\begin{proof}
    It suffices to prove the claims for the case $m' = m$. Using Lemma~\ref{lem-Poisson-dedicate}, we have
    \begin{align*}
        \Pb\left( X_i > \lambda + \sqrt{\lambda \log m} \right) \geq  \Pb\left( X_i = \lfloor\lambda + \sqrt{\lambda \log m}\rfloor + 1 \right) > \frac{1}{m^{0.51}}\,.
    \end{align*}
    Also, by the standard Chernoff bound (see, e.g., \cite[Theorem~2.9 and Equation~(2.10)]{BLM13}) we have
    \begin{align*}
     &\Pb\left( X_i \geq \lambda + \sqrt{\lambda h\log m} \right) \leq \exp\left(-\frac{h\log m}{2(1+\sqrt{\tfrac{h\log m}{\lambda}})}\right)\\
     \leq\ & \exp\left(-\frac{h\log m}{2(1+\sqrt{\tfrac{1}{\log \lambda}})}\right)\leq \exp\left(-\frac{h\log m}{2(1+\tfrac{1}{h-1})}\right) = \frac{1}{m^{(h-1)/2}}\,.
    \end{align*}
    Therefore, we have
    \begin{align*}
        &\Pb\left( \max_{1 \leq i \leq m} \{ X_i \} \leq \lambda + \sqrt{\lambda\log m} \right) \leq \left( 1-\frac{1}{m^{0.51}} \right)^m \leq \exp(-m^{1/3}) \,, \\
        &\Pb\left( \max_{1 \leq i \leq m} \{ X_i \} \geq \lambda + \sqrt{\lambda h\log m} \right) \leq \frac{m}{m^{(h-1)/2}} = \frac{1}{m^{(h-3)/2}} \,.  
    \end{align*}
\end{proof}

The next lemma is a core lemma for estimating the number of i.i.d. Poisson variables on different layers. The proof is based on the concentration inequalities and the probability estimates on Poisson variables that we have derived previously.

\begin{lemma}{\label{lem-maximum-core-level-properties}}
    Suppose $\lambda = \omega(1)$ and $X_1 \,, \ldots \,, X_m \sim \mathsf{Pois(\lambda)}$ are $i.i.d.$ random variables, where $\exp\left(2\exp(\sqrt{\log \lambda})\right) < m < \exp(\lambda^{1-\epsilon})$ for some constant $\epsilon \in (0,1)$. Define
    \begin{align}{\label{eq-def-S-lambda-m}}
        S(\lambda,m) = \frac{ \sqrt{\lambda} \exp(\sqrt{\log\lambda}) }{ \sqrt{\log m} } \,,
    \end{align}
    then we have the following facts:
    \begin{enumerate}
        \item[(1)] There exists $h_*(\lambda , m) \in [\lambda + \sqrt{1.9\lambda\log m}, \lambda + \sqrt{2.1\lambda\log m}]$ such that
        \begin{align}{\label{eq-typical-1st-minus-2nd-Poisson-1}}
            \Pb\left( |\max_{1\leq j \leq m}X_j - h_*(\lambda,m)| > S(\lambda,m) \right) \leq 2\exp\left(-\frac{\exp(\sqrt{\log \lambda})}{2}\right) \,.
        \end{align}
        \item[(2)] For all $v \in \mathbb Z_{\geq 0}$, we have
        \begin{align}
            \Pb\left( \#\{i:X_i=\max_{1\leq j \leq m}X_j - v\} \geq \exp\left( 10\max\left\{ v\sqrt{ \log m/\lambda}, \exp(\sqrt{\log \lambda}) \right\} \right) \right) \nonumber \\
            \leq 2\exp \left(-\frac{\exp(\sqrt{\log \lambda})}{2} \right) \,.  \label{eq-typical-1st-minus-2nd-Poisson-2}
        \end{align}
        \item[(3)] For all $v \in \mathbb Z$ such that $4S(\lambda,m) \leq v \leq \frac{1}{10} \sqrt{\lambda\log m}$, we have
        \begin{align}{\label{eq-typical-1st-minus-2nd-Poisson-3}}
            \Pb\left( \#\{i:X_i=\max_{1\leq j \leq m}X_j - v\} \leq \exp\left(\frac{v\sqrt{\log m}}{4\sqrt{\lambda}}\right) \right) \leq 2\exp\left(-\frac{\exp(\sqrt{\log \lambda})}{2}\right)\,.
        \end{align}
    \end{enumerate}
\end{lemma}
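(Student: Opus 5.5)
The plan is to reduce everything to counting variables in level sets. For an integer $x$ let $N_x=\#\{i:X_i\ge x\}$ and $n_x=\#\{i:X_i=x\}$. These are binomial, with means $mp_{\ge x}$ and $mp_x$, where $p_x=\Pb(X_1=x)$ and $p_{\ge x}=\Pb(X_1\ge x)$. I will work in the window $x=\lambda+t\sqrt\lambda$ with $t\approx\sqrt{2\log m}$. The condition $m<\exp(\lambda^{1-\epsilon})$ gives $t=o(\sqrt\lambda)$, so Lemma~\ref{lem-Poisson-dedicate} applies there. It says that $\log p_x=-(1+o(1))t^2/2-\tfrac12\log\lambda+O(1)$. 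By part (2), moving down one unit multiplies $p_x$ by roughly $e^{t/\sqrt\lambda}=e^{(1+o(1))\sqrt{2\log m/\lambda}}$. Hence the tail $p_{\ge x}$ is comparable to $p_x\cdot\sqrt{\lambda}/t$.

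\textbf{Part (1).} Define $h_*$ as the smallest integer $x$ with $mp_{\ge x}\le \exp(\sqrt{\log\lambda})/2$ (any fixed threshold of order $e^{\sqrt{\log\lambda}}$ will do). Lemma~\ref{lem-Poisson-dedicate} places this in $[\lambda+\sqrt{1.9\lambda\log m},\lambda+\sqrt{2.1\lambda\log m}]$. The $\log\lambda$ and $\sqrt{\log\lambda}$ corrections are negligible against $\log m$, because $\log m>2\exp(\sqrt{\log\lambda})$. For the upper deviation, a union bound gives
\[
\Pb(\max>h_*+S)\le mp_{\ge h_*+S}.
\]
Shifting by $S$ multiplies by about $\exp(-S\sqrt{2\log m/\lambda})=\exp(-\sqrt2\,e^{\sqrt{\log\lambda}})$, which is tiny. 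For the lower deviation, $\Pb(\max<h_*-S)=(1-p_{\ge h_*-S})^m\le\exp(-mp_{\ge h_*-S})$. Here $mp_{\ge h_*-S}\ge \tfrac12 e^{\sqrt{\log\lambda}}\cdot e^{\sqrt2 e^{\sqrt{\log\lambda}}}$, which gives the bound with room to spare. In both directions, the ratio estimate from Lemma~\ref{lem-Poisson-dedicate}(2) is applied over $S$ consecutive steps. I need $S\cdot t/\sqrt\lambda\ll t^2$-type error terms to be harmless, and I will check this.

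\textbf{Parts (2) and (3).} I work on the event $G=\{|\max-h_*|\le S\}$ from part (1). On $G$, $\max-v\in[h_*-v-S,h_*-v+S]$, so it suffices to control $n_x$ uniformly over the $2S+1$ values $x=h_*-v+j$ with $|j|\le S$. The mean is
\[
\Eb n_x=mp_x\approx mp_{h_*}\exp\big((v-j)(1+o(1))\sqrt{2\log m/\lambda}\big),
\]
and $mp_{h_*}$ lies between $e^{-O(\sqrt{\log \lambda})}$-ish factors times $e^{\sqrt{\log\lambda}}$, after the tail-to-point conversion $p_x\approx p_{\ge x}t/\sqrt\lambda$ costs $\log(\sqrt\lambda/t)$. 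For (2), the mean is at most $\exp\big(3\max\{v\sqrt{\log m/\lambda},e^{\sqrt{\log\lambda}}\}\big)$, so Markov's inequality (or Chernoff) at level $\exp(10\max\{\cdot\})$ plus a union bound over the $2S+1$ shifts gives failure probability $(2S+1)e^{-7\max\{\cdot\}}\le e^{-e^{\sqrt{\log\lambda}}/2}$. For (3), the constraint $v\ge4S$ ensures $v-j\ge 3v/4$, so the mean is at least $\exp\big(1.3\,v\sqrt{\log m/\lambda}-O(\sqrt{\log\lambda})\big)$, which is much larger than $\exp\big(v\sqrt{\log m}/(4\sqrt\lambda)\big)$. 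A Chernoff lower-tail bound, $\Pb(\mathsf{Bin}\le \mu/2)\le e^{-\mu/8}$, together with a union bound over the shifts then gives the claim. The condition $v\le\frac1{10}\sqrt{\lambda\log m}$ keeps $x$ within the $t=o(\sqrt\lambda)$ window, where the ratio estimate is valid.

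\textbf{Main obstacle.} The main difficulty is the bookkeeping of the error terms in Lemma~\ref{lem-Poisson-dedicate}. The multiplicative factor $(1+O(t/\sqrt\lambda))$ in the exponent $t^2/2$ produces an additive error of order $t^3/\sqrt\lambda=(\log m)^{3/2}/\sqrt\lambda$. This need not be small, so it must be absorbed into the choice of $h_*$ rather than into the ratio estimates. To handle this, I will define $h_*$ directly through the exact tail $p_{\ge x}$, and use only local ratios $p_{x-1}/p_x=x/\lambda$, which are exact for the Poisson distribution, over ranges of length $\le v+S$. This keeps the accumulated error at $O\big((v+S)^2/\lambda\big)\cdot$ small, which is dominated by the main term $(v-j)\sqrt{\log m/\lambda}$.
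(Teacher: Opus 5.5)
Your proposal is correct and follows essentially the same route as the paper. Both proofs pin down a reference level $h_*$: the paper takes $\Pb(X_1=h_*)\in(1/m,2/m)$, whereas you use a tail threshold. Both obtain part (1) from a union bound and $(1-p)^m\le e^{-mp}$. Both obtain parts (2)--(3) on the event $\{|\max_j X_j-h_*|\le S\}$ by union-bounding over the $2S+1$ shifts and comparing binomial means through ratio estimates; your exact ratios $p_{x-1}/p_x=x/\lambda$ play the role of the paper's Lemma~\ref{lem-Poisson-dedicate}(2).

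Two small bookkeeping corrections, neither of which affects the argument:
\begin{itemize}
\item The tail-to-point conversion can cost up to about $\tfrac12\log\lambda$, not $O(\sqrt{\log\lambda})$.
\item In your lower bound for the mean in (3), the coefficient of $v\sqrt{\log m/\lambda}$ is closer to $1$ than to $1.3$.
\end{itemize}
Both are harmless: $v\ge 4S$ gives $v\sqrt{\log m/\lambda}\ge 4e^{\sqrt{\log\lambda}}\gg\log\lambda$, and any coefficient above $1/4$ suffices.
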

\begin{proof}
We first prove (1). By Item~(1) of Lemma~\ref{lem-Poisson-dedicate}, there exists $s_*,t_*$ such that $|s_*-\sqrt{1.9\lambda\log m}|\leq 1$ and $|t_* -\sqrt{2.1\lambda\log m}|\leq 1$ and 
\begin{align*}
\Pb(X_1 = \lambda + s_*) \geq \tfrac{1}{m^{0.99}}\,,\quad \Pb(X_1 = \lambda + t_*) \leq  \tfrac{1}{m^{1.01}}\,.
\end{align*}
Also, we have $\tfrac{\Pb(X_1 = h+1)}{\Pb(X_1=h)} = 1 + O(\sqrt{\tfrac{\log m}{\lambda}})$ for $h \in [\lambda + s_* , \lambda + t_*]$. Therefore, there exists $h_*:=h_*(\lambda, m) \in [\lambda + s_* , \lambda + t_*]$ such that $\Pb(X_1=h_*) \in( \tfrac{1}{m}, \tfrac{2}{m})$. Next, for each $u \geq \lambda + \sqrt \lambda$, note that
\begin{align}
1 \leq \tfrac{\Pb(X_i \geq u)}{\Pb(X_i = u)} = \sum_{i \geq 0}\tfrac{\Pb(X_i = u+i)}{\Pb(X_i = u)} \leq \sum_{i \geq 0}(1+\tfrac{1}{\sqrt \lambda})^{-i} \leq \sqrt \lambda + 1\,.{\label{eq-simple-geq-equal-bounds}}
\end{align}
Therefore, we have
\begin{align*}
\Pb(\max_{1\leq i\leq m}X_i \leq h_* - S(\lambda , m)) &= (1-\Pb(X_1 \geq h_*+1 - S(\lambda , m)))^m \\
&\leq \left(1-\exp(\tfrac{(h_*-\lambda)^2-(h_*-\lambda+1- S(\lambda , m))^2}{3\lambda })\Pb(X_1=h_*) \right)^m \\
&\leq \left(1-\exp(\tfrac{\exp(\sqrt {\log \lambda})}{3})\cdot\tfrac{1}{m}\right)^m \leq \exp(-\tfrac{\exp(\sqrt {\log \lambda})}{2}) \,,\\
\end{align*}
and
\begin{align*}
\Pb(\max_{i}X_i \geq h_* + S(\lambda , m)) &\leq m\Pb(X_1 \geq h_* + S(\lambda , m)) \\
&\leq m(\sqrt \lambda + 1)\exp(\tfrac{(h_*-\lambda)^2-(h_*-\lambda+S(\lambda , m))^2}{\lambda })\Pb(X_1=h_*) \\
&\leq 2(\sqrt \lambda + 1) \exp(-\exp(\sqrt {\log \lambda})) \leq\exp(-\tfrac{\exp(\sqrt {\log \lambda})}{2}) \,,
\end{align*}
where the inequalities hold by Item (2) of Lemma~\ref{lem-Poisson-dedicate}, which proves (1). 

Next we prove (2) and (3). By Item (2) of Lemma~\ref{lem-Poisson-dedicate} and \eqref{eq-simple-geq-equal-bounds}, for each $w \in \mathbb Z$ such that $|w| \leq S(\lambda , m)$ we have
\begin{align*}
\Pb(X_1 = h_* + w - v) 
\leq \Pb(X_1 = h_*)\exp(\tfrac{(h_*-\lambda)^2-(h_*-\lambda+w-v)^2}{3\lambda })\leq \tfrac{2}{m}\exp(4(v\vee S(\lambda , m))\sqrt{\tfrac{\log m}{\lambda}})\,,
\end{align*}
\begin{align*}
\Pb(X_1 = h_* + w - v)
\geq \Pb(X_1 = h_*)\exp(\tfrac{(h_*-\lambda)^2-(h_*-\lambda+w-v)^2}{\lambda }) \geq \tfrac{1}{m}\exp((v- S(\lambda , m))\sqrt{\tfrac{\log m}{\lambda}})\,,
\end{align*}
Therefore, for each $v \in \mathbb Z$ such that $2S(\lambda, m) \leq v \leq \tfrac{1}{10}\sqrt{\lambda \log m}$ we have
\begin{align}
&\quad\,\,\Pb\left( \#\{i:X_i = \max_j X_j - v\}\leq \exp \Big(\tfrac{v\sqrt{\log m/\lambda}}{4}\Big) \right)\nonumber \\
&\leq \Pb \left( \exists |w| \leq S(\lambda,m), \#\{i:X_i = h_* + w - v\}\leq\exp\Big(\tfrac{v\sqrt{(\log m)/\lambda}}{4}\Big) \right) + \Pb(\max_j X_j > h_* + S(\lambda,m))\nonumber\\
&\leq \exp\Big(-\tfrac{\exp(\tfrac{1}{10}v\sqrt{(\log m)/\lambda})}{10}\Big) + \exp(-\tfrac{\exp(\sqrt{\log \lambda})}{2}) \leq 2\exp(-\tfrac{\exp(\sqrt{\log \lambda})}{2})\,,{\label{eq-number-of-competitive-candidates-lower-bound}}
\end{align}
where the second inequality holds by the standard binomial Chernoff bound.

Similarly, for each $v \in \mathbb Z$ such that $0 \leq v \leq \tfrac{1}{10}\sqrt{\lambda \log m}$, denoting $S_{v,\lambda,m} = v \vee S(\lambda , m)$ we have
\begin{align}
&\quad\,\,\Pb(\#\{i:X_i \geq \max_j X_j - v\}\geq\exp(10S_{v,\lambda,m}\sqrt{(\log m)/\lambda}))\label{eq-level-set-prob-lb-original} \\
&\leq \Pb(\#\{i:X_i \geq h_* - w - v\}\geq\exp(10S_{v,\lambda,m}\sqrt{(\log m)/\lambda})\mbox{ for some }|w| \leq S(\lambda,m)) \nonumber \\
&+ \Pb(\max_j X_j < h_* - S(\lambda,m))\nonumber\\
&\leq \exp(-\tfrac{\exp(S_{v,\lambda,m}\sqrt{(\log m)/\lambda})}{5}) + \exp(-\tfrac{\exp(\sqrt{\log \lambda})}{2}) \leq 2\exp(-\tfrac{\exp(\sqrt{\log \lambda})}{2})\,,{\label{eq-number-of-competitive-candidates-upper-bound}}
\end{align}
where the second inequality holds by the standard binomial Chernoff bound. Also, for each $v \in \mathbb Z$ such that $v >  \tfrac{1}{10}\sqrt{\lambda \log m}$ we have
\begin{align}
\eqref{eq-level-set-prob-lb-original}\leq \Pb(\#\{i:X_i \geq \max_j X_j - v\}> m) = 0\,.\label{eq-number-of-competitive-candidates-upper-bound-2}
\end{align}
Combining \eqref{eq-number-of-competitive-candidates-lower-bound}, \eqref{eq-number-of-competitive-candidates-upper-bound} and \eqref{eq-number-of-competitive-candidates-upper-bound-2} proves (2) and (3).
\end{proof}

The next lemma bounds the typical value of the difference of the maximal value and the second maximal value among inhomogeneous independent Poisson variables.

\begin{lemma}{\label{lem-Poisson-maximum-1st-minus-2nd}}
    Suppose that $\lambda=\omega(1)$ and $m \in \mathbb N$, and $\eta_i \sim \mathsf{Pois}(a_i)$ are independent Poisson variables with $\frac{\lambda}{2} \leq a_1,\ldots,a_m \leq \lambda$ and $\max_{1 \leq i \leq m} a_i = \lambda$. Denote by $\eta_{(1)} \geq \eta_{(2)} \geq \ldots \geq \eta_{(m)}$ the order statistics of $(\eta_1,\eta_2,\ldots,\eta_m)$. Then we have the following estimates:
    \begin{enumerate}
        \item[(1)] Suppose that $2 \leq m\leq \exp(\frac{0.01\lambda}{(\log \lambda)^3})$. Then, we have
        \begin{align}
            \Pb\left( \eta_{(1)} - \eta_{(2)} \leq \frac{\sqrt{\lambda}}{(\log\lambda)\sqrt{\log m}} \right) \leq \frac{1}{\sqrt{\log\lambda}} \,.  \label{eq-Poisson-1st-minus-2nd-lb}
        \end{align}
        \item[(2)] Suppose that $\exp((\log\lambda)^{2+2c}) \leq m \leq \exp(\frac{0.01\lambda}{(\log \lambda)^3})$ for some $c=\Theta(1)>0$. In addition, suppose that there exists $\lambda_* \in [\frac{\lambda}{2}, \lambda]$ such that $\log \#\{j: a_j=\lambda_*\} \geq \frac{\log m}{(\log\lambda)^c}$ and
        \begin{align*}
            \Pb\Big( \mbox{exists }i\in[m]\,, a_i \leq \lambda_* + \sqrt{\lambda_* (\log \lambda)^{(c+2)/2}}\,, \eta_i = \max_{1 \leq j \leq m} \eta_j \Big) &\geq 1 - \frac{1}{\log \lambda}\,. 
        \end{align*}
        Then we have
        \begin{align}
            \Pb\left( \eta_{(1)} - \eta_{(2)} \geq \frac{ (\log \lambda)^{c+1} \sqrt{\lambda} }{ \sqrt{\log m} } \right) \leq \frac{1}{\sqrt{\log\lambda}}\,.\label{eq-Poisson-1st-minus-2nd-ub}
        \end{align}
    \end{enumerate}
\end{lemma}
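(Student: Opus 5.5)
We prove the two items separately; both rest on the local estimates of Lemma~\ref{lem-Poisson-dedicate}. The guiding picture is that, near the maximum level $h\approx a+\sqrt{2a\log m}$, the point masses of $\mathsf{Pois}(a_i)$ change by a factor of about $\exp(\sqrt{\log m/\lambda})$ per unit step. So the top order statistics behave like the top points of a Poisson process whose intensity is exponential with rate $\theta\asymp\sqrt{\log m/\lambda}$. For such a process the gap $\eta_{(1)}-\eta_{(2)}$ is of order $1/\theta$ with fluctuations of constant order. The two items compare the gap with $\theta^{-1}/\log\lambda$ from below and with $\theta^{-1}(\log\lambda)^{c+1}$ from above.

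\textbf{Item (1): a shifting argument for the lower bound.} Write $g=\sqrt\lambda/((\log\lambda)\sqrt{\log m})$. It suffices to bound $\Pb(\eta_{(1)}-\eta_{(2)}\le g)$, and by a union bound over which index attains the top, this probability is at most
\[
\sum_i\sum_h \Pb(\eta_i=h)\,\Pb\big(\max_{j\ne i}\eta_j\in[h-g,h]\big).
\]
\begin{itemize}
\item First localize. By Corollary~\ref{cor-est-maximum-Poisson}-type tail bounds, the maximum lies in a window $[\lambda/2,\ \lambda+\sqrt{3\lambda\log m}]$ except with probability $o(1/\sqrt{\log\lambda})$.
\item Inside this window, Lemma~\ref{lem-Poisson-dedicate}(2) gives $\Pb(\eta_j=h-s)\le e^{Cs\sqrt{\log m/\lambda}}\Pb(\eta_j=h)$ for $0\le s\le g$. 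Since $g\sqrt{\log m/\lambda}=1/\log\lambda$, this factor is $1+O(1/\log\lambda)$.
\item Hence the probability that the maximum of the other variables lands in a window of length $g$ just below $h$ is at most about $g\cdot\Pb(\max_{j\ne i}\eta_j=h)\cdot(1+o(1))$, up to harmless factors.
\end{itemize}

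This is the \emph{mapping method}: we map configurations in which the second-largest value lies within $g$ of the top to configurations with a tie, or with a shifted top, and compare their masses.

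A cleaner way to organize the same idea is to condition on all the values and the order statistics except the top one, and then look at the conditional law of $\eta_{(1)}$ above $\eta_{(2)}$. This conditional law is a tilted tail of some $\mathsf{Pois}(a_i)$. By \eqref{eq-simple-geq-equal-bounds}-type geometric decay with ratio $1-\Theta(\sqrt{\log m/\lambda})$, each point carries conditional mass $O(\sqrt{\log m/\lambda})$. Therefore
\[
\Pb\big(\eta_{(1)}-\eta_{(2)}\le g\big)\lesssim g\sqrt{\log m/\lambda}=\frac{1}{\log\lambda}\le \frac{1}{\sqrt{\log\lambda}}.
\]
The hypothesis $m\le\exp(0.01\lambda/(\log\lambda)^3)$ is exactly what keeps the relevant deviation $t=\sqrt{2\log m}$ equal to $o(\lambda^{1/2}/\log\lambda)$, so that Lemma~\ref{lem-Poisson-dedicate} applies with error terms $O(t^2\cdot t/\sqrt\lambda)=o(1)$ over shifts of length $g$.

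\textbf{Item (2): an upper bound from the many variables with the common mean $\lambda_*$.} Let $J=\{j:a_j=\lambda_*\}$, so $\log|J|\ge \log m/(\log\lambda)^c$. Set $G=(\log\lambda)^{c+1}\sqrt\lambda/\sqrt{\log m}$.
\begin{itemize}
\item On the hypothesized event, the maximizer $i$ has mean $a_i\le\lambda_*+\sqrt{\lambda_*(\log\lambda)^{(c+2)/2}}$. Its value is then at most $\lambda_*+\sqrt{(2+o(1))\lambda\log m}$ plus a lower-order term.
\item The point is to show that, with high probability, some $j\in J\setminus\{i\}$ satisfies $\eta_j\ge\eta_{(1)}-G$. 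Work conditionally on $\eta_{(1)}=h$ with $h$ in the typical range.
\item The number of $j\in J$ with $\eta_j\in[h-G,h]$ is binomial with mean at least $|J|\cdot\Pb(\mathsf{Pois}(\lambda_*)=h-G)$.
\item By Lemma~\ref{lem-Poisson-dedicate}, stepping down by $G$ from level $h$ gains a factor $\exp(\Theta(G\sqrt{\log m/\lambda}))=\exp(\Theta((\log\lambda)^{c+1}))$.
\end{itemize}

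We then need to compare $|J|\,\Pb(\mathsf{Pois}(\lambda_*)=h)$ with $1/\mathrm{poly}$. Since the maximum is attained at a mean within $\sqrt{\lambda_*(\log\lambda)^{(c+2)/2}}$ of $\lambda_*$, the level $h$ exceeds $\lambda_*$ by at most roughly $\sqrt{2\lambda\log m}+(\log\lambda)^{(c+2)/4}\sqrt\lambda$. This makes $\Pb(\mathsf{Pois}(\lambda_*)=h)\ge m^{-1}\exp(-O((\log\lambda)^{c/2+1}\sqrt{\log m}))$. Meanwhile $|J|\ge\exp(\log m/(\log\lambda)^c)$.

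\textbf{Main obstacle.} The delicate step is the bookkeeping that lets the gain $\exp((\log\lambda)^{c+1})$, together with $|J|$, beat the loss coming from the levels at which the other $m-|J|$ variables might push the maximum. I would handle it as follows.
\begin{itemize}
\item Define $h_J=\max_{j\in J}\eta_j$. By Lemma~\ref{lem-maximum-core-level-properties}(1) applied to $J$, with $\log|J|$ in place of $\log m$, $h_J$ is concentrated near $\lambda_*+\sqrt{2\lambda_*\log|J|}$.
\item Show that the overall maximum exceeds $h_J$ by less than $G$. The mean of the maximizer is within the stated window, so conditionally the excess $\eta_{(1)}-h_J$ is controlled by the rate-$\theta$ tail argument from Item (1), giving excess $O(\theta^{-1}(\log\lambda)^{c+1/2})$ with probability $1-O(1/\sqrt{\log\lambda})$.
\item A tie-in with the second element of $J$ then gives $\eta_{(2)}\ge h_J-O(\theta_J^{-1}\log\log\lambda)$, where $\theta_J\asymp\sqrt{\log|J|/\lambda}$. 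The loss factor $\sqrt{\log m/\log|J|}\le(\log\lambda)^{c/2}$ is absorbed into the $(\log\lambda)^{c+1}$ slack.
\end{itemize}

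I expect that reconciling the two rates $\theta$ and $\theta_J$ is where the specific exponent $c+1$ and the condition $m\ge\exp((\log\lambda)^{2+2c})$ are needed.
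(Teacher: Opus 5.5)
Your Item (2) rests on an intermediate claim that is false under the stated hypotheses. Both of your formulations use $J=\{j:a_j=\lambda_*\}$ as the source of the runner-up. First you claim that some $j\in J\setminus\{i\}$ has $\eta_j\ge\eta_{(1)}-G$; then you claim that $\eta_{(1)}-h_J<G$, where $h_J=\max_{j\in J}\eta_j$.

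Here is a counterexample. Take $\lambda_*=\lambda-1$, give $\lceil\exp(\log m/(\log\lambda)^c)\rceil$ of the variables mean $\lambda_*$, and give all the others mean $\lambda$. Every hypothesis holds; the locating event even has probability one, since all $a_i\le\lambda_*+1$. Yet the two largest values come from the non-$J$ population, at height about $\lambda+\sqrt{2\lambda\log m}$, while $h_J\approx\lambda_*+\sqrt{2\lambda_*\log m/(\log\lambda)^c}$. Hence $\eta_{(1)}-h_J$ is of order $\sqrt{\lambda\log m}$. This is much larger than $G$, because $\log m\ge(\log\lambda)^{2+2c}\gg(\log\lambda)^{c+1}$. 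The gap $\eta_{(1)}-\eta_{(2)}$ is still small in this example, but not for the reason you give.

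The same example explains why your first counting attempt fails. $|J|$ can be as small as $m^{(\log\lambda)^{-c}}$, so $|J|\,\Pb(\mathsf{Pois}(\lambda_*)=h)=m^{-1+o(1)}$, and a gain of size $e^{\Theta((\log\lambda)^{c+1})}=m^{o(1)}$ cannot rescue it. The tail argument you borrow from Item (1) controls $\eta_i-\max_{j\neq i}\eta_j$, not $\eta_i-h_J$.

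The missing idea is that $J$ should certify \emph{height}, not supply a competitor.
By Corollary~\ref{cor-est-maximum-Poisson}, except with probability $\exp(-|J|^{1/3})$, some $j\in J$ has $\eta_j\ge\lambda_*+\sqrt{\lambda_*\log m/(\log\lambda)^c}$, and hence so does $\eta_{(1)}$.
On $\{\eta_{(1)}-\eta_{(2)}\ge G\}$ the maximizer is unique. On the locating event its mean is at most $\lambda_*+\sqrt{\lambda_*(\log\lambda)^{(c+2)/2}}$. This is negligible against that height precisely because $\log m\ge(\log\lambda)^{2+2c}$.
So the top value sits at least $0.9\sqrt{\lambda_*\log m/(\log\lambda)^c}$ above its \emph{own} mean. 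There the Poisson pmf grows by a factor $1+\Omega(\sqrt{\log m/((\log\lambda)^c\lambda)})$ per unit step downward. The shift $G/2$ is negligible against this excess, so the factor persists along the whole shift.
Lowering the top coordinate by $G/2$ is injective on $\{\eta_{(1)}-\eta_{(2)}\ge G\}$. It multiplies the probability by at least $\exp(\Omega((\log\lambda)^{1+c/2}))\ge\lambda^{0.01}$, which gives $\Pb(\eta_{(1)}-\eta_{(2)}\ge G)\le 2/\log\lambda+\lambda^{-0.01}$. The identity of the runner-up never enters.
In your conditional language: given $\{\eta_j\}_{j\neq i}$, once $\eta_i$ is that deep in its own tail, each extra unit of overshoot above the runner-up, whoever it is, costs a factor $1-\Omega(\sqrt{\log m/((\log\lambda)^c\lambda)})$.

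Your Item (1) is a reasonable density-form version of the paper's shift-up map, but it has the mirror-image omission. The per-point conditional mass is $O(\sqrt{\log m/\lambda})$ only if the threshold is within $O(\sqrt{\lambda\log m})$ of the maximizer's own mean $a_i$, which may be as small as $\lambda/2$. So you must restrict to the event $\{\eta_j\le a_j+10\sqrt{a_j\log(m\vee\log\lambda)}\ \forall j\}$, rather than localize the level of the maximum. The $m\vee\log\lambda$ is needed because for bounded $m$ your localization window fails with constant probability.
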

\begin{proof}
(1) Define the set $\Lambda := \{\eta_{(1)} - \eta_{(2)} \leq \tfrac{1}{\log \lambda}\sqrt{\frac{\lambda}{\log m}}\}$,
and for $u = (u_1 , \ldots , u_m)$ we define
\begin{align*}
k_1(u) = \min \{i: u_i = u_{(1)}\}\,, k_{2}(u) = \min\{i\neq k_1(u): u_i = u_{(2)}\}\,.
\end{align*}
We write $\eta$ as a shorthand for $(\eta_1,\dots,\eta_m)$.
To show that $\Pb(\eta \in \Lambda) \leq \tfrac{1}{\sqrt{\log \lambda}}$, we will map each element in $\Lambda$ to $\Theta(\log \lambda)$ distinct elements in $\Lambda^c$ with similar probabilities. For each $u \in \Lambda$, define
\begin{align*}
\mathtt S(u) &= \{(u_{i}+2\ell\cdot [\tfrac{1}{\log \lambda}\sqrt{\tfrac{\lambda}{\log m}}]\cdot\mathbf{1}_{\{i=k_1(u)\}})_{1 \leq i \leq m}: 1 \leq \ell \leq \lfloor\log \lambda\rfloor\}\,,
\end{align*}
then $\{\mathtt S(u)\}_{u \in \Lambda}$ are disjoint subsets of $\Lambda^c$. Also, for $u\in \Lambda$ and $v \in \mathtt S(u)$ we have
\begin{align}
\tfrac{\Pb(\eta=u)}{\Pb(\eta=v)} \leq \max_{1 \leq i \leq 2\sqrt{\tfrac{\lambda}{\log m}}} \tfrac{\Pb(\eta_{k_1(u)}=u_{k_1(u)})}{\Pb(\eta_{k_1(u)}=u_{k_1(u)}+i)} \leq \Bigg(\tfrac{u_{k_1(u)}+2\sqrt{\tfrac{\lambda}{\log m}}}{a_{k_1(u)}}\Bigg)^{2\sqrt{\tfrac{\lambda}{\log m}}}\,,\label{eq-Poisson-lr}
\end{align}
Next, define $\Ecal_{\text{small}} = \{\frac{\eta_j-a_j}{\sqrt{a_j}} \leq 10\sqrt{\log (m\vee \log \lambda)}, \forall 1 \leq j \leq m\}$, then we have
\begin{align*}
\Pb(\Ecal_{\text{small}}^c)&\leq m\max_{1 \leq i \leq m}\Pb(\mathsf{Pois}(a_i)\geq a_i + 10\sqrt{a_i \log (m\vee \log \lambda)}) \\
&\leq m\exp(-5\log (m\vee \log \lambda)) \leq \tfrac{1}{(\log \lambda)^4}\,,
\end{align*}
Also, given $u\in \Lambda \cap\Ecal_{\text{small}}$ and $v \in \mathtt S(u)$ we have $a_{k_1(u)} \geq u_{k_1(u)} - 10\sqrt{a_{k_1(u)}\log m}$ and therefore
\begin{align*}
\eqref{eq-Poisson-lr} \leq \Big(1+\tfrac{10\sqrt{\log (m\vee \log \lambda)}}{\sqrt{a_{k_1(u)}}} + \tfrac{2}{a_{k_1(u)}}\sqrt{\tfrac{\lambda}{\log m}} \Big)^{2\sqrt{\frac{\lambda}{\log m}}} \leq e^{100\sqrt{\log \log \lambda}}\,.
\end{align*}
Thus we have
\begin{align*}
    \Pb(\eta \in\Lambda) &\leq \Pb(\Ecal_{\text{small}}^c ) + \sum_{u \in \Ecal_{\text{small}} \cap \Lambda}\Pb(\eta=u) \leq \tfrac{1}{(\log \lambda)^4} + \tfrac{e^{100\sqrt{\log \log \lambda}}}{\log \lambda}\sum_{u\in\Ecal_{\text{small}} \cap\Lambda}\Pb(\eta \in \mathtt S(u))\\
    &< \tfrac{1}{(\log \lambda)^4} + \tfrac{1}{2\sqrt{\log \lambda}} \leq \tfrac{1}{\sqrt{\log \lambda}}\,,
\end{align*}
which yields \eqref{eq-Poisson-1st-minus-2nd-lb}. 

(2) By symmetry, we can assume that $\{a_i\}_{1 \leq i \leq m}$ is non-decreasing (so that our subsequent claim on $k_1(u)$ becomes more transparent). Then, define the set
\begin{align*}
\Lambda_*  &= \{\eta_{(1)} - \eta_{(2)} \geq (\log \lambda)^{c+1}\cdot \sqrt{\tfrac{\lambda}{\log m}}\} \,,
\end{align*}
and for each $u \in \Lambda_*$ we define
\begin{align*}
\mathtt s_*(u) &= (u_{i}-[\tfrac{(\log \lambda)^{c+1}}{2}  \cdot\sqrt{\tfrac{\lambda}{\log m}}]\cdot\mathbf{1}_{\{i=k_1(u)\}})_{1 \leq i \leq m}\,,
\end{align*}
Then $\{\mathtt s_*(u)\}_{u \in \Lambda_*}$ are distinct vectors. Next, define the events
\begin{align*}
\Ecal^*_{\text{locate}} = \{\mbox{exists }i, a_i \leq \lambda_* + \sqrt{\lambda_* (\log \lambda)^{c/2 + 1}}, \eta_i = \max_{1 \leq j \leq m} \eta_j\}
\end{align*}
and
\begin{align*}
\Ecal^*_{\text{large}} = \{\max_{j: a_j = \lambda_*}\tfrac{\eta_j-a_j}{\sqrt{a_j}} \geq \sqrt{\tfrac{\log m}{(\log \lambda)^c}} \}\,,
\end{align*}
Then we have $\Pb\big((\Ecal^*_{\text{locate}})^c\big) \leq \tfrac{1}{\log \lambda}$ by our condition, and
\begin{align*}
\Pb\big((\Ecal^*_{\text{large}})^c\big) &\leq \exp(-|\#\{j:a_j = \lambda_*\}|^{1/3}) \leq \exp\big(-\exp((\log \lambda)^{c+1}/3)\big)\,,
\end{align*}
where the second inequality holds by Corollary~\ref{cor-est-maximum-Poisson} and our second condition. Also, given $u\in \Lambda_* \cap \Ecal^*_{\text{locate}} \cap \Ecal^*_{\text{large}}$ we have
\begin{align*}
u_{k_1(u)} &\geq  \lambda_* + \sqrt{\tfrac{\lambda_*\log m}{(\log \lambda)^c} } \geq a_{k_1(u)} + 0.9\sqrt{ \tfrac{\lambda_*\log m}{(\log \lambda)^c}  } \,,
\end{align*}
where the first inequality holds by $a_{k_1(u)} \leq \lambda_* + \sqrt{\lambda_* (\log \lambda)^{c/2 + 1}}$. Therefore, we have
\begin{align*}
\frac{\Pb(\eta = \mathtt s_*(u))}{\Pb(\eta = u)} &= \frac{\Pb(\eta_{k_1(u)}=u_{k_1(u)}-\frac{(\log \lambda)^{c+1}}{2}  \cdot\sqrt{\frac{\lambda}{\log m}})} {\Pb(\eta_{k_1(u)}=u_{k_1(u)})} \\
&\geq \Big( \frac{a_{k_1(u)} + 0.2\sqrt{\frac{\lambda_*\log m}{(\log \lambda)^c}}}{a_{k_1(u)}} \Big)^{\frac{(\log \lambda)^{c+1}}{2}  \cdot\sqrt{\frac{\lambda}{\log m}} } \geq \lambda^{0.01}\,,
\end{align*}
which gives
\begin{align*}
\Pb(\eta \in\Lambda_*) &\leq \Pb\big((\Ecal^*_{\text{locate}} \cap \Ecal^*_{\text{large}})^c\big) + \sum_{u \in \Ecal^*_{\text{locate}} \cap \Ecal^*_{\text{large}}\cap \Lambda_*}\Pb(\eta=u)\\
&\leq \tfrac{2}{\log \lambda} + \tfrac{1}{\lambda^{0.01}}\sum_{u\in\Ecal_*\cap\Lambda_*}\Pb(\eta =\mathtt s_*(u)) \leq \tfrac{2}{\log \lambda} + \tfrac{1}{\lambda^{0.01}} \leq \tfrac{1}{\sqrt{\log \lambda}}\,,
\end{align*}
which yields \eqref{eq-Poisson-1st-minus-2nd-ub}\,.
\end{proof}

\subsection{Coupling between multinomial and Poisson distributions}{\label{subsec:coupling}}

This subsection consists of three lemmas that deal with dependency between components of a multinomial distribution by coupling multinomial variables with independent Poisson variables of the same means. The first lemma ensures uniform bounded-by-constant errors between variables; the second lemma ensures comparable sizes of level sets in the coupling; the last lemma describes the vote distribution after the first round precisely by the coupling derived in the first two lemmas.

\begin{lemma}{\label{lem-Poisson-coupling}}
    Suppose that $(\zeta_1,\ldots,\zeta_n) \sim \mathsf{Mult}(n;\frac{a_1}{n},\ldots,\frac{a_n}{n})$ with $\sum_{i \in [n]} a_i = n$ and $\max\{ a_i \} \leq (\log n)^{A}$ for some constant $A \geq 10$. Sample independent random variables $\eta_1,\ldots,\eta_n$ such that $\eta_i \sim \mathsf{Pois}(a_i)$. Then there exists a coupling $\pi$ between $(\zeta_1,\ldots,\zeta_n)$ and $(\eta_1,\ldots,\eta_n)$ such that
    \begin{align*}
        &\pi\left( |\zeta_i-\eta_i| \leq 4 \mbox{ for all } 1 \leq i \leq n \right) \geq 1-\frac{1}{n} \,;  \\
        &\pi\left( \#\{i:\zeta_i \neq \eta_i\} \leq 2(\log n) \sqrt{n} \right) \geq 1-\frac{1}{n} \,.  
    \end{align*}
\end{lemma}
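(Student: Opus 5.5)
The plan is to use Poissonization: a multinomial vector with $N$ trials, where $N$ is independent Poisson, has independent Poisson coordinates. Let $N \sim \mathsf{Pois}(n)$, and given $N$ sample $(\eta_1,\ldots,\eta_n) \sim \mathsf{Mult}(N;\frac{a_1}{n},\ldots,\frac{a_n}{n})$. Then the $\eta_i$ are independent with $\eta_i \sim \mathsf{Pois}(a_i)$. We couple this with $\zeta$ by running a single sequence of i.i.d. labels $B_1,B_2,\ldots$ with $\Pb(B_j=i)=a_i/n$, and setting
\[
\zeta_i=\#\{j\le n: B_j=i\}, \qquad \eta_i=\#\{j\le N: B_j=i\}.
\]
The two vectors then differ only through the labels $B_j$ with $j$ between $\min(n,N)+1$ and $\max(n,N)$. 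There are $|N-n|$ such labels, so
\[
\sum_i |\zeta_i-\eta_i| = |N-n|.
\]

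For the second claim, Poisson concentration gives $\Pb(|N-n| > 2(\log n)\sqrt n) \le 2\exp(-c(\log n)^2) \le \frac1n$ for large $n$. On the complementary event, $\#\{i:\zeta_i\ne\eta_i\} \le |N-n| \le 2(\log n)\sqrt n$, which is what we need.

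For the first claim, the difference $|\zeta_i-\eta_i|$ equals the number of times label $i$ appears among the $K:=|N-n|$ extra indices. Conditionally on $N$, this count is $\mathsf{Bin}(K, a_i/n)$, independently of which side the extra indices fall on. On the event $K \le 2(\log n)\sqrt n$, we use $a_i \le (\log n)^A$ to get
\[
\Pb\big(\mathsf{Bin}(K,a_i/n)\ge 5\big) \le \binom{K}{5}\Big(\frac{a_i}{n}\Big)^5 \le \frac{(2(\log n)^{A+1}\sqrt n)^5}{n^5} = O\big((\log n)^{5A+5} n^{-5/2}\big).
\]
A union bound over the $n$ indices gives total probability $O((\log n)^{5A+5}n^{-3/2})$. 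Adding the probability that $K$ is large, this is at most $\frac1n$ for large $n$, so $|\zeta_i-\eta_i|\le 4$ for all $i$ with probability at least $1-\frac1n$.

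I expect no serious obstacle here. The only point that needs care is checking that the two claims can be realized by the same coupling $\pi$. They can, because both are derived from the single construction above: each claim holds with probability at least $1-\frac1n$ under that one coupling, which is all the statement asks for.
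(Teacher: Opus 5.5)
Your proof is correct and is essentially the paper's argument. The paper builds the same coupling from a Poisson point process of intensity $n\,\mathrm dx$ on $\mathbb R_+$ with i.i.d.\ labels, comparing the points in $[0,1]$ (giving $\eta$) with the first $n$ points (giving $\zeta$); this is your shared label sequence with $N=|\Lambda\cap[0,1]|\sim\mathsf{Pois}(n)$, followed by the same Poisson tail bound and binomial union bound. You should state explicitly that $N$ is drawn independently of $(B_j)_{j\ge 1}$, since you use this both for the Poissonization and for the conditional $\mathsf{Bin}(K,a_i/n)$ law.
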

\begin{proof}
 Set \(p_i=a_i/n\), \(1\le i\le n\). Let \(\Lambda\) be a Poisson point process on \(\mathbb R_+\) with intensity \(n\,\mathrm dx\). Independently of \(\Lambda\), assign to each point \(x\in \Lambda\) a label \(L(x)\in[n]\), with
\[
    \mathbb P(L(x)=i)=p_i=\frac{a_i}{n},
    \qquad i=1,\ldots,n,
\]
independently over all points \(x\in\Lambda\). Let
\(\Lambda_\eta=\Lambda\cap[0,1],\) and define
\[
    \eta_i'=|\{x\in \Lambda_\eta: L(x)=i\}|,
    \qquad i=1,\ldots,n.
\]
By the thinning property of Poisson point processes, the random variables \(\eta_1',\ldots,\eta_n'\) are independent and satisfy $\eta_i'\sim \operatorname{Pois}(a_i)$. Therefore $(\eta_1',\ldots,\eta_n') \stackrel{d}{=}(\eta_1,\ldots,\eta_n)$.
Next, let \(\tau_n\) be the location of the \(n\)-th point of \(\Lambda\), and let $\Lambda_\zeta=\Lambda\cap[0,\tau_n]$ be the set of the first \(n\) points of \(\Lambda\). Using the same labels \(L(x)\), define
\[
    \zeta_i'=|\{x\in \Lambda_\zeta: L(x)=i\}|,
    \qquad i=1,\ldots,n.
\]
Since \(\Lambda_\zeta\) contains exactly \(n\) points and their labels are independent with law \((p_1,\ldots,p_n)\), we have $(\zeta_1',\ldots,\zeta_n') \stackrel{d}{=}(\zeta_1,\ldots,\zeta_n)$.
We define the coupling \(\pi\) to be the joint law of $\bigl((\zeta_1',\ldots,\zeta_n'), (\eta_1',\ldots,\eta_n')\bigr)$. We now claim that the coupling $\pi$ satisfies the requirements of the lemma. To this end, we define 
    \begin{align}\label{eq-def-Bfar}
        \mathcal B_{\text{far}} &= \left\{ \#(\Lambda_\zeta \triangle \Lambda_\eta) \geq 2(\log n)\sqrt{n} \right\} =\left\{ \left||\Lambda_\eta| -n\right| \geq 2(\log n)\sqrt{n} \right\}\nonumber\\
        &= \Big\{ \Big|\Big(\sum_{i=1}^n \eta_i\Big) -n\Big| \geq 2(\log n)\sqrt{n} \Big\}\,,
    \end{align}
    where the second inequality holds by the definitions of $\Lambda_\zeta$ and $\Lambda_\eta$. By standard Poisson Chernoff bound we have
    \begin{align}
        \pi(\mathcal B_{\text{far}}) = \pi\left( |\mathsf{Pois}(n)-n| \geq 2\sqrt{n}\log n \right) \leq 2\exp(-(\log n)^2) \,, \label{eq-poisson-tail-basic-estimate}
    \end{align}
    It is clear that under $\mathcal B_{\text{far}}^c$ we have $\#\{ i:\zeta_i' \neq \eta_i' \} \leq \#(\Lambda_\zeta \triangle \Lambda_\eta) \leq 2\sqrt{n}\log n$.
    Also, we have
    \begin{align}
        &\pi\left( \max_{1\leq i \leq n}\big|\zeta_i' - \eta_i'\big| > 4 \right) \leq \pi(\mathcal B_{\text{far}}) + \pi\left( \max_{1\leq i \leq n} \big|\zeta_i' - \eta_i'\big| > 4 \mid \mathcal B_{\text{far}}^c \right) \nonumber \\
        \overset{\eqref{eq-poisson-tail-basic-estimate}}{\leq}\ & 2\exp(-(\log n)^2) + \sum_{i=1}^{n} \pi\left(\big|\zeta_i' - \eta_i'\big| > 4 \mid \mathcal B_{\text{far}}^c \right) \nonumber \\
        \leq\ & 2\exp(-(\log n)^2) + \sum_{i=1}^{n} \Pb\left( \mathsf{Bin}\left( \#(\Lambda_\zeta \triangle \Lambda_\eta), \tfrac{a_i}{n} \right) > 4 \mid \mathcal B_{\text{far}}^c \right) \nonumber \\
        \leq\ & 2\exp(-(\log n)^2) + n\Pb\left( \mathsf{Bin}\left( 10\sqrt n \log n, \tfrac{(\log n)^{A}}{n} \right) > 4 \right) < \frac{1}{2n}\,.\label{eq-zeta-prime-similar-to-zeta}
    \end{align}
    This shows that $\pi$ is the desired coupling.
\end{proof} 

\begin{lemma}{\label{lem-Poisson-coupling-further}}
    Define $\mathcal K(h)=\{i:\eta_i' = h\}$. Let $\{a_i\}$ satisfy the condition in Lemma~\ref{lem-Poisson-coupling}. Under the coupling in Lemma~\ref{lem-Poisson-coupling} we have
    \begin{align*}
        \pi\left( \#\left( \{i:\zeta_i' = \eta_i' \} \cap \mathcal K(h) \right) \geq \frac{ |\mathcal K(h)| (1+\mathbf{1}_{ \{ |\mathcal K(h)|< n^{1/6} \} }) }{2} \mbox{ for all } h \right) \geq 1 - \frac{1}{n^{1/8}} \,.
    \end{align*}
\end{lemma}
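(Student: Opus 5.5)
The plan is to work entirely inside the Poisson point process construction of Lemma~\ref{lem-Poisson-coupling}. There, $\zeta_i'$ and $\eta_i'$ differ only through the points of $\Lambda_\zeta\triangle\Lambda_\eta$, which all lie in the interval between $1$ and $\tau_n$. So for each $i\in\mathcal K(h)$, the event $\zeta_i'\neq\eta_i'$ forces some point of this symmetric difference to carry label $i$.

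\textbf{Step 1: reduce to a fixed number of discrepant points.} On $\mathcal B_{\text{far}}^c$ the symmetric difference contains at most $N:=2\sqrt n\log n$ points. Their labels are i.i.d.\ with law $(p_i)$. Moreover, they are independent of the labels in $\Lambda\cap[0,\min(1,\tau_n)]$ and of $|\Lambda_\eta|$. To make this rigorous, condition on the point configuration $\Lambda$. This fixes which points lie in $\Lambda_\zeta\triangle\Lambda_\eta$, and then the labels of the discrepant points are independent of the labels of the common points.

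\textbf{Step 2: bound the number of spoiled indices.} Let $D(h)$ be the number of $i\in\mathcal K(h)$ hit by at least one discrepant label. The discrepant labels touch at most $N$ distinct indices in total. What remains is to show that few of them land in $\mathcal K(h)$.

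The difficulty is that $\mathcal K(h)$ is defined via $\eta'$, which includes the discrepant points when $\tau_n<1$. We therefore handle the two cases separately.
\begin{itemize}
\item When $\tau_n\ge1$, the discrepant points lie outside $[0,1]$. Their labels are then independent of $\eta'$, and hence of $\mathcal K(h)$.
\item When $\tau_n<1$, we instead use $\mathcal K_0(h)$ defined by $\zeta'$ (the common points), which is independent of the extra labels. Each discrepant point shifts membership of at most two level sets by one index. We then compare $\mathcal K(h)$ with $\mathcal K_0(h)$, whose sizes differ by at most $N$. This is harmless because we only care about the relevant regimes below.
\end{itemize}

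\textbf{Step 3: the conditional count and case split.} Given the level set, $D(h)$ is stochastically dominated by $\mathsf{Bin}(N,q_h)$, where $q_h=\sum_{i\in\mathcal K(h)}p_i\le|\mathcal K(h)|(\log n)^A/n$.

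\emph{Case $|\mathcal K(h)|\ge n^{1/6}$.} We need $D(h)\le|\mathcal K(h)|/2$. The mean of the binomial is at most $|\mathcal K(h)|\cdot 2(\log n)^{A+1}n^{-1/2}$, which is much smaller than $|\mathcal K(h)|/2$. A Chernoff bound of the form $\Pb(\mathsf{Bin}\ge k)\le(eN q_h/k)^k$ with $k=|\mathcal K(h)|/2\ge n^{1/6}/2$ gives a bound of $n^{-\Omega(n^{1/6})}$.

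\emph{Case $|\mathcal K(h)|<n^{1/6}$.} We need $D(h)=0$. Here $\Pb(D(h)\ge1)\le Nq_h\le 2n^{1/6-1/2}(\log n)^{A+1}=n^{-1/3+o(1)}$.

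\textbf{Step 4: union bound over $h$.} Only values $h\le 2(\log n)^{A}$ matter: by a Poisson tail bound, $\max\eta_i'\le 2(\log n)^A$ outside probability $O(1/n)$, and for larger $h$ the set $\mathcal K(h)$ is empty. A naive union bound over $(\log n)^{A}$ values of $h$ in the small case gives $n^{-1/3+o(1)}$, which is far below $n^{-1/8}$.

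A better accounting makes the sum over small level sets even more comfortable. Summing $Nq_h$ over all $h$ yields $N\cdot\sum_{i \text{ in small sets}}p_i$, and this is at most $N\cdot n^{1/6}(\log n)^{A+1}/n$. Adding $\pi(\mathcal B_{\text{far}})$ completes the estimate.

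\textbf{Main obstacle.} I expect the main difficulty to be the dependence between $\mathcal K(h)$ and the discrepant labels in Step 2. The fix is the conditioning on $\Lambda$ together with the case split $\tau_n\ge1$ versus $\tau_n<1$. The generous margins ($n^{-1/3}$ against $n^{-1/8}$) should absorb any slack from that comparison.
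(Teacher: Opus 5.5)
\textbf{Case $\tau_n\ge 1$ (correct).} Your treatment of the case $\tau_n\ge 1$ (i.e.\ $|\Lambda_\eta|\le n$) is correct and is essentially the paper's Case~1. The discrepant points lie in $(1,\tau_n]$, and their labels are i.i.d.\ and independent of $\eta'$. So with $\mathcal K(h)$ fixed, $D(h)$ is dominated by $\mathsf{Bin}(N,q_h)$, and both of your estimates go through.

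\textbf{Case $\tau_n<1$ (the gap).} Here the domination ``given the level set, $D(h)\preceq\mathsf{Bin}(N,q_h)$'' is false for $\mathcal K(h)=\{i:\eta_i'=h\}$. Conditioning on $\eta'$ biases the discarded labels. With $k=|\Lambda_\eta|$, the conditional probability that $i$ is hit is of order $(k-n)\eta_i'/n$, which can exceed $(k-n)p_i$ by an arbitrarily large factor when $\eta_i'\gg a_i$.

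For example, let most $a_i$ equal $n^{-0.9}$, with the remaining mass on indices with $a_i=(\log n)^A$. Then $|\mathcal K(1)|\approx n^{0.1}$. Each member of $\mathcal K(1)$ is hit with conditional probability about $(k-n)/n$, typically $n^{-1/2}$. Hence $\pi(D(1)\ge 1\mid\eta')\approx n^{-0.4}$, whereas $Nq_1=n^{-1.3+o(1)}$.

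Your fallback does not repair this, for three reasons.
\begin{enumerate}
\item Knowing that $|\mathcal K(h)|$ and $|\mathcal K_0(h)|$ differ by at most $N=2\sqrt n\log n$ cannot transfer a threshold of $n^{1/6}\ll N$.
\item More importantly, the hit members of $\mathcal K(h)$ are exactly the indices that moved up from lower $\zeta'$-levels, and those levels may be huge. To bound the chance that a hit lands an index in a small $\eta'$-level, you would need to relate $\sum_{i\in\mathcal K_0(h-1)}p_i$ to $|\mathcal K_0(h)|$ with high probability. You do not do this, and it is not automatic for inhomogeneous $a_i$.
\item The same problem affects the ``at most half'' requirement for moderate levels $n^{1/6}\le|\mathcal K(h)|\lesssim N$.
\end{enumerate}
Appealing to generous margins does not replace an argument here.

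\textbf{The missing idea: condition on $\eta'$ and use exchangeability.} Fix the positions, $k=|\Lambda_\eta|>n$, and $\eta'$. Then the labels of the $k$ points in $[0,1]$, ordered by position, are uniform over arrangements of the multiset recorded by $\eta'$. So the $k-n$ points after $\tau_n$ form a uniform draw without replacement, and $Y=\eta'-\zeta'$ is multivariate hypergeometric.

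On $\mathcal B_{\text{far}}^c\cap\mathcal B_{\text{large}}^c$ we have $\eta_i'\le 2(\log n)^A$, hence $\pi(Y_i>0\mid\eta')\le (k-n)\eta_i'/n<n^{-1/3}$. This is why the bound must go through $\eta_i'$ and $\mathcal B_{\text{large}}$, not through $p_i\le(\log n)^A/n$.
\begin{itemize}
\item Small levels: take a union bound over the fewer than $n^{1/6}$ members of each small level and over the $O((\log n)^A)$ relevant levels.
\item Large levels: use a hypergeometric tail bound for $\sum_{j\in\mathcal K(h)}Y_j$, drawing $k-n$ items of which $h|\mathcal K(h)|$ are marked.
\end{itemize}
This is the paper's Case~2. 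With this replacement your overall structure, and your final order $n^{-1/3+o(1)}$, stand.
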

\begin{proof}
   Define $\mathcal I_{\mathsf{eq}} =\{ i \in [n]: \zeta_i' = \eta_i'\}$\,, and define the event
   \begin{align}{\label{eq-def-Blarge}}
       \mathcal B_{\text{large}} = \left\{ \max_{1 \leq i \leq n} \eta_i' > 2(\log n)^{A} \right\} \,.
   \end{align}
   By Poisson Chernoff bound we have
   \begin{align}
       \Pb(\mathcal B_{\text{large}}) \leq \sum_{1 \leq i \leq n} \Pb\left( \eta_i'>a_i+(\log n)^{A} \right) \leq \exp(-(\log n)^{A/3})\,.\label{eq-low-prob-Bcal-large-lemA}
   \end{align}
   Combining this with \eqref{eq-poisson-tail-basic-estimate} (recall the definition of $\mathcal B_{\text{far}}$ in \eqref{eq-def-Bfar}), for all $h \in \mathbb Z_{\geq 0}$ such that $|\mathcal K(h)|>0$ we have
   \begin{align}
       &\pi\left( \#\left(\mathcal K(h) \cap\mathcal I_{\mathsf{eq}}\right) < \frac{|\mathcal K(h)|(1 + \mathbf 1_{\{|\mathcal K(h)| < n^{1/6}\}})}{2} \right)
       \leq \pi(\mathcal B_{\text{far}} \cup \mathcal B_{\text{large}}) \nonumber \\
       +\ & \pi\left( \left\{\#\left(\mathcal K(h) \cap\mathcal I_{\mathsf{eq}} \right) < \frac{|\mathcal K(h)|(1 + \mathbf 1_{\{|\mathcal K(h)| < n^{1/6}\}})}{2}\right\}\cap \mathcal B_{\text{far}}^c \cap \mathcal B_{\text{large}}^c \right) \nonumber \\
       \leq\ & n^{-100} + \pi\left(|\mathcal K(h) \cap\mathcal I_{\mathsf{eq}}| < \frac{|\mathcal K(h)|(1 + \mathbf 1_{\{|\mathcal K(h)| < n^{1/6}\}})}{2}; \mathcal B_{\text{far}}^c \cap \mathcal B_{\text{large}}^c \right) \,. \label{eq-zeta-prime-similar-to-zeta-2}
   \end{align}
   
   Next, define $\eta' = (\eta'_i)_{1\leq i \leq n}$ and $\zeta' = (\zeta'_i)_{1\leq i \leq n}$ for simplicity of notation. Note that by \eqref{eq-def-Bfar} and \eqref{eq-def-Blarge}, whether the events $\mathcal B_{\text{far}}$ and $\mathcal B_{\text{large}}$ happen or not depends solely on $\eta'$. Therefore, we will write $\eta^* \in \mathcal B_{\text{far}}$(or $\eta^* \in \mathcal B_{\text{large}}$) if and only if the event $\mathcal B_{\text{far}}$(or $\mathcal B_{\text{large}}$) happens given $\eta'=\eta^*$.

   Note that $\{|\mathcal K(h)|\}_{h \in \mathbb Z_{\geq 0}}$ is measurable with respect to $\eta'$ (i.e. $\{|\mathcal K(h)|\}_{h \in \mathbb Z_{\geq 0}}$ is fixed given $\eta'$). Therefore, in the following proof we will view $\{|\mathcal K(h)|\}_{h \in \mathbb Z_{\geq 0}}$ as fixed nonnegative integers conditioned on any specific value of $\eta'$. Then, we deal with \eqref{eq-zeta-prime-similar-to-zeta-2} by considering the following two cases separately.

   \noindent{\bf Case 1:} $|\Lambda_\eta| \leq n$. In this case, we have $\zeta' = \eta'+Y$, where $Y$ is independent of $\eta'$ conditioning on $\eta'$ and $Y \sim \mathsf{Mult}\left( n-|\Lambda_\eta|;\frac{a_1}{n},\ldots,\frac{a_n}{n} \right)$
   conditioning on $\eta'$. Thus, for any $h$ such that $|\mathcal K(h)| \geq n^{\frac{1}{3}}$ we have
   \begin{align}
       &\pi\left(|\mathcal K(h) \cap\mathcal I_{\mathsf{eq}}| < \frac{|\mathcal K(h)|}{2}; \mathcal B_{\text{far}}^c \cap \mathcal B_{\text{large}}^c\right) \nonumber \\
       \leq\ &\max_{\eta^*\in\mathcal B_{\text{far}}^c \cap \mathcal B_{\text{large}}^c}\pi\left(|\mathcal K(h) \cap\{i:Y_i \neq 0 \}| \geq \frac{|\mathcal K(h)|}{2}; \eta'=\eta^* \right) \nonumber \\
       \overset{\mathcal B_{\text{far}}}{\leq}\ &\Pb\left( \mathsf{Bin}\left( |\mathcal K(h)|,\frac{4\sqrt{n}(\log n)^{A}}{n} \right) \geq \frac{|\mathcal K(h)|}{2} \right) \leq 2\exp(-0.01n^{1/3}) \,. \label{eq-A.12-case-1-bound-1} 
   \end{align}
   In addition, for any $1\leq |\mathcal K(h)| < n^{\frac{1}{3}}$ we have 
   \begin{align}
       &\pi\left(|\mathcal K(h) \cap\mathcal I_{\mathsf{eq}}| < |\mathcal K(h)|; \mathcal B_{\text{far}}^c \cap \mathcal B_{\text{large}}^c \right) \le \pi\left(|\mathcal K(h) \cap\{i:Y_i \neq 0 \}| \geq |\mathcal K(h)|; \mathcal B_{\text{far}}^c \cap \mathcal B_{\text{large}}^c \right) \nonumber \\
       \overset{\mathcal B_{\text{far}}}{\leq}\ &\Pb\left( \mathsf{Bin}\left( |\mathcal K(h)|,\tfrac{4\sqrt{n}(\log n)^{A}}{n} \right) \geq 1 \right) \leq n^{-\frac{1}{7}} \,. \label{eq-A.12-case-1-bound-2} 
   \end{align}
   Combining \eqref{eq-A.12-case-1-bound-1} with \eqref{eq-A.12-case-1-bound-2} yields that (note that under $\mathcal B_{\text{large}}^c$ we have $\max |\eta_i'|\leq 3(\log n)^A$)
   \begin{align}
       \eqref{eq-zeta-prime-similar-to-zeta-2} \leq n^{-100}+ \sum_{\substack{h \leq 3(\log n)^{A} \\ \mathcal K(h) \geq n^{1/3} }}  2\exp(-0.01n^{1/3}) + \sum_{\substack{h \leq 3(\log n)^{A} \\ \mathcal K(h) < n^{1/3} }} n^{-\frac{1}{7}} \leq n^{-\frac{1}{8}} \,.  \label{eq-K(h)-scatter-case-1}
   \end{align}
    
   \noindent{\bf Case 2:} $|\Lambda_\eta|>n$. In this case, we have $\zeta' = \eta'-Y$, where $Y$ is independent of $\eta'$ conditioning on $\eta'$ and $Y \sim \mathsf{Hypergeometric}\left( |\Lambda_\eta|-n;\eta'_1,\ldots,\eta'_n\right)$ conditioning on $\eta'$. Therefore, given any $i \in [n]$ and any $\eta^* \in \mathcal B_{\text{far}}^c \cap \mathcal B_{\text{large}}^c$ with $\sum_{1\leq j \leq n}\eta^*_j =k$ we have
    \begin{align}
        \pi(Y_i > 0 \,; \eta' = \eta^*) = 1 - \frac{\tbinom{k-\eta^*_i}{k-n}}{\tbinom{k}{k-n}} \leq 1 - (1-\tfrac{\eta_i^*}{n})^{k-n} \leq \tfrac{(k-n)\eta_i^*}{n} < n^{-\frac{1}{3}}\,,\label{eq-thetaj-sum-for-small}
    \end{align}
    also, for any $h \in \mathbb Z_+$ such that $n^{1/6} < |\mathcal K(h)| \leq 4\sqrt n \log n$ we have $h|\mathcal K(h)| < n^{3/5}$. Therefore, given any $i \geq\tfrac{|\mathcal K(h)|}{3}$ and any $\eta^* \in \mathcal B_{\text{far}}^c \cap \mathcal B_{\text{large}}^c$ with $\sum_{1\leq j \leq n}\eta^*_j =k$ we have
    \begin{align}
        &\pi(\sum_{j \in \mathcal K(h)}Y_j = i \,; \eta' = \eta^*) =\frac{\tbinom{h|\mathcal K(h)|}{i} \cdot \tbinom{k-h|\mathcal K(h)|}{k-n-i} }{\tbinom{k}{k-n}} =\frac{\tbinom{h|\mathcal K(h)|}{i} \cdot \tbinom{k-h|\mathcal K(h)|}{k-n-i} }{\tbinom{k}{h|\mathcal K(h)|}}\nonumber \\
        \geq\ & \mathbf 1_{\{i+1\leq k-n\}}\tfrac{(i+1)(n+i+1-h|\mathcal K(h)|)}{(h|\mathcal K(h)|-i)(k-n-i)} \cdot \frac{\tbinom{h|\mathcal K(h)|}{i+1} \cdot \tbinom{k-h|\mathcal K(h)|}{k-n-i-1} }{\tbinom{k}{h|\mathcal K(h)|}}\nonumber\\
        \geq\ & \mathbf 1_{\{i+1\leq k-n\}} \cdot\tfrac{\sqrt{n}}{6h \log n}\cdot\pi(\sum_{j \in \mathcal K(h)}Y_j = i +1\,; \eta' = \eta^*) \ge n^{\frac{1}{3}} \pi(\sum_{j \in \mathcal K(h)}Y_j = i +1\,; \eta' = \eta^*)\,,\nonumber
    \end{align}
which directly gives 
\begin{align}
    &\pi\Big(\sum_{j \in \mathcal K(h)}Y_j \geq \tfrac{|\mathcal K(h)|}{2}\,; \eta' = \eta^*\Big) \leq 2n^{-\frac{|\mathcal K(h)|}{18}} \pi\Big(\sum_{j \in \mathcal K(h)}Y_j = \lfloor\tfrac{|\mathcal K(h)|}{3}\rfloor\,; \eta' = \eta^*\Big) < \exp(-n^{\frac{1}{6}})\,.\label{eq-thetaj-sum-for-large}
\end{align}
Therefore, combining \eqref{eq-thetaj-sum-for-small} and \eqref{eq-thetaj-sum-for-large}, given any $\eta^* \in \mathcal B_{\text{far}}^c \cap \mathcal B_{\text{large}}^c$, if we define $\mathcal I^* = \{h : |\mathcal K(h)| > n^{\frac{1}{6}}\}$ (which is determined by $\eta'$) we have
   \begin{align}
&\pi\Big(\big\{|\mathcal K(h) \cap\mathcal I_{\mathsf{eq}}| < \tfrac{|\mathcal K(h)|}{2}(1 + \mathbf{1}_{\{|\mathcal K(h) | < n^{1/6}\}})\mbox{ for all }h\big\}^c;\eta' = \eta^*\Big)\nonumber\\
\leq\ & \pi\Big(|\mathcal K(h) \cap\{i:Y_i \neq 0 \}| < \tfrac{|\mathcal K(h)|}{2}\mbox{ for some }h \in \mathcal I^*\,; \eta' = \eta^*\Big)\nonumber \\
+\ & \pi\Big(|\mathcal K(h) \cap\{i:Y_i \neq 0 \}| < |\mathcal K(h)| \mbox{ for some }h \not\in  \mathcal I^*\,;\eta' = \eta^*\Big)\nonumber \\
\leq\ & \sum_{\substack{0 \leq h \leq 2(\log n)^{A} \\ \mathcal K(h) > n^{1/6} }} \pi(\sum_{j \in \mathcal K(h)}Y_j \geq \tfrac{|\mathcal K(h)|}{2}\,; \eta' = \eta^*)+\sum_{\substack{0 \leq h \leq 2(\log n)^{A} \\ \mathcal K(h) \leq n^{1/6} }} \sum_{j \in \mathcal K(h)}\pi(Y_j > 0\,;\eta' = \eta^*) \nonumber \\
\leq\ &  3(\log n)^{A}\exp(-n^{\frac{1}{6}}) + 3(\log n)^{A} n^{-\frac{1}{3}+\frac{1}{6}} < n^{-\frac{1}{7}}\,,\nonumber
\end{align}
which gives
   \begin{align}
       \eqref{eq-zeta-prime-similar-to-zeta-2} &\leq 2\exp(-(\log n)^2)\nonumber \\
        &+ \max_{\eta^* \in \mathcal B_{\text{far}}^c \cap \mathcal B_{\text{large}}^c} \pi\Big(\big\{|\mathcal K(h) \cap\mathcal I_{\mathsf{eq}}| \geq \tfrac{|\mathcal K(h)|(1 + \mathbf{1}_{\{|\mathcal K(h) | < n^{1/6}\}})}{2}\mbox{ for all }h\big\}^c;\eta' = \eta^*\Big) \nonumber \\     
       &\leq n^{-100} + n^{-\frac{1}{7}} <  n^{-\frac{1}{8}} \,.  \label{eq-K(h)-scatter-case-2}
   \end{align}
   Combining \eqref{eq-K(h)-scatter-case-1} and \eqref{eq-K(h)-scatter-case-2} we have shown \eqref{eq-zeta-prime-similar-to-zeta-2}.
\end{proof}

\begin{lemma}{\label{lem-level-set-concentration}}
    Suppose that $X =(X_1,\ldots,X_n) \sim \mathsf{Mult}(n;\frac{1}{n}, \ldots,\frac{1}{n})$. Define 
    \begin{align*}
        \mathbf M=\max_{1 \leq i \leq n}X_i\,, \quad Y_j = \#\{i: X_i = j\}\,.
    \end{align*}
    Then we define $\aleph>0$ to be the unique positive integer such that $\aleph! \leq n < (\aleph+1)!$, and define the events
    \begin{align*}
        &\mathcal D_{\mathsf{unique}} = \{ Y_{\mathbf M} = 1 \} \,,
        \quad \mathcal D_{\mathsf{max}} = \left\{ |\mathbf M-\aleph| \leq 2 \right\} \,, \\
        &\mathcal D_{\mathsf{lb}} = \left\{ Y_j\geq \frac{0.4n}{ej!} \mbox{ for all }0 \leq j \leq \aleph-1 \right\} \,,  \\
        &\mathcal D_{\mathsf{ub}} = \left\{ Y_j\leq \frac{10n}{e((j-4)\vee 0)!} \vee 9\aleph^8 \mbox{ for all } 0 \leq j \leq \aleph+2 \right\} \,,\\
    \end{align*}
    Then we have $\Pb(\mathcal D_{\mathsf{unique}}) \wedge \Pb(\mathcal D^c_{\mathsf{unique}}) \geq \frac{1}{30\aleph}$. 
    and
    \begin{align}\label{eq-bound-D-lb-ub-mx}
        \Pb(\mathcal D_{\mathsf{max}} \cap \mathcal D_{\mathsf{lb}} \cap \mathcal D_{\mathsf{ub}} | \mathcal D_{\mathsf{unique}}), \quad \Pb(\mathcal D_{\mathsf{max}} \cap \mathcal D_{\mathsf{lb}} \cap \mathcal D_{\mathsf{ub}} | \mathcal D_{\mathsf{unique}}^c) = 1 - o(1)\,.
    \end{align}
\end{lemma}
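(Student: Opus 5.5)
The plan is to pass to independent $\mathsf{Pois}(1)$ variables via the coupling of Lemma~\ref{lem-Poisson-coupling} and Lemma~\ref{lem-Poisson-coupling-further}, with $a_i\equiv 1$. Under that coupling $\zeta=X$, and with probability $1-O(n^{-1/8})$ every level set $\mathcal K(h)$ of $\eta$ retains at least half of its members (all of them when $|\mathcal K(h)|<n^{1/6}$), and $|\zeta_i-\eta_i|\le 4$ for all $i$. Since $\#\{i:\zeta_i\neq\eta_i\}\le 2\sqrt n\log n$, the level counts satisfy $|Y_j-\tilde Y_j|\le 2\sqrt n\log n$, where $\tilde Y_j=\#\{i:\eta_i=j\}$. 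The top levels have size $O(\aleph^{O(1)})\ll n^{1/6}$, so by Lemma~\ref{lem-Poisson-coupling-further} they are preserved exactly. Here one has to make sure no $i$ with $\eta_i<\mathbf M$ jumps up into the top level. I would handle this by noting that jumps occur only at the $O(\sqrt n\log n)$ discrepant coordinates. Each of these is essentially a uniformly chosen index, and it lands at a level $\ge \aleph-3$ with probability $O(\aleph^{O(1)}/\aleph!)\cdot\sqrt n\log n=o(1)$, because $1/\aleph!\approx 1/n$. Consequently the top structure of $X$ (the value of $\mathbf M$ and whether $Y_{\mathbf M}=1$) coincides with that of $\eta$ with probability $1-o(1)$, and it suffices to prove every statement for $\tilde Y$, apart from an additive $o(1)$ error that must be made small compared with $\frac1{30\aleph}$.

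For the i.i.d.\ Poisson variables, $\tilde Y_j\sim\mathsf{Bin}(n,\frac{1}{ej!})$ with mean $\frac{n}{ej!}$. For $j\le\aleph-1$ this mean is at least $\frac{n}{e(\aleph-1)!}\ge \frac{\aleph}{e}\to\infty$. Chernoff then gives $\tilde Y_j\ge 0.5\,\frac{n}{ej!}$ with failure probability $e^{-c\aleph}$ at $j=\aleph-1$, and much smaller for lower $j$. The coupling error $2\sqrt n\log n$ is negligible against $\frac{n}{ej!}$ as long as $j!\le \sqrt n/\log^2 n$. For larger $j$ the set $\mathcal K(j)$ has size below $n^{1/6}$ or is otherwise small, and Lemma~\ref{lem-Poisson-coupling-further} keeps at least half of it; the constant $0.4$ leaves room for this. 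For $\mathcal D_{\mathsf{ub}}$, the slack $j\mapsto j-4$ together with the floor $9\aleph^8$ absorbs both the shift $|\zeta_i-\eta_i|\le 4$ and the Chernoff/Markov fluctuations at small means. This yields $1-o(1)$ unconditionally. Similarly, $\Pb(\mathbf M\ge\aleph+3)\le n/(\aleph+3)!\cdot O(1)=O(1/\aleph^2)$. Also $\Pb(\mathbf M\le \aleph-3)\le \Pb(\tilde Y_{\aleph-2}=0)=e^{-\Omega(\aleph^2)}$, because $\frac{n}{(\aleph-2)!}\ge \aleph(\aleph-1)$. Hence $\mathcal D_{\mathsf{max}}$ fails with probability $O(\aleph^{-2})$.

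The main obstacle is the lower bound $\Pb(\mathcal D_{\mathsf{unique}})\wedge\Pb(\mathcal D^c_{\mathsf{unique}})\ge\frac1{30\aleph}$, since it is exactly this bound that lets the $o(1/\aleph)$ failure probabilities survive conditioning. Indeed, if the unconditional failure probabilities are $o(1/\aleph)$, dividing by $\frac1{30\aleph}$ still gives $o(1)$. So I must sharpen the estimates above to $o(1/\aleph)$: the $O(\aleph^{-2})$ bound is fine, the coupling error $n^{-1/8}$ is fine, and the top-level-jump estimate needs checking. For the lower bound itself, I would look at the top threshold $J$, the largest $j$ with $\mu_J:=\frac{n}{e}\sum_{i\ge J}\frac1{i!}\ge 1$. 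The count $\#\{i:\eta_i\ge J\}$ is approximately $\mathsf{Pois}(\mu_J)$ with $\mu_J\in[1,O(\aleph)]$, because consecutive tail masses differ by a factor of about $\aleph$. Consider the event that exactly one index lies at level $J$ and none above, where the level $J+1$ has mean at most $1$. Its probability is at least $\mu e^{-\mu}\cdot e^{-1}$ for the appropriate $\mu\le O(\aleph)$, and I would check the regimes to get $\gtrsim\frac1{\aleph}$. For $\mathcal D^c_{\mathsf{unique}}$ I would use the event that exactly two indices lie at the top level and none above, whose probability is bounded below in the same way. In the worst case, where $\mu\approx\aleph$ is large, I would instead go one level down, where the relevant mean is at most $O(1)$. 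Balancing these cases is where the constant $30$ comes from, and I expect this case analysis to be the delicate step.
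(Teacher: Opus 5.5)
Your route is the paper's: Poisson coupling via Lemmas~\ref{lem-Poisson-coupling}--\ref{lem-Poisson-coupling-further}, exact transfer of the top levels, binomial concentration of level counts, and $O(\aleph^{-2})$ failure probabilities divided by an $\Omega(1/\aleph)$ lower bound. However, the step you flag as the crux does not close as proposed for $\mathcal D^c_{\mathsf{unique}}$. You use the event that \emph{exactly two} indices sit at the top level and none above, and you move one level up only when the top mean is $\approx\aleph$. Put $x=n/(e\,\aleph!)\in[1/e,(\aleph+1)/e)$, the mean of $\tilde Y_\aleph$. Then $\tilde Y_{\aleph+1}$ has mean $x/(\aleph+1)$, and a level below $\aleph$ can be the top only if $\tilde Y_\aleph=0$, which costs $\approx e^{-x}$. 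Hence the probability that $\max_i\eta_i$ is attained exactly twice is about $\frac{x^2}{2}e^{-x}+O(x^2/\aleph^2)$. For $x=\aleph^{1/4}$ (i.e.\ $n\approx e\,\aleph^{1/4}\aleph!$), and more generally whenever $\log\aleph\ll x\ll\sqrt\aleph$, this is $o(1/\aleph)$, far below $\frac{1}{30\aleph}$. So no choice of level rescues an exactly-two event in this intermediate range, which your two-regime split misses. There the maximum is typically attained about $x$ times, not twice.

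The remedy, which is what the paper does, is to use monotone events. For non-uniqueness, take $\{\tilde Y_\aleph\ge 2,\ \tilde Y_j=0\ \forall j>\aleph\}$. Since $\Pb(\eta_i>\aleph)\le 1/(\aleph+1)!$ and $n<(\aleph+1)!$, nothing exceeds $\aleph$ with probability $\gtrsim e^{-1}$. Conditionally, $\tilde Y_\aleph$ is binomial with mean at least $1/e$, hence it is $\ge 2$ with probability at least $1-(1+\frac1e)e^{-1/e}-o(1)>0.05$. This gives $\Pb(\mathcal D^c_{\mathsf{unique}})=\Omega(1)$ for every $n$, with no case analysis.

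Uniqueness needs no case analysis either. Exactly one index has $\eta_i\ge\aleph+1$, which already forces a unique maximum, with probability at least $\frac{n}{e(\aleph+1)!}\bigl(1-\frac{1}{(\aleph+1)!}\bigr)^{n-1}\gtrsim\frac{1}{e^2(\aleph+1)}>\frac{1}{20\aleph}$. The constant $30$ merely absorbs the $4/\aleph^2$ transfer error.

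Two minor points. First, Chernoff at level $0.5$ followed by keeping half of a level set gives only $0.25<0.4$ when $n^{1/6}\lesssim n/j!\lesssim\sqrt n\log^2 n$, so run Chernoff at $0.9$ instead. Second, your probabilistic ``no jump into the top level'' estimate relies on the internal structure of the coupling, and it can be replaced by the paper's deterministic argument. On the good event every level $\ge\aleph-6$ has fewer than $n^{1/6}$ elements and is preserved in full. So $X_i=l\ge\aleph-2$ with $\eta_i\neq l$ would force $\eta_i\in[l-4,l+4]\setminus\{l\}$, a fully preserved level, whence $X_i=\eta_i\neq l$, a contradiction.
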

\begin{proof}
Suppose $Z_1, \ldots, Z_{n} \overset{\mathsf{i.i.d.}}{\sim} \mathsf{Pois}(1)$, $\mathbf M' = \max_{1 \leq i \leq n}Z_i$ and $W_j = \#\{i : Z_i = j \}$ for all $j \geq 0$. Then, by Lemmas~\ref{lem-Poisson-coupling} and \ref{lem-Poisson-coupling-further}, there exists a coupling of $(X_1 , \ldots, X_{n})$ and $Z_1, \ldots, Z_{n} \overset{\mathsf{i.i.d.}}{\sim} \mathsf{Pois}(1)$ such that 
\begin{align*}
\mathcal G_{\mathsf{couple}} := \Big\{&\left|\{i: X_i = Z_i\}\cap \{i: Z_i = j\}\right| \geq \tfrac{W_j}{2}(1 + \mathbf{1}_{\{W_j < n^{1/6}\}}) \mbox{ for all }j \geq 0,\,\,\\&|X_i - Z_i| \leq 4 \mbox{ for all }1 \leq i \leq n\,,\#\{i : X_i \neq Z_i\} \leq 2(\log n)\sqrt{n}
\Big\}
\end{align*}
holds with probability $1 - O(\tfrac{1}{n^{1/8}})$. Under $\mathcal G_{\mathsf{couple}}$, we have 
\begin{align}
\tfrac{W_j}{2}(1 + \mathbf{1}_{\{W_j < n^{1/6}\}}) \leq Y_j \leq \sum_{i \in [(j-4) \vee 0, j+4]} W_i \mbox{ for all } j \geq 0.\label{eq-relationship-Wj-Yj}
\end{align}
Next, using binomial Chernoff bound, for all $0 \leq j \leq (\aleph - 1)$ we have
\begin{align*}
\Pb(|W_j - \Eb[W_j]| \geq 0.1\Eb[W_j]) = \Pb(|\mathsf{Bin}(n, \tfrac{1}{ej!}) - \tfrac{n}{ej!} | \geq \tfrac{0.1n}{ej!}) \leq\exp(-\tfrac{n}{300ej!}) \leq \exp(-\tfrac{\aleph}{1000})\,,
\end{align*}
therefore, we define 
\begin{align*}
\mathcal G_{\mathsf{conc}} &= \{\tfrac{0.9n}{ej!} \leq W_j \leq \tfrac{1.1n}{ej!} \mbox{ for all }0 \leq j \leq \aleph - 1\}\,,\\
\mathcal G_{\mathsf{precise}} &= \{|\mathbf M' - \aleph| \leq 2\} \cap \{W_{\aleph}\vee W_{\aleph + 1} \vee W_{\aleph + 2} \leq \aleph^8\} \cap\mathcal G_{\mathsf{couple}} \cap \mathcal G_{\mathsf{conc}}\,,
\end{align*}
then we get 
\begin{align}
\Pb(\mathcal G_{\mathsf{conc}}) \geq 1 - \aleph\exp(-\tfrac{\aleph}{1000}) \geq 1 - \exp(-\tfrac{\aleph}{2000})\,. \label{eq-concentrate-Wjs}
\end{align}
Then we have
\begin{align}
&\quad\,\,\Pb(\mathcal{G}^c_{\mathsf{precise}}) \leq \Pb(\mathcal{G}^c_{\mathsf{couple}})+ \Pb(|\mathbf M' - \aleph| > 2) + \tfrac{\Eb[W_{\aleph} + W_{\aleph+1} +W_{\aleph+2} ]}{\aleph^8} + \Pb(\mathcal G^c_{\mathsf{conc}})\nonumber \\
&< \exp(-\tfrac{\aleph}{8}) + \Pb(\mathbf M' \geq \aleph + 3) + \Pb(\mathbf M' \leq \aleph - 2)+ \tfrac{3}{\aleph^7}+ \exp(-\tfrac{\aleph}{2000}) \nonumber \\
&\leq \tfrac{1}{\aleph^2} + (1-(1-\tfrac{1}{(\aleph + 3)!})^n) + \Pb(W_{\aleph - 1} = 0)+ \exp(-\tfrac{\aleph}{2000})\nonumber \\
&\leq \tfrac{1}{\aleph^2} + \tfrac{1}{\aleph^2} + 2\exp(-\tfrac{\aleph}{2000}) \leq \tfrac{4}{\aleph^2}\,.\label{eq-maximum-concentrate-for-multinom}
\end{align}
Next, define $\mathcal G_{\mathsf{unique}} = \{W_{\mathbf M'} =1 \}$,
then by definition of $\mathcal G_{\mathsf{precise}}$, under $\mathcal G_{\mathsf{precise}}$ we have $W_j \leq 1.1\Eb[W_j] < \aleph^8$ for all $\aleph - 6 \leq j \leq \aleph - 1$, and $W_j \leq \aleph^8$ also holds for $\aleph \leq j \leq \aleph + 2$. 

Since $\aleph^8 < n^{1/6}$, by definition of $\mathcal G_{\mathsf{couple}}$ and Lemma~\ref{lem-Poisson-coupling-further}, for all $l \geq \aleph - 6$ we have $\left|\{i: X_i = Z_i\}\cap \{i: Z_i = l\}\right| \geq W_l = \#\{i: Z_i = l\}$, which directly implies $\{i:Z_i = l\}\subset \{i:X_i = l\}$. Therefore, for all $l \geq \aleph - 2 \geq \aleph - 6 + 4$, we have 
\begin{align*}
&\quad\,\,\{i:X_i = l\} \setminus \{i:Z_i = l\} \subset \{i:X_i = l\} \cap \Big(\bigcup_{|h| \leq 4, h \in \mathbb Z \setminus \{0\}}\{i:Z_i = l + h\} \Big)\\
&\subset \{i:X_i = l\} \cap \Big( \bigcup_{|h| \leq 4, h \in \mathbb Z \setminus \{0\}} \{i:X_i = l + h\}\Big) = \emptyset\,.
\end{align*}
Therefore, we have
\begin{align}
\mathcal G_{\mathsf{precise}} \subset\{W_j = Y_j \mbox{ for all }j \geq \mathbf M' \geq \aleph - 2 \}\,. \label{eq-G-precise-equal-levels}
\end{align}
Thus, we have $\mathcal G_{\mathsf{precise}} \cap \mathcal D_{\mathsf{unique}} =\mathcal G_{\mathsf{precise}}\cap \mathcal G_{\mathsf{unique}}$ and $\mathcal G_{\mathsf{precise}} \cap\mathcal D_{\mathsf{unique}}^c = \mathcal G_{\mathsf{precise}} \cap \mathcal G_{\mathsf{unique}}^c$. Also, note that $\Pb(Z_j \geq \aleph + h) = \sum_{j \geq \aleph + h}\tfrac{1}{ej!} < \tfrac{1}{(\aleph + h)!}$ for all $h \geq 0$, which gives
\begin{align}
\Pb(\mathcal G_{\mathsf{unique}}) &\geq \Pb(W_{\aleph + 1} = 1, \sum_{i \geq \aleph + 2}W_{i} = 0) \geq \Pb(W_{\aleph + 1} = 1| \sum_{i \geq \aleph + 2}W_{i} = 1)\Pb( \sum_{i \geq \aleph + 2}W_{i} = 1) \nonumber\\
&\geq \tfrac{\tfrac{1}{e(\aleph + 1)!}}{\tfrac{1}{e(\aleph + 1)!} + \tfrac{1}{(\aleph + 2)!}} \cdot \tfrac{n}{e(\aleph + 1)!}(1 -\tfrac{1}{(\aleph + 1)!})^{n-1} \geq \tfrac{1}{20\aleph}\,,\label{eq-Gcal-unique-bound}
\end{align}
Also, since $f(x) := (1-x)^{n-1}(1+(n-1)x)$ is strictly decreasing on $(0,1)$, 
\begin{align}
    \Pb(\mathcal G_{\mathsf{unique}}^c) &\geq \Pb(W_{\aleph } \geq 2, \sum_{i \geq \aleph + 1}W_{i} = 0) \geq \Pb(W_{\aleph} \geq 2 \mid \sum_{i \geq \aleph + 1}W_{i} = 0)\Pb(\sum_{i \geq \aleph + 1}W_{i} = 0) \nonumber\\
    &\geq \Big(1-f\big(\Pb(Z_1 = \aleph|Z_1 \leq \aleph)\big)\Big) \cdot (1 -\tfrac{1}{(\aleph + 1)!})^n \nonumber\\
    &\geq \tfrac{1}{e}(1-(1+\tfrac{1}{e})e^{-\tfrac{1}{e} + o(1)}) \geq \Omega(1)\,.\label{eq-Gcal-unique-complement-bound}
\end{align}
Therefore, we have
\begin{align}
\Pb(\mathcal D_{\mathsf{unique}}) &\geq \Pb(\mathcal G_{\mathsf{precise}}\cap\mathcal D_{\mathsf{unique}}) = \Pb(\mathcal G_{\mathsf{precise}}\cap\mathcal G_{\mathsf{unique}}) \nonumber\\
&\geq \Pb(\mathcal G_{\mathsf{unique}})- \Pb(\mathcal G^c_{\mathsf{precise}}) > \tfrac{1}{30\aleph}\,,\label{eq-Dcal-unique-bound}\\
\Pb(\mathcal D_{\mathsf{unique}}^c) &\geq \Pb(\mathcal G_{\mathsf{precise}}\cap\mathcal D_{\mathsf{unique}}^c) = \Pb(\mathcal G_{\mathsf{precise}}\cap\mathcal G_{\mathsf{unique}}^c) \nonumber\\
&\geq \Pb(\mathcal G_{\mathsf{unique}}^c)- \Pb(\mathcal G^c_{\mathsf{precise}}) >\tfrac{1}{\aleph}\,.\label{eq-Dcal-unique-complement-bound}
\end{align}

Finally, to prove \eqref{eq-bound-D-lb-ub-mx}, we first show that $\mathcal G_{\mathsf{precise}} \subset\mathcal D_{\mathsf{max}}\cap \mathcal D_{\mathsf{lb}} \cap \mathcal D_{\mathsf{ub}}$. By \eqref{eq-G-precise-equal-levels} we have $\mathcal G_{\mathsf{precise}} \subset\{ \mathbf M' = \mathbf M\}$ and therefore $\mathcal G_{\mathsf{precise}} \subset \mathcal D_{\mathsf{max}}$. Moreover, on the event $\mathcal G_{\mathsf{precise}}$ we must have $W_j \leq \aleph^8$ for $j \geq \aleph$, and $\tfrac{0.9n}{ej!} \leq W_j \leq \tfrac{1.1n}{ej!}$ for all $0 \leq j \leq \aleph - 1$. By \eqref{eq-relationship-Wj-Yj} we immediately have $\mathcal G_{\mathsf{precise}} \subset\mathcal D_{\mathsf{lb}} \cap \mathcal D_{\mathsf{ub}}$. Therefore, we have
\begin{align}
&\quad\,\,\Pb((\mathcal D_{\mathsf{max}} \cap \mathcal D_{\mathsf{lb}} \cap\mathcal D_{\mathsf{ub}} )^c) \leq \Pb(\mathcal G^c_{\mathsf{precise}}) \leq \tfrac{4}{\aleph^2}\,.\label{eq-3events-for-multinom}
\end{align}
Combining \eqref{eq-3events-for-multinom} with \eqref{eq-Dcal-unique-bound} and \eqref{eq-Dcal-unique-complement-bound} we have
\begin{align*}
\Pb((\mathcal D_{\mathsf{max}} \cap \mathcal D_{\mathsf{lb}} \cap \mathcal D_{\mathsf{ub}})^c | \mathcal D_{\mathsf{unique}}) \vee \Pb((\mathcal D_{\mathsf{max}} \cap \mathcal D_{\mathsf{lb}} \cap \mathcal D_{\mathsf{ub}})^c | \mathcal D_{\mathsf{unique}}^c) \leq \frac{\tfrac{4}{\aleph^2}}{\tfrac{1}{30\aleph}} = o(1)\,,
\end{align*}
which proves \eqref{eq-bound-D-lb-ub-mx}, completing the proof of our lemma.
\end{proof}

\subsection{Estimates on maxima and partition functions}{\label{subsec:estim-M-Z}}

In this subsection, we present some estimates on maxima and partition functions during the voting process. The following lemma gives a rough bound on the maxima in the first $O(\log \log n)$ rounds.

\begin{lemma}{\label{lem-event-E-diamond}}
    Define
    \begin{align}{\label{eq-def-E-diamond}}
        \mathcal E_{\ref{lem-event-E-diamond}} = \left\{ \mathbf M_{\mathbf X_k} \leq \left( \frac{ 20\log(n) }{ \log\log(n) } \right)^{\alpha^{k}},\ 1 \leq k \leq \frac{\log\log(n)}{\log(\alpha)} \right\} \,. 
    \end{align}
    Then we have $\Pb_n^{\alpha}(\mathcal E_{\ref{lem-event-E-diamond}})=1-o(\frac{1}{n^{0.9}})$ when $\alpha>1$.
\end{lemma}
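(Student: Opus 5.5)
Throughout, $\mathbf M_{\mathbf X}$ denotes $\max_i \mathbf X^{(i)}$ and $\mathbf Z_{\mathbf X}=\sum_i(\mathbf X^{(i)})^\alpha$, as in Section~\ref{sec:proof-strats}. The plan is induction on $k$ via a one-step bound: given $\mathbf M_{\mathbf X_{k-1}}\le m_{k-1}$, control $\mathbf M_{\mathbf X_k}$. We then take a union bound over the $O(\log\log n)$ rounds.

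\emph{Lower bound on the partition function.} The first step is a deterministic lower bound on $\mathbf Z$. By convexity of $x\mapsto x^\alpha$ for $\alpha>1$ and $\sum_i \mathbf X_{k-1}^{(i)}=n$, Jensen (or power-mean) gives
\begin{align*}
\mathbf Z_{\mathbf X_{k-1}}=\sum_i (\mathbf X_{k-1}^{(i)})^\alpha \ge n\cdot 1^\alpha = n,
\end{align*}
since the average of the $\mathbf X_{k-1}^{(i)}$ is $1$. This holds for every configuration in $\Omega_n$, so no control of $\mathbf Z$ is needed. Consequently, conditionally on $\mathbf X_{k-1}$, each coordinate satisfies
\begin{align*}
\mathbf X_k^{(i)}\sim\mathsf{Bin}\big(n, (\mathbf X_{k-1}^{(i)})^\alpha/\mathbf Z_{\mathbf X_{k-1}}\big), \qquad\text{with mean } \le (\mathbf X_{k-1}^{(i)})^\alpha\le \mathbf M_{\mathbf X_{k-1}}^\alpha.
\end{align*}

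\emph{Base case $k=1$.} Here $\mathbf X_1\sim\mathsf{Mult}(n;1/n,\dots,1/n)$. A union bound gives $\Pb(\mathbf M_{\mathbf X_1}\ge r)\le n\binom{n}{r}n^{-r}\le n/r!$. Take $r=m_1:=\lfloor(20\log n/\log\log n)^\alpha\rfloor$; in fact $r=\lceil 20\log n/\log\log n\rceil$ already suffices. Stirling gives $r!\ge (r/e)^r=\exp\big((20+o(1))\log n\big)$, so this probability is $O(n^{-18})$.

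\emph{Inductive step.} Set $m_k:=(20\log n/\log\log n)^{\alpha^k}$. Note $m_k = m_{k-1}^\alpha$, which matches the one-step mean bound exactly, so we need extra room for fluctuations. On $\{\mathbf M_{\mathbf X_{k-1}}\le m_{k-1}\}$ each binomial mean is at most $m_{k-1}^\alpha=m_k$, so we must beat the fluctuations using the slack already built in. The trick is to run the induction with slightly smaller thresholds. Define $\tilde m_1=e\cdot 8\log n/\log\log n$ and $\tilde m_k=\tilde m_{k-1}^\alpha+\tilde m_{k-1}^{\alpha}$, i.e.\ $\tilde m_k= 2\tilde m_{k-1}^\alpha$, allowing a factor-$2$ fluctuation. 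A Chernoff bound gives
\begin{align*}
\Pb\big(\mathsf{Bin}(n,p)\ge 2\mu' \big)\le e^{-\mu'/3},\qquad \mu'\ge\max(np,\tilde m_{k-1}^\alpha),
\end{align*}
and since $\mu'\ge \tilde m_1\gg\log n$, a union bound over $n$ coordinates costs $n e^{-\tilde m_1/3}=o(n^{-2})$. Unwinding, $\tilde m_k= 2^{(\alpha^k-1)/(\alpha-1)}\tilde m_1^{\alpha^k}\le (C_\alpha\tilde m_1)^{\alpha^k}$ with $C_\alpha=2^{1/(\alpha-1)}$.

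\emph{Main obstacle.} The delicate point is whether $C_\alpha\tilde m_1\le 20\log n/\log\log n$. For $\alpha$ close to $1$, $C_\alpha$ is huge, so the constant $20$ cannot absorb a factor-$2$ fluctuation per step. The fix I would try first is to let the multiplicative fluctuation factor shrink with $k$. Use $\tilde m_k=(1+\epsilon_k)\tilde m_{k-1}^\alpha$ with $\epsilon_k=\tilde m_{k-1}^{-\alpha/3}$. Chernoff still gives failure probability $\exp(-\epsilon_k^2\tilde m_{k-1}^\alpha/3)=\exp(-\tilde m_{k-1}^{\alpha/3}/3)$. Since $\tilde m_{k-1}\ge\log n/\log\log n$, this is at most $\exp(-(\log n)^{\alpha/3-o(1)})$, which is not $o(n^{-2})$. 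So I would use $\epsilon_k=\sqrt{10\log n/\tilde m_{k-1}^\alpha}$, which is at most $(\log n)^{-(\alpha-1)/2+o(1)}$ for $k\ge2$. Then $\prod_k(1+\epsilon_k)$ converges to $1+o(1)$, because $\tilde m_k$ grows doubly exponentially so the $\epsilon_k$ decay super-geometrically. The first step is handled by the exact base bound with constant $e\cdot 8<20$.

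\emph{Conclusion.} With $\prod_k(1+\epsilon_k)=1+o(1)$, we get $\tilde m_k\le ((1+o(1))\tilde m_1)^{\alpha^k}\le m_k$. Each step fails with probability $n\cdot n^{-10/3}$, and the $\log\log n/\log\alpha$ steps contribute $O(n^{-2}\log\log n)+O(n^{-18})=o(n^{-0.9})$.
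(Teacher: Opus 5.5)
Your argument is correct and is essentially the paper's proof: the deterministic bound $\mathbf Z\ge n$, the estimate $n/r!$ for the first round, then a binomial Chernoff bound with a union bound over coordinates and over the $O(\log\log n)$ rounds. The one place you diverge is replacing the paper's fluctuation factor $2$ by $1+\epsilon_k$, and that change is unnecessary: it rests on an off-by-one, because $\tilde m_k=2\tilde m_{k-1}^\alpha$ unwinds to $2^{(\alpha^{k-1}-1)/(\alpha-1)}\tilde m_1^{\alpha^{k-1}}$ (exponent $\alpha^{k-1}$, not $\alpha^k$), so against the lemma's threshold $(20\log n/\log\log n)^{\alpha^k}$ there is a spare factor of at least $\tilde m_1^{(\alpha-1)\alpha^{k-1}}$, which absorbs the powers of $2$ once $(\alpha-1)^2\log\tilde m_1\ge\log 2$; hence the plain factor-$2$ step already works for every fixed $\alpha>1$, and your refinement, while valid (it even yields the sharper exponent $\alpha^{k-1}$), only adds complication.
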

\begin{proof}
    Note that $(\mathbf X_1^{(1)},\ldots,\mathbf X_1^{(n)}) \sim \mathsf{Mult}(n;\frac{1}{n},\ldots,\frac{1}{n})$. Thus, by Lemma~\ref{lem-level-set-concentration} we have
    \begin{align*}
        \Pb_n^{\alpha}\left( \mathbf M_{\mathbf X_1} > \frac{10\log(n)}{\log\log(n)} \right) &\leq n \cdot \binom{n}{\tfrac{10\log n}{\log \log n}} \cdot(\frac{1}{n})^{\tfrac{10\log n}{\log \log n}} \\
        &\leq n\cdot((\tfrac{10\log n}{\log \log n})!)^{-1} = o(\frac{1}{n})\,,
    \end{align*}
    In addition, given any $(\mathbf X_k^{(1)},\ldots,\mathbf X_k^{(n)}) \in \Omega_n$ such that $\mathbf M_{\mathbf X_k} \leq M_k$ for some $M_k \geq \tfrac{10\log(n)}{\log\log(n)}$, we have (note that when $\alpha\geq 1$ we always have $\mathbf Z_{\mathbf X_k} \geq n$)
    \begin{align*}
        (\mathbf X_{k+1}^{(1)},\ldots,\mathbf X_{k+1}^{(n)}) \sim \mathsf{Mult}(n;a_1,\ldots,a_n) \mbox{ with } \max\{ a_i \} \leq \frac{M_k^{\alpha}}{n}\,.
    \end{align*}
    Therefore, when $M_k^\alpha < n$, by binomial Chernoff bound we have
    \begin{align*}
        &\Pb_n^{\alpha}\left( \mathbf M\left( \mathbf X_{k+1}^{(1)}, \ldots, \mathbf X_{k+1}^{(n)} \right) > 2M_k^{\alpha} \right) \\
        \leq\ & n\Pb\left( \mathsf{Bin}\left( n,\frac{M_k^{\alpha}}{n} \right)>2M_k^{\alpha} \right) \leq n\exp(-\frac{M_k^\alpha}{5}) < \frac{1}{n}\,,
    \end{align*}
    and when $M_k^\alpha \geq n$ we have $\Pb_n^{\alpha}\left( \mathbf M\left( \mathbf X_{k+1}^{(1)}, \ldots, \mathbf X_{k+1}^{(n)} \right) > 2M_k^{\alpha} \right) = 0$. Thus, using an inductive argument and a union bound, it is clear that $\Pb_n^{\alpha}(\mathcal E_{\ref{lem-event-E-diamond}})=1-o(\frac{1}{n^{0.9}})$, which finishes our proof. 
\end{proof}

The next lemma is a rough control of the partition functions and maxima in the first constant rounds.

\begin{lemma}{\label{lem-good-event-E-1}}
    Suppose $\alpha \in [1, +\infty)$ and $R$ is a fixed constant. Define $\mathcal E_{\ref{lem-good-event-E-1}}=\mathcal E_{\ref{lem-event-E-diamond}} \cap \mathcal E_{\star}$, where 
    \begin{align*}
         \mathcal E_{\star} = \left\{ \mathbf Z_{\mathbf X_k} \leq n(200\alpha)^{2R^2\alpha^R}, 1\leq k \leq R \right\} \,.
    \end{align*}
    We then have $\Pb^{\alpha}_n(\mathcal E_{\ref{lem-good-event-E-1}})=1-o(n^{-\tfrac{1}{2}})$.
\end{lemma}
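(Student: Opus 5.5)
Since $\Pb_n^\alpha(\mathcal E_{\ref{lem-event-E-diamond}}^c)=o(n^{-0.9})$ by Lemma~\ref{lem-event-E-diamond}, it suffices to show $\Pb_n^\alpha(\mathcal E_\star^c)=o(n^{-1/2})$. We proceed round by round. Write $B_k:=n(200\alpha)^{2k^2\alpha^k}$; note that $B_k\le B_R$ for $k\le R$. We show that for each $1\le k\le R$,
\begin{align*}
\Pb_n^\alpha\big(\mathbf Z_{\mathbf X_k}>B_k,\ \mathbf Z_{\mathbf X_{k-1}}\le B_{k-1}\big)=o(n^{-1/2})\,,
\end{align*}
with the convention $\mathbf Z_{\mathbf X_0}=n$. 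A union bound over the $R$ rounds then finishes the argument.

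Fix $k$ and condition on $\mathbf X_{k-1}$ with $\mathbf Z_{\mathbf X_{k-1}}\le B_{k-1}$. Then $\mathbf X_k\sim\mathsf{Mult}(n;p_1,\ldots,p_n)$ with $p_i=\mu(\mathbf X_{k-1}^{(i)})/\mathbf Z_{\mathbf X_{k-1}}$, so the mean vote count of candidate $i$ is $a_i=np_i$. Since $\mathbf Z_{\mathbf X_{k-1}}\ge n$ and we are on $\mathcal E_{\ref{lem-event-E-diamond}}$, every $a_i$ is at most $(\log n)^{O(1)}$. We bound $\Eb[\mathbf Z_{\mathbf X_k}\mid\mathbf X_{k-1}]=\sum_i\Eb[(\mathbf X_k^{(i)})^\alpha]$ using the binomial moment bound $\Eb[\mathsf{Bin}(n,p)^\alpha]\le C_\alpha\big((np)^\alpha+1\big)$; for $\alpha\ge1$ one can take $C_\alpha\le(10\alpha)^{\alpha}$, via the Poisson/Stirling moment bound $\Eb[\mathsf{Pois}(\lambda)^{\lceil\alpha\rceil}]\le(\lambda+\lceil\alpha\rceil)^{\lceil\alpha\rceil}$. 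Because $\mu(x)=x^\alpha$ and $\sum_i\mu(\mathbf X_{k-1}^{(i)})=\mathbf Z_{\mathbf X_{k-1}}$, this gives
\begin{align*}
\Eb[\mathbf Z_{\mathbf X_k}\mid\mathbf X_{k-1}]\le C_\alpha\Big(n+\frac{n^\alpha}{\mathbf Z_{\mathbf X_{k-1}}^\alpha}\sum_i(\mathbf X_{k-1}^{(i)})^{\alpha^2}\Big)\,.
\end{align*}
Next we use $\sum_i x_i^{\alpha^2}\le(\max_i x_i)^{\alpha^2-\alpha}\sum_i x_i^\alpha$ together with $\mathbf Z_{\mathbf X_{k-1}}\ge n$. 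The sum is then controlled by $\mathbf Z_{\mathbf X_{k-1}}$ times a power of $\mathbf M_{\mathbf X_{k-1}}$, and this power is polylogarithmic, not constant.

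This polylogarithmic factor is the main obstacle: a naive expectation bound yields $n(\log n)^{O(1)}$ rather than $O(n)$. The remedy is to exploit the fact that most of the mass sits in small values. For $k=1$, $\mathbf Z_{\mathbf X_1}=\sum_i(\mathbf X_1^{(i)})^\alpha$ with $\mathbf X_1$ close to i.i.d.\ $\mathsf{Pois}(1)$, so $\Eb\mathbf Z_{\mathbf X_1}\le n\,\Eb[\mathsf{Pois}(1)^\alpha]\le n(2\alpha)^{\alpha}$ directly. For $k\ge2$ we instead track the full moment vector
\begin{align*}
\mathbf S_k(\beta):=\sum_i(\mathbf X_k^{(i)})^\beta\,,\qquad \beta\in\{\alpha,\alpha^2,\ldots,\alpha^{R}\}\,.
\end{align*}
The binomial moment bound gives $\Eb[\mathbf S_k(\beta)\mid\mathbf X_{k-1}]\le C_\beta\big(n+(n/\mathbf Z_{\mathbf X_{k-1}})^\beta\,\mathbf S_{k-1}(\alpha\beta)\big)\le C_\beta\big(n+\mathbf S_{k-1}(\alpha\beta)\big)$, using $\mathbf Z_{\mathbf X_{k-1}}\ge n$. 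Iterating down from round $1$, where $\mathbf S_1(\beta)\le n(2\beta)^{\beta}$ in expectation, we obtain $\Eb\,\mathbf S_k(\alpha)\le n\prod_{j\le k}C_{\alpha^j}\cdot(2\alpha^{k+1})^{\alpha^{k+1}}$, which is at most $n(200\alpha)^{R^2\alpha^R}$. Hence the lower bound $\mathbf Z\ge n$ is all that is needed, and no upper bound on $\mathbf Z$ enters the recursion.

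Markov's inequality applied to this bound only gives probability $(200\alpha)^{-R^2\alpha^R}$, which is not $o(n^{-1/2})$. We therefore need concentration. Conditionally on $\mathbf X_{k-1}$, each quantity $\mathbf S_k(\beta)$ is a function of $n$ independent votes, and changing one vote alters it by at most $2\beta(\mathbf M_{\mathbf X_k}+1)^{\beta}\le(\log n)^{O(1)}$ on $\mathcal E_{\ref{lem-event-E-diamond}}$. McDiarmid's inequality, applied on the event $\mathcal E_{\ref{lem-event-E-diamond}}$ with the usual truncation, then yields deviations of order $\sqrt n(\log n)^{O(1)}=o(n)$ with failure probability $e^{-n^{\Omega(1)}}$. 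Consequently, at each round every $\mathbf S_k(\beta)$ stays within a factor of $2$ of its conditional-expectation bound. Inducting over the finitely many pairs $(k,\beta)$ gives $\mathbf Z_{\mathbf X_k}=\mathbf S_k(\alpha)\le n(200\alpha)^{2R^2\alpha^R}$ for all $1\le k\le R$ with probability $1-o(n^{-1/2})$. I expect the moment-recursion bookkeeping, in particular keeping the constant within the stated exponent, to be the fiddly part.
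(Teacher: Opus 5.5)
Your proof is correct. Its backbone matches the paper's: a hierarchy of moments $\sum_i(\mathbf X_k^{(i)})^{\alpha^j}$ whose exponent drops by a factor $\alpha$ per round, closed using only $\mathbf Z_{\mathbf X_{k-1}}\ge n$ and the polylogarithmic bound on maxima from Lemma~\ref{lem-event-E-diamond}. The differences are in the bookkeeping and in the concentration tool.

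On bookkeeping, the paper follows a single chain, $\sum_i(\mathbf X_k^{(i)})^{\alpha^{R+1-k}}\le n(200\alpha)^{2kR\alpha^R}$, and recovers $\mathbf Z_{\mathbf X_k}$ from it at every round by H\"older's inequality. You instead carry all exponents at once. That works provided two things hold. First, at round $k$ you keep only $\beta\le\alpha^{R+1-k}$, or enlarge the exponent set, so that $\beta\mapsto\alpha\beta$ stays inside it. Second, your opening round-by-round reduction, which conditions only on $\mathbf Z_{\mathbf X_{k-1}}\le B_{k-1}$, is replaced by the joint induction over pairs $(k,\beta)$, as you eventually do.

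On concentration, the paper couples $\mathbf X_{k+1}$ with independent Poissons (Lemma~\ref{lem-Poisson-coupling}) and dominates them by $\mathsf{Pois}((\mathbf X_k^{(i)})^\alpha)$. It then normalizes the deviations, separates them from the previous-round moment by Cauchy--Schwarz, and finishes with Chebyshev, at a cost of $O(1/n)$ per round. You apply McDiarmid directly to the $n$ independent votes, using the truncated functional $\sum_i\min(\mathbf X_k^{(i)},L)^\beta$ with deterministic polylogarithmic $L$. This functional has bounded differences $2\beta L^{\beta-1}$ (not $(\mathbf M+1)^\beta$, though that looser bound is harmless). Its conditional mean is at most that of $\mathbf S_k(\beta)$, and it coincides with $\mathbf S_k(\beta)$ on $\mathcal E_{\ref{lem-event-E-diamond}}$, so the truncation is legitimate.

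Your route avoids the Poisson coupling and the normalization trick, and makes the concentration step super-polynomially sharp. In both arguments, however, the total error is governed by $\Pb_n^\alpha(\mathcal E_{\ref{lem-event-E-diamond}}^c)=o(n^{-0.9})$. The paper's route in exchange needs only second moments of independent variables and a single moment chain.

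Your constant also fits. With $C_\beta\le(4\beta)^\beta$, which follows from your Touchard-polynomial bound, and a factor $4$ of slack per round, the chain gives at most $(16\alpha^R)^{R\alpha^R}\le(200\alpha)^{2R^2\alpha^R}$.
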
  

\begin{proof}
Note that on the event $\Ecal_{\ref{lem-event-E-diamond}}$ we have $\mathbf M_{\mathbf X_k} \leq (\log n)^{\alpha^k}$ for all $1 \leq k \leq R-1$. Let
\begin{align}
    \Ecal_{\ref{lem-event-E-diamond};\leq k} = \left\{ \mathbf M_{\mathbf X_i} \leq (\log n)^{\alpha^i} \mbox{ for all } 1 \leq i \leq k \right\}, \quad k \geq 1 \,.
\end{align}  
Since we have shown that $\Pb^{\alpha}_n(\mathcal{E}_{\ref{lem-event-E-diamond}}) = 1-o(n^{-0.9})$, it suffices to show that $\Pb^{\alpha}_n(\mathcal E_{\star}^c \cap \Ecal_{\ref{lem-event-E-diamond}; \leq R-1}) = o(n^{-\tfrac{1}{2}})$. Define the events
\begin{align}
    \Ecal_{\star;k} = \left\{ \sum_{i=1}^{n} \left( \mathbf X_k^{(i)} \right)^{\alpha^{R+1-k}} \leq n(200\alpha)^{2kR\alpha^R}  \right\}, \quad k \geq 0 \,,  \label{eq-def-Ecal-star-K}
\end{align}
and define $\Ecal_{\star;\leq k} = \bigcap_{0 \leq i \leq k} \Ecal_{\star;i}$ for any $k\geq 0$. Using H\"{o}lder's inequality, on $\Ecal_{\star;k}$ we have
\begin{align*}
    \mathbf Z_{\mathbf X_k} &\leq \left( \sum_{i=1}^n \left( X_k^{(i)} \right)^{\alpha^{R+1-k}} \right)^{\alpha^{k-R}} \cdot \left( \sum_{i=1}^n 1 \right)^{1-\alpha^{k-R}} \leq n (200\alpha)^{2kR\alpha^R}  \,, 
\end{align*}
and therefore $\Ecal_{\star;\leq R}\subset\Ecal_{\star}$. Let $\Ecal_{\mathsf{all};\leq k} = \Ecal_{\ref{lem-event-E-diamond};\leq k} \cap \Ecal_{\star;\leq k}$.
Then it suffices to prove that
\begin{align}
    &\Pb_n^{\alpha}\left( \Ecal_{\star;0}^c  \right) = 0 \,,  \label{eq-prob-E-star-0}  \\
    &\Pb^{\alpha}_n\left( \Ecal_{\star;k+1}^c \cap \Ecal_{\mathsf{all};\leq k} \right) = o(n^{-\tfrac{1}{2}}), 0 \leq k \leq R-1\,,\label{eq-induction-Ecal-star}\\
    &\Pb^{\alpha}_n\left( \Ecal_{\ref{lem-event-E-diamond};\leq k+1}^c \cap \Ecal_{\mathsf{all};\leq k} \right) = o(n^{-\tfrac{1}{2}}), 0 \leq k \leq R-1\,.\label{eq-induction-Ecal-trivial}
\end{align}
Note that \eqref{eq-prob-E-star-0} is straightforward by definition of $\Ecal_{\star ; 0}$. Also, \eqref{eq-induction-Ecal-trivial} automatically holds since $\Pb^{\alpha}_n(\Ecal_{\ref{lem-event-E-diamond};\leq k+1}^c \cap \Ecal_{\mathsf{all};\leq k}) \leq \Pb^{\alpha}_n(\mathcal{E}^c_{\ref{lem-event-E-diamond}}) = o(n^{-0.9})$. Thus, it suffices to prove \eqref{eq-induction-Ecal-star}. By an induction using \eqref{eq-prob-E-star-0} and \eqref{eq-induction-Ecal-trivial}, it suffices to prove that
\begin{align}
\Pb^{\alpha}_n\left( \Ecal_{\star;k+1}^c ;\Ecal_{\mathsf{all};\leq k} \right) = o(n^{-\tfrac{1}{2}}), 0 \leq k \leq R-1\,.\label{eq-induction-Ecal-star-conditional}
\end{align}
Define independent Poisson random variables
\begin{align*}
    \eta^{(i)}_{k+1} \sim \mathsf{Pois}\left( \frac{n(\mathbf X_k^{(i)})^\alpha}{\mathbf Z_{\mathbf X_k}} \right),\ \zeta_{k+1}^{(i)} \sim \mathsf{Pois}\left( (\mathbf X_k^{(i)})^\alpha \right), \quad 1 \leq i \leq n \text{ and }\mathbf{X}_k^{(i)}>0\,.
\end{align*}
Since $\mathbf Z_{\mathbf X_k} \geq n$ when $\alpha\geq 1$, $\zeta_{k+1}^{(i)}$ stochastically dominates $\eta^{(i)}_{k+1}$. Also, by Lemma~\ref{lem-Poisson-coupling} with $A = 10 \vee \alpha^{R+1}$, given $(\mathbf X_1,\ldots,\mathbf X_k)$ such that $\Ecal_{\mathsf{all};\leq k}$ holds, there exists a coupling $\pi_{k+1}$ between $(\mathbf X_{k+1}^{(i)})_{1 \leq i \leq n}$ and $(\eta_{k+1}^{(i)})_{1 \leq i \leq n}$ such that
\begin{align*}
    \pi_{k+1}\left( \max_{1\leq i \leq n}|\mathbf X^{(i)}_{k+1} - \eta^{(i)}_{k+1}| \leq 4 \right) \geq 1-\frac{1}{n} \,.
\end{align*}
Therefore, given $\Pb^{\alpha}_n(\Ecal_{\mathsf{all};\leq k} ) = 1-o(1)$ we have
\begin{align}
    & \Pb^{\alpha}_n\left( \Ecal_{\star;k+1}^c ;\Ecal_{\mathsf{all};\leq k} \right) \nonumber \\
    \leq\ & \frac{1}{n} + \Pb^\alpha_n\left( \sum_{i=1}^n \left( \eta^{(i)}_{k+1}+4 \right)^{\alpha^{R-k}} > (200\alpha)^{2(k+1)R\alpha^R} n; \Ecal_{\mathsf{all};\leq k} \right) \nonumber \\
    \leq\ & o(\frac{1}{\sqrt{n}}) + \Pb^\alpha_n\left( \sum_{i=1}^n \left( \zeta^{(i)}_{k+1}+4 \right)^{\alpha^{R-k}} > (200\alpha)^{2(k+1)R\alpha^R} n; \Ecal_{\mathsf{all};\leq k} \right) \,. \label{eq-Ecal-star-k+1-bound-part-1-step-0} 
\end{align}
Note that on $\Ecal_{\star,\leq k}$ we have
\begin{align*}
    &\sum_{i=1}^n\left( \zeta^{(i)}_{k+1}+4 \right)^{\alpha^{R-k}} \leq 3^{\alpha^{R-k}} \sum_{i=1}^n\left( \left( \zeta^{(i)}_{k+1} - (\mathbf X_{k}^{(i)})^{\alpha} \right)^{\alpha^{R-k}}\mathbf 1_{\{\mathbf X_{k}^{(i)} > 0\}} + (\mathbf X_{k}^{(i)})^{\alpha^{R-k+1}} + 4^{\alpha^{R-k}} \right) \,, \\
    &\sum_{i=1}^n (\mathbf X_{k}^{(i)})^{\alpha^{R-k+1}} \leq (200\alpha)^{2kR\alpha^R}n \,.
\end{align*} 
Thus
\begin{align*}
    &\sum_{i=1}^n \left( \zeta^{(i)}_{k+1}+4 \right)^{\alpha^{R-k}} > (200\alpha)^{2(k+1)R\alpha^R} n \\
    \Longrightarrow & \sum_{i=1}^n\left| \zeta^{(i)}_{k+1}- (\mathbf X_{k}^{(i)})^{\alpha} \right|^{\alpha^{R-k}} \mathbf 1_{\{\mathbf X_{k}^{(i)} > 0\}}> \left( 3^{-\alpha^{R-k}}(200\alpha)^{2(k+1)R\alpha^R} - 4^{\alpha^{R-k}} - (200\alpha)^{2kR\alpha^R} \right)n \\
    \Longrightarrow & \sum_{i=1}^n\left(\left| \zeta^{(i)}_{k+1}- (\mathbf X_{k}^{(i)})^{\alpha} \right|^{\alpha^{R-k}}\cdot \mathbf 1_{\{\mathbf X_{k}^{(i)} > 0\}}\right) > (200\alpha)^{2kR\alpha^R} n \,.
\end{align*}
Thus, we have \eqref{eq-Ecal-star-k+1-bound-part-1-step-0} is bounded by $o(\frac{1}{n})$ plus
\begin{align}
    \Pb^\alpha_n\left( \sum_{i=1}^n\left(\left| \zeta_{k+1}^{(i)} - (\mathbf X_{k}^{(i)})^{\alpha} \right|^{\alpha^{R-k}}\cdot \mathbf 1_{\{\mathbf X_{k}^{(i)} > 0\}}\right) > (200\alpha)^{(2k+1)R\alpha^R} n; \Ecal_{\mathsf{all};\leq k} \right) \,.  \label{eq-Ecal-star-k+1-bound-part-1}
\end{align}
Define $\iota_{k+1}^{(i)}=\frac{ |\zeta^{(i)}_{k+1}-(\mathbf X_{k}^{(i)})^\alpha |}{ (\mathbf X_{k}^{(i)})^{\frac{\alpha}{2}} }\mathbf 1_{\{\mathbf X_{k}^{(i)} > 0\}}$. By binomial Chernoff bound, for all $t>0$ and all $i$ such that $\mathbf X^{(i)}_{k}>0$ we have $\Pb( \iota_{k+1}^{(i)} \geq t ) \leq 2\exp(-(t\wedge t^2)/3)$. Thus, for all $s\geq 1$ we have
\begin{align}
    \Eb\left[ \left( \iota_{k+1}^{(i)} \right)^s \right] &\leq 1 + \int_{1}^{\infty} \Pb\left( \left(\iota_{k+1}^{(i)} \right)^s > u \right) \mathrm du = 1 + s\cdot 3^{s}\int_{1/3}^{\infty} \Pb\left( \iota_{k+1}^{(i)} > 3t \right)t^{s-1}dt\nonumber\\
    &\leq 1 + 2s\cdot 3^{s} \int_{0}^{\infty} e^{-t}t^{s-1}dt \leq 4 \cdot 3^{s} \Gamma(s+1)\,.
\end{align}
Therefore, by Cauchy-Schwarz inequality, we have
\begin{align}
    \eqref{eq-Ecal-star-k+1-bound-part-1} &\leq \Pb\left( \Big(\sum_{i=1}^n (\iota^{(i)}_{k+1})^{2\alpha^{R-k}}\Big)\Big(\sum_{i=1}^n (\mathbf X_k^{(i)})^{\alpha^{R+1-k}}\Big) \geq (200\alpha)^{(4k+2)R\alpha^R}n^2\right) \nonumber\\
    &\leq \Pb\left( \sum_{i=1}^n (\iota^{(i)}_{k+1})^{2\alpha^{R-k}} \geq (200\alpha)^{(2k+2)R\alpha^R}n \right)\nonumber\\
    &\leq \Pb\left( \sum_{i=1}^n \Big((\iota^{(i)}_{k+1})^{2\alpha^{R-k}} -\Eb[(\iota^{(i)}_{k+1})^{2\alpha^{R-k}}]\Big)\geq 4 \cdot 3^{2\alpha^{R-k}} \Gamma(2\alpha^{R-k}+1)n \right)\,,\label{eq-Ecal-star-k+1-bound-temp1}
\end{align}
where the third inequality holds by the fact that $(200\alpha)^{(2k+2)R\alpha^R}n \geq 8 \cdot 3^{2\alpha^{R-k}} \Gamma(2\alpha^{R-k}+1)n$. Thus, by Chebyshev's inequality we have
\begin{align*}
    \eqref{eq-Ecal-star-k+1-bound-temp1} &\leq \left( 4\cdot 3^{2\alpha^{R-k}} \Gamma(2\alpha^{R-k}+1)n \right)^{-2} \sum_{i=1}^{n}\mathbb{E}\left[ \left( \iota^{(i)}_{k+1} \right)^{4\alpha^{R-k}} \right] \\
    &\leq \frac{8\cdot 3^{4\alpha^{R-k}}\Gamma(4\alpha^{R-k}+1)n }{ (4\cdot 3^{2\alpha^{R-k}}\Gamma(2\alpha^{R-k}+1)n)^2 } = o(n^{-\tfrac{1}{2}})\,, 
\end{align*}
which yields \eqref{eq-induction-Ecal-star-conditional} and therefore proves \eqref{eq-induction-Ecal-star}. Combining \eqref{eq-prob-E-star-0}, \eqref{eq-induction-Ecal-star} and \eqref{eq-induction-Ecal-trivial} yields our desired result.

\end{proof}

\section{Determining the absorption time and the winner}{\label{sec:determining-T-and-W}}

In this section, we establish a key probabilistic argument that will be used repeatedly throughout the paper. Informally, it states that once a ``leader'' accumulates a sufficiently large gap comparing to other candidates, it will survive with high probability. Moreover, the remaining absorption time will be determined solely by the size of the gap. To formalize this, we first introduce the necessary notation. 
\begin{definition}{\label{eq-def-M-admissible}}
    For $M \in \mathbb Z_+$, we say $(\mathbf X_1,\ldots,\mathbf X_n) \in \Omega_n$ is an \emph{$M$-admissible} tuple, if 
    \begin{align*}
        \mathbf M_{\mathbf X}= \max_{i \in [n]}\{ \mathbf X_i \}=M \,.
    \end{align*}
    Denote by $\Omega_n(M)$ the set of all $M$-admissible tuples.
\end{definition}
\begin{definition}{\label{eq-def-(M,1-delta)-admissible}}
    For $M \in \mathbb Z_+$ and $0 \leq \delta \leq 1$, we say $(\mathbf X_1,\ldots,\mathbf X_n) \in \Omega_n$ is an \emph{$(M,\delta)$-admissible} tuple, if there exists $j \in [n]$ such that 
    \begin{align*}
        \mathbf X_j=M \mbox{ and } \mathbf X_i \leq (1-\delta)M \mbox{ for all } i \neq j \,.
    \end{align*}
    Denote by $\Omega_n(M,\delta)$ the set of all $(M,\delta)$-admissible tuples, and denote by $\overline{\Omega}_n (M,\delta)$ the set of tuples $\mathbf X=(\mathbf X_1,\ldots,\mathbf X_n)$ such that
    \begin{align}\label{eq-def-Omega-bar}
        (\mathbf X_1,\ldots,\mathbf X_n) \in \Omega_n (M,\delta), \quad \mathbf X_i = \lfloor(1-\delta)M\rfloor \mbox{ for some } i \in [n] \,.
    \end{align}
\end{definition}
\begin{definition}{\label{eq-def-(M,1-delta,Z)-admissible}}
    For $M \in \mathbb Z_+$, $0 \leq \delta \leq 1$ and $Z \in \mathbb R_+$, we say $(\mathbf X_1,\ldots,\mathbf X_n) \in \Omega_n$ is an \emph{$(M,\delta,Z)$-admissible} tuple, if 
    \begin{align*}
        (\mathbf X_1,\ldots,\mathbf X_n) \in \Omega_n(M,\delta) \mbox{ and } \mathbf Z_\mathbf X = \sum_{i \in [n]}\mathbf X_i^\alpha \leq Z \,.
    \end{align*}
    Denote by $\Omega_n(M,\delta,Z)$ the set of all $(M,\delta,Z)$-admissible tuples.
\end{definition}

The main goal of this section is to prove the following result:
\begin{proposition}{\label{lem-repeat-used}}
    Recall that we use $\mathcal W$ to denote the eventual winner of the election, and we use $\mathcal L(\mathbf X)$ to denote the set of leaders in a tuple $\mathbf X$. Suppose that $\alpha > 1$ and
    \begin{equation}{\label{eq-assum-lem-2.4}}
        M \geq (\log n)^{101}, \quad \delta \geq M^{-\frac{1}{2}} (\log n)^3, \quad (1-\delta)M \geq (\log n)^{20}\,, \quad 1-\delta >n^{-\tfrac{1}{100(\alpha + 1)}} \,.
    \end{equation}
    Define $\kappa=\kappa(n,\delta,\alpha)$ such that
    \begin{equation}{\label{eq-def-kappa}}
        \kappa = \frac{ \log\log(n) + \log(1/\delta) }{ \log(\alpha) } \,.
    \end{equation}
    Then, given any $(\mathbf X_k^{(1)},\ldots,\mathbf X_k^{(n)}) \in \overline{\Omega}_n(M,\delta)$, we have
    \begin{align*}
        \Pb_n^{\alpha}\left( \mathcal W \in \mathcal L(\mathbf X_k), \mathcal T=k+(1-o(1))\kappa \right)=1-o(1) \,.
    \end{align*}
\end{proposition}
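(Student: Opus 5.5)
The plan is to follow the leader and track one gap parameter across rounds. By the Markov property we may take $k=0$, so we start from $\mathbf X\in\overline{\Omega}_n(M,\delta)$ with leader $j$, $\mathbf X_j=M$, and some coordinate equal to $\lfloor(1-\delta)M\rfloor$. Write $M_t$ for the leader's count and $\delta_t$ for the relative gap to the best competitor after $t$ rounds. The key one-step claim is that, outside an event of probability $O((\log n)^{-2})$, the leader stays unique and
\begin{align*}
1-\delta_{t+1}=(1-\delta_t)^{\alpha}\bigl(1+O(\epsilon_t)\bigr)
\end{align*}
with summable errors $\epsilon_t$. In parallel, $M_t$ does not decrease as long as $M_t\le n-\sqrt n$.

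To prove the one-step claim, first use $\alpha>1$. Every non-leader satisfies $\mathbf X_i\le(1-\delta)M$, so by H\"older $\mathbf Z_{\mathbf X}\le M^\alpha+(1-\delta)^{\alpha-1}M^{\alpha-1}(n-M)$. Hence the leader's share $p_j=M^\alpha/\mathbf Z$ satisfies $np_j\ge M$, and the inequality is strict by a factor bounded away from $1$ when $M\ll n$. Given $\mathbf X$, the leader's next count is $\mathsf{Bin}(n,p_j)$ and concentrates within $\sqrt{M_{\mathrm{new}}}\log n$ of its mean. We need a uniform bound over all competitors, and a union bound over $n$ indices is affordable because binomial tails at distance $(\log n)\sqrt{\text{mean}}$ are $n^{-\omega(1)}$. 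For each $i\ne j$, $\mathbf X_{t+1}^{(i)}\le np_j(1-\delta)^\alpha+\sqrt{np_j}\,\log n$ with high probability. Small competitors need separate treatment: those with expected count $\le(\log n)^{20}$ can be handled by the trivial Chernoff bound, which is why the hypothesis $(1-\delta)M\ge(\log n)^{20}$ appears. The same estimates give a matching lower bound for the coordinate $i_0$ with $\mathbf X_{i_0}=\lfloor(1-\delta)M\rfloor$, so $\delta_{t+1}$ is determined up to $1+O(\text{error})$. The relative error is $\sqrt{np_j}\log n/(np_j\,\delta')$. It stays small because $\delta\ge M^{-1/2}(\log n)^3$ and $M$ only grows, so the fluctuation term is always dominated by the deterministic loss. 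This gives $\delta'\ge M'^{-1/2}(\log n)^{3}$ again, so hypothesis \eqref{eq-assum-lem-2.4} propagates with slightly worsened constants, and the errors stay summable because $M_t$ grows geometrically.

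Iterating, $-\log(1-\delta_t)\approx\alpha^t\delta$ while $\delta$ is small; once $\delta_t$ is of constant order the recursion becomes doubly exponential in $-\log(1-\delta_t)$. The leader reaches $n-o(n)$, and competitors fall below $(1-\delta_t)M_t$, at the time when $(1-\delta_t)^{\cdots}$ is about $n^{-\Theta(1)}$, that is $\alpha^t\delta\asymp\log n$, which gives $t=(1+o(1))\kappa$. For the upper bound on $\mathcal T$, once every competitor has mean $np_i\le n^{-c}$, a union bound shows they all receive zero votes within $O(1)$ further rounds; the condition $1-\delta>n^{-1/(100(\alpha+1))}$ controls this endgame and keeps the transition sharp. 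For the lower bound, absorption requires every competitor to reach $0$. This cannot happen while the coordinate descending from $i_0$ still has expected count $\gg 1$, and that coordinate is tracked by the lower estimate until $\alpha^t\delta\ge(1-o(1))\log n$. The $\log\log n$ in $\kappa$ comes from exactly this requirement. Finally, the leader is never overtaken, so $\mathcal W=j\in\mathcal L(\mathbf X_0)$.

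The main obstacle is the competitor step. We need uniform control over all $n$ competitors while $\mathbf Z$ changes randomly, and the errors must accumulate over $\Theta(\log\log n)$ rounds to only $o(\kappa)$ rounds in total. I would first try a union bound with probability $(\log n)^{-2}$ per round, together with multiplicative errors of order $(\log n)^{-1}$ on $\log(1-\delta_t)$.
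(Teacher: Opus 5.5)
You have the same architecture as the paper: a concentration phase in which $1-\delta_{t+1}\approx(1-\delta_t)^{\alpha}$ with the leader preserved, followed by a short endgame. The step that keeps the concentration phase running until $1-\delta_t\approx n^{-c}$ is missing. Two of your steps depend on it:
\begin{itemize}
\item your sharp recursion for $-\log(1-\delta_t)$ once $\delta_t=\Theta(1)$, which is what produces the $\log\log n/\log\alpha$ part of $\kappa$;
\item your lower bound on $\mathcal T$ via $i_0$.
\end{itemize}
Both need the runner-up's mean $np_j(1-\delta_t)^{\alpha}$ to stay polylogarithmically large through all $\Theta(\log\log n)$ rounds. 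In other words, the hypothesis $(1-\delta)M\ge(\log n)^{20}$ must propagate, and you only verify propagation of $\delta\ge M^{-1/2}(\log n)^3$. The one consequence you draw from your bound on $\mathbf Z$ is $np_j\ge M$. That only yields $np_j(1-\delta)^{\alpha}\ge(1-\delta)^{\alpha-1}\cdot(1-\delta)M$, so the runner-up loses a factor $(1-\delta_t)^{\alpha-1}$ every round. This is not a slightly worsened constant. Take the admissible start $M=(\log n)^{101}$, $(1-\delta)M=(\log n)^{20}$. Your bound gives $(\log n)^{101-81\alpha}<1$ after one round as soon as $\alpha>101/81$, although the mean is in fact about $(\log n)^{20}$.

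The missing estimate uses the full strength of your bound on $\mathbf Z$:
\[
np_j(1-\delta)^{\alpha}\ \ge\ \frac{n(1-\delta)M}{(1-\delta)^{1-\alpha}M+n-M}\ \ge\ \min\Bigl\{\bigl(1-n^{-\frac{1}{2(\alpha+1)}}\bigr)(1-\delta)M,\ n^{\frac{1}{3(\alpha+1)}}\Bigr\}.
\]
So while the leader's own weight is negligible in $\mathbf Z$, the leader grows by a factor of about $(1-\delta)^{1-\alpha}$ and the runner-up's count is essentially preserved. This is \eqref{eq-np-lower-bound-crude-2} in Lemma~\ref{lem-prelim-gap-est-time-after-2R}. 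Its second inequality is where a hypothesis $1-\delta>n^{-\Theta(1)}$ is genuinely used. The condition $1-\delta>n^{-1/(100(\alpha+1))}$ therefore marks the end of the concentration phase; it does not control the endgame.

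Two further repairs are needed after that point.
\begin{itemize}
\item Below the threshold the runner-up is no longer concentrated. Chernoff bounds then only give $1-\delta_{t+1}\le\max\{(\log n)^2(1-\delta_t)^{\alpha},\log n/n\}$, as in Lemma~\ref{lem-prelim-gap-est-final-constant-rounds}, and you need these $O(1)$ crude rounds before your endgame starts.
\item A one-round union bound over up to $n$ competitors with means $n^{-c}$ fails for $c\le 1$. For $\alpha<2$ a surviving competitor's mean is never below $n^{1-\alpha}>n^{-1}$, so that case cannot be avoided. A union bound on surviving $r>1/(\alpha-1)$ consecutive rounds works. So does the paper's recursion $\Eb[S_{t+1}]\lesssim n(S_t/n)^{\alpha}$ for the total non-leader mass (Lemma~\ref{lem-prelim-gap-est-very-final-rounds}).
\end{itemize}

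A minor correction: $M_t$ need not grow geometrically while $\delta_t$ is small, because the gain in $np_j\ge M$ is then only a factor $1+\Theta(\delta_t)$. Your errors $\log n/(\sqrt{M_t}\,\delta_t)$ are still summable, but the reason is that $\delta_t$ grows by a factor of about $\alpha$ each round while $M_t$ does not decrease.
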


\subsection{Key estimates during the evolution}

The key to proving Proposition~\ref{lem-repeat-used} is to step by step show a series of results that can be applied to different phases of $(M, \delta)$, starting from the initial conditions of $(M, \delta)$ given by \eqref{eq-assum-lem-2.4}. We start with the following lemma that brings $1-\delta$ to a ``relatively small'' phase:
\begin{lemma}{\label{lem-prelim-gap-est-time-after-2R}}
    Suppose that $\alpha > 1$ and
    \begin{equation}{\label{eq-assum-lem-2.5}}
        M \geq (\log n)^{100},\quad \delta \geq M^{-\frac{1}{2}}(\log n)^3, \quad (1-\delta)M \geq (\log n)^{12}, \quad 1-\delta > n^{-\frac{1}{\alpha+1}} \,.
    \end{equation}
    Define $\mathfrak F(M,\delta)$ to be the collection of $(M',\delta')$ such that
    \begin{align}
        &(M')^{-\frac{1}{2}}(\log n)^3 \leq \delta', \quad \left( 1-(\log n)^{-1} \right) M \leq M' \,; \label{eq-def-F_2-on-general}  \\
        &\min\left\{ \left( 1-(\log n)^{-3} \right) M(1-\delta), (\log n)^{20} \right\} \leq M'(1-\delta') \,; \label{eq-def-F_2-on-M}  \\
        & \left( 1-\frac{\delta}{\sqrt{\log n}} \right) (1-\delta)^\alpha \leq (1-\delta') \leq \left( 1+\frac{\delta}{\sqrt{\log n}} \right) (1-\delta)^{\alpha} \,.  \label{eq-def-F_2-on-delta}
    \end{align}
    Given any $(\mathbf X_1,\ldots,\mathbf X_n) \in \overline{\Omega}_n(M,\delta)$, we have 
    \begin{equation*}
        \Pb^\alpha_{n,\mathbf X}\left( (\mathbf X_1',\ldots,\mathbf X_n') \in \cup_{ (M',\delta') \in \mathfrak F(M,\delta) } \overline{\Omega}_n(M',\delta'), \mathcal L(\mathbf X')=\mathcal L(\mathbf X) \right) \geq 1-\frac{1}{\log n} \,.
    \end{equation*}
\end{lemma}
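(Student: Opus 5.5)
Throughout, $\mathbf X\in\overline{\Omega}_n(M,\delta)$ with leader $j$, $\mathbf X_j=M$, some coordinate equal to $\lfloor(1-\delta)M\rfloor$, and all other coordinates at most $(1-\delta)M$. Write $\mathbf X'$ for the next round. The plan is a one-step concentration argument for the multinomial vector $\mathbf X'\sim\mathsf{Mult}(n;\mu(\mathbf X_i)/\mathbf Z_{\mathbf X})$, and it has three parts.

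\textbf{Step 1 (the leader and the runner-up).} Set $\lambda_j=nM^\alpha/\mathbf Z_{\mathbf X}$. Since $\mathbf Z_{\mathbf X}\ge M^\alpha+((1-\delta)M)^\alpha$, the success probability of $j$ is bounded away from $1$ unless $1-\delta$ is tiny. The binomial Chernoff bound gives $|\mathbf X'_j-\lambda_j|\le \sqrt{\lambda_j}\log n$ with probability $1-n^{-\omega(1)}$ (in fact better, since the variance is $\lambda_j(1-p_j)$). The same bound applies to a fixed index $i_0$ with $\mathbf X_{i_0}=\lfloor(1-\delta)M\rfloor$. Its mean is $\lambda_j(1-\delta)^\alpha(1+O(1/M))$, and its fluctuation is $O(\sqrt{\lambda_j(1-\delta)^\alpha}\log n)$.

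\textbf{Step 2 (no other index overtakes $i_0$ or $j$).} We must show that w.h.p. no $i\neq j$ has $\mathbf X'_i$ larger than $\mathbf X'_{i_0}$ by more than the allowed slack. I would bound this by a union bound over all $i\ne j$: each such $i$ has mean at most $\lambda_j(1-\delta)^\alpha$, so $\mathbf X'_i\le \lambda_j(1-\delta)^\alpha+\sqrt{\lambda_j(1-\delta)^\alpha}\log n$ w.h.p. Define $M'=\mathbf X'_j$ and $(1-\delta')M'=\max_{i\neq j}\mathbf X'_i$. The exact relation $\mathbf X'_{i}=\lfloor(1-\delta')M'\rfloor$ for some $i$ then holds by definition, so $\mathbf X'\in\overline\Omega_n(M',\delta')$ as long as $\delta'>0$, which also gives $\mathcal L(\mathbf X')=\{j\}=\mathcal L(\mathbf X)$. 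Combining the two-sided bounds on $\mathbf X'_{i_0}$ (lower) and on $\max_i\mathbf X'_i$ (upper) with the bound on $M'$,
\[
\frac{(1-\delta')}{(1-\delta)^\alpha}=1+O\Big(\frac{\log n}{\sqrt{\lambda_j(1-\delta)^\alpha}}+\frac{\log n}{\sqrt{\lambda_j}}+\frac1M\Big).
\]

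\textbf{Step 3 (verifying the conditions of $\mathfrak F$).} Condition \eqref{eq-def-F_2-on-delta} asks this error to be $\le\delta/\sqrt{\log n}$. The key input is $\lambda_j\ge M$ up to a factor $1-O(1/\log n)$, i.e. $M'\gtrsim M$. This follows from $\mathbf Z_{\mathbf X}\le M^{\alpha-1}\sum_i\mathbf X_i=nM^{\alpha-1}$, which gives $\lambda_j\ge M$ and hence the second part of \eqref{eq-def-F_2-on-general}. With $\lambda_j\ge M$ and $\lambda_j(1-\delta)^\alpha\ge M(1-\delta)^\alpha$:
\begin{itemize}
\item \emph{Regime $(1-\delta)M$ large.} Write $(1-\delta)^{\alpha}M\ge (1-\delta)^{\alpha-1}(\log n)^{12}$. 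The assumption $\delta\ge M^{-1/2}(\log n)^3$ then makes $\log n/\sqrt{M(1-\delta)^\alpha}\ll \delta/\sqrt{\log n}$, provided $(1-\delta)^{\alpha}$ is not too small.
\item \emph{Regime $1-\delta$ small (down to $n^{-1/(\alpha+1)}$).} Here $\delta\approx1$, and the relative error bound only needs $\lambda_j(1-\delta)^\alpha\gg(\log n)^3$. If this fails, the runner-up count is tiny. This is where the alternative $(\log n)^{20}$ in \eqref{eq-def-F_2-on-M} and the two-sided tolerance help, but the lower inequality in \eqref{eq-def-F_2-on-delta} still needs care.
\end{itemize}

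The remaining conditions are routine given Steps 1–2. For \eqref{eq-def-F_2-on-M}, note $M'(1-\delta')\approx \lambda_j(1-\delta)^\alpha\ge M(1-\delta)\cdot(1-\delta)^{\alpha-1}\cdot(\lambda_j/M)$; here I would use $\lambda_j/M\ge M^{\alpha-1}\cdot n/\mathbf Z_{\mathbf X}$ and $\mathbf Z_{\mathbf X}\le nM^{\alpha-1}$ to absorb the factor $(1-\delta)^{\alpha-1}$. For $\delta'\ge M'^{-1/2}(\log n)^3$, use $1-\delta'\le(1+o(\delta))(1-\delta)^\alpha\le 1-\delta$ so that $\delta'\gtrsim\delta$, together with $M'\ge M(1-1/\log n)$.

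I expect the main obstacle to be the small-$(1-\delta)$ regime: the union bound over up to $n$ indices, when the runner-up mean $\lambda_j(1-\delta)^\alpha$ is only polylogarithmic, giving the needed relative precision $\delta/\sqrt{\log n}$. I would first try sharpening Step 2 with Lemma~\ref{lem-Poisson-coupling} and the exact Poisson tail. If that is insufficient, I would split on whether $(1-\delta)^\alpha M\ge(\log n)^{10}$ and handle the complementary case by showing the runner-up stays below the $(\log n)^{20}$ floor.
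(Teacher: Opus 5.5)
Your architecture matches the paper's: a Chernoff bound for the leader, a lower tail for one runner-up, a union-bound upper tail over all non-leaders, then checking the three conditions of $\mathfrak F(M,\delta)$. But the argument does not close, and it fails exactly in the regime you flag.

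\textbf{The gap.} Your only lower bound on the runner-up's mean is
$\lambda_j(1-\delta)^\alpha\ge M(1-\delta)^\alpha=(1-\delta)M\cdot(1-\delta)^{\alpha-1}$,
which comes from $\mathbf Z_{\mathbf X}\le nM^{\alpha-1}$. Under \eqref{eq-assum-lem-2.5} this can be $o(1)$. For example, take $(1-\delta)M=(\log n)^{12}$ and $1-\delta=2n^{-1/(\alpha+1)}$; all hypotheses hold, yet $M(1-\delta)^\alpha\asymp(\log n)^{12}n^{-(\alpha-1)/(\alpha+1)}$. In that regime you can verify neither \eqref{eq-def-F_2-on-M} nor the lower half of \eqref{eq-def-F_2-on-delta}. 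Both require a \emph{lower} bound on $\max_{i\neq j}\mathbf X'_i$ of order at least $(\log n)^{12}$.
\begin{itemize}
\item Your step ``use $\lambda_j/M\ge M^{\alpha-1}n/\mathbf Z_{\mathbf X}$ and $\mathbf Z_{\mathbf X}\le nM^{\alpha-1}$ to absorb $(1-\delta)^{\alpha-1}$'' is invalid. These give only $\lambda_j/M\ge 1$, which cannot absorb a factor smaller than $1$.
\item Sharper Poisson tails cannot compensate for a mean that is too small.
\item Showing the runner-up stays \emph{below} $(\log n)^{20}$ goes in the wrong direction for \eqref{eq-def-F_2-on-M}.
\end{itemize}
Tellingly, you never use the hypothesis $1-\delta>n^{-1/(\alpha+1)}$ quantitatively.

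\textbf{The missing ingredient.} Use that every non-leader coordinate is at most $(1-\delta)M$, so $\sum_{i\ne j}\mathbf X_i^\alpha\le((1-\delta)M)^{\alpha-1}(n-M)$. Hence
\[
\lambda_j(1-\delta)^\alpha\ \ge\ \frac{n(1-\delta)M}{(1-\delta)^{1-\alpha}M+n-M}\,.
\]
Split on whether $(1-\delta)M\le n^{1/(2(\alpha+1))}$, and use $(1-\delta)^{-\alpha}<n^{\alpha/(\alpha+1)}$. This yields $\lambda_j(1-\delta)^\alpha\ge H:=\min\{(1-n^{-1/(2(\alpha+1))})(1-\delta)M,\ n^{1/(3(\alpha+1))}\}\ge(\log n)^{11}$, and also $\lambda_j(1-\delta)^\alpha\ge 2^{-\alpha}M$ when $\delta\le\frac12$.

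So the ``complementary case'' you worry about never occurs. In the example above, the leader's mean is at least of order $M(1-\delta)^{1-\alpha}\gg M$.

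With deviations of order $(\log n)^2$ times the square root of the mean, the relative error in $(1-\delta')/(1-\delta)^\alpha$ is $O((\log n)^2/\sqrt H)$. This is below $\delta/\sqrt{\log n}$ in both regimes:
\begin{itemize}
\item for $\delta>\frac12$, since $H\ge(\log n)^{11}$;
\item for $\delta\le\frac12$, since $H\ge 2^{-\alpha}M$ and $\delta\ge M^{-1/2}(\log n)^3$.
\end{itemize}
Condition \eqref{eq-def-F_2-on-M} then follows from $H-(\log n)^2\sqrt H\ge\min\{(1-(\log n)^{-3})(1-\delta)M,(\log n)^{20}\}$. This is where the $(\log n)^{20}$ alternative is really used: a runner-up mean that is huge but smaller than $(1-\delta)M$, not a tiny one.

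\textbf{A minor point.} For $\delta'\ge(M')^{-1/2}(\log n)^3$ you need a strict gain such as $\delta'\ge\frac{1+C_\alpha}{2}\delta$, not just $\delta'\ge\delta$, because $M'$ may fall slightly below $M$. This gain comes from $(1-\delta)^\alpha\le 1-C_\alpha\delta$ on $[0,\frac12]$, with $C_\alpha=2(1-2^{-\alpha})>1$.
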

\begin{proof}
Without loss of generality, we may assume that
\begin{align*}
    \mathbf X_1=M<n, \quad \mathbf X_2 = \max_{2 \leq i \leq n} \mathbf X_i= (1-\delta)M \,.
\end{align*}
We first give some preliminary estimates. Note that we have
\begin{align}\label{eq-standard-upper-bound-Z}
    \mathbf Z_{\mathbf X} \leq \mathbf X_1^\alpha + \left( \sum_{i=2}^n \mathbf X_i \right) \mathbf X_2^{\alpha-1} \leq M^{\alpha}+(n-M)(1-\delta)^{\alpha-1}M^{\alpha-1} \,. 
\end{align}
Therefore, letting $p=\frac{M^\alpha}{\mathbf Z_{\mathbf X}}$, we have
\begin{align}
    np=\frac{nM^\alpha}{\mathbf Z_{\mathbf X}} \geq \frac{nM^\alpha}{M^\alpha+(n-M)(1-\delta)^{\alpha-1} M^{\alpha-1}} \geq \frac{nM^\alpha}{M^\alpha+(n-M)M^{\alpha-1}} = M \,.   \label{eq-np-lower-bound}
\end{align}
Moreover, we have
\begin{align}
    np(1-\delta)^\alpha &\geq \frac{nM^\alpha(1-\delta)^\alpha}{M^\alpha + (n-M)(1-\delta)^{\alpha-1}M^{\alpha-1}} = \frac{n(1-\delta)M}{(1-\delta)^{1-\alpha}M + n - M} \nonumber\\
    &\geq \frac{\frac{1}{2} nM \mathbf 1_{\{\delta \leq \frac{1}{2}\}}}{(2^{\alpha - 1}-1)M + n}  \geq \frac{M}{2^\alpha} \mathbf 1_{\{\delta \leq \frac{1}{2}\}}\,,  \label{eq-np-lower-bound-crude}
\end{align}
In addition, we have
\begin{align}
    np(1-\delta)^\alpha &\geq \frac{nM^\alpha(1-\delta)^\alpha}{M^\alpha + (n-M)(1-\delta)^{\alpha-1}M^{\alpha-1}} = \frac{n(1-\delta)M}{(1-\delta)^{1-\alpha}M + n - M} \nonumber\\
    &\geq \min\left\{ \left( 1-n^{-\frac{1}{2(\alpha + 1)}} \right)(1-\delta)M, n^{\frac{1}{3(\alpha + 1)}} \right\} \,,  \label{eq-np-lower-bound-crude-2}
\end{align}
where the last inequality follows from
\begin{align*}
    \frac{n}{(1-\delta)^{1-\alpha}M+n-M} & \geq \frac{1}{1+\frac{(1-\delta)M}{n} \cdot (1-\delta)^{-\alpha}} \overset{\eqref{eq-assum-lem-2.5}}{\geq} 1-\frac{(1-\delta)M}{n} \cdot n^{\frac{\alpha}{\alpha+1}} \geq 1-n^{-\frac{1}{2(\alpha + 1)}}
\end{align*}
when $(1-\delta)M \leq n^{\frac{1}{2(\alpha + 1)}}$, and
\begin{align*}
    \frac{n(1-\delta)M}{(1-\delta)^{1-\alpha}M+n-M} \geq \frac{n(1-\delta)M}{n^{\frac{\alpha}{\alpha+1}}(1-\delta)M+n} > n^{\frac{1}{3(\alpha + 1)}}
\end{align*}
when $(1-\delta)M > n^{\frac{1}{2(\alpha + 1)}}$. 

Now we return to the proof of Lemma~\ref{lem-prelim-gap-est-time-after-2R}. Define
\begin{align}\label{def-H-M,n,alpha}
H(M, n, \alpha) := \min\left\{ \left( 1-n^{-\frac{1}{2(\alpha + 1)}} \right)(1-\delta)M, n^{\frac{1}{3(\alpha + 1)}} \right\} \vee \frac{M}{2^\alpha} \mathbf 1_{\{\delta \leq \frac{1}{2}\}}\,,
\end{align}
then, by \eqref{eq-np-lower-bound-crude} and \eqref{eq-np-lower-bound-crude-2} we have
\begin{align}
np(1-\delta)^\alpha \geq H(M, n, \alpha)\,. \label{eq-np-lower-bound-delicate}
\end{align}
Also, define the bad events 
\begin{align*}
    \mathcal B_1 &= \left\{ \mathbf X'_1 \geq \left( 1+\frac{2(\log n)^2}{\sqrt{M}} \right) np \right\} \cup \left\{ \max_{2\leq i \leq n} \mathbf X'_i \leq \left( 1-\frac{3(\log n)^2}{\sqrt{H(M,n,\alpha)}} \right) np(1-\delta)^{\alpha} \right\} \,, \\
    \mathcal B_2 &= \left\{ \mathbf X'_1 \leq \left( 1-\frac{2(\log n)^2}{\sqrt{M}} \right)np \right\}  \cup \left\{ \max_{2\leq i \leq n} \mathbf X'_i \geq \left( 1+\frac{3(\log n)^2}{\sqrt{H(M,n,\alpha)}} \right) np(1-\delta)^{\alpha} \right\}\,, \\
    \mathcal B_3 &= \left\{ \max_{2\leq i \leq n} \mathbf X'_i \leq \min\left\{ \left( 1-\frac{1}{(\log n)^{3}} \right)M(1-\delta), (\log n)^{20} \right\} \right\}\,,
\end{align*}
We first argue that on $(\mathcal B_1 \cup \mathcal B_2 \cup \mathcal B_3)^c$ we have $\mathbf X' \in \overline{\Omega}_n(M',\delta')$ for some $(M',\delta') \in \mathfrak F(M,\delta)$. In fact, on $(\mathcal B_1 \cup \mathcal B_2)^c$ we have
\begin{align*}
    \frac{ \max_{2\leq i \leq n} \mathbf X'_i }{ \mathbf X_1' } \geq \frac{ (1-\frac{3(\log n)^2}{\sqrt{H(M,n,\alpha)}})np(1-\delta)^{\alpha} }{ (1+\frac{2(\log n)^2}{\sqrt{M}})np } \geq \left( 1-\frac{10(\log n)^2}{\sqrt{H(M,n,\alpha)}} \right) (1-\delta)^{\alpha}
\end{align*}
by the fact that $(\log n)^{11} \leq H(M, n, \alpha) \leq M$, and similarly
\begin{align*}
    \frac{ \max_{2\leq i \leq n} \mathbf X'_i }{ \mathbf X_1' } &\leq \frac{ (1+\frac{3(\log n)^2}{\sqrt{H(M,n,\alpha)}})np(1-\delta)^{\alpha} }{ (1-\frac{2(\log n)^2}{\sqrt{M}})np } \leq \left( 1+\frac{10(\log n)^2}{\sqrt{H(M,n,\alpha)}} \right) (1-\delta)^{\alpha} \,.
\end{align*}
This also implies that $\mathbf X_1'=\max_{1 \leq i \leq n}\mathbf X_i'$. Thus, $(\mathbf X_1',\ldots,\mathbf X_n') \in \overline{\Omega}_n (M',\delta')$ for a unique $(M',\delta')$. Combined with $\mathcal B_3^c$, we see that $(M',\delta')$ satisfies \eqref{eq-def-F_2-on-M}. Also, from $\mathcal B_1^c$ and $\mathcal B_2^c$ we have
\begin{align*}
    M' \geq \left( 1-(\log n)^{-2} \right) np \geq \left( 1-(\log n)^{-2} \right) M \,.
\end{align*}
Next, when $\delta > \frac{1}{2}$, we have $\frac{10(\log n)^2}{\sqrt{H(M,n,\alpha)}}  \leq \frac{20(\log n)^2}{\sqrt{(\log n)^{11}}} < \frac{\delta}{\sqrt{\log n}}$, which gives
\begin{align*}
    \left|\frac{1-\delta'}{(1-\delta)^\alpha} - 1\right| \leq \frac{\delta}{\sqrt{\log n}}, \quad \delta' \geq 1-\left( 1+\frac{\delta}{\sqrt{\log n}} \right) (\frac{1}{2})^{\alpha} \geq 1-2^{-\frac{\alpha}{2}} \,,
\end{align*}
and when $\delta \leq \frac{1}{2}$, by $\delta \geq \frac{(\log n)^3}{\sqrt{M}}$ we have $\frac{10(\log n)^2}{\sqrt{H(M,n,\alpha)}}  \leq \frac{10(\log n)^2}{\sqrt{M\cdot 2^{-\alpha}}} < \frac{\delta}{\sqrt{\log n}}$. Also, $(1-\delta)^\alpha < 1 - C_\alpha \delta$ for some constant $C_\alpha > 1$ in this case, which gives
\begin{align*}
    \left|\frac{1-\delta'}{(1-\delta)^\alpha} - 1\right| \leq \frac{\delta}{\sqrt{\log n}}, \quad \delta' \geq 1-\left( 1+\frac{\delta}{\sqrt{\log n}} \right) (1-C_\alpha\delta) &\geq \frac{C_\alpha + 1}{2}\delta \,.
\end{align*}
Hence, $(M',\delta')$ satisfies \eqref{eq-def-F_2-on-delta}, and moreover
\begin{align*}
    M'(\delta')^2 &\geq \min\{ (1-(\log n)^{-2})\cdot (\frac{C_\alpha + 1}{2})^2 \cdot M\delta^2 \,, M(1-2^{-\frac{\alpha}{2}})^2 \} \\
    &\geq \min\{M\delta^2 \,, (\log n)^{99}\} \geq (\log n)^6\,,
\end{align*}
and thus $(M',\delta')$ satisfies \eqref{eq-def-F_2-on-general}. Combining the above argument, we have $\mathbf X' \in \overline{\Omega}(M',\delta')$ for some $(M',\delta') \in \mathfrak F(M,\delta)$.

To this end, it suffices to prove that
\begin{equation}
    \Pb\left( \mathcal B_1 \cup \mathcal B_2 \cup \mathcal B_3 \right) \leq \frac{1}{\log n} \,. \label{eq-lem-2.4-goal}
\end{equation}
Note that 
\begin{align}
&\left( 1-n^{-\frac{1}{2(\alpha + 1)}} \right)(1-\delta)M - (\log n)^2 \sqrt{\left( 1-n^{-\frac{1}{2(\alpha + 1)}} \right)(1-\delta)M}\label{eq-np-lb-de-1}\\
\geq \ & \left( 1-n^{-\frac{1}{2(\alpha + 1)}} \right)(1-\delta)M - \frac{1}{2}(\log n)^{-3}(1-\delta) M > (1-\frac{1}{(\log n)^3})(1-\delta)M\,,\nonumber
\end{align}
and
\begin{align}
n^{\frac{1}{3(\alpha + 1)}} - (\log n)^2 \sqrt{n^{\frac{1}{3(\alpha + 1)}}}  > (\log n)^{20}\,,\label{eq-np-lb-de-2}
\end{align}
Therefore, if we define $f(x) = x - (\log n)^2\sqrt{x}$ which is increasing on $(\frac{(\log n)^{4}}{4} \,, +\infty) \supset \left(\eqref{eq-np-lower-bound-crude-2}\,, +\infty\right)$, we have 
\begin{align*}
    f(\eqref{eq-np-lower-bound-delicate}) &\geq f(\eqref{eq-np-lower-bound-crude-2}) = \min\{\eqref{eq-np-lb-de-1}\,, \eqref{eq-np-lb-de-2}\} \geq \min\{ (1-\tfrac{1}{(\log n)^3})(1-\delta)M\,, (\log n)^{20} \}\,.
\end{align*}
Hence, we have
\begin{align*}
    \Pb\left( \mathcal B_3 \right) &\leq \Pb\left( \max_{2 \leq i \leq n} \{ \mathbf X_i' \} < \eqref{eq-np-lower-bound-delicate} - (\log n)^2 \sqrt{\eqref{eq-np-lower-bound-delicate}} \right) \leq 2n\exp(-(\log n)^{1.9}) \,.
\end{align*}
In addition, we have
\begin{align}
    & \Pb(\mathcal B_1\cup \mathcal B_2)
    \leq \Pb\left( |\mathsf{Bin}(n,p)-np| \geq (\log n)^2 \sqrt{np} \right) \nonumber\\
    &+(n-1)\Pb\left( |\mathsf{Bin}(n,p(1-\delta)^\alpha)-np(1-\delta)^\alpha| \geq (\log n)^{2} \sqrt{np(1-\delta)^{\alpha}} \right) \leq 2n\exp\big(-(\log n)^{2}\big)\,, \nonumber
\end{align}
leading to \eqref{eq-lem-2.4-goal} from a direct union bound.
\end{proof}

Then, we use the following lemma to bring $1-\delta$ from the ``relatively small'' phase to a ``very small'' phase:

\begin{lemma}{\label{lem-prelim-gap-est-final-constant-rounds}}
    Suppose that $\alpha > 1$ and
    \begin{equation}{\label{eq-assum-lem-2.6}}
        M \geq (\log n)^{99}, \quad 1-\delta \leq n^{-\frac{1}{100(\alpha+1)}} \,.
    \end{equation}
    Define $\mathfrak F(M,\delta)$ to be the collection of $(M',\delta')$ such that 
    \begin{equation}{\label{eq-def-F_3-on-M-delta}}
        (1-(\log n)^{-2})M \leq M', \quad (1-\delta') \leq \max\left\{ \frac{\log n}{n}, (\log n)^2(1-\delta)^{\alpha} \right\}  \,.
    \end{equation}
    Given any $(\mathbf X_1,\ldots,\mathbf X_n) \in \overline{\Omega}_n(M,\delta)$, we have 
    \begin{equation*}
        \Pb^\alpha_{n,\mathbf X}\left( (\mathbf X_1',\ldots,\mathbf X_n') \in \cup_{ (M',\delta') \in \mathfrak F(M,\delta) } \overline{\Omega}_n(M',\delta'), \mathcal L(\mathbf X')=\mathcal L(\mathbf X) \right) \geq 1-\frac{1}{\log n} \,.
    \end{equation*}
\end{lemma}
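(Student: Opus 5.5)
The plan is to follow the template of Lemma~\ref{lem-prelim-gap-est-time-after-2R}: condition on $\mathbf X\in\overline{\Omega}_n(M,\delta)$, and assume without loss of generality that $\mathbf X_1=M$ and $\mathbf X_i\le(1-\delta)M$ for $i\ge2$. The new vector $\mathbf X'$ is multinomial with cell probabilities $\mathbf X_i^\alpha/\mathbf Z_{\mathbf X}$. Write $p=M^\alpha/\mathbf Z_{\mathbf X}$. By \eqref{eq-np-lower-bound} we have $np\ge M\ge(\log n)^{99}$. For $i\ge2$, each marginal $\mathbf X'_i\sim\mathsf{Bin}(n,p_i)$ satisfies
\[
\mu_i:=np_i\le(1-\delta)^\alpha np, \qquad \sum_{i\ge2}\mu_i\le n .
\]
I will define two bad events. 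The first is $\mathcal B_1=\{\mathbf X'_1\le(1-(\log n)^{-2})np\}$. The second, $\mathcal B_2$, is that some $i\ge2$ has $\mathbf X'_i > t$, with threshold
\[
t:=\max\Big\{\tfrac{\log n}{n},\,(\log n)^2(1-\delta)^\alpha\Big\}\,(1-(\log n)^{-2})\,np .
\]
On $(\mathcal B_1\cup\mathcal B_2)^c$ we get $M'=\mathbf X'_1\ge(1-(\log n)^{-2})np\ge(1-(\log n)^{-2})M$. The leader is strictly unique because $t<\mathbf X_1'$; indeed $(\log n)^2(1-\delta)^\alpha\le(\log n)^2n^{-\alpha/(100(\alpha+1))}=o(1)$. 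Hence $\mathcal L(\mathbf X')=\mathcal L(\mathbf X)=\{1\}$. Setting $1-\delta':=\max_{i\ge2}\mathbf X'_i/M'$ puts $\mathbf X'$ in $\overline{\Omega}_n(M',\delta')$. If all other coordinates vanish, then $\delta'=1$, and the floor convention in \eqref{eq-def-Omega-bar} is still satisfied with value $0$. Finally $1-\delta'\le t/M'$ gives \eqref{eq-def-F_3-on-M-delta}. So it suffices to show $\Pb(\mathcal B_1\cup\mathcal B_2)\le 1/\log n$.

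The event $\mathcal B_1$ is handled by the binomial Chernoff bound. Since $np\ge(\log n)^{99}$, a deviation of $np/(\log n)^2\gg(\log n)^2\sqrt{np}$ has probability at most $\exp(-(\log n)^{2})$.

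For $\mathcal B_2$, I will use the crude tail bound $\Pb(\mathsf{Bin}(n,p_i)\ge s)\le\binom{n}{s}p_i^s\le \mu_i^s/s!$, summed over $i$ (with $s=\lceil t\rceil$) via
\[
\sum_{i\ge2}\mu_i^s\le\Big(\max_{i\ge2}\mu_i\Big)^{s-1}\sum_{i\ge2}\mu_i\le \big((1-\delta)^\alpha np\big)^{s-1}n .
\]
There are two regimes.

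\emph{Regime 1: $(1-\delta)^\alpha np\ge 1$.} Then $s\ge(\log n)^{1.9}(1-\delta)^\alpha np$. The bound becomes at most
\[
n\Big(\tfrac{e(1-\delta)^\alpha np}{s}\Big)^{s}\Big/\big((1-\delta)^\alpha np\big)\le n(\log n)^{-1.8s},
\]
which is $o(1)$ once $s\ge\log n$. If $s<\log n$, the other branch of the max gives $s\ge \tfrac{\log n}{n}np\cdot(1-o(1))$. This needs a separate look: there $np$ is of order $n$ up to $\log n$ factors, so $(1-\delta)^\alpha np\ll(\log n)^{-1}$ or we are back in the first case.

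\emph{Regime 2: $(1-\delta)^\alpha np<1$.} Here every $\mu_i<1$, and I aim to show $\Pb(\exists i\ge2:\mathbf X'_i\ge s)\le n\max_i\mu_i^{s-1}$. Since $(1-\delta)^\alpha\le n^{-\alpha/(100(\alpha+1))}$, this is polynomially small provided $s\ge C(\alpha)$. When $s$ is a small constant (including the case $s=1$), I use instead the tail bound $\Pb(\mathsf{Bin}\ge s)\le\mu_i^s$ together with $\sum\mu_i^s\le \max\mu_i^{s-1}\,n$.

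The main obstacle is the boundary regime where $t$ is of constant order. There the union over $n$ coordinates with total mean $\sum\mu_i\approx n$ is not automatically small, and the factor $\tfrac{\log n}{n}$ in \eqref{eq-def-F_3-on-M-delta} must be exactly what absorbs it. I would try first to exploit $np\le n$ together with the identity $n(1-p)=\sum_{i\ge2}\mu_i$, since $M'\approx np$ forces $\sum_{i\ge 2}\mu_i$ to be small whenever $M'$ is close to $n$. I would also treat separately the cases $np\ge n/\log n$ and $np<n/\log n$. In the latter case the $(\log n)^2(1-\delta)^\alpha$ branch should dominate, since there $\max_i\mu_i\le (1-\delta)^\alpha np$ is polynomially small relative to $np$.
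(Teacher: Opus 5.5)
Your treatment of $\mathcal B_1$ is fine. Using $np\ge M$ with a lower-tail Chernoff bound is simpler than the paper's proof that $\mathbf X_1'\ge M$. Your Regime 1 is also fine. There the worry about $s<\log n$ is vacuous, since $(1-\delta)^\alpha np\ge 1$ already forces $s\ge(\log n)^{1.9}$.

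The gap is Regime 2, which you leave open. You claim $n\max_i\mu_i^{s-1}$ is polynomially small once $s\ge C(\alpha)$. That treats $\max_{i\ge2}\mu_i\le(1-\delta)^\alpha np$ as polynomially small. But $np$ can be of order $n$, while the hypothesis only gives $(1-\delta)^\alpha\le n^{-\alpha/(100(\alpha+1))}$. So $\max_{i\ge2}\mu_i$ can be of order one. For example, take $\mathbf X=(n-v,v,0,\ldots,0)$ with $v^\alpha\asymp n^{\alpha-1}$; then $1-\delta\asymp n^{-1/\alpha}$ satisfies the hypothesis. In that case $n\max_i\mu_i^{s-1}$ is of order $n$ for bounded $s$, and for $s=1$ your fallback $\sum_i\mu_i\le n$ gives nothing. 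You say yourself that the argument hinges on this boundary regime, and the proposal only lists ideas for it.

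The missing observation is that $t$ is never of constant order. Let $v=\max_{i\ge2}\mathbf X_i$; you may assume $v\ge1$, since otherwise $\mathbf X$ is already absorbed. Set $\mu_{\max}:=nv^\alpha/\mathbf Z_{\mathbf X}=\max_{i\ge2}\mu_i$. Since $(\mathbf X_i/v)^\alpha\le \mathbf X_i/v\le\mathbf X_i$,
\[
n(1-p)=\sum_{i\ge2}\mu_i=\mu_{\max}\sum_{i\ge2}\Big(\frac{\mathbf X_i}{v}\Big)^{\alpha}\le\mu_{\max}(n-M)\le \mu_{\max}n ,
\]
so $p\ge 1-\mu_{\max}$. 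In your Regime 2 we have $\mu_{\max}<1$, and there are two subcases:
\begin{itemize}
\item If $\mu_{\max}\ge\frac12$, the $(\log n)^2(1-\delta)^\alpha$ branch of the max gives $t\ge\frac12\big((\log n)^2-1\big)$.
\item If $\mu_{\max}<\frac12$, then $p>\frac12$, and the $\frac{\log n}{n}$ branch gives $t\ge\frac13\log n$.
\end{itemize}
Either way your own union bound yields $\Pb(\mathcal B_2)\le n\mu_{\max}^{s-1}/s!\le n/\lceil\tfrac13\log n\rceil!=o(1)$, which closes the argument.

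This is the same dichotomy the paper uses, phrased as a split on $(1-\delta)^\alpha$:
\begin{itemize}
\item When $(1-\delta)^\alpha>1/n$, this forces $nq_M\ge\frac12$, and a multiplicative Chernoff bound at level $(\log n)^{1.5}nq_M$ handles the others.
\item When $(1-\delta)^\alpha\le 1/n$, this forces $np_M>n/2$ and all other means at most $1$. Then every other coordinate stays below $\frac{9}{20}\log n$ while $\mathbf X_1'\ge\frac{9}{20}n$.
\end{itemize}
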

\begin{proof}
Without loss of generality, we may assume that
\begin{align*}
    \mathbf X_1=M<n, \quad \mathbf X_2 = \max_{2 \leq i \leq n} \mathbf X_i= (1-\delta)M \,.
\end{align*}
We first show that $\Pb(\mathbf X_1'\geq M)\geq 1-\frac{1}{\log n}$. When $(\log n)^{99}\leq M \leq \frac{2n}{3}$, using \eqref{eq-standard-upper-bound-Z} we have (recall that $(1-\delta)^{\alpha-1}=o(1)$)
\begin{align*}
    \mathbf Z_{\mathbf X} \leq M^{\alpha}+(n-M)(1-\delta)^{\alpha-1} M^{\alpha-1} \leq \frac{3n}{4} \cdot M^{\alpha-1} \,.
\end{align*}
Therefore, we have
\begin{align*}
    &\Pb(\mathbf X'_1<M) \leq \Pb\left( \mathsf{Bin}\left( n,\tfrac{4M}{3n} \right)<M \right) \\
    \leq\ & \Pb\left( \mathsf{Bin}\left( n,\tfrac{4M}{3n} \right)< n\cdot \tfrac{4M}{3n} - (\log n) \sqrt{n\cdot\tfrac{4M}{3n}\left( 1-\tfrac{4M}{3n} \right)} \right) \leq \exp\Big( -\frac{(\log n)^2}{4+\log n} \Big) \leq \frac{1}{\sqrt n}\,,
\end{align*}
where the second inequality follows from
\begin{align*}
    n \cdot \frac{4M}{3n} - M = \frac{M}{3} \overset{\eqref{eq-assum-lem-2.6}}{\geq} (\log n) \sqrt{n \cdot \frac{4M}{3n}\left( 1-\frac{4M}{3n} \right)} \,.
\end{align*}
When $M \geq\frac{2n}{3}$, we have
\begin{align*}
    \mathbf Z(\mathbf X_1,\ldots,\mathbf X_n)= M^{\alpha}+ \sum_{i=2}^{n}\mathbf X_i^\alpha \leq M^{\alpha} + (n-M)^{\alpha} \,.
\end{align*}
Therefore, defining $p=\frac{M^{\alpha}}{M^{\alpha}+(n-M)^{\alpha}} \geq \frac{1}{2}$, we have
\begin{align*}
    np-M &= \frac{(n-M)M(1-(\frac{n-M}{M})^{\alpha-1})}{M+(\frac{n-M}{M})^{\alpha-1}(n-M)} \geq \left( 1-\frac{1}{2^{\alpha-1}} \right) \cdot \frac{(n-M)M}{M+ (\frac{n-M}{M})^{\alpha-1}(n-M)}\\
    &\geq \left( 1-\frac{1}{2^{\alpha-1}} \right) (\log n)^2 \sqrt{ n\cdot\frac{M^\alpha }{ M^\alpha+(n-M)^\alpha}\left( 1-\frac{M^\alpha}{M^\alpha+(n-M)^\alpha} \right) } \\
    &= \left( 1-\frac{1}{2^{\alpha-1}} \right) (\log n)^2 \sqrt{np(1-p)} \,,
\end{align*}
where the second inequality follows from 
\begin{align*}
    (n-M)^2 M^{2\alpha} \geq n(\log n)^4 M^\alpha (n-M)^{\alpha} \mbox{ when } \frac{2n}{3} \leq M \leq n-1 \mbox{ and } \alpha>1 \,.
\end{align*}
When $p \leq \frac{n-1}{n}$, we have 
\begin{align}\label{eq-lower-bound-above-avg-3}
    \Pb(\mathbf X_1'<M) &\leq \Pb\left(\mathsf{Bin}(n,p)<M \right) \leq \Pb\left( \mathsf{Bin}(n,p) < np - \left( 1-\tfrac{1}{2^{\alpha-1}} \right)(\log n)^2 \sqrt{np(1-p)} \right) \nonumber\\
    &\leq \exp\Big( -\left(1-\tfrac{1}{2^{\alpha-1}} \right)^2 \tfrac{(\log n)^4}{4+(\log n)^2} \Big)\,,
\end{align}
where the last inequality holds by binomial Chernoff bound. When $p>\tfrac{n-1}{n}$ (which implies that $np(1-p)\leq 1$) and $n-M>(\log n)^2$, again by binomial Chernoff bound,
\begin{align}\label{eq-lower-bound-above-avg-4}
    \Pb(\mathbf X'_1 < M) \leq \Pb\left( \mathsf{Bin}(n,1-p)>(\log n)^2 \right) \leq \exp(-(\log n)^{2})\,.
\end{align}
When $n-M \leq (\log n)^2$, we have $M \geq n - (\log n)^2$ and therefore
\begin{align}\label{eq-lower-bound-above-avg-5}
    \Pb(\mathbf X'_1<M) &\leq \Pb\left( \mathsf{Bin}\left( n,\frac{M^\alpha}{M^\alpha+(n-M)^\alpha} \right) < n \right)\nonumber\\
    &= \Pb\left( \mathsf{Bin}\left( n,\frac{(n-M)^\alpha}{M^\alpha + (n-M)^\alpha} \right) > 0 \right) < n^{\frac{1-\alpha}{2}} \,.
\end{align}
Combining \eqref{eq-lower-bound-above-avg-3}, \eqref{eq-lower-bound-above-avg-4} and \eqref{eq-lower-bound-above-avg-5} we have $\Pb(\mathbf X'_1 < M) \leq n^{\frac{1-\alpha}{2}}$.

Next, we consider the second item of \eqref{eq-def-F_3-on-M-delta}. Define $p_M = \frac{M^\alpha}{\mathbf Z_{\mathbf X}}$ and $q_M = \frac{(1-\delta)^\alpha M^\alpha}{\mathbf Z_{\mathbf X}}$. 
Applying the binomial Chernoff bound, we have
\begin{align*}
    \Pb\left( |\mathbf X_1'-np_M| \geq 0.1np_M \right) \leq \exp\Big(-\frac{0.01np_M}{4(1-p_M)+0.1}\Big) \leq \exp(-0.001np_M)\,.
\end{align*}
Combined with \eqref{eq-np-lower-bound} and the fact that $M \geq (\log n)^{100}$, we have
\begin{align}
    \Pb\left( |\mathbf X_1'-np_M| \geq 0.1np_M \right) \leq \exp(-(\log n)^{99}) \,.   \label{eq-ratio-pM-qM-bound-1}
\end{align}
Next, if $(1-\delta)^\alpha>\frac{1}{n}$ we have 
\begin{align*}
    nq_M \geq \frac{n(1-\delta)^\alpha M^\alpha}{M^{\alpha} + (n-M)(1-\delta)^{\alpha-1}M^{\alpha-1}} \geq \frac{n}{(1-\delta)^{-\alpha}+n-1} \geq \frac{1}{2} \,.
\end{align*}
Thus, applying the binomial Chernoff bound, we have
\begin{align}
    \Pb\left( \max_{2 \leq i \leq n} \{ \mathbf X_i' \} > (\log n)^{1.5} nq_M \right) &\leq n\Pb\left( \mathsf{Bin}(n,q_M)>(\log n)^{1.5}nq_M \right) \nonumber \\
    &\leq n\exp\Big(-\frac{(\log n)^3 nq_M}{4+(\log n)^{1.5}}\Big) \leq \exp(-(\log n)^{0.4})  \,.    \label{eq-ratio-pM-qM-bound-2}
\end{align}
Combining \eqref{eq-ratio-pM-qM-bound-1} and \eqref{eq-ratio-pM-qM-bound-2}, we have in this case
\begin{align}\label{eq-ratio-pM-qM-bound-2-Gaussian}
    &\Pb\left( \max_{2 \leq i \leq n} \{ \mathbf X_i' \} > (\log n)^2 (1-\delta)^\alpha \mathbf X_1' \right)\nonumber\\
    \leq\ &\Pb\left( \mathbf X_1' \leq \tfrac{9}{10}np_M \right) + \Pb\left( \max_{2 \leq i \leq n} \{ \mathbf X_i' \} > (\log n)^{1.5} nq_M \right) \nonumber \\
    \leq\ &\exp(-(\log n)^{99}) + \exp(-(\log n)^{0.4}) \,.
\end{align}
For $(1-\delta)^\alpha \leq\frac{1}{n}$ we have
\begin{align*}
     q_M \leq (1-\delta)^{\alpha}p_M \leq \frac{1}{n}, \quad \Eb[\mathbf X_1'] = np_M > \frac{n}{2} \,.
\end{align*}
Therefore, we have
\begin{align}\label{eq-ratio-pM-qM-bound-Poisson}
    &\Pb\left( \max_{2 \leq i \leq n} \{ \mathbf X_i' \} > \frac{\log n}{n} \mathbf X'_1 \right) \leq \Pb\left( \max_{2 \leq i \leq n} \mathbf X_i' > \frac{9\log n}{20}  \right) +\Pb\left(\mathbf X_1' < \frac{9n}{20} \right)\nonumber \\
    \leq\ & n\Pb\left( \mathsf{Bin}(n,\tfrac{1}{n})>\frac{9\log n}{20} \right) + \Pb\left( \mathsf{Bin}(n,\tfrac{1}{2})<\frac{9n}{20} \right) \leq \frac{n}{(9\log n/20)!} + \exp(-(\log n)^{99}) \,.
\end{align}
Combining \eqref{eq-ratio-pM-qM-bound-2-Gaussian} and \eqref{eq-ratio-pM-qM-bound-Poisson}, the second item of \eqref{eq-def-F_3-on-M-delta} is satisfied with probability $1-\frac{1}{2\log n}$.
\end{proof}

Next, we use the following lemma to show that the voting process ends within a bounded number of additional rounds once $1-\delta$ enters the ``very small'' phase:
\begin{lemma}{\label{lem-prelim-gap-est-very-final-rounds}}
    Suppose that $\alpha > 1$, $M \in \mathbb Z_{+}$ and $\delta \in (0, 1]$ such that
    \begin{equation}{\label{eq-assum-lem-2.7}}
         1-\delta \leq \frac{\log n}{n} \,.
    \end{equation}
    Define $N_\alpha = \lceil\log_\alpha (\frac{2}{\alpha - 1})\rceil + 1$. Then, given that $(\mathbf X^{(1)}_k,\ldots,\mathbf X^{(n)}_k) \in \overline{\Omega}_n(M,\delta)$ we have 
    \begin{align*}
        \Pb_n^{\alpha}\left( \mathcal T \leq k+N_\alpha + 1, \mathfrak L(\mathbf X_k) = \{\mathcal W\} \right) \geq 1-\frac{1}{\log n} \,.
    \end{align*} 
\end{lemma}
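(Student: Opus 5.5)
Starting from $\mathbf X_k\in\overline{\Omega}_n(M,\delta)$ with $1-\delta\le \frac{\log n}{n}$, I would first observe that this hypothesis pins the configuration down almost completely. Since the second largest coordinate satisfies $\lfloor(1-\delta)M\rfloor\le \frac{\log n}{n}M\le \log n$ and the $\overline{\Omega}_n$ condition forces some coordinate to equal $\lfloor(1-\delta)M\rfloor$, all non-leaders have at most $\log n$ votes. The leader, say $\mathbf X_k^{(1)}=M$, is unique whenever $(1-\delta)M<M$, which holds because $\delta>0$; hence $\mathcal L(\mathbf X_k)=\{1\}$. The plan is to track $D_t:=n-\mathbf X_{k+t}^{(1)}$, the number of votes not cast for candidate $1$, and show that with probability $1-\frac{1}{\log n}$ it collapses super-exponentially (in the exponent $\alpha$) to $0$ within $N_\alpha+1$ steps, while candidate $1$ stays the leader throughout. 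Then absorption occurs with $\mathcal W=1$.

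The key one-step estimate is as follows. Given $\mathbf X$ with leader value $n-D$ and remaining coordinates summing to $D$, a single cardinal votes for someone other than $1$ with probability
\begin{align*}
q=\frac{\sum_{i\ge2}\mathbf X_i^\alpha}{\mathbf Z_{\mathbf X}}\le \frac{D^\alpha}{(n-D)^\alpha}\,,
\end{align*}
using $\sum_{i\ge 2}\mathbf X_i^\alpha\le(\sum_{i\ge2}\mathbf X_i)^\alpha$. So $D'\sim\mathsf{Bin}(n,q)$ with mean at most $n(2D/n)^\alpha$ when $D\le n/2$. Writing $D=n^{\theta}$, the mean is at most $2^\alpha n^{1-\alpha(1-\theta)}$, i.e.\ the exponent updates as $1-\theta'\approx\alpha(1-\theta)$ up to $O(1/\log n)$ losses, by Chernoff (or Markov once the mean is $o(1)$).

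I would initialize with the first step. Before it, $D_0=n-M$ could be as large as order $n$ (many small candidates), so the above bound is useless. Instead I would bound $q$ directly: $\sum_{i\ge2}\mathbf X_i^\alpha\le (\log n)^{\alpha-1}D_0\le n(\log n)^{\alpha-1}$, while $M\ge$ roughly $n/\log n$ (since $\sum \mathbf X_i=n$, and the alternative that $M$ is small forces $n-M$ mass spread in pieces $\le\log n\cdot M/n$; I need to check this crudely gives $M^\alpha\gg n(\log n)^{\alpha}$). This yields $\mathbb E D_1\le n\cdot n(\log n)^{\alpha-1}/M^\alpha\le n^{2-\alpha}\,\mathrm{polylog}$, so $D_1\le n^{1-(\alpha-1)}\mathrm{polylog}(n)$ w.h.p. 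For $\alpha\ge2$, this is already polylogarithmic, and one more step gives $\mathbb E D_2\le n^{1-\alpha}\mathrm{polylog}=o(1)$. For general $\alpha$, iterate: after $t$ steps the deficit exponent $1-\theta_t$ grows like $\alpha^{t-1}(\alpha-1)$, and $N_\alpha=\lceil\log_\alpha\frac{2}{\alpha-1}\rceil+1$ is exactly chosen so that $\alpha^{N_\alpha-1}(\alpha-1)\ge 2>1$ with slack absorbing the polylog factors. Then $\mathbb E D_{N_\alpha+1}=o(1)$, and Markov gives $D=0$. Each step fails with probability $o(1/\log n)$, and I would union bound over the $O(1)$ steps. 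Leadership of candidate $1$ is automatic once $D_t<n/2$.

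The main obstacle is the first step: controlling $\mathbf Z_{\mathbf X_k}$ and ensuring $M$ is not small, since the hypothesis only gives relative information. If $M$ itself were small (e.g.\ $M\approx \log n$), $1-\delta\le\log n/n$ forces the second maximum to be $0$, so all $n$ votes are on one candidate and the state is already absorbing. More generally, $\mathbf X_i\le M\log n/n$ for $i\ge 2$ together with $\sum_i\mathbf X_i=n$ gives $n-M\le n\cdot M\log n/n$, so $M\ge n/(1+\log n)$. This resolves the obstacle and makes the first-step bound rigorous.
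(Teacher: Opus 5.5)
Your proposal is correct and follows essentially the same route as the paper. Like the paper, you handle the first round separately, using that every non-leader has at most $\log n$ votes and $M\gtrsim n/\log n$, which leaves off-leader mass $n^{2-\alpha}\,\mathrm{polylog}(n)$. You then apply Markov or Chernoff bounds to the off-leader mass via $\sum_{j\ge2}x_j^\alpha\le(\sum_{j\ge2}x_j)^\alpha$, so its exponent improves from $1-\alpha^{t-1}(\alpha-1)$ to $1-\alpha^{t}(\alpha-1)$ each round and is negative within $N_\alpha$ rounds. The differences are cosmetic: the paper also carries the unneeded constraint $\max_{i\ge2}\mathbf X^{(i)}\le\frac{2\alpha}{\alpha-1}$ through its sets $\mathcal S_{n,\alpha}(K)$, and it gets $M\ge n/\log n$ from $(1-\delta)M\ge1$, whereas you get it from the sum constraint.
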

\begin{proof}
Without loss of generality, we may assume that
\begin{align*}
    \mathbf X_k^{(1)}=M<n, \quad \mathbf X_k^{(2)} = \max_{2 \leq i \leq n}\mathbf X_k^{(i)}= (1-\delta)M \,.
\end{align*}
Since $(1-\delta)M \geq 1$, we have $M \geq(1-\delta)^{-1}\geq \frac{n}{\log n}$. Also, for $K \in [0,\frac{n}{3}]$ we define
\begin{align*}
\mathcal S_{n,\alpha}(K) = \{(x_1 \,, \ldots \,, x_n) \in \Omega_n : \sum_{2\leq i\leq n}x_i =n-x_1\leq K  \,, \max_{2 \leq i \leq n} x_i \leq \frac{2\alpha}{\alpha - 1}\}\,,
\end{align*}
then by definition of $\mathcal S_{n,\alpha}(\cdot)$ we have $\mathcal S_{n,\alpha}(K) \subset \bigcup_{M \in \mathbb Z_+ , \delta \in (0,1-\frac{\log n}{n}]} \overline{\Omega}_n(M,\delta)$ for each $K \in [0,\frac{n}{3}]$. Next, note that for $i \geq 2$ we have
\begin{align*}
    \frac{ (\mathbf X_k^{(i)})^{\alpha} }{ \mathbf Z_{\mathbf X_k} } \leq \frac{ (\mathbf X_k^{(i)})^{\alpha} }{ M^{\alpha} } \leq (1-\delta)^{\alpha} < \left( \frac{\log n}{n} \right)^{\alpha} \,.
\end{align*}
Thus, 
\begin{align*}
    \Pb_n^\alpha\left( \sum_{2 \leq i \leq n}\mathbf X_{k+1}^{(i)} > n^{2-\alpha}(\log n)^{2\alpha}\,;  \mathbf X_k\in \overline{\Omega}_n(M,\delta)\right) \leq \frac{\sum_{2\leq i \leq n} \Eb[\mathbf X_{k+1}^{(i)}]}{n^{2-\alpha}(\log n)^{2\alpha}} 
    < (\log n)^{-\alpha} \,,
\end{align*}
and
\begin{align}
    &\Pb_n^\alpha \left( \max_{2 \leq i \leq n}\mathbf X_{k+1}^{(i)} \geq \frac{2\alpha}{\alpha -1}\,;  \mathbf X_k\in \overline{\Omega}_n(M,\delta) \right) \nonumber \\
    \leq\ & \sum_{2 \leq i \leq n}\Pb\left(\mathsf{Bin}\left(n, \left(\frac{\log n}{n}\right)^\alpha\right) \geq \frac{2\alpha}{\alpha -1}\right) < n \cdot \binom{n}{\lceil\frac{2\alpha}{\alpha - 1}\rceil} \left(\frac{\log n}{n}\right)^{\alpha \cdot \lceil\frac{2\alpha}{\alpha - 1}\rceil} < n^{1-2\alpha +o(1)}\,.\label{eq-veryfinal-ub-individual}
\end{align}
Hence, we have
\begin{align}
\Pb_n^\alpha \left( \mathbf X_{k+1} \in \mathcal S_{n,\alpha}(n^{2-\alpha}(\log n)^{2\alpha}) \,; \mathbf X_k\in \overline{\Omega}_n(M,\delta) \right) = 1-O(\frac{1}{(\log n)^{\alpha}})\,.\label{eq-very-final-initial-step}
\end{align}
Next, we prove that for any $i \in \mathbb Z_+$ we have
\begin{align}
\Pb_n^\alpha \left( \mathbf X_{k+i+1} \in \mathcal S_{n,\alpha}(n^{1+\alpha^{i}-\alpha^{i+1}}(\log n)^{(2\alpha)^{i+1}}) \,; \mathbf X_{k+i}\in \mathcal S_{n,\alpha}(n^{1+\alpha^{i-1}-\alpha^i}(\log n)^{(2\alpha)^i}) \right) \nonumber\\
= 1-O(\frac{1}{(\log n)^{\alpha}})\,. \label{eq-very-final-induction}
\end{align}
Given that $\mathbf X_{k+i}\in \mathcal S_{n,\alpha}(n^{1+\alpha^{i-1}-\alpha^i}(\log n)^{(2\alpha)^i})$, we have $\mathbf X_{k+i} \in \overline{\Omega}_n(M,\delta)$ for some $M \in \mathbb Z_+$ and $\delta \in (0,1-\frac{\log n}{n}]$. Also, we have $\mathbf X_{k+i}^{(1)} = \max_{j \in [n]} \mathbf X^{(j)}_{k+i}$ by definition of $\mathcal S(\cdot)$. Therefore, similar to \eqref{eq-veryfinal-ub-individual} we have
\begin{align*}
\Pb_n^\alpha \left( \max_{2 \leq j \leq n}\mathbf X^{(j)}_{k+i+1} \geq \frac{2\alpha}{\alpha - 1}\,; \mathbf X_{k+i}\in \mathcal S_{n,\alpha}(n^{1+\alpha^{i-1}-\alpha^i}(\log n)^{(2\alpha)^i}) \right) < n^{1-2\alpha + o(1)}\,.
\end{align*}
Next, note that
\begin{align*}
&\sum_{2\leq j \leq n} \Eb[\mathbf X_{k+i+1}^{(j)}] = \frac{n\sum_{2\leq j \leq n}(\mathbf X_{k+i}^{(j)})^\alpha}{(\mathbf X_{k+i}^{(1)})^\alpha + \sum_{2\leq j \leq n}(\mathbf X_{k+i}^{(j)})^\alpha}\\
\leq\ & \frac{n(\sum_{2\leq j \leq n}\mathbf X_{k+i}^{(j)})^\alpha}{(\mathbf X_{k+i}^{(1)})^\alpha + (\sum_{2\leq j \leq n}\mathbf X_{k+i}^{(j)})^\alpha} \leq \frac{n(n^{1+\alpha^{i-1}-\alpha^i}(\log n)^{(2\alpha)^i})^\alpha}{(\mathbf X_{k+i}^{(1)})^\alpha+(n^{1+\alpha^{i-1}-\alpha^i}(\log n)^{(2\alpha)^i})^\alpha}\\
&\leq \frac{n(n^{1+\alpha^{i-1}-\alpha^i}(\log n)^{(2\alpha)^i})^\alpha}{\frac{2}{3}n^\alpha+(n^{1+\alpha^{i-1}-\alpha^i}(\log n)^{(2\alpha)^i})^\alpha} \leq 2n^{1+\alpha^i-\alpha^{i+1}} \cdot (\log n)^{(2\alpha)^i}\,,
\end{align*}
which gives
\begin{align*}
    &\Pb_n^\alpha\left( \sum_{2 \leq j \leq n}\mathbf X_{k+i+1}^{(j)} > n^{1+\alpha^i-\alpha^{i+1}}(\log n)^{(2\alpha)^{i+1}}\,;  \mathbf X_{k+i}\in \mathcal S(n^{1+\alpha^{i-1}-\alpha^{i}}(\log n)^{(2\alpha)^{i}})\right) \\ 
    \leq\ & \frac{\sum_{2\leq j \leq n} \Eb[\mathbf X_{k+i+1}^{(j)}]}{n^{1+\alpha^i-\alpha^{i+1}}(\log n)^{(2\alpha)^{i+1}}} 
    < (\log n)^{-2\alpha} \,.
\end{align*}
Hence, we have proved \eqref{eq-very-final-induction}. Combining \eqref{eq-very-final-induction} with \eqref{eq-very-final-initial-step}, we have shown
\begin{align*}
\Pb_n^\alpha\left(\mathbf X_{k+N_\alpha + 1} \in \mathcal S(n^{-\Omega(1)})\,;  \mathbf X_{k}\in \overline{\Omega}_n(M,\delta) \mbox{ with }\mathbf X_k^{(1)} = \max_{i \in [n]}\mathbf X_k^{(i)} \right) \geq 1-\frac{1}{\log n}\,.
\end{align*}
Therefore, by symmetry we have
\begin{align*}
    \Pb_n^{\alpha}\left( \mathcal T \leq k + N_\alpha +1, \mathfrak L(\mathbf X_k) = \{\mathcal W\} \right) \geq 1-\frac{1}{\log n} \,,
\end{align*} 
which is exactly our desired result.
\end{proof}

Now we are ready to prove Proposition~\ref{lem-repeat-used}.
\begin{proof}[Proof of Proposition~\ref{lem-repeat-used}]
    Without loss of generality, we may assume that $k=0$. Suppose that $(\mathbf X_t^{(1)},\ldots,\mathbf X_t^{(n)}) \in \overline{\Omega}_n(M_t,\delta_t)$ and define
    \begin{align*}
        \mathcal T'&= \inf_{ t \geq 0 } \left\{ 1-\delta_t < n^{-\frac{1}{100(\alpha+1)}} \right\} \wedge \inf_{ t \geq 0 } \left\{ M_t(1-\delta_t) < (\log n)^{12} \right\} \\
        &\quad \wedge \inf_{ t \geq 0 } \left\{ M_t < (\log n)^{100} \right\} \wedge \lfloor\sqrt{\log n}\rfloor \,; \\
        \mathcal T''&= \inf_{ t \geq \mathcal T' } \left\{ 1-\delta_t < \frac{\log n}{n} \right\} \wedge \lfloor\sqrt{\log n}\rfloor \,.
    \end{align*}
    Since we assume that $(\mathbf X_0^{(1)},\ldots,\mathbf X_0^{(n)}) \in \overline{\Omega}_n(M,\delta)$ for some $(M,\delta)$ satisfying \eqref{eq-assum-lem-2.4}, it is clear that $\mathcal T'>0$. Also, it is clear that $\mathcal T' \leq\sqrt{\log n}$ by definition of $\mathcal T'$. In addition, define the event
    \begin{align*}
        \mathcal G=\ & \cap_{t \leq \mathcal T'-1} \left\{ \min\left\{ \left( 1-\frac{1}{(\log n)^{3}} \right) M_t(1-\delta_t), (\log n)^{20} \right\} \leq M_{t+1}(1-\delta_{t+1}) \right\} \\
        &\cap \cap_{t \leq \mathcal T'-1} \left\{ \left( 1- \delta_t(\log n)^{-\frac{1}{2}} \right) (1-\delta_t)^{\alpha} \leq (1-\delta_{t+1}) \right\} \\
        &\cap \cap_{t \leq \mathcal T'-1} \left\{ (1-\delta_{t+1}) \leq \left( 1+ \delta_t(\log n)^{-\frac{1}{2}} \right) (1-\delta_t)^{\alpha} \right\} \\
        &\cap \cap_{t \leq \mathcal T'-1} \left\{\delta_{t+1} \geq M_{t+1}^{-\frac{1}{2}}(\log n)^3, M_{t+1} \geq \left( 1-\frac{1}{\log n} \right)M_t \right\}\\
        &\cap  \cap_{t \leq \mathcal T'-1}\left\{\mathcal L(\mathbf X_{t+1}) = \mathcal L(\mathbf X_t)\right\}\,.
    \end{align*}
    From Lemma~\ref{lem-prelim-gap-est-time-after-2R} and a union bound we then have $\Pb(\mathcal G)=1-o(1)$. In addition, using $M_0=M\geq (\log n)^{101}$ and $M_0(1-\delta_0)=M(1-\delta)\geq (\log n)^{20}$, it is straightforward to check by induction that on the event $\mathcal G$ we have
    \begin{align*}
        & M_{t+1}(1-\delta_{t+1}) \geq (\log n)^{19}, \ M_t \geq (\log n)^{100} \,; \\
        & \left( 1-\frac{\delta_t}{\sqrt{\log n}} \right)(1-\delta_t)^{\alpha} \leq (1-\delta_{t+1}) \leq \left( 1+\frac{\delta_t}{\sqrt{\log n}} \right)(1-\delta_t)^{\alpha} 
    \end{align*}
    for all $t \leq \mathcal T'-1$. This implies that
    \begin{align*}
        M_{\mathcal T'} &\geq (\log n)^{100},\quad  M_{\mathcal T'}(1-\delta_{\mathcal T'}) \geq (\log n)^{18}\,,\\
        (1-\delta_{\mathcal T'})&\in\left[(1-\delta)^{(\alpha-\frac{1}{\sqrt{\log n}})^{\mathcal T'}} \,,(1-\delta)^{(\alpha+\frac{1}{\sqrt{\log n}})^{\mathcal T'}} \right]\,.
    \end{align*}
    Thus, under the event $\mathcal G$ we have
    \begin{align*}
        (\alpha + \frac{1}{\sqrt{\log n}})^{\mathcal T'-1}\log( (1-\delta_0)^{-1}) &> \frac{\log n}{200\alpha(\alpha + 1)}\Longrightarrow \mathcal T' = o(\sqrt{\log n}) \\
         \Longrightarrow(\alpha - \frac{1}{\sqrt{\log n}})^{\mathcal T'}\log( (1-\delta_0)^{-1}) &\leq \frac{\log n}{100(\alpha + 1)} 
        \overset{\eqref{eq-def-kappa}}{\Longrightarrow} \mathcal T'=(1+o(1)) \kappa \,.
    \end{align*}
    Thus, we have (note that $\mathcal T' \leq \mathcal T$ since $1-\delta_{\mathcal T}=0$)
    \begin{align}
        \Pb_{n,M,\delta}^{\alpha}\Big( \mathcal T \geq (1-o(1))\kappa \Big) \geq \Pb_{n,M,\delta}^{\alpha}\Big( \mathcal T' \geq (1-o(1))\kappa \Big)\geq \Pb_{n,M,\delta}^{\alpha}(\mathcal G) =1-o(1) \,. \label{eq-lem-2.4-lower-bound}
    \end{align}
    In addition, under the event $\mathcal G$ we have
    \begin{align*}
        1-\delta_{\mathcal T'} \leq n^{-\frac{1}{100(\alpha+1)}}, M_{\mathcal T'} \geq n^{\frac{1}{100(\alpha+1)}} \,. 
    \end{align*}
   Then we define
    \begin{align*}
        \mathcal G'&= \cap_{\mathcal T' \leq t \leq \mathcal T'+\lceil\log_\alpha(100(\alpha + 1))\rceil}\left\{ 1-\delta_{t+1} \leq \max\left\{ \frac{\log n}{n}, (\log n)^2 (1-\delta_t)^{\alpha} \right\}\right\} \\
        &\cap  \cap_{\mathcal T' \leq t \leq \mathcal T'+\lceil\log_\alpha(100(\alpha + 1))\rceil}\left\{\mathcal L(\mathbf X_{t+1}) = \mathcal L(\mathbf X_t)\right\}
    \end{align*}
    Using Lemma~\ref{lem-prelim-gap-est-final-constant-rounds}, we have $\Pb(\mathcal G')=1-o(1)$. Thus, on the event $\mathcal G \cap \mathcal G'$ we have
    \begin{align*}
        (1-\delta_{\mathcal T'+k}) &\leq \max\left\{ (\log n)^{\alpha^k+\ldots+\alpha+1} (1-\delta_{\mathcal T'})^{\alpha^k}, \frac{\log n}{n} \right\} \\
        &\leq \max\left\{ \frac{\log n}{n}, n^{ -\frac{\alpha^k}{10(\alpha+1)} } \right\} \mbox{ for all } k \leq C \,.
    \end{align*}
    By choosing $\frac{\alpha^C}{10(\alpha+1)}>1$ we have on the event $\mathcal G \cap \mathcal G'$
    \begin{align*}
        1-\delta_{\mathcal T'+C} \leq \frac{\log n}{n} \Longrightarrow \mathcal T'' \leq \mathcal T'+C \,.
    \end{align*}
    Finally, define
    \begin{align*}
        \mathcal G''=\left\{ \mathcal T \leq \mathcal T'' + \lceil\log_\alpha (\frac{2}{\alpha - 1})\rceil + 2 \,, \mathcal L(\mathbf X_{\mathcal T''}) = \{\mathcal W\}\right\} \,.
    \end{align*}
    By Lemma~\ref{lem-prelim-gap-est-very-final-rounds} we have $\Pb_{n,M,\delta}^{\alpha}(\mathcal G'') = 1-o(1)$. Thus, we have
    \begin{align}
        \Pb_{n,M,\delta}^{\alpha}\Big( \mathcal T \leq (1+o(1))\kappa \Big) \geq \Pb_{n,M,\delta}^{\alpha}(\mathcal G \cap \mathcal G' \cap \mathcal G'') =1-o(1) \,. \label{eq-lem-2.4-upper-bound}
    \end{align}
    Combining \eqref{eq-lem-2.4-lower-bound} and \eqref{eq-lem-2.4-upper-bound} yields the desired result.
\end{proof}

\section{Proofs of Theorems~\ref{thm-rapid-ordering-supercritical} and \ref{thm-all-or-nothing-transition}}{\label{sec:proof-all-or-nothing}}

\subsection{The case $\alpha\geq 2$}{\label{subsec:case-k=1}}

We first prove the case $\alpha\geq 2$, which needs to be treated separately due to Remark~\ref{rmk-uniqueness-leader}. Define $\aleph=\aleph(n)$ such that 
\begin{equation}{\label{eq-choice-aleph}}
    \aleph! \leq n < (\aleph+1)!
\end{equation}
(note that $\aleph=\Theta(\frac{\log n}{\log\log n})$). The starting point of our proof is the following characterization of the voting dynamics in the first round.
\begin{lemma}\label{lem-first-round-distribution-features}
    Fix $\alpha\geq 1$. Define $A_j = \#\{ 1 \leq i \leq n: \mathbf X^{(i)}_1 = j\}$ for $0 \leq j \leq \aleph + 2$. In addition, define
    \begin{align*}
        \mathtt N = (\log \log n)^2 <\exp(40\alpha \sqrt{\log \log n}), \quad \mathtt N' = \frac{\log n}{(\log \log n)^{30}} \,.
    \end{align*}
    Define $\mathcal E_{\ref{lem-first-round-distribution-features}}=\mathcal E_{\ref{lem-first-round-distribution-features},1} \cap \mathcal E_{\ref{lem-first-round-distribution-features},2} \cap \Ecal_{\ref{lem-first-round-distribution-features},3}$, where 
    \begin{align}
        & \mathcal E_{\ref{lem-first-round-distribution-features},1} = \left\{ |\mathbf M_1-\aleph| \leq 2, n \leq \mathbf Z_1 \leq n\log \log n \right\} \,; \\
        & \mathcal E_{\ref{lem-first-round-distribution-features},2} = \cap_{j \geq 0} \left\{ A_{\mathbf M_1-j} \leq \exp(\log\log n \cdot \max\{ \mathtt N, j \}) \right\} \,; \\
        & \mathcal E_{\ref{lem-first-round-distribution-features},3} = \cap_{10 \leq j \leq \mathtt N'} \left\{ A_{\mathbf M_1-j} \geq \exp( (\log\log n)^{-1} \cdot j ) \right\}\,.
    \end{align}
    We then have: 
    \begin{enumerate}
        \item[(1)] $\Pb^{\alpha}_n(E_{\mathsf{unique}}), \Pb^{\alpha}_n(E^c_{\mathsf{unique}}) \geq \frac{1}{\log n}$.
        \item[(2)] $\Pb^{\alpha}_n(\mathcal E_{\ref{lem-first-round-distribution-features}} \mid E_{\mathsf{unique}}), \Pb^{\alpha}_n(\mathcal E_{\ref{lem-first-round-distribution-features}} \mid E^c_{\mathsf{unique}})=1-o(1)$.
    \end{enumerate}
\end{lemma}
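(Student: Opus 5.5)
The plan is to reduce everything to Lemma~\ref{lem-level-set-concentration}. Since $\mathbf X_0=(1,\ldots,1)$, the first-round vector $\mathbf X_1\sim\mathsf{Mult}(n;\frac1n,\ldots,\frac1n)$, whatever the value of $\alpha$. In the notation of that lemma we have $\mathbf M=\mathbf M_1$ and $Y_j=A_j$. Hence $E_{\mathsf{unique}}=\mathcal D_{\mathsf{unique}}$.

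For Item (1), Lemma~\ref{lem-level-set-concentration} gives $\Pb(\mathcal D_{\mathsf{unique}})\wedge\Pb(\mathcal D^c_{\mathsf{unique}})\ge \frac{1}{30\aleph}$. Since $\aleph=\Theta(\frac{\log n}{\log\log n})$, this is at least $\frac1{\log n}$ for large $n$. For Item (2), by \eqref{eq-bound-D-lb-ub-mx} it suffices to show the deterministic inclusion $\mathcal D_{\mathsf{max}}\cap\mathcal D_{\mathsf{lb}}\cap\mathcal D_{\mathsf{ub}}\subset \mathcal E_{\ref{lem-first-round-distribution-features}}$ for all large $n$. Both conditionings are then handled at once.

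\textbf{The event $\mathcal E_{\ref{lem-first-round-distribution-features},1}$.} The bound $|\mathbf M_1-\aleph|\le 2$ is exactly $\mathcal D_{\mathsf{max}}$. The bound $\mathbf Z_1\ge n$ follows from $x^\alpha\ge x$ on nonnegative integers when $\alpha\ge 1$. For the upper bound, on $\mathcal D_{\mathsf{max}}$ all values are at most $\aleph+2$, so $\mathbf Z_1=\sum_{j\le\aleph+2}A_j j^\alpha$. Using $\mathcal D_{\mathsf{ub}}$ we get
\[
\mathbf Z_1\le \frac{10n}{e}\sum_{j\ge0}\frac{j^\alpha}{((j-4)\vee0)!}+9\aleph^8(\aleph+3)^{\alpha+1}=O_\alpha(n),
\]
which is at most $n\log\log n$.

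\textbf{The event $\mathcal E_{\ref{lem-first-round-distribution-features},2}$.} Fix $j\ge0$ with $\mathbf M_1-j\ge0$; otherwise $A_{\mathbf M_1-j}=0$. By $\mathcal D_{\mathsf{ub}}$ we have $A_{\mathbf M_1-j}\le \frac{10n}{e((\mathbf M_1-j-4)\vee0)!}\vee 9\aleph^8$. We use $n<(\aleph+1)!$ and $\mathbf M_1\le\aleph+2$. This gives $\frac{n}{(\mathbf M_1-j-4)!}\le\frac{(\aleph+1)!}{(\aleph-j-2)!}\le(\aleph+1)^{j+3}$, and when $\mathbf M_1-j-4\le 0$ we use the crude bound $n$ instead. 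Since $\log(\aleph+1)=\log\log n-\log\log\log n+O(1)$, we need $(j+3)(\log\log n-\log\log\log n+O(1))\le \log\log n\cdot\max\{\mathtt N,j\}$.
\begin{itemize}
\item[$\circ$] If $j\ge\mathtt N=(\log\log n)^2$, this holds because $3\log\log n\le j\log\log\log n$.
\item[$\circ$] If $j<\mathtt N$, the right side is $(\log\log n)^3$, which dominates the left side.
\end{itemize}
The remaining terms are also covered. The term $9\aleph^8$ is trivially below $\exp(\mathtt N\log\log n)$. When $\mathbf M_1-j-4\le0$ we have $j\ge\aleph-6$, so $\exp(j\log\log n)\ge n$ and the bound $A\le n$ suffices.

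\textbf{The event $\mathcal E_{\ref{lem-first-round-distribution-features},3}$.} Take $10\le j\le\mathtt N'$. Then $\mathbf M_1-j\le\aleph+2-j\le\aleph-1$, so $\mathcal D_{\mathsf{lb}}$ applies. It gives $A_{\mathbf M_1-j}\ge \frac{0.4n}{e(\mathbf M_1-j)!}\ge\frac{0.4}{e}\cdot\frac{\aleph!}{(\aleph+2-j)!}\ge \frac{0.4}{e}(\aleph+3-j)^{j-2}$. Since $j\le \mathtt N'=o(\aleph)$, this is at least $\frac{0.4}{e}(\aleph/2)^{j-2}$. That quantity is far larger than $\exp(j/\log\log n)$ once $j\ge10$, because $(j-2)\log(\aleph/2)-2\ge j/\log\log n$.

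Combining the three inclusions with \eqref{eq-bound-D-lb-ub-mx} yields Item (2). The only step needing real care is the bookkeeping in $\mathcal E_{\ref{lem-first-round-distribution-features},2}$: one must check that the loss from $\log\aleph<\log\log n$ absorbs the additive $+3$ exactly when $j\ge\mathtt N$, and that the edge cases ($9\aleph^8$, and levels near $0$) are covered. Everything else is a direct reading of Lemma~\ref{lem-level-set-concentration}.
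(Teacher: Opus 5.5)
Your overall route is the paper's. You identify $E_{\mathsf{unique}}=\mathcal D_{\mathsf{unique}}$, read off Item (1) from $\frac{1}{30\aleph}\ge\frac{1}{\log n}$, and obtain Item (2) from \eqref{eq-bound-D-lb-ub-mx} via the deterministic inclusion $\mathcal D_{\mathsf{max}}\cap\mathcal D_{\mathsf{lb}}\cap\mathcal D_{\mathsf{ub}}\subset\mathcal E_{\ref{lem-first-round-distribution-features}}$. Your arguments for $\mathcal E_{\ref{lem-first-round-distribution-features},1}$ and $\mathcal E_{\ref{lem-first-round-distribution-features},3}$ are fine.

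There is, however, an inequality in the wrong direction in your treatment of $\mathcal E_{\ref{lem-first-round-distribution-features},2}$. To bound $\frac{n}{(\mathbf M_1-j-4)!}$ from above you need a \emph{lower} bound on $\mathbf M_1$. Using $\mathbf M_1\le\aleph+2$ only gives $(\mathbf M_1-j-4)!\le(\aleph-j-2)!$, so your estimate $\frac{n}{(\mathbf M_1-j-4)!}\le\frac{(\aleph+1)!}{(\aleph-j-2)!}$ does not follow. It is in fact false on these events: for $\mathbf M_1=\aleph-1$ (which is allowed) and bounded $j$, the left side exceeds the right side by a factor of order at least $\aleph^{2}$.

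The repair is to use the other half of $\mathcal D_{\mathsf{max}}$, as the paper does via $n<(\aleph+1)!\le(\mathbf M_1+3)!$, which gives $(\mathbf M_1+3)^{j+7}$. Concretely, $\mathbf M_1\ge\aleph-2$ gives
\[
\frac{10n}{e((\mathbf M_1-j-4)\vee 0)!}\le\frac{10}{e}\cdot\frac{(\aleph+1)!}{((\aleph-j-6)\vee 0)!}\le\frac{10}{e}(\aleph+1)^{j+7}
\]
for every $j\ge0$. This also absorbs your edge case $\mathbf M_1-j-4\le 0$.

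Your bookkeeping then survives with $j+7$ in place of $j+3$. Write $\log(\aleph+1)=\log\log n-\log\log\log n+O(1)$.
\begin{itemize}
\item The case $j\ge\mathtt N$ needs $7\log\log n+O(j)\le j\log\log\log n$, which holds since $j\ge(\log\log n)^2$.
\item The case $j<\mathtt N$ needs $\bigl((\log\log n)^2+7\bigr)\bigl(\log\log n-\log\log\log n+O(1)\bigr)+O(1)<(\log\log n)^3$.
\end{itemize}
In the second case both sides equal $(\log\log n)^3$ to leading order. So it is again the $(\log\log n)^2\log\log\log n$ saving, not a crude domination, that closes the estimate. The care you flag for $j\ge\mathtt N$ is needed for $j$ just below $\mathtt N$ as well.
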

\begin{proof}
    Note that $(\mathbf X^{(1)}_1,\ldots,\mathbf X^{(n)}_1) \sim \mathsf{Mult}(n;\frac{1}{n},\ldots,\frac{1}{n})$. By Lemma~\ref{lem-level-set-concentration}, we have 
    \begin{align*}
        \Pb^{\alpha}_n(E_{\mathsf{unique}}), \Pb^{\alpha}_n(E^c_{\mathsf{unique}}) \geq \frac{1}{30\aleph} \geq \frac{1}{\log n} \,.
    \end{align*}
    In addition, by Lemma~\ref{lem-level-set-concentration}, with probability $1-o(1)$ the event
    \begin{align*}
        \Ecal_{\mathsf{crit}} :=\{ |\mathbf M_1-\aleph| \leq 2 \} &\cap \left\{ A_j\geq \frac{0.4n}{ej!} \mbox{ for all } 0 \leq j \leq \aleph-1 \right\} \\
        &\cap\left\{ A_j\leq \frac{10n}{e((j-4)\vee 0)!} \vee 9\aleph^8 \mbox{ for all } 0 \leq j \leq \aleph + 2 \right\}
    \end{align*}
    holds, both conditioned on $E_{\mathsf{unique}}$ and $E^c_{\mathsf{unique}}$. Therefore, it suffices to show $\Ecal_{\mathsf{crit}} \subset \Ecal_{\ref{lem-first-round-distribution-features}}$. Note that under $\Ecal_{\mathsf{crit}}$ we have $|\mathbf M_1-\aleph|\leq 2$ and (recall $\alpha\geq 1$)
    \begin{align*}
        n & \leq \mathbf Z_1 \leq 100\aleph^{8+\alpha} +n\sum_{0 \leq j \leq \aleph - 5}\tfrac{10j^\alpha}{e((j-4)\vee 0)!} \\
        &\leq 50n + n\Eb_{x \sim \mathsf{Pois}(1)}[(x + 4)^{\alpha}] < n \log \log n \,.
    \end{align*}
    Therefore, we have $\Ecal_{\mathsf{crit}} \subset\Ecal_{\ref{lem-first-round-distribution-features},1}$. Next, under $\Ecal_{\mathsf{crit}}$, for all $0 \leq j \leq \mathbf M_1$ we have
    \begin{align*}
        A_{\mathbf M_1-j} &\leq \max\left\{ \frac{10n}{e((\mathbf M_1-j-4)\vee 0)!}, 9\aleph^8 \right\} \leq \max\left\{ \frac{ 10n(\mathbf M_1+3)^{j+7} }{ e(\mathbf M_1+3)! }, \exp(9\log\log n) \right\}  \\
        &\leq \max\left\{ \left( \frac{2\log n}{\log\log n} \right)^{j+7}, \exp(9\log\log n) \right\} \leq \exp(\log\log n \cdot (\mathtt N \vee j))\,,
    \end{align*}
    which implies that $\Ecal_{\mathsf{crit}} \subset\Ecal_{\ref{lem-first-round-distribution-features},2}$. Finally, under $\Ecal_{\mathsf{crit}}$, for all $10 \leq j \leq \mathtt N'$ we have
    \begin{align*}
        A_{\mathbf M_1 - j} &\geq \frac{0.4n}{e(\mathbf M_1 - j)!} \geq \frac{0.1n}{(\mathbf M_1-2)!}\cdot (\mathbf M_1-j)^{j-2} \\
        &\geq 0.1 \cdot \left( \sqrt{\log n} \right)^{j-2} \geq (\log n)^{j/3} > \exp((\log\log n)^{-1}j)\,,
    \end{align*}
    which implies that $\Ecal_{\mathsf{crit}} \subset\Ecal_{\ref{lem-first-round-distribution-features},3}$. Combining the above arguments, we have $\Ecal_{\mathsf{crit}} \subset\Ecal_{\ref{lem-first-round-distribution-features}}$ and thus we have finished our proof. 
\end{proof}

Based on Lemma~\ref{lem-first-round-distribution-features}, the following lemma characterizes the voting dynamics in the second round.
\begin{lemma}{\label{lem-alpha>2-second-round}}
    When $\alpha\geq 2$, define
    \begin{align*}
        \mathfrak M_{\alpha} = \Big\{ & (M,\delta): \frac{(\log n)^{\alpha}}{(\log\log n)^{2\alpha}} \leq M \leq (\log n)^{\alpha}(\log \log n)^\alpha,  \\
        & \frac{(\log \log n)^{-300-\alpha}}{\log n} \leq \delta \leq \frac{(\log \log n)^{300+\alpha}}{\log n} \Big\} \,.  
    \end{align*}
    Also define
    \begin{align*}
        \mathfrak M_{\alpha}' = \Big\{ & (M,\delta): \frac{(\log n)^{\alpha}}{(\log \log n)^{2\alpha}} \leq M \leq (\log n)^{\alpha}(\log \log n)^\alpha, \\
        &\frac{(\log \log n)^{-300-\alpha}}{(\log n)^{\frac{\alpha}{2}}} \leq \delta \leq \frac{(\log \log n)^{300+\alpha}}{(\log n)^{\frac\alpha 2}}  \Big\} \,.
    \end{align*}
    We have
    \begin{align*}
        &\Pb_n^{\alpha}\left( (\mathbf X^{(1)}_2,\ldots,\mathbf X^{(n)}_2) \in \cup_{(M,\delta) \in \mathfrak M_\alpha} \overline{\Omega}_n(M,\delta) \mid E_{\mathsf{unique}} \right) = 1-o(1) \,; \\
        &\Pb_n^{\alpha}\left( (\mathbf X^{(1)}_2,\ldots,\mathbf X^{(n)}_2) \in \cup_{(M,\delta) \in \mathfrak M_\alpha'} \overline{\Omega}_n(M,\delta) \mid E_{\mathsf{unique}}^c \right) = 1-o(1) \,.
    \end{align*}
    Moreover, for $\alpha > 2$ we have
    \begin{align*}
        &\Pb_n^{\alpha}\Big( \mathcal L(\mathbf X_2) = \mathcal L(\mathbf X_1)  \mid E_{\mathsf{unique}} \Big) = 1-o(1) \,; \\
        &\Pb_n^{\alpha}\Big( \mathcal L(\mathbf X_2) \subsetneq \mathcal L(\mathbf X_1), |\mathcal L(\mathbf X_2)| = 1  \mid E_{\mathsf{unique}}^c \Big) = 1-o(1) \,.
    \end{align*}
\end{lemma}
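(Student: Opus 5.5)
We would condition on $\mathbf X_1$ and work on the event $\mathcal E_{\ref{lem-first-round-distribution-features}}$. By Lemma~\ref{lem-first-round-distribution-features}, this event has conditional probability $1-o(1)$ under both $E_{\mathsf{unique}}$ and $E^c_{\mathsf{unique}}$. On it we have $|\mathbf M_1-\aleph|\le 2$, $n\le \mathbf Z_1\le n\log\log n$, and level-set sizes $A_{\mathbf M_1-j}\le \exp(\log\log n\cdot\max\{\mathtt N,j\})$.

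Given $\mathbf X_1$, the vector $\mathbf X_2$ is $\mathsf{Mult}(n;a_i/n)$ with $a_i=n(\mathbf X_1^{(i)})^\alpha/\mathbf Z_1\le(\log n)^{\alpha}$. By Lemma~\ref{lem-Poisson-coupling}, we couple it with independent $\eta_i\sim\mathsf{Pois}(a_i)$ up to an additive error of $4$. All thresholds below are polylogarithmic, so this error is negligible. Set $\lambda=n\mathbf M_1^\alpha/\mathbf Z_1$, which lies in $[(\log n)^\alpha(\log\log n)^{-2\alpha},(\log n)^\alpha]$ and so gives the range of $M$ in $\mathfrak M_\alpha,\mathfrak M'_\alpha$. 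For a candidate at level $\mathbf M_1-j$, the mean drops by
\[
\lambda\bigl(1-(1-j/\mathbf M_1)^\alpha\bigr)\asymp \lambda j/\mathbf M_1=(\log n)^{\alpha-1+o(1)}j .
\]

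\textbf{Case $E_{\mathsf{unique}}$.} Let $i_1$ be the unique first-round leader. By Chernoff, $\eta_{i_1}=\lambda+O(\sqrt{\lambda}\log\log n)$ with probability $1-o(1)$. Next we bound, for each $j\ge1$, the maximum over level $\mathbf M_1-j$ by a union bound using $A_{\mathbf M_1-j}$. The deviation required is $\asymp \sqrt{\lambda\cdot\log\log n\cdot\max\{\mathtt N,j\}}$. Since $\alpha-1\ge\alpha/2$, this is $o(\lambda j/\mathbf M_1)$ up to $\log\log$ factors, for all $j\ge1$ and for $j\ge(\log\log n)^{C}$ alike. The levels $j\ge \mathbf M_1/2$ are handled crudely, since their means are at most $2^{-\alpha}\lambda$.

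So the runner-up comes from levels $j\le(\log\log n)^{O(1)}$. The $j=1$ level is nonempty on $\mathcal E_{\ref{lem-first-round-distribution-features},3}$-type bounds, or more simply because $A_{\mathbf M_1-1}\ge1$ by Lemma~\ref{lem-level-set-concentration}. Hence the second maximum is $\lambda-\Theta(\lambda/\mathbf M_1)\cdot(\log\log n)^{O(1)}$, which gives $\delta\in[(\log\log n)^{-300-\alpha},(\log\log n)^{300+\alpha}]/\log n$.

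For $\alpha=2$ the loss and the fluctuation are of the same order $\log n$ at $j=1$. Then we only need $\delta$ within polyloglog factors of $1/\log n$. The upper bound on $\delta$ comes from the union bound above. For the lower bound we apply Lemma~\ref{lem-Poisson-maximum-1st-minus-2nd}(1) to the finitely many relevant candidates: it gives a gap of at least $\sqrt\lambda/((\log\lambda)\sqrt{\log m})$ with probability $1-O((\log\log n)^{-1/2})$. The leader-preservation claim is only asserted for $\alpha>2$, where the loss strictly dominates and $i_1$ remains the leader.

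\textbf{Case $E^c_{\mathsf{unique}}$.} There are $2\le A_{\mathbf M_1}\le \exp(\mathtt N\log\log n)$ tied leaders, all with mean $\lambda$. The same union bound shows that lower levels fall short by at least $(\log n)^{\alpha-1-o(1)}$. For $\alpha>2$ this exceeds the top-level fluctuation $\sqrt{\lambda\log A_{\mathbf M_1}}=(\log n)^{\alpha/2+o(1)}$, so the top two values of $\mathbf X_2$ both come from $\mathcal L(\mathbf X_1)$. Their gap is at most $O(\sqrt{\lambda\log A_{\mathbf M_1}})$ by Chernoff, and at least $\sqrt\lambda/(\log\lambda\sqrt{\log m})$ by Lemma~\ref{lem-Poisson-maximum-1st-minus-2nd}(1). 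Together these give $\delta\asymp(\log n)^{-\alpha/2}$ up to $(\log\log n)^{O(1)}$. A positive gap forces $|\mathcal L(\mathbf X_2)|=1$ with $\mathcal L(\mathbf X_2)\subsetneq\mathcal L(\mathbf X_1)$.

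For $\alpha=2$ the lower levels may compete, but any competitor still satisfies $\delta\le(\log n)^{-1}\mathrm{polyloglog}$. Lemma~\ref{lem-Poisson-maximum-1st-minus-2nd}(1), applied to candidates with $j\le(\log\log n)^{O(1)}$, then gives the lower bound.

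\textbf{Main obstacle.} The delicate step is the lower bound on $\delta$: showing that the top two Poisson values are not too close. This is where Lemma~\ref{lem-Poisson-maximum-1st-minus-2nd} must be applied to a suitably truncated candidate set with $a_i\in[\lambda/2,\lambda]$. The second delicate point is transferring this to the multinomial through the coupling error of $4$; this is harmless because the gap lower bound $\sqrt\lambda/\mathrm{polylog}$ is much larger than $8$.
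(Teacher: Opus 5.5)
Your proposal is correct and follows essentially the paper's route. You work on $\mathcal E_{\ref{lem-first-round-distribution-features}}$, Poisson-couple $\mathbf X_2$, and union-bound over the first-round level sets to show that lower layers fall behind: for all $j\ge1$ when $\alpha>2$, and for $j\ge(\log\log n)^{O(1)}$ when $\alpha=2$. You then apply Item~(1) of Lemma~\ref{lem-Poisson-maximum-1st-minus-2nd} to the tied leaders (respectively, to the top $(\log\log n)^{O(1)}$ layers when $\alpha=2$) to bound the gap from below. For the gap upper bound when $\alpha=2$, the paper additionally invokes Item~(2) of that lemma, but your direct argument (a union-bound upper bound on the maximum plus a near-top runner-up) already suffices.

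One small fix is needed. $A_{\mathbf M_1-1}\ge1$ is not guaranteed, and it does not follow from Lemma~\ref{lem-level-set-concentration} or from $\mathcal E_{\ref{lem-first-round-distribution-features},3}$. The former only bounds levels $\le\aleph-1$ from below, $\mathbf M_1$ may equal $\aleph+1$ or $\aleph+2$, and the latter starts at $j=10$. Use instead $A_{\mathbf M_1-10}\ge1$ from $\mathcal E_{\ref{lem-first-round-distribution-features},3}$, as the paper does with $\mathcal A_*(10)$; this changes only constants.
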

\begin{proof}
Define $\mathcal A_*(j) = \{ i: \mathbf X_1^{(i)} = \mathbf M_1-j\}$.
First, by Lemma~\ref{lem-first-round-distribution-features}, we can work on the event $\Ecal_{\ref{lem-first-round-distribution-features}}$ in the following proof. Define $\Pb^*(\cdot) = \Pb(\cdot\mid  E_{\mathsf{unique}})$ and $\Pb^{**}(\cdot) = \Pb(\cdot\mid  E^c_{\mathsf{unique}})$. When the probability measure can be either we use the notation $\Pb$. Then, we split into two cases:

\textbf{Case 1:} $\alpha = 2$. In this case, $\frac{n\mathbf M_1^\alpha}{\mathbf Z_1} \in (\frac{(\log n)^2}{(\log\log n)^4}, (\log n)^2)$. Define $\mathtt H = (\log \log n)^{30}$, and define $\mathcal A_{\diamond} = \{ i: \mathbf X_1^{(i)} \geq \mathbf M_1 - \mathtt H\}$. In addition, define ($\overset{(2)}{\max}$ denotes the second maximum)
\begin{align*}
    \mathcal A_{\mathsf{highest}} &= \{(\log \log n)^{-100} \log n \leq\max_{k \in \mathcal A_{\diamond}} \{ \mathbf X_2^{(k)} \} - \max^{(2)}_{k \in \mathcal A_{\diamond}} \{ \mathbf X_2^{(k)} \} \leq (\log \log n)^{200} \log n\}\,.
\end{align*}
Note that by Lemma~\ref{lem-Poisson-coupling}, given any $\mathbf X_1 \in \Omega_n$ there exists a coupling of $\mathbf X_2$ and $(\mathbf Y_2^{(i)} :\sim \mathsf{Pois}(\frac{(\mathbf X_1^{(i)})^2}{\mathbf Z_{\mathbf X_1}}))_{i\in[n]}$ such that $\Pb(\max_{i \in [n]} |\mathbf X_2^{(i)} - \mathbf Y_2^{(i)}| \leq 4) = 1 - o(1)$.
Under $\Ecal_{\ref{lem-first-round-distribution-features}}$ we have $\mathbf M_1 \in [\aleph-2,\aleph+2]$, and
\begin{align}
    \exp(\frac{\mathtt H}{\log\log n}) \leq |\mathcal A_{\diamond}| \leq \exp(2(\log\log n)\mathtt H)\,,\label{eq-size-A-diamond}
\end{align}
which satisfy the requirements of Item (1) and Item (2) of Lemma~\ref{lem-Poisson-maximum-1st-minus-2nd} with $\lambda = \frac{n\mathbf M_1^2}{\mathbf Z_1}, \lambda_* = \frac{n(\mathbf M_1 - \mathtt H)^2}{\mathbf Z_1}$ and $c = 128$, since we have
\begin{align*}
    \lambda_* + \sqrt{\lambda_* (\log \lambda)^{1+c/2}}\geq \frac{n(\mathbf M_1 - \mathtt H)^2}{\mathbf Z_1} + \frac{\sqrt{n}\cdot(\mathbf M_1 - \mathtt H)}{\sqrt{\mathbf Z_1}} \cdot (\log\log n)^{32} > \frac{n\mathbf M_1^2}{\mathbf Z_1}\,.
\end{align*}
Therefore, by Items (1) and (2) of Lemma~\ref{lem-Poisson-maximum-1st-minus-2nd} with $\lambda = \frac{n\mathbf M_1^2}{\mathbf Z_1}$, we have
\begin{align*}
    \Pb\Big( \frac{ (\log\log n)^{-1} \sqrt{n\mathbf M_1^2} }{ \sqrt{\mathbf Z_1 \log |\mathcal A_{\diamond}|} } \leq \max_{k \in \mathcal A_{\diamond}} \{ \mathbf Y_2^{(k)} \} - \max^{(2)}_{k \in \mathcal A_{\diamond}} \{ \mathbf Y_2^{(k)} \} \leq \frac{ (\log\log n)^{129} \sqrt{n\mathbf M_1^2} }{ \sqrt{\mathbf Z_1 \log |\mathcal A_{\diamond}|} } \,; \mathcal E_{\ref{lem-first-round-distribution-features}}\Big) = 1-o(1)\,.
\end{align*}
This implies that (by \eqref{eq-size-A-diamond})
\begin{align}
    \Pb(\mathcal A_{\mathsf{highest}} \cap \mathcal E_{\ref{lem-first-round-distribution-features}}) = 1-o(1)-\Pb(\max_{i \in [n]} |\mathbf X_2^{(i)} - \mathbf Y_2^{(i)}| > 4) =1-o(1)\,. \label{eq-A-diamond-gap-1st-2nd-alpha=2}
\end{align}
Next, note that for $h\geq\mathtt H$ we have
\begin{align*}
    & \frac{n\mathbf M_1^2}{\mathbf Z_1} - \frac{n(\mathbf M_1-h)^{2}}{\mathbf Z_1} \geq \frac{n\mathbf M_1 h}{\mathbf Z_1} \\
    >\ & (\log n)(\log \log n)^{2} + \sqrt{\frac{n\mathbf M_1^2}{\mathbf Z_1}\cdot (\log\log n)^5} + \sqrt{\frac{n(\mathbf M_1-h)^2}{\mathbf Z_1} \cdot h(\log \log n)^5}\,.
\end{align*}
Also, we have 
\begin{align*}
    h(\log \log n)^5 \leq \frac{\mathbf M_1(\log \log n)^5}{2} < \frac{n\mathbf M_1^2}{4\mathbf Z_1} \,.
\end{align*}
Therefore, define
\begin{align*}
    \mathcal A_{\mathsf{order}} = \Big\{ \max_{k \in \mathcal A_*(j)} \{ \mathbf X_2^{(k)} \} \leq \max_{k \in \mathcal A_{\diamond}} \{ \mathbf X_2^{(k)} \} - (\log\log n)^2 \log n \mbox{ for all } j \geq \mathtt H \Big\} \,.
\end{align*}
We then have
\begin{align}
    &\Pb(\mathcal A_{\mathsf{order}}^c\cap \Ecal_{\ref{lem-first-round-distribution-features}}) \nonumber  \\
    \leq\ & \Pb\left( \max_{k \in \mathcal A_*(j)} \{ \mathbf X_{2}^{(k)} \} > \max_{k \in \mathcal A_*(0)} \{ \mathbf X_{2}^{(k)} \} - (\log \log n)^2 \log n \mbox{ for some } \mathtt H \leq j \leq \frac{\mathbf M_1}{2} \right)\nonumber \\
    &+ \Pb\left( \max_{k \in \mathcal A_*(j)} \{ \mathbf X_{2}^{(k)} \} > \max_{k \in \mathcal A_*(0)} \{ \mathbf X_{2}^{(k)} \} - (\log \log n)^2 \log n \mbox{ for some } j >\frac{\mathbf M_1}{2} \right) \nonumber \\
    \leq\ & \Pb\left( \max_{k \in \mathcal A_*(0)} \{ \mathbf X_{2}^{(k)} \} \leq \tfrac{n\mathbf M_1^2}{\mathbf Z_1} - \sqrt{\tfrac{n\mathbf M_1^2}{\mathbf Z_1}\cdot (\log \log n)^5} \right) \nonumber \\
    &+\sum_{\mathtt H \leq j \leq \mathbf M_1/2} \Pb\left( \max_{k \in \mathcal A_*(j)} \{ \mathbf X_{2}^{(k)} \} \geq \tfrac{n(\mathbf M_1 - j)^2}{\mathbf Z_1} + \sqrt{\tfrac{n(\mathbf M_1-j)^2}{\mathbf Z_1}\cdot j(\log \log n)^5} \right) \nonumber \\
    &+ n\cdot \Pb\left( \mathsf{Bin}\left( n,\tfrac{\mathbf M_1^2}{4\mathbf Z_1} \right) \geq \tfrac{n\mathbf M_1^2}{2\mathbf Z_1} \right) \nonumber \\
    \leq\ & 2\exp(-\tfrac{(\log \log n)^5}{5}) + \sum_{\mathtt H \leq j \leq \mathbf M_1/2} 2\exp(-\tfrac{j(\log \log n)^5}{10}) + n\exp(-(\log n)^{2-o(1)}) = o(1)\,. \label{eq-bound-away-from-highests-alpha=2}
\end{align}
Combining \eqref{eq-A-diamond-gap-1st-2nd-alpha=2} and \eqref{eq-bound-away-from-highests-alpha=2}, we have
\begin{align*}
    &\Pb\left( (\log \log n)^{-100}(\log n) \leq \max_{k \in [n]} \{ \mathbf X_2^{(k)} \} - \max_{k \in [n]}^{(2)} \{ \mathbf X_2^{(k)} \} \leq (\log \log n)^{200}(\log n) \right) \\
    \geq\ & \Pb(\mathcal A_{\mathsf{highest}} \cap \mathcal A_{\mathsf{order}} \cap \Ecal_{\ref{lem-first-round-distribution-features}}) = 1-o(1) \,.
\end{align*}
Also, we have 
\begin{align}
    &\Pb\Big( \mathbf M_2 \not\in \left[ \tfrac{n\mathbf M_1^2}{2\mathbf Z_1}, \tfrac{3n\mathbf M_1^2}{2\mathbf Z_1} \right] \Big) \leq \Pb(\Ecal_{\ref{lem-first-round-distribution-features}}^c) + \Pb\Big( \mathsf{Bin}\Big(n, \tfrac{\mathbf M_1^2}{\mathbf Z_1} \Big) < \tfrac{\mathbf M_1^2}{2\mathbf Z_1} \Big) + n\Pb\Big( \mathsf{Bin}\Big( n,\tfrac{\mathbf M_1^2}{\mathbf Z_1} \Big) > \tfrac{3\mathbf M_1^2}{2\mathbf Z_1} \Big) \nonumber \\
    &\leq o(1)+\exp(-(\log n)^{2-o(1)})+n\exp(-(\log n)^{2-o(1)})=o(1) \,, \label{control-second-step-case-1}
\end{align}
which gives our desired result.

\noindent\textbf{Case 2: $\alpha>2$.} In this case, $\frac{n\mathbf M_1^\alpha}{\mathbf Z_1} \in (\frac{(\log n)^\alpha}{(\log\log n)^{\alpha + 2}}, (\log n)^\alpha)$. Define
\begin{align*}
    &\mathcal A_{\mathsf{mult}} = \Big\{ \tfrac{ (\log n)^{\frac{\alpha}{2}} }{ (\log\log n)^{\frac{\alpha}{2}+2.9} } \leq \max_{k \in \mathcal A_*(0)} \{ \mathbf X_{2}^{(k)} \} - \max_{k \in \mathcal A_*(0)}^{(2)} \{ \mathbf X_{2}^{(k)} \} \leq (\log \log n)^{\frac{\alpha}{2}+2.9} (\log n)^{\frac{\alpha}{2}} \Big\}\,,\\
    &\mathcal A_{\mathsf{order};1} = \cap_{j \geq 1} \Big\{ \max_{k \in \mathcal A_*(j)} \{ \mathbf X_{2}^{(k)} \} \leq \max_{k \in \mathcal A_*(0)} \{ \mathbf X_{2}^{(k)} \} - (\log \log n)^{-\alpha} (\log n)^{\alpha-1} \Big\} \,, \\
    &\mathcal A_{\mathsf{order};2} = \Big\{ \max_{k \in \mathcal A_*(10)} \{ \mathbf X_{2}^{(k)} \} \geq \max_{k \in \mathcal A_*(0)} \{ \mathbf X_{2}^{(k)} \} - (\log \log n)^{\alpha} (\log n)^{\alpha-1} \Big\} \,.
\end{align*}
It is clear that under the event $\mathcal A_{\mathsf{order};1}$ we have $\mathcal L(\mathbf X_2) \subset \mathcal L(\mathbf X_1)$.
Note that for all $j \geq 1$ we have
\begin{align*}
    &\tfrac{n\mathbf M_1^\alpha}{\mathbf Z_1} - \tfrac{n(\mathbf M_1 -j)^{\alpha}}{\mathbf Z_1} > \tfrac{n}{\mathbf Z_1}\cdot \mathbf M_1^{\alpha - 1} j \\
    >\ & (\log n)^{\alpha-1}(\log\log n)^{-\alpha}+ \sqrt{\tfrac{n\mathbf M_1^\alpha}{\mathbf Z_1}\cdot (\log\log n)^5} + \sqrt{\tfrac{n(\mathbf M_1 - j)^\alpha}{\mathbf Z_1} \cdot j(\log \log n)^5} \,.
\end{align*}
Thus, we have 
\begin{align}
    &\Pb(\mathcal A_{\mathsf{order};1}^c \cap \Ecal_{\ref{lem-first-round-distribution-features}} ) \nonumber \\
    \leq\ & \Pb\left( \max_{k \in \mathcal A_*(j)} \{ \mathbf X_{2}^{(k)} \} > \max_{k \in \mathcal A_*(0)} \{ \mathbf X_{2}^{(k)} \} -  (\log \log n)^{-\alpha} (\log n)^{\alpha-1} \mbox{ for some } 1 \leq j \leq \frac{\mathbf M_1}{2} \right) \nonumber \\
    &+ \Pb\left( \max_{k \in \mathcal A_*(j)} \{ \mathbf X_{2}^{(k)} \} > \max_{k \in \mathcal A_*(0)} \{ \mathbf X_{2}^{(k)} \} - (\log\log n)^{-\alpha} (\log n)^{\alpha-1} \mbox{ for some } j >\frac{\mathbf M_1}{2} \right)\nonumber \\
    \leq\ & \Pb\left( \max_{k \in \mathcal A_*(0)} \{ \mathbf X_{2}^{(k)} \} \leq \tfrac{n\mathbf M_1^\alpha}{\mathbf Z_1} - \sqrt{\tfrac{n\mathbf M_1^\alpha}{\mathbf Z_1}\cdot (\log\log n)^5} \right) \nonumber \\
    &+\sum_{1 \leq j \leq \mathbf M_1/2} \Pb\left( \max_{k \in \mathcal A_*(j)} \{ \mathbf X_{2}^{(k)} \} \geq \tfrac{n(\mathbf M_1-j)^\alpha}{\mathbf Z_1} + \sqrt{\tfrac{n(\mathbf M_1-j)^\alpha}{\mathbf Z_1}\cdot j(\log\log n)^5} \right) \nonumber \\
    &+ n\Pb\left( \mathsf{Bin}\left( n,\tfrac{\mathbf M_1^\alpha}{4\mathbf Z_1} \right) \geq \tfrac{n\mathbf M_1^\alpha}{2\mathbf Z_1} \right) \nonumber \\
    \leq\ & 2\exp(-\tfrac{(\log \log n)^5}{5}) + \sum_{1 \leq j \leq \mathbf M_1/2} 2\exp(-\tfrac{j(\log \log n)^5}{10}) + n\exp(-(\log n)^{\alpha-o(1)}) = o(1)\,. \label{eq-bound-away-from-highests-alpha>2}
\end{align}
Similarly, using
\begin{align*}
    &\frac{n\mathbf M_1^\alpha}{\mathbf Z_1} - \frac{n(\mathbf M_1 -10)^{\alpha}}{\mathbf Z_1}< \frac{20\alpha n}{\mathbf Z_1}\cdot \mathbf M_1^{\alpha - 1}  \\
    <\ & (\log n)^{\alpha-1}(\log \log n)- \sqrt{\tfrac{n\mathbf M_1^\alpha}{\mathbf Z_1}\cdot (\log \log n)^5} - \sqrt{\tfrac{n(\mathbf M_1 - 10)^\alpha}{\mathbf Z_1} \cdot (\log \log n)^5} \,,
\end{align*}
we have
\begin{align}
    &\Pb( \mathcal A_{\mathsf{order};2}^c \cap \Ecal_{\ref{lem-first-round-distribution-features}} ) \nonumber \\
    \leq\ & \Pb\left( \max_{k \in \mathcal A_*(10)} \{ \mathbf X_{2}^{(k)} \} > \max_{k \in \mathcal A_*(0)} \{ \mathbf X_{2}^{(k)} \} -  (\log \log n)^{\alpha} (\log n)^{\alpha-1} \right) \nonumber \\
    \leq\ & \Pb\left( \max_{k \in \mathcal A_*(10)} \{ \mathbf X_{2}^{(k)} \} \geq \tfrac{n(\mathbf M_1-10)^\alpha}{\mathbf Z_1} + \sqrt{\tfrac{n(\mathbf M_1-10)^\alpha}{\mathbf Z_1} \cdot 10(\log\log n)^5} \right) \nonumber \\
    &+ \Pb\left( \max_{k \in \mathcal A_*(0)} \{ \mathbf X_{2}^{(k)} \} \leq \tfrac{n\mathbf M_1^\alpha}{\mathbf Z_1} - \sqrt{\tfrac{n\mathbf M_1^\alpha}{\mathbf Z_1}\cdot (\log\log n)^5} \right) \nonumber \\
    \leq\ & 2\exp(-\frac{(\log \log n)^5}{5}) + 2\exp(-\frac{10(\log \log n)^5}{10}) = o(1) \,.  \label{eq-bound-away-from-highests-alpha>2-another}
\end{align}
Combining \eqref{eq-bound-away-from-highests-alpha>2} and \eqref{eq-bound-away-from-highests-alpha>2-another}, we have 
\begin{align}
    \Pb^*\Big( \mathcal L(\mathbf X_2) \subset \mathcal L(\mathbf X_1), \tfrac{ (\log n)^{\alpha-1} }{ (\log\log n)^{\alpha} } \leq \max_{k \in [n]} \{ \mathbf X_2^{(k)} \} - \max^{(2)}_{k \in [n]} \{ \mathbf X_2^{(k)} \} \leq \tfrac{ (\log n)^{\alpha-1} }{ (\log\log n)^{-\alpha} } \Big) \nonumber \\
    \geq \Pb^*( \Ecal_{\ref{lem-first-round-distribution-features}} \cap \mathcal A_{\mathsf{order};1} \cap \mathcal A_{\mathsf{order};2}) = 1 - o(1)\,,\label{control-second-step-case-2.1}
\end{align}
Also, by Lemma~\ref{lem-Poisson-coupling}, given any $\mathbf X_1 \in \Omega_n$ there exists a coupling of $\mathbf X_2$ and $(\mathbf Y_2^{(i)} :\sim \mathsf{Pois}(\frac{(\mathbf X_1^{(i)})^\alpha}{\mathbf Z_{\mathbf X_1}}))_{i\in[n]}$ such that $\Pb(\max_{i \in [n]} |\mathbf X_2^{(i)} - \mathbf Y_2^{(i)}| \leq 4\,;\mathbf X_1) = 1 - o(1)$. Note that $\log |\mathcal A_*(0)| \leq (\log \log n)^3$ under the event $\Ecal_{\ref{lem-first-round-distribution-features}}$. Therefore, using Item~(1) of Lemma~\ref{lem-Poisson-maximum-1st-minus-2nd} with $(\lambda , m) = ( \frac{\mathbf M_{\mathbf X_1}^\alpha}{\mathbf Z_{\mathbf X_1}},|\mathcal A_*(0)|)$ we have
\begin{align*}
\Pb^{**}\Big(\tfrac{ (\log n)^{\frac{\alpha}{2}} }{ (\log\log n)^{\frac{\alpha}{2}+2.9} } \leq \max_{k \in \mathcal A_*(0)} \{ \mathbf Y_{2}^{(k)} \} - \max_{k \in \mathcal A_*(0)}^{(2)} \{ \mathbf Y_{2}^{(k)} \} \leq (\log \log n)^{2.9} (\log n)^{\frac{\alpha}{2}}\,; \Ecal_{\ref{lem-first-round-distribution-features}}\Big) = 1-o(1)\,,
\end{align*}
which gives
\begin{align*}
    \Pb^{**}( \mathcal A_{\mathsf{mult}}^c \cap \Ecal_{\ref{lem-first-round-distribution-features}} ) \leq o(1) + \Pb^{**}(\max_{i \in [n]} |\mathbf X_2^{(i)} - \mathbf Y_2^{(i)}| >4)= o(1)\,.
\end{align*}
Therefore, we have
\begin{align}
    \Pb^{**}\Big( \mathcal L(\mathbf X_2) \subset \mathcal L(\mathbf X_1), \tfrac{ (\log n)^{\frac{\alpha}{2}} }{ (\log\log n)^{\alpha} } \leq \max_{k \in [n]} \{ \mathbf X_2^{(k)} \} - \max^{(2)}_{k \in [n]} \{ \mathbf X_2^{(k)} \} \leq \tfrac{ (\log n)^{\frac{\alpha}{2}} }{ (\log\log n)^{-\alpha} } \Big) \nonumber \\
    \geq \Pb^{**}(\Ecal_{\ref{lem-first-round-distribution-features}} \cap \mathcal A_{\mathsf{mult}} \cap \mathcal A_{\mathsf{order};1} \cap \mathcal A_{\mathsf{order};2}) = 1 - o(1)\,. \label{control-second-step-case-2.2}
\end{align}
Combining \eqref{control-second-step-case-2.1} and \eqref{control-second-step-case-2.2} leads to the desired result.
\end{proof}

Then, before we present the core lemma, we introduce a lemma that describes the transitions of $(M,\delta)$ in the first constant number of steps (recall Definition~\ref{eq-def-(M,1-delta)-admissible}).

\begin{lemma}{\label{lem-prelim-gap-est-time-2-to-200}}
   Suppose that 
   \begin{equation}
       M \geq \frac{(\log n)^{\alpha}}{(\log \log n)^{2\alpha}}, \quad \frac{1}{ (\log n)^{0.1+\frac{\alpha}{2}} } \leq\delta\leq \frac{1}{(\log n)^{0.9}}, \quad Z \leq n\log\log n \,.
   \end{equation}
   Define $\mathfrak T(M,\delta,Z)$ to be the collection of $(M',\delta')$ such that 
   \begin{equation}{\label{eq-def-F_1-on-M-and-delta}}
       \frac{M^{\alpha}n}{2Z} \leq M' \leq \frac{2M^{\alpha}n}{Z}\,,\quad \frac{\alpha\delta}{2} < \delta' < 2\alpha\delta\,,
   \end{equation}
   Given any $(\mathbf X_1,\ldots,\mathbf X_n) \in \overline{\Omega}_n(M,\delta,Z)$, we have
   \begin{equation}\label{eq-after-second-round-estimate}
       \Pb\left( \mathcal L(\mathbf X') = \mathcal L(\mathbf X)\,, (\mathbf X_1',\ldots,\mathbf X_n') \in \cup_{ (M',\delta') \in \mathfrak T(M,\delta,Z) } \overline{\Omega}_n(M',\delta') \right) \geq 1-o\left( \frac{1}{\log n} \right) \,.
   \end{equation}
\end{lemma}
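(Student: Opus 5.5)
Without loss of generality we take $\mathbf X_1=M$ and $\mathbf X_2=\lfloor(1-\delta)M\rfloor=\max_{i\ge2}\mathbf X_i$. We set $p=M^\alpha/\mathbf Z_{\mathbf X}$ and $q=((1-\delta)M)^\alpha/\mathbf Z_{\mathbf X}$. We do not try to control $\mathbf Z_{\mathbf X}$ precisely. We only need $n\le \mathbf Z_{\mathbf X}\le Z\le n\log\log n$, which gives $np\in[M^\alpha n/Z, M^\alpha]$. The upper end is much larger than needed, but we will only use ratios. The argument is a one-step concentration argument in the spirit of Lemma~\ref{lem-prelim-gap-est-time-after-2R}, now in a regime where $\delta$ is small but not tiny.

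First, by the binomial Chernoff bound, with probability $1-\exp(-\Omega((\log\log n)^2))$ we have
\[
|\mathbf X'_1-np|\le (\log\log n)\sqrt{np}.
\]
Since $np\ge M^\alpha/\log\log n\ge (\log n)^{\alpha^2-o(1)}$, this gives $M'=\mathbf X_1'\in[\tfrac{M^\alpha n}{2Z},\tfrac{2M^\alpha n}{Z}]$, provided we use the natural upper bound $np\le M^\alpha n/\mathbf Z_{\mathbf X}$. Since $Z$ is only an upper bound on $\mathbf Z_{\mathbf X}$, this step implicitly requires $\mathbf Z_{\mathbf X}\ge Z/2$. I would read the statement as applying with $Z$ of the same order as $\mathbf Z_{\mathbf X}$, or else state the lower bound via $n/\mathbf Z$. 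This is a point to check.

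For the competitors, every $i\ge2$ has $\mathbf X_i'\sim\mathsf{Bin}(n,p_i)$ with $np_i\le nq$. We split the competitors into two groups. Candidates with $\mathbf X_i\le (1-2\delta)M$ have mean at most $np(1-2\delta)^\alpha$. A union bound over at most $n$ of them, with deviation $\sqrt{np}\log n$, shows that none exceeds $np(1-\tfrac{3}{2}\alpha\delta)$. This uses $\delta\sqrt{np}\ge (\log n)^{\alpha/2-o(1)}\cdot(\log n)^{-0.1-\alpha/2}\cdots$. Here the lower bound $\delta\ge(\log n)^{-0.1-\alpha/2}$ against $\sqrt{np}\approx(\log n)^{\alpha^2/2}$ gives $\delta np\gg\sqrt{np}\,\log n$, because $\alpha^2/2-\alpha/2-0.1>1$ for $\alpha\ge2$.

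Candidates with $\mathbf X_i\in((1-2\delta)M,(1-\delta)M]$ are handled with the same deviation bound $\sqrt{np}\log n\ll \delta np$. Their maximum lies in $[nq-\sqrt{nq}\log n,\; nq+\sqrt{nq}\log n]$, and the lower end is attained by $\mathbf X_2$ itself.

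Combining the two groups gives
\[
\frac{\max_{i\ge2}\mathbf X'_i}{\mathbf X'_1}=(1-\delta)^\alpha\bigl(1+O(\tfrac{\log n}{\sqrt{np}})\bigr)=1-\alpha\delta(1+o(1)).
\]
Here $\delta\le(\log n)^{-0.9}$ makes $(1-\delta)^\alpha=1-\alpha\delta+O(\delta^2)$, and the error $\log n/\sqrt{np}=o(\delta)$ by the computation above. This yields $\delta'\in(\alpha\delta/2,2\alpha\delta)$ and $\mathcal L(\mathbf X')=\mathcal L(\mathbf X)=\{1\}$, since the leader strictly beats everyone. The total failure probability is $n\exp(-\Omega((\log n)^2))+\exp(-\Omega((\log\log n)^2))$. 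To get $o(1/\log n)$ we simply use deviation $(\log n)\sqrt{np}$ for $\mathbf X_1'$ as well, which is still $o(\delta np)$.

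The main obstacle is checking that the window $\delta\ge(\log n)^{-0.1-\alpha/2}$ is compatible with the fluctuation scale $\sqrt{np}\,\log n$ uniformly for $\alpha\ge2$. This is the inequality $\delta\sqrt{np}\gg\log n$, which rests on $M\ge(\log n)^\alpha/(\log\log n)^{2\alpha}$ giving $np\ge(\log n)^{\alpha^2-o(1)}$. The second delicate point is the $M'$ bounds and the role of $Z$ noted above.
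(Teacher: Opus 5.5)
Your strategy matches the paper's: Chernoff concentration for the leader, a union bound over the at most $n$ competitors, then a ratio computation. Splitting the competitors into two groups is harmless but unnecessary, since every competitor has mean at most $nq$.

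The problem is the quantitative step you yourself flag as the main obstacle. You use the fluctuation window $(\log n)\sqrt{np}$ and justify $(\log n)\sqrt{np}=o(\delta np)$ by ``$\alpha^2/2-\alpha/2-0.1>1$ for $\alpha\ge2$''. That inequality is false at $\alpha=2$, where the left side is $0.9$. It holds only for $\alpha>(1+\sqrt{9.8})/2\approx2.07$.

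Concretely, take $\alpha=2$, $M=(\log n)^2(\log\log n)^{-4}$ and $\delta=(\log n)^{-1.1}$, all allowed by the hypotheses. Then $np\le M^2$, so $\delta\sqrt{np}\le(\log n)^{0.9}$. Your window $(\log n)\sqrt{np}$ is therefore larger than the expected gap $\approx\alpha\delta\,np$ between leader and runner-up. Your bounds then certify neither that the leader stays ahead nor that $\delta'\in(\alpha\delta/2,2\alpha\delta)$.

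This is not only an artifact of the stated hypotheses. At $\alpha=2$ the lemma is used from round $2$ to round $3$. There $np\approx(\log n)^4$ and $\delta\approx(\log n)^{-1}$, up to powers of $\log\log n$ of either sign. So $\delta\sqrt{np}\gg\log n$ is simply not available there.

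The missing point is that $(\log n)\sqrt{np}$ is far more than you need.
\begin{itemize}
\item For the competitors, a union bound over $n$ binomials with means at most $nq$ only needs a deviation of order $\sqrt{nq\log n}$ times a slowly growing factor. For example, $(\log\log n)\sqrt{nq\log n}$ gives failure probability at most $2n\exp(-(\log\log n)^2\log n/5)$.
\item For the leader, the window $(\log\log n)\sqrt{np}$ already fails with probability $\exp(-\Omega((\log\log n)^2))=o(1/\log n)$. So your remark that the leader's window must be enlarged to $(\log n)\sqrt{np}$ to reach $o(1/\log n)$ is mistaken, and enlarging it moves in the wrong direction.
\end{itemize}
With these windows the requirement becomes $\delta\sqrt{np}\gg(\log\log n)\sqrt{\log n}$, i.e.\ $\alpha^2/2-\alpha/2-0.1>1/2$. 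This holds for all $\alpha\ge2$ (it equals $0.9$ at $\alpha=2$), and it is exactly the paper's choice.

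Your remark about $Z$ is correct. Since $Z$ only bounds $\mathbf Z_{\mathbf X}$ from above, the upper bound $M'\le 2M^\alpha n/Z$ can fail as literally stated, and the paper's proof silently uses $nM^\alpha/Z$ as the mean. What the later application actually needs is $M'\in[\frac{nM^\alpha}{2\mathbf Z_{\mathbf X}},\frac{2nM^\alpha}{\mathbf Z_{\mathbf X}}]$ together with $n\le\mathbf Z_{\mathbf X}\le n\log\log n$. Your argument, with the corrected windows, does give that.
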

\begin{proof}
    Without loss of generality we assume $\mathbf X_1 = \max_{1 \leq i \leq n} \mathbf X_i$. Then, 
    \begin{align}
        \Pb\left( \left| \mathbf X_1' - \tfrac{nM^\alpha}{Z} \right| \geq (\log \log n) \sqrt{\tfrac{nM^\alpha}{Z}} \right) \leq 2\exp(-\tfrac{(\log \log n)^2}{5}) \,.  \label{eq-2-to-200-maximum}
    \end{align}
    In addition, we have
    \begin{align}
        \Pb\left( \left| \max_{2 \leq i \leq n} \mathbf X_i' - \tfrac{nM^\alpha(1-\delta)^\alpha}{Z} \right| \geq (\log\log n) \sqrt{\tfrac{(n\log n) M^\alpha(1-\delta)^{\alpha}}{Z}} \right) \nonumber \\
        \leq n\Pb\left( \left| \mathsf{Bin}(n,\tfrac{M^\alpha(1-\delta)^{\alpha}}{Z})-\tfrac{nM^\alpha(1-\delta)^\alpha}{Z} \right| \geq (\log\log n)\sqrt{\tfrac{(n\log n)\cdot M^\alpha(1-\delta)^{\alpha}}{Z}} \right) \nonumber \\
        \leq 2n\exp(-\tfrac{(\log\log n)^2(\log n)}{5}) < \exp(-(\log \log n)^{1.9})\,,  \label{eq-2-to-200-second-maximum}
    \end{align}
    Note that
    \begin{align*}
        \frac{ \frac{nM^\alpha(1-\delta)^\alpha}{Z} + (\log \log n)\sqrt{\frac{(n\log n)\cdot M^\alpha(1-\delta)^{\alpha}}{Z}}}{\frac{nM^\alpha}{Z} - (\log \log n)\sqrt{\frac{nM^\alpha}{Z}}}  \leq 1-\frac{\alpha\delta}{2}
    \end{align*}
    and similarly we have
    \begin{align*}
         \frac{ \frac{nM^\alpha(1-\delta)^\alpha}{Z} - (\log \log n) \sqrt{\frac{(n\log n)\cdot M^\alpha(1-\delta)^{\alpha}}{Z}}}{\frac{nM^\alpha}{Z} + (\log \log n)\sqrt{\frac{nM^\alpha}{Z}}} \geq 1-2\alpha\delta \,,
    \end{align*}
    it is clear that \eqref{eq-after-second-round-estimate} follows from \eqref{eq-2-to-200-maximum} and \eqref{eq-2-to-200-second-maximum}.
\end{proof}

Then, we present the core lemma in the $\alpha\geq 2$ case, which shows that we can bring $(M,\delta)$ to the initial phase in \eqref{eq-assum-lem-2.4} in constant steps. Moreover, the order of the resulting $\delta$ will be affected solely by the event $E_{\mathsf{unique}}$, which affects the absorption time by Proposition~\ref{lem-repeat-used}.

\begin{proposition}{\label{cor-core-lemma-alpha>=2}}
    Given $\alpha \geq 2$, define 
    \begin{align*}
        &\mathfrak T_{\mathsf{crit}}= \left\{ (M,\delta):(M,\delta) \mbox{ satisfying } \eqref{eq-assum-lem-2.4} \right\} \,; \\
        &\mathfrak T_{\mathsf{unique}} = \Big\{ (M,\delta) \in\mathfrak T_{\mathsf{crit}}: (\log\log n)^{-300-2\alpha} < \tfrac{\delta^{-1}}{\log n} < (\log\log n)^{300+2\alpha} \Big\}\,;  \\
        &\mathfrak T_{\mathsf{mult}} = \Big\{ (M,\delta) \in\mathfrak T_{\mathsf{crit}}: (\log\log n)^{-300-2\alpha} < \tfrac{\delta^{-1}}{(\log n)^{\frac{\alpha}{2}}} < (\log\log n)^{300+2\alpha} \Big\} \,.
    \end{align*}
    In addition, define the event
    \begin{align*}
        \Ecal_{\mathsf{lead}} &= \{\mathcal L(\mathbf X_{2})=\mathcal L(\mathbf X_{3})=\ldots=\mathcal L(\mathbf X_{10})\} \,.
    \end{align*}
    Then we have 
    \begin{align*}
        &\Pb\left( \Ecal_{\mathsf{lead}}\cap\{\mathbf X_{10} \in \cup_{(M,\delta) \in \mathfrak T_{\mathsf{unique}}}\overline{\Omega}_n(M,\delta)\}\mid E_{\mathsf{unique}} \right) = 1-o(1)\,, \\ 
        &\Pb\left( \Ecal_{\mathsf{lead}}\cap\{\mathbf X_{10} \in \cup_{(M,\delta) \in \mathfrak T_{\mathsf{mult}}}\overline{\Omega}_n(M,\delta)\}\mid E^c_{\mathsf{unique}} \right) = 1-o(1)\,.
    \end{align*}
\end{proposition}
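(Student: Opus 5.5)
The plan is to start from the description of round~$2$ given by Lemma~\ref{lem-alpha>2-second-round}. Then iterate the one-step Lemma~\ref{lem-prelim-gap-est-time-2-to-200} eight times, for rounds $2\to 3\to\cdots\to 10$, using the Markov property. Finally, check that the pair $(M_{10},\delta_{10})$ falls in $\mathfrak T_{\mathsf{crit}}$ and in the correct window. Write $\Pb^*=\Pb(\cdot\mid E_{\mathsf{unique}})$ and $\Pb^{**}=\Pb(\cdot\mid E^c_{\mathsf{unique}})$.

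\textbf{Step 1.} By Lemma~\ref{lem-alpha>2-second-round}, with $\Pb^*$-probability $1-o(1)$ we have $\mathbf X_2\in\overline{\Omega}_n(M_2,\delta_2)$ with $(M_2,\delta_2)\in\mathfrak M_\alpha$. Under $\Pb^{**}$ the same holds with $\mathfrak M'_\alpha$ in place of $\mathfrak M_\alpha$.

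\textbf{Step 2: partition functions.} Apply Lemma~\ref{lem-good-event-E-1} with $R=10$. It gives $\mathbf Z_{\mathbf X_k}\le n(200\alpha)^{200\alpha^{10}}\le n\log\log n$ for all $k\le 10$, together with the bounds of $\mathcal E_{\ref{lem-event-E-diamond}}$ on the maxima. This holds with unconditional probability $1-o(n^{-1/2})$. Since Lemma~\ref{lem-first-round-distribution-features} gives $\Pb(E_{\mathsf{unique}})\wedge\Pb(E^c_{\mathsf{unique}})\ge 1/\log n$, the failure probability stays $o(1)$ after conditioning on either event.

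\textbf{Step 3: the iteration.} Suppose that at round $t\in\{2,\dots,9\}$ we have $\mathbf X_t\in\overline{\Omega}_n(M_t,\delta_t,Z_t)$ with $M_t\ge(\log n)^\alpha(\log\log n)^{-2\alpha}$, $(\log n)^{-0.1-\alpha/2}\le\delta_t\le(\log n)^{-0.9}$ and $Z_t\le n\log\log n$. Then Lemma~\ref{lem-prelim-gap-est-time-2-to-200} applies: with probability $1-o(1/\log n)$ it keeps the leader set and yields $M_{t+1}\ge M_t^\alpha n/(2Z_t)$ and $\delta_{t+1}\in(\alpha\delta_t/2,2\alpha\delta_t)$.

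Over eight steps, $\delta_{10}/\delta_2\in[(\alpha/2)^8,(2\alpha)^8]$, a constant factor. So $\delta_{10}$ stays within $(\log\log n)^{\pm(300+2\alpha)}$ of $(\log n)^{-1}$ under $\Pb^*$, and of $(\log n)^{-\alpha/2}$ under $\Pb^{**}$. In particular the hypothesis $\delta_t\le(\log n)^{-0.9}$ persists because $\alpha/2\ge1$, and the lower bound persists trivially. The lower bound on $M_t$ persists since $M_t$ grows. The key point for the conditioning is that each step's failure probability is $o(1/\log n)$, so eight steps cost $o(1/\log n)$ unconditionally, and dividing by $\Pb(E_{\mathsf{unique}}),\Pb(E^c_{\mathsf{unique}})\ge 1/\log n$ still gives $o(1)$. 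On these events $\mathcal L(\mathbf X_2)=\cdots=\mathcal L(\mathbf X_{10})$, i.e.\ $\Ecal_{\mathsf{lead}}$ holds.

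\textbf{Step 4: landing in $\mathfrak T_{\mathsf{crit}}$.} It remains to verify the conditions \eqref{eq-assum-lem-2.4} for $(M_{10},\delta_{10})$. Iterating $M_{t+1}\ge M_t^\alpha/(2\log\log n)$ from $M_2\ge(\log n)^{\alpha-o(1)}$ gives $M_{10}\ge(\log n)^{\alpha^9-o(1)}\ge(\log n)^{101}$, since $\alpha\ge2$. Then $M_{10}^{-1/2}(\log n)^3\le(\log n)^{-200}\ll\delta_{10}$. Also $(1-\delta_{10})M_{10}\ge M_{10}/2\ge(\log n)^{20}$, and $1-\delta_{10}=1-o(1)>n^{-1/(100(\alpha+1))}$. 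Finally, $M_{10}<n$ because of $\mathcal E_{\ref{lem-event-E-diamond}}$.

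\textbf{Main obstacle.} I expect the main difficulty to be bookkeeping rather than new estimates. First, Lemma~\ref{lem-prelim-gap-est-time-2-to-200} must be applied uniformly over all admissible states via the Markov property. Second, every per-step error must be made $o(1/\log n)$ so that it survives the conditioning on $E_{\mathsf{unique}}$ or $E^c_{\mathsf{unique}}$.
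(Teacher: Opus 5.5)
Your proposal is correct and follows the paper's proof essentially step for step: round~2 comes from Lemma~\ref{lem-alpha>2-second-round}, the partition-function bound from Lemma~\ref{lem-good-event-E-1}, then eight applications of Lemma~\ref{lem-prelim-gap-est-time-2-to-200}, with $\Pb(E_{\mathsf{unique}})\wedge\Pb(E^c_{\mathsf{unique}})\ge 1/\log n$ used to absorb the unconditional errors into the conditioning (you are more explicit than the paper about the $o(1/\log n)$ per-step bookkeeping and about verifying \eqref{eq-assum-lem-2.4} at round~10). One cosmetic slip: under $\Pb^{**}$ one has $\delta_{10}=(\log n)^{-\alpha/2\pm o(1)}$, so the chain $M_{10}^{-1/2}(\log n)^3\le(\log n)^{-200}\ll\delta_{10}$ is false as written for $\alpha\ge 400$; instead compare $(\log n)^{3-\alpha^9/2+o(1)}$ directly with $(\log n)^{-\alpha/2-o(1)}$, which works since $\alpha^9-\alpha>6$.
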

\begin{proof}
 For $k \in [10]$, define
    \begin{align*}
        \Ecal_{\mathsf{boundZ};k} = \cap_{1 \leq i \leq k} \{\mathbf Z_i < n\log \log n \} \,.
    \end{align*}
    Note that by Item (1) of Lemma~\ref{lem-first-round-distribution-features} we have $\Pb(E_{\mathsf{unique}}), \Pb(E^c_{\mathsf{unique}}) \geq \frac{1}{\log n}$. Also, by Lemma~\ref{lem-good-event-E-1}, for all $k \in [10]$ we have
    \begin{align*}
        \Pb(\Ecal^c_{\mathsf{boundZ};k}) \leq \Pb(\mathcal E^c_{\ref{lem-good-event-E-1}}) \leq \frac{1}{\sqrt{n}} \,.
    \end{align*}
    Thus, for all $k \in [10]$ we have $\Pb(\Ecal^c_{\mathsf{boundZ};k}\mid E_{\mathsf{unique}}), \Pb(\Ecal_{\mathsf{boundZ};k}^c|E^c_{\mathsf{unique}}) = o(1)$. Define
    \begin{align*}
        \mathfrak T^{(i)}_{\mathsf{unique}} = \Big\{ & (M, \delta) \in\mathfrak T_{\mathsf{crit}}: \left( \tfrac{(\log n)^\alpha}{2(\log\log n)^{2\alpha}} \right)^{\alpha^{i-2}} \leq M \leq \left( 2(\log n)^\alpha(\log \log n)^{2\alpha} \right)^{\alpha^{i-2}}, \\
        & (\log \log n)^{-300-\alpha}(2\alpha)^{2-i} \leq \frac{\delta^{-1}}{\log n} \leq (\log \log n)^{300+\alpha}(2\alpha)^{i-2} \Big\} 
    \end{align*}
    and 
    \begin{align*}
        \mathfrak T^{(i)}_{\mathsf{mult}} &= \Big\{ (M,\delta) \in\mathfrak T_{\mathsf{crit}}: \left( \tfrac{(\log n)^\alpha}{2(\log \log n)^{2\alpha}} \right)^{\alpha^{i-2}} \leq M \leq \left( 2(\log n)^\alpha(\log \log n)^{2\alpha} \right)^{\alpha^{i-2}}, \\
        & (\log\log n)^{-300-\alpha}(2\alpha)^{2-i} \leq \tfrac{\delta^{-1}}{(\log n)^{\frac{\alpha}{2}}} \leq (\log \log n)^{300+\alpha}(2\alpha)^{i-2} \Big\} \,.
    \end{align*}
    By Lemma~\ref{lem-alpha>2-second-round} we have
    \begin{align*}
        \Pb\Big( \Ecal_{\mathsf{boundZ};2} \cap \Big\{ \mathbf X_2 \in \cup_{(M,\delta) \in \mathfrak T^{(2)}_{\mathsf{unique}}} \overline{\Omega}_n(M,\delta) \Big\} \mid E_{\mathsf{unique}} \Big) = 1-o(1)
    \end{align*}
    and 
    \begin{align*}
        \Pb\Big( \Ecal_{\mathsf{boundZ};2} \cap \Big\{ \mathbf X_2 \in \cup_{(M,\delta) \in \mathfrak T^{(2)}_{\mathsf{mult}}} \overline{\Omega}_n(M,\delta) \Big\} \mid E^c_{\mathsf{unique}} \Big) = 1-o(1) \,.
    \end{align*}
    Thus, for $3 \leq k \leq 10$, by Lemma~\ref{lem-prelim-gap-est-time-2-to-200} we can inductively prove for $E\in\{ E_{\mathsf{unique}}, E^c_{\mathsf{unique}} \}$
    \begin{align*}
        &\Pb\Big( \Ecal_{\mathsf{boundZ};k}\cap \cap_{3\leq h \leq k}\Big\{\{ \mathcal L(\mathbf X_{h-1})=\mathcal L(\mathbf X_{h})\}\cap \big\{\mathbf X_h \in \cup_{(M,\delta) \in \mathfrak T^{(h)}_{\mathsf{unique}}} \overline{\Omega}_n(M, \delta) \big\} \Big\}\mid E \Big) = 1-o(1) \,.
    \end{align*}
    Note that $\mathfrak T^{(10)}_{\mathsf{unique}} \subset\mathfrak T_{\mathsf{unique}}$ and $\mathfrak T^{(10)}_{\mathsf{mult}} \subset\mathfrak T_{\mathsf{mult}}$. The proof is completed.
\end{proof}

Finally, based on Proposition~\ref{cor-core-lemma-alpha>=2}, we can complete the proof of Theorems~\ref{thm-rapid-ordering-supercritical} and \ref{thm-all-or-nothing-transition} when $\alpha\geq 2$.
\begin{proof}[Proof of Theorems~\ref{thm-rapid-ordering-supercritical} and \ref{thm-all-or-nothing-transition} for $\alpha \geq 2$]
    We first prove Theorem~\ref{thm-rapid-ordering-supercritical}. For $\alpha \geq 2$, using Lemma~\ref{lem-alpha>2-second-round}, Proposition~\ref{cor-core-lemma-alpha>=2} and Proposition~\ref{lem-repeat-used}, we see that (write $\tau=\frac{2\log\log(n)}{\log(\alpha)}$ and $\tau'=\frac{(1+\tfrac{\alpha}{2})\log\log(n)}{\log(\alpha)}$)
    \begin{align*}
        \Pb_n^{\alpha}\Big( \frac{ \mathcal T-10 }{ \tau } = 1+o(1) \mid E_{\mathsf{unique}} \Big)&=1-o(1) \,,\\
        \Pb_n^{\alpha}\Big( \frac{ \mathcal T-10 }{ \tau' } = 1+o(1) \mid E^c_{\mathsf{unique}} \Big)&=1-o(1) \,,
    \end{align*}
    thus we have shown Theorem~\ref{thm-rapid-ordering-supercritical} for $\alpha \geq 2$. 

    Then, we prove Theorem~\ref{thm-all-or-nothing-transition}. Based on Lemma~\ref{lem-alpha>2-second-round}, Proposition~\ref{cor-core-lemma-alpha>=2} and Proposition~\ref{lem-repeat-used}, we see that 
    \begin{align*}
        \Pb_n^{\alpha}\left(\mathcal L(\mathbf X_2)\subseteq \mathcal L(\mathbf X_1),\mathcal L(\mathbf X_2) = \ldots =\mathcal L(\mathbf X_{10})=\{\mathcal W\} \right)=1-o(1) \,.
    \end{align*}
    Thus we have shown Theorem~\ref{thm-all-or-nothing-transition} for $\alpha > 2$.
\end{proof}

\subsection{The case $1<\alpha<2$}{\label{subsec:case-k-geq-2}}

Now we focus on the case $1 < \alpha < 2$, where there exists $k \geq 2$ such that $\alpha\in(\lambda_k,\lambda_{k-1}]$. In this case, it is clear that $\alpha^{k-1}\leq 2<\alpha^k$. The key to our argument is to show the following lemma.
\begin{proposition}{\label{lem-k-geq-2-very-first-rounds}}
    Suppose $\alpha^{k-1}\leq 2<\alpha^k$. Define 
    \begin{equation}
        m_0 = \left\lceil \frac{\log 101}{\log \alpha} \right\rceil \geq k \,.
    \end{equation}
    Then the following hold with probability $1-o(1)$:
    \begin{enumerate}
        \item[(1)] $\mathcal L(\mathbf X_t)$ is a singleton set for each $2 \leq t \leq k$. In addition, all $\{ \mathcal L(\mathbf X_t):\alpha^{t-1}<2 \}$ are distinct.
        \item[(2)] $\mathcal L(\mathbf X_t) = \mathcal L(\mathbf X_{t+1})$ for $k \leq t \leq m_0$.
        \item[(3)] $\mathbf X_{m_0+1} \in \overline{\Omega}_n (M,\delta)$ for some 
        \begin{align*}
            M = (\log n)^{\Theta(1)} \geq (\log n)^{101} \mbox{ and } \delta = (\log n)^{-1+o(1)} \,,
        \end{align*}
        which satisfies the conditions of Proposition~\ref{lem-repeat-used}.
    \end{enumerate}
    In particular, the $k$-th round is the exact round that determines the eventual winner by Proposition~\ref{lem-repeat-used}.
\end{proposition}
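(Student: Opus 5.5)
We will prove the proposition by an induction over the rounds $t=1,\dots,k$. At each step we track a \emph{profile} of the configuration $\mathbf X_t$. This profile consists of the maximum $\mathbf M_t=(\log n)^{\alpha^{t-1}+o(1)}$, the bound $\mathbf Z_t=\Theta(n)$ (both supplied by Lemma~\ref{lem-good-event-E-1} together with Lemma~\ref{lem-event-E-diamond}), and a level-set exponent function $\phi_t$ defined by
\begin{align*}
\log\#\{i:\mathbf X_t^{(i)}=\mathbf M_t-(\log n)^{\beta}\}=(\log n)^{\phi_t(\beta)+o(1)},
\end{align*}
uniformly for $\beta$ in compact subsets of $(0,\alpha^{t-1})$.

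The base case is $t=1$. Here Lemma~\ref{lem-first-round-distribution-features} (items $\mathcal E_{\ref{lem-first-round-distribution-features},2}$ and $\mathcal E_{\ref{lem-first-round-distribution-features},3}$) gives $\phi_1(\beta)=\beta$, up to the factor $(\log\log n)^{\pm 1}$, which is absorbed into the $o(1)$.

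For the inductive step we condition on $\mathbf X_t$ and couple $\mathbf X_{t+1}$ with independent Poissons via Lemma~\ref{lem-Poisson-coupling}; the error is at most $4$ per coordinate, which is negligible. Fix a layer $\mathcal A_{t,\beta}$ with $m=|\mathcal A_{t,\beta}|$. Its coordinates are i.i.d.\ $\mathsf{Pois}(\lambda_\beta)$ with
\begin{align*}
\lambda_\beta=\frac{n(\mathbf M_t-(\log n)^\beta)^\alpha}{\mathbf Z_t}=\Lambda_t-(\log n)^{\beta+\alpha^{t-1}(\alpha-1)+o(1)},\qquad \Lambda_t:=\frac{n\mathbf M_t^\alpha}{\mathbf Z_t}.
\end{align*}
Lemma~\ref{lem-maximum-core-level-properties} then gives the layer maximum $h_*(\lambda_\beta,m)=\lambda_\beta+(\log n)^{(\alpha^t+\phi_t(\beta))/2+o(1)}$, and it also gives the layer's own level-set sizes below that maximum: $\log\#\{\text{at depth }v\}\approx v\sqrt{\log m/\lambda_\beta}$.

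We then take a union over a polylog-sized grid of $\beta$, together with the Chernoff tail for the deep layers, where $\beta$ is close to $\alpha^{t-1}$ or the layers are tiny. This yields the maximiser
\begin{align*}
\beta^*_t=\arg\max_\beta\Big[-(\log n)^{\beta+\alpha^{t-1}(\alpha-1)}+(\log n)^{(\alpha^t+\phi_t(\beta))/2}\Big],
\end{align*}
and the recursion for the new profile,
\begin{align*}
\phi_{t+1}(\gamma)=\max_\beta\Big\{\phi_t(\beta)-\big(\tfrac{\alpha^t+\phi_t(\beta)}{2}-\gamma\big)\cdot\ldots\Big\}.
\end{align*}
As in Section~\ref{subsec:strat-thm-rapid}, this recursion simplifies to $\phi_{t+1}(\gamma)=0\vee(\gamma-c_t)$ for explicit constants $c_t$ built from $1-\alpha,\alpha-\alpha^2,\dots$.

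Solving the recursion shows two things. While $\alpha^{t}<2$ (so $t\le k-1$), the optimal layer has $\beta^*_t>0$: the winner of round $t+1$ comes from a deep layer of round $t$, so it is distinct from the leader of round $t$. Once $\alpha^{k}>2$, we get $\beta^*<c$, i.e.\ the top layer $\mathcal A_{k,0}$, which is a single vertex, dominates. Since $\alpha^{k-1}\le2$, the boundary case $\alpha^{k-1}=2$ only affects rounds before $k$ and is handled by the same estimate with $(\log\log n)$ slack.

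Uniqueness of the leader at each round $2\le t\le k$, which is item~(1), and the relative gap $\delta_t=(\log n)^{-1+o(1)}$ both follow from Lemma~\ref{lem-Poisson-maximum-1st-minus-2nd}. Part~(1) of that lemma gives the lower bound on the gap. Part~(2) gives the upper bound; it applies with $c$ a small constant, since its "locate" hypothesis is exactly the statement that the maximum comes from layers near $\beta^*_t$. Comparing this gap to $\mathbf M_t$ gives $\delta_t=(\log n)^{\beta^*+\ldots-\alpha^{t-1}}=(\log n)^{-1+o(1)}$.

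From round $k$ on, we apply Lemma~\ref{lem-prelim-gap-est-time-2-to-200} repeatedly. This is legitimate because $\delta\in[(\log n)^{-1.1},(\log n)^{-0.9}]$ holds for $\alpha<2$, $M\ge(\log n)^{\alpha}/\mathrm{polyloglog}$, and $Z\le n\log\log n$ holds on $\mathcal E_{\ref{lem-good-event-E-1}}$ with $R=m_0+1$. At each step the leader is preserved, $M$ becomes $M^\alpha$ up to constant factors, and $\delta$ changes by at most a constant factor. This gives item~(2) for $k\le t\le m_0$. After $m_0+1$ rounds we have $M\ge(\log n)^{\alpha^{m_0}}\ge(\log n)^{101}$ and $\delta=(\log n)^{-1+o(1)}$, so \eqref{eq-assum-lem-2.4} holds, which is item~(3). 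The final claim then follows from Proposition~\ref{lem-repeat-used}.

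The main obstacle is making the profile recursion rigorous. We must control $\phi_{t+1}$ uniformly over all depths, including layers whose Poisson means differ, which is really a sum of shifted profiles over all source layers $\beta$. We must also keep the $(\log n)^{o(1)}$ errors from accumulating over $k$ rounds. The approach we will try first is to carry explicit $(\log\log n)^{C_t}$ slacks in the upper and lower bounds on the level sets, and to discretise $\beta$ on a grid of mesh $1/\log\log n$. The level-set bounds then come from Items~(2) and~(3) of Lemma~\ref{lem-maximum-core-level-properties} applied to each source layer, combined with a union bound over the grid.
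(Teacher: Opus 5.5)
Your plan for rounds $1,\dots,k$ matches the paper. The paper likewise tracks $\mathbf Z_t$ and $\mathbf M_t$, and two-sided level-set bounds on $|\mathcal A_t(j)|$ with explicit $(\log\log n)^{30^{t-1}}$ slacks. It locates the new leader in a depth window $[\mathtt N_t^-,\mathtt N_t^+]$ and gets the gap from Lemma~\ref{lem-Poisson-maximum-1st-minus-2nd}; this is the content of the events $\Ecal_{1,t},\dots,\Ecal_{6,t}$ and Lemma~\ref{lem-inductive-ecal-h-t}.

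The gap is the step ``from round $k$ on, apply Lemma~\ref{lem-prelim-gap-est-time-2-to-200} repeatedly.'' That lemma assumes $(\log n)^{-0.1-\alpha/2}\le\delta\le(\log n)^{-0.9}$, not $\delta\ge(\log n)^{-1.1}$. For $\alpha<1.8$ this excludes $\delta=(\log n)^{-1+o(1)}$, and for $\alpha<1.6$ the admissible range is empty.

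The mismatch is substantive, not cosmetic. The lemma's proof controls all $n-1$ non-leaders by a single union bound. That needs the leader's expected lead $\alpha\delta_t\Lambda_t=(\log n)^{\alpha^t-1+o(1)}$ to dominate the fluctuation $\sqrt{\Lambda_t\log n}=(\log n)^{(\alpha^t+1)/2+o(1)}$, which holds only if $\alpha^t>3$. But $\alpha^k\le 2^{k/(k-1)}\le 2\sqrt2<3$ for every $k\ge 3$, and $\alpha^2<3$ when $k=2$ and $\alpha<\sqrt3$. So the argument fails already at the step $k\to k+1$. For $\alpha$ near $1$ it fails for a whole run of rounds with $\alpha^t\in(2,3]$. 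In that regime you cannot rule out that one of the many candidates a little below the leader overtakes it without knowing how few candidates sit at each depth. Hence items (2) and (3) are not established.

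What is missing is to continue the level-set induction past round $k$, until Proposition~\ref{lem-repeat-used} applies; the paper runs its events up to $t=m_0+1$. For $\alpha^t>2$ only upper bounds are needed, and the step goes as follows.
\begin{itemize}
\item Take $\log|\mathcal A_t(\lambda)|\le\lambda(\log n)^{1-\alpha^{t-1}+o(1)}$ for depths $\lambda$ at least the gap $g_t=(\log n)^{\alpha^{t-1}-1+o(1)}$.
\item The squared deficit of layer $\lambda$, namely $\lambda^2(\log n)^{2\alpha^t-2\alpha^{t-1}+o(1)}$, exceeds $\Lambda_t\log|\mathcal A_t(\lambda)|$ by a factor $\lambda(\log n)^{\alpha^t-\alpha^{t-1}-1+o(1)}\ge(\log n)^{\alpha^t-2+o(1)}$.
\item A layer-weighted union bound therefore keeps the leader and gives $\delta_{t+1}=(\log n)^{-1+o(1)}$.
\item You must then re-derive the upper bound at round $t+1$ so the next step can run. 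On the large-gap event, layer $j$ at round $t+1$ lies inside the round-$t$ layers of depth $\lesssim j(\log n)^{\alpha^{t-1}-\alpha^t+o(1)}$, which gives $\log|\mathcal A_{t+1}(j)|\le j(\log n)^{1-\alpha^t+o(1)}$.
\end{itemize}
This is exactly the ``$\alpha^t>2$'' case in the paper's proofs of $\Ecal_{3,t+1}$, $\Ecal_{4,t+1}$ and $\Ecal_{5,t+1}$.

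Three smaller points also need fixing.
\begin{itemize}
\item Distinctness from \emph{all} earlier leaders, not just the previous one, needs exchangeability within the huge source layer.
\item In Lemma~\ref{lem-Poisson-maximum-1st-minus-2nd}(2), $c$ must be a large constant to absorb your slacks, not a small one.
\item In the critical case $\alpha^{k-1}=2$ there is no dominant layer for the ``locate'' hypothesis, so the gap upper bound there needs a separate argument.
\end{itemize}
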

Based on Proposition~\ref{lem-k-geq-2-very-first-rounds}, we can first finish the proof of Theorems~\ref{thm-all-or-nothing-transition} and \ref{thm-rapid-ordering-supercritical} when $1<\alpha<2$. 
\begin{proof}[Proof of Theorems~\ref{thm-rapid-ordering-supercritical} and \ref{thm-all-or-nothing-transition} for $1<\alpha<2$]
    We first prove Theorem~\ref{thm-rapid-ordering-supercritical}. Using Item~(3) in Proposition~\ref{lem-k-geq-2-very-first-rounds} and Proposition~\ref{lem-repeat-used}, we see that (write $\tau=\frac{2\log\log(n)}{\log(\alpha)}$)
    \begin{align*}
        \Pb_n^{\alpha}\Big( \frac{ \mathcal T-m_0 }{ \tau } = 1+o(1) \Big)=1-o(1) \,.
    \end{align*}
    Thus, we have shown Theorem~\ref{thm-rapid-ordering-supercritical}.

    Then, we prove Theorem~\ref{thm-all-or-nothing-transition}. Suppose $\alpha\in(\lambda_k,\lambda_{k-1})$, combining Proposition~\ref{lem-k-geq-2-very-first-rounds} and Proposition~\ref{lem-repeat-used}, we see that 
    \begin{align*}
        \Pb_n^{\alpha}\left( \mathcal L(\mathbf X_k) \neq \mathcal L(\mathbf X_i) \mbox{ for all }1 \leq i \leq k-1\,, \mathcal L(\mathbf X_{k}) =\mathcal L(\mathbf X_{m_0+1})=\{\mathcal W\} \right)=1-o(1) \,.
    \end{align*}
    Thus, we have shown Theorem~\ref{thm-all-or-nothing-transition}. 
\end{proof}

The rest of this subsection is devoted to the proof of Proposition~\ref{lem-k-geq-2-very-first-rounds}. For notational convenience, define $\mathbf M_t = \mathbf M_{\mathbf X_t}$ and $\mathbf Z_t = \mathbf Z_{\mathbf X_t}$. Define the events 
\begin{align*}
    \mathcal E_{1,t} = \left\{ \mathbf Z_t \leq n\log \log n \right\}, \ \mathcal E_{2,t} = \Big\{ \tfrac{(\log n)^{\alpha^{t-1}}}{(2\log \log n)^{(\alpha^{t}-1)/(\alpha-1)}} \leq \mathbf M_t \leq (2\log n)^{\alpha^{t-1}} \Big\} \,.  
\end{align*}
In addition, define the level sets at time $t$ by
\begin{align}
    &\mathcal A_{t}(j) = \left\{ k \in [n]: \mathbf X^{(k)}_{t} = \mathbf M_{\mathbf X_t} - j \right\}, \quad \mathcal A_t[j_1,j_2] = \cup_{j_1 \leq j \leq j_2} \mathcal A_t(j) \,.  \label{eq-def-mathcal-A-t-j} 
\end{align}
Define
\begin{equation}{\label{eq-def-mathtt-N-t-puls-minus}}
    \begin{aligned}
        &\mathtt N_t^+ = \lfloor(\log n)^{1+\alpha^{t-1}-\alpha^{t}}(\log \log n)^{10\cdot 30^{t-1}}\rfloor \,, \\
        &\mathtt N_t^- = \lfloor(\log n)^{1+\alpha^{t-1}-\alpha^{t}}(\log \log n)^{-10\cdot 30^{t-1}}\rfloor \,.
    \end{aligned}
\end{equation}
Also define 
\begin{align}{\label{eq-def-mathtt-n-t-plus-minus}}
    \mathtt n_t^- = \lfloor(\log n)^{\alpha^{t-1}-1} \exp( 0.1(40\alpha)^{t}\sqrt{\log\log n} )\rfloor \,, \quad \mathtt n_t^+ =  \lfloor\frac{\log n}{(\log\log n)^{30^t}}\rfloor\,.
\end{align}
Define $\Ecal_{3,t}$ to be the event that
\begin{align*}
    \begin{cases}
    \mathcal L(\mathbf X_{t}) \subseteq [n], & t=1 \,;\\
        \mathcal L(\mathbf X_{t}) \subseteq \mathcal A_{t-1}[\mathtt N_{t-1}^-,\mathtt N_{t-1}^+], & t\geq 2\,,\alpha^{t-1}<2  \,; \\
        \mathcal L(\mathbf X_t) \subseteq \mathcal A_{t-1}[0,\mathtt n_{t-1}^-], & \alpha^{t-1}=2 \,; \\
        \mathcal L(\mathbf X_{t}) \subseteq \mathcal A_{t-1}(0), & \alpha^{t-1}>2  \,.
    \end{cases} 
\end{align*}
Next, for each $\alpha^{t-1}<2$ we define $\Ecal_{4,t}$ to be the event on which all of the following bounds hold:
\begin{equation}\label{eq-At-ub-lb}
    |\mathcal A_t(j)| 
    \begin{cases}
        \leq \exp((\log\log n)\exp((40\alpha)^{t}\sqrt{\log\log n})), \quad & 1 \leq j \leq \mathtt n_t^- \,; \\
        \leq \exp((\log\log n)^{30^{t-1}}(\log n)^{1-\alpha^{t-1}}j), \quad & j > \mathtt n_t^- \,;  \\
        \geq \exp((\log\log n)^{-30^{t-1}}(\log n)^{1-\alpha^{t-1}}j), \quad & \mathtt n_t^- \leq j \leq \mathtt n_t^+ \,.
    \end{cases}
\end{equation}
For each $\alpha^{t-1}\geq 2$ we define $\Ecal_{4,t}$ by
\begin{align*}
    |\mathcal A_t(j)| &\leq \exp((\log\log n)\exp((40\alpha)^{t} \sqrt{\log\log n})) \\
    &+ \exp((\log\log n)^{30^{t-1}}(\log n)^{1-\alpha^{t-1}}j) \mbox{ for all } j \geq 1 \,.
\end{align*}
Next, for $t=1$, define the event $\Ecal_{5,t}=\emptyset^c$; for $t \geq 2$ such that $\alpha^{t-1} < 2$, define $\Ecal_{5,t}$ to be the event that
\begin{align*}
    \frac{(\log n)^{\alpha^{t-1}-1}}{(4\log\log n)^{30^{t-1}}} \leq \max_{k \in [n]} \{ \mathbf X_t^{(k)} \} - \max_{k \in [n]}^{(2)} \{ \mathbf X_t^{(k)} \} 
    \leq (\log n)^{\alpha^{t-1}-1} (4\log\log n)^{30^{t-1}}\,;
\end{align*}
for $t \geq 2$ such that $\alpha^{t-1} \geq 2$, define $\Ecal_{5,t}$ to be the event that
\begin{align*}
    \frac{(\log n)^{\alpha^{t-1}-1}}{(4\log\log n)^{3\cdot30^{t-1}+\sqrt{\log \log n}}} &\leq \max_{k \in [n]} \{ \mathbf X_t^{(k)} \} - \max_{k \in [n]}^{(2)} \{ \mathbf X_t^{(k)} \} \\
    &\leq (\log n)^{\alpha^{t-1}-1} (4\log\log n)^{3\cdot30^{t-1}+\sqrt{\log \log n}}\,;
\end{align*}
Finally, define
\begin{equation}
    \Ecal_{6,t} =   
    \begin{cases}
        \{ |\mathcal L(\mathbf X_t)| \leq \exp((\log \log n)^3) \}, & t=1; \\
        \left\{ \mathcal L(\mathbf X_t) \mbox{ is singleton}, \mathcal L(\mathbf X_t) \not\in\{ \mathcal L(\mathbf X_{t-1}), \ldots,\mathcal L(\mathbf X_{1}) \}\right\}, & t\geq 2\,,\alpha^{t-1}<2; \\
        \{ \mathcal L(\mathbf X_t) \mbox{ is singleton} \}, & \alpha^{t-1}=2 \,; \\
        \{ \mathcal L(\mathbf X_t) \mbox{ is singleton}, \mathcal L(\mathbf X_t) = \mathcal L(\mathbf X_{t-1}) \}, & \alpha^{t-1}>2 \,.
    \end{cases}
\end{equation}
By construction of these events, Proposition~\ref{lem-k-geq-2-very-first-rounds} can be proved if 
\begin{align}\label{eq-cruicial-alpha-medium-lemma-reduction}
    \Pb_n^{\alpha}\Big( \cap_{1 \leq h \leq 6, 1 \leq t \leq m_0+1} \mathcal E_{h,t} \Big) = 1-o(1) \,,
\end{align}
By Lemma~\ref{lem-first-round-distribution-features}, $\cap_{1 \leq h \leq 6} \mathcal E_{h,1}$ holds with probability $1-o(1)$. It remains to prove \eqref{eq-cruicial-alpha-medium-lemma-reduction} via the following inductive argument.
\begin{lemma}{\label{lem-inductive-ecal-h-t}}
    We have the following estimates for $1 \leq t \leq m_0$:
    \begin{align}
        &\Pb_n^{\alpha}\left( \mathcal E_{1,t+1}^c; \cap_{1 \leq h \leq 6, 1 \leq s \leq t} \mathcal E_{h,s} \right) = o(1) \,; \label{eq-prob-E-1-fail} \\
        &\Pb_n^{\alpha}\left( \mathcal E_{2,t+1}^c; \cap_{1 \leq h \leq 6, 1 \leq s \leq t} \mathcal E_{h,s} \right) = o(1) \,; \label{eq-prob-E-2-fail} \\
        &\Pb_n^{\alpha}\left( \mathcal E_{3,t+1}^c; \cap_{1 \leq h \leq 6, 1 \leq s \leq t} \mathcal E_{h,s} \right) = o(1) \,; \label{eq-prob-E-3-fail} \\
        &\Pb_n^{\alpha}\left( \mathcal E_{4,t+1}^c; \cap_{1 \leq h \leq 6, 1 \leq s \leq t} \mathcal E_{h,s}  \right) = o(1) \,; \label{eq-prob-E-4-fail} \\
        &\Pb_n^{\alpha}\left( \mathcal E_{5,t+1}^c; \cap_{1 \leq h \leq 6, 1 \leq s \leq t} \mathcal E_{h,s}  \right) = o(1) \,. \label{eq-prob-E-5-fail} \\
        &\Pb_n^{\alpha}\left( \mathcal E_{6,t+1}^c; \cap_{1 \leq h \leq 6, 1 \leq s \leq t} \mathcal E_{h,s} \right) = o(1) \,. \label{eq-prob-E-6-fail}
    \end{align}
\end{lemma}
\begin{proof}
For notational convenience, we define $\mathtt P(\cdot) = \mathbb P_n^{\alpha}\left( \cdot; \cap_{1 \leq h \leq 6, 1 \leq s \leq t} \mathcal E_{h,s} \right)$.
We will prove \eqref{eq-prob-E-1-fail}--\eqref{eq-prob-E-6-fail} separately.

\subsubsection{Proof of (\ref{eq-prob-E-1-fail})}
By Lemma~\ref{lem-good-event-E-1}, we have $\Pb_{n}^{\alpha}(\mathcal E_{1,t+1}^c) = o(1)$,
and therefore \eqref{eq-prob-E-1-fail} holds. 

\subsubsection{Proof of (\ref{eq-prob-E-2-fail})}
Note that using Lemmas~\ref{lem-Poisson-coupling} and \ref{lem-Poisson-coupling-further}, given $\mathbf X_{t}$, we can couple $\mathbf X_{t+1}^{(1)},\ldots,\mathbf X_{t+1}^{(n)}$ with independent Poisson random variables 
\begin{align*}
    \eta^{(i)} \sim \mathsf{Pois}\left( \frac{n(\mathbf X_{t}^{(i)})^\alpha }{ \mathbf Z_{\mathbf X_t}} \right) \mbox{ where } 1 \leq i \leq n \,,
\end{align*}
such that
\begin{align}
    &\mathtt P\left( |\eta^{(i)} - \mathbf X_{t+1}^{(i)}| \leq 4 \mbox{ for all } 1 \leq i \leq n \right) = 1-o(1) \,; \label{eq-couple-error-at-most-4}\\
    &\mathtt P\left( \#\{i:\eta^{(i)} = \mathbf X_{t+1}^{(i)} = h \}\geq \frac{1}{2} \#\{i:\eta^{(i)} = h \} \mbox{ for all } h \right) = 1-o(1) \,.\label{eq-couple-identical-at-least-half}
\end{align}
Thus, it is straightforward to check that
\begin{align*}
    \mathtt P\left( \Ecal_{2,t+1}^c \right) &\leq \mathtt P\left( \mathbf M_{t+1} < \tfrac{n\mathbf M_t^\alpha}{2\mathbf Z_t} \mbox{ or } \mathbf M_{t+1}>\tfrac{2n\mathbf M_t^\alpha}{\mathbf Z_t} \right)  \\
    &\leq \mathtt P\left( \max\{ \eta^{(i)} \} < \tfrac{n\mathbf M_t^\alpha}{2\mathbf Z_t}+4 \mbox{ or } \max\{ \eta^{(i)} \} > \tfrac{2n\mathbf M_t^\alpha}{\mathbf Z_t}-4 \right) \\
    &\leq \exp(-\tfrac{n\mathbf M_t^\alpha}{4\mathbf Z_t}) \leq \exp(-\tfrac{(\log n)^{\alpha^{t}}}{(\log\log n)^2}) \,,
\end{align*}
and therefore \eqref{eq-prob-E-2-fail} holds.

\subsubsection{Proof of (\ref{eq-prob-E-3-fail})}
Recall \eqref{eq-couple-error-at-most-4} and \eqref{eq-couple-identical-at-least-half}, we will consider the following two cases separately: 

\noindent{\bf Case 1:} $\alpha^{t}\leq 2$. We will prove that $\mathcal L(\mathbf X_{t+1}) \in \mathcal A_t[\mathtt N_t^-,\mathtt N_t^+]$ with high probability when $\alpha^t < 2$, and $\mathcal L(\mathbf X_{t+1}) \in \mathcal A_t[0,\mathtt n_t^-]$ with high probability when $\alpha^t = 2$. Define
\begin{align}{\label{eq-def-varphi(lambda)}}
    \varphi(\lambda) = 100\log n + \tfrac{\lambda (\log n)^{\alpha^t - \alpha^{t-1}}}{(\log \log n)^{3 \cdot 30^{t-1}}} \,.
\end{align}
We will prove that
\begin{align}
    &\mathtt P\Big( \max_{k \in \mathcal A_t(0)} \{ \eta^{(k)} \} \leq \max_{\lambda \in [\max\{\mathtt N_t^+, \mathtt n_t^-\} +1,\mathbf M_t]} \big\{ \max_{k \in \mathcal A_t(\lambda)} \{ \eta^{(k)} \} + \varphi(\lambda) \big\} \Big) = o(1),  &\alpha^t \leq 2\,, \label{eq-mathbf-Bt-small-prob} \\
    &\mathtt P\Big( \max_{k \in \mathcal A_t[\mathtt N_t^-,\mathtt N_t^+]} \{ \eta^{(k)} \} < \max_{k \in \mathcal A_t[0,\mathtt N_t^- -1]} \{ \eta^{(k)} \}+10 \Big) = o(1),  &\alpha^t < 2\,.  \label{eq-toolarge-layers-lead-small-prob}
\end{align}
In particular, combining \eqref{eq-mathbf-Bt-small-prob}, \eqref{eq-toolarge-layers-lead-small-prob} and \eqref{eq-couple-error-at-most-4} will imply that
\begin{align}
    &\mathtt P\left( \mathcal L(\mathbf X_{t+1}) \not\in \mathcal A_t[\mathtt N_t^-,\mathtt N_t^+] \right) = o(1) \,,  &\alpha^t < 2\,, {\label{eq-nextmax-in-mathtt-Nts}} \\
    &\mathtt P\left( \mathcal L(\mathbf X_{t+1}) \not\in \mathcal A_t[0,\mathtt n_t^-] \right) = o(1) \,,  &\alpha^t = 2\,. {\label{eq-nextmax-in-mathtt-Nts-critical}}
\end{align}
We first show \eqref{eq-mathbf-Bt-small-prob} for $\alpha^t \leq 2$. Define
\begin{align}{\label{eq-def-mathbf-Ht-lambda}}
    \mathbf H_t(\lambda) = \frac{n(\mathbf M_t-\lambda)^\alpha}{\mathbf Z_t}
\end{align}
for notational convenience. For $\mathtt N_t^+ \leq \lambda \leq \frac{\mathbf M_t}{2}$, we have 
\begin{align}
    &\mathbf H_t(0) - \mathbf H_t(\lambda) =\frac{ n\mathbf M_t^{\alpha} - n(\mathbf M_t-\lambda)^{\alpha} }{\mathbf Z_t} \geq \frac{ n\alpha \lambda(\mathbf M_t- \lambda)^{\alpha-1} }{\mathbf Z_t} \nonumber\\
    &\geq (\tfrac{\lambda}{4} + \tfrac{\lambda}{4} +  \tfrac{\lambda}{4} )\cdot \tfrac{1}{\sqrt{\log \log n}}\cdot \sqrt{\tfrac{n}{\mathbf Z_t}} \cdot \mathbf M_t^{\alpha-1} \nonumber\\
    &\geq \sqrt{\lambda(\log n)^{1-\alpha^{t-1}}}(\log \log n)^{4\cdot 30^{t-1}} \sqrt{\tfrac{n}{\mathbf Z_t}} \cdot \mathbf M_t^{\alpha / 2} + 200\log n +\tfrac{\lambda (\log n)^{\alpha^t - \alpha^{t-1}}}{(\log \log n)^{2 \cdot 30^{t-1}}}  \nonumber\\
    &\geq (\log \log n)^{3\cdot 30^{t-1}}\sqrt{\tfrac{n(\mathbf M_t-\lambda)^{\alpha}}{\mathbf Z_t} \cdot (\log \log n)^{2\cdot30^{t-1}} \lambda(\log n)^{1-\alpha^{t-1}} } + (200\log n + \tfrac{\lambda (\log n)^{\alpha^t - \alpha^{t-1}}}{(\log \log n)^{2 \cdot 30^{t-1}}}  ) \nonumber\\
    &\geq (\log \log n)^{3\cdot 30^{t-1}}\sqrt{\mathbf H_t(\lambda)\cdot ((\log \log n)^{10\cdot 30^{t-1}} \vee \log |\mathcal A_t(\lambda)|)} \nonumber\\
    &+\sqrt{\mathbf H_t(0) \cdot (\log \log n)^{10}} +2\varphi(\lambda) \,.\label{eq-phi-lambda-bound-temp-1}
\end{align}
Next, note that $|\mathcal A_t(0)| \leq \exp((\log \log n)^{3})$ under the measure $\mathtt P$ by the definition of $\Ecal_{6,t}$. Thus, by the Poisson Chernoff bound we have 
\begin{align}
    &\mathtt P\Big( \max_{k \in \mathcal A_t(0)} \{ \eta^{(k)} \} \leq \mathbf H_t(0) - \sqrt{\mathbf H_t(0)(\log \log n)^4} \Big) \\
    \leq\ & \exp\Big(-\tfrac{(\log \log n)^4}{2(1+\frac{(\log \log n)^2}{\sqrt{\mathbf H_t(0)}})}\Big) < \exp(-\tfrac{(\log \log n)^4}{3})\,.\label{eq-phi-lambda-bound-temp-2}
\end{align}
Combining \eqref{eq-phi-lambda-bound-temp-1} with \eqref{eq-phi-lambda-bound-temp-2}, we have
\begin{align*}
    &\sum_{\max\{\mathtt N_t^+, \mathtt n_t^-\} \leq \lambda \leq \mathbf M_t / 2}\mathtt P\Big( \max_{ k \in \mathcal A_t(\lambda) } \{ \eta^{(k)} \} \geq \max_{\mathcal A_t[\mathtt N_t^-,\mathtt N_t^+]} \{ \eta^{(k)} \} - 2\varphi(\lambda) \Big) \\
    \leq\ & \sum_{\substack{\max\{\mathtt N_t^+, \mathtt n_t^-\} \leq \lambda \leq \mathbf M_t / 2 \\ \log|\mathcal A_t(\lambda)| > (\log \log n)^{10}} }\mathtt P\left( \max_{k \in \mathcal A_t(\lambda)} \{ \eta^{(k)} \} \geq \mathbf H_t(\lambda)+(\log\log n)^{3\cdot 30^{t-1}} \sqrt{\mathbf H_t(\lambda) \cdot \log |\mathcal A_t(\lambda)|} \right) \\
    +\ & \sum_{\substack{\max\{\mathtt N_t^+, \mathtt n_t^-\} \leq \lambda \leq \mathbf M_t / 2 \\ \log|\mathcal A_t(\lambda)| \leq (\log \log n)^{10}} }\mathtt P\left( \max_{k \in \mathcal A_t(\lambda)} \{ \eta^{(k)} \} \geq \mathbf H_t(\lambda)+(\log\log n)^{3\cdot 30^{t-1}} \sqrt{\mathbf H_t(\lambda) \cdot (\log \log n)^{10}} \right) \\
    &+ \mathtt P\left( \max_{k \in \mathcal A_t(0)} \{ \eta^{(k)} \} \leq \mathbf H_t(0) - \sqrt{\mathbf H_t(0)(\log \log n)^4} \right) \\
    \leq\ & \Big(\sum_{\max\{\mathtt N_t^+, \mathtt n_t^-\} \leq \lambda \leq \mathbf M_t / 2} |\mathcal A_t(\lambda)|\cdot\exp \big( -\tfrac{(\log \log n)^{6\cdot 30^{t-1}}\cdot ((\log \log n)^{10} \vee\log |\mathcal A_t(\lambda)|)}{3}\big) \Big)+ o(\frac{1}{\log n}) \\
    \leq\ & \sum_{\max\{\mathtt N_t^+, \mathtt n_t^-\} \leq \lambda \leq \mathbf M_t / 2}\exp(-\lambda (\log n)^{1-\alpha^{t-1}}(\log\log n)^{1.5 \cdot 30^{t-1}}) + \frac{\mathbf M_t\exp(-\frac{(\log \log n)^{10}}{4})}{2}\\
    +\ & o(\frac{1}{\log n}) = o(\frac{1}{\log n})\,,
\end{align*}
Also, for $\frac{\mathbf M_t}{2} \leq \lambda \leq \mathbf M_t$ we have
\begin{align*}
    &\Pb\Big( \mathsf{Pois}\left( \frac{n(\mathbf M_t-\lambda)^{\alpha}}{\mathbf Z_t} \right) \geq \frac{n\mathbf M_t^\alpha}{\mathbf Z_t} -\varphi(\lambda) \Big) \leq \Pb\Big( \mathsf{Pois}\left( \frac{n(\mathbf M_t/2)^{\alpha}}{\mathbf Z_t} \right) \geq \frac{n\mathbf M_t^\alpha}{\mathbf Z_t} -2\varphi(\mathbf M_t) \Big) \\
    \leq\ & \exp(-\frac{1}{4}\mathbf M_t (\log n)^{1-\alpha^{t-1}} (\log \log n)^{1.5 \cdot 30^{t-1}})\leq \exp(-\frac{1}{8}\lambda (\log n)^{1-\alpha^{t-1}} (\log \log n)^{1.5 \cdot 30^{t-1}}) \,.
\end{align*}
Therefore, the probability in \eqref{eq-mathbf-Bt-small-prob} is bounded by
\begin{align*}
    o(\frac{1}{\log n})+\sum_{\frac{\mathbf M_t}{2} \leq \lambda \leq \mathbf M_t} \exp( -\frac{1}{8} \lambda(\log n)^{1-\alpha^{t-1}} (\log\log n)^{1.5\cdot 30^{t-1}} ) =o( \frac{1}{\log n} ) \,,
\end{align*}
leading to \eqref{eq-mathbf-Bt-small-prob}. Next, we show \eqref{eq-toolarge-layers-lead-small-prob} for $\alpha^t < 2$. Note that in this case $\mathtt n_t^- < \mathtt N_t^- < \mathtt N_t^+ < \mathtt n_t^{+}$, so the third inequality in \eqref{eq-At-ub-lb} applies to $|\mathcal A_t(\lambda)|$ for all $\lambda \in [\mathtt N_t^-,\mathtt N_t^+]$. Define
\begin{align*}
    \mathtt N_t = \mathtt N_t^- \cdot (\log\log n)^{3\cdot 30^{t-1}} \in [\mathtt N_{t}^-, \mathtt N_t^{+}] \,.
\end{align*}
We have
\begin{align}
    &\log |\mathcal A_t[1,\mathtt N_t^-]| \leq \log \Big( \sum_{0 \leq i \leq \mathtt n_t^-}|\mathcal A_t(i)| + \sum_{\mathtt n_t^- < i \leq \mathtt N_t^-}|\mathcal A_t(i)| \Big) \nonumber\\
    \leq\ & \log\Big( \mathtt n_t^- (\log \log n)\exp(\exp((40\alpha)^t \sqrt{\log \log n})) + \mathtt N_t^- \exp((\log \log n)^{30^{t-1}} (\log n)^{1-\alpha^{t-1}} \mathtt N_t^-)\Big) \nonumber \\
    \leq\ & \log\Big(2\mathtt N_t^- \exp((\log \log n)^{30^{t-1}} (\log n)^{1-\alpha^{t-1}} \mathtt N_t^-) \Big) \nonumber \\
    \leq\ & 2(\log \log n)^{30^{t-1}} (\log n)^{1-\alpha^{t-1}} \mathtt N_t^- \leq (\log n)^{2-\alpha^t}\,. \label{eq-upperbound-At-[1,Nt-]}
\end{align}
Therefore, we have
\begin{align*}
&\quad\,\,\mathbf H_t(0) - \mathbf H_t(\mathtt N_t) =\tfrac{n(\mathbf M_t)^{\alpha}}{\mathbf Z_t} -\tfrac{n(\mathbf M_t-\mathtt N_t)^{\alpha}}{\mathbf Z_t}  \leq \tfrac{n\alpha}{\mathbf Z_t} \cdot \mathtt N_t(\mathbf M_t)^{\alpha - 1} \\
&\leq (\log \log n)^{3\cdot30^{t-1}} \mathtt N_t \cdot \sqrt{\tfrac{n}{\mathbf Z_t}} \cdot \mathbf M_t^{\alpha-1} \leq \tfrac{1}{2}\sqrt{\mathtt N_t(\log n)^{1-\alpha^{t-1}}(\log \log n)^{-30^{t-1}} }\cdot \sqrt{\tfrac{n}{\mathbf Z_t}} \cdot \mathbf M_t^{\alpha / 2} \\
&\leq \sqrt{\mathtt N_t(\log n)^{1-\alpha^{t-1}}(\log \log n)^{-30^{t-1}} }\cdot \sqrt{\tfrac{n}{\mathbf Z_t}} \cdot \mathbf M_t^{\alpha / 2} \\
&- 3\cdot \sqrt{2\mathtt N_t^-(\log n)^{1-\alpha^{t-1}}(\log \log n)^{30^{t-1}} }\cdot \sqrt{\tfrac{n}{\mathbf Z_t}} \cdot \mathbf M_t^{\alpha / 2} - 10\\
&\leq \sqrt{\mathbf H_t(\mathtt N_t)\cdot \log |\mathcal A_t(\mathtt N_t)|} - 3 \sqrt{\mathbf H_t(0) \cdot \log |\mathcal A_t[1,\mathtt N_t^-]|} - 10\,.
\end{align*}
Therefore, by Corollary~\ref{cor-est-maximum-Poisson} with $h = 9$, we have (denote by $\mathsf{PM}(m,\lambda)$ the maximum of $m$ independent $\mathsf{Pois}(\lambda)$ variables) the probability in \eqref{eq-toolarge-layers-lead-small-prob} is bounded by
\begin{align*}
    & \mathtt P\left( \mathsf{PM}\left( |\mathcal A_t[1,\mathtt N_t^-]|, \mathbf H_t(0) \right) \geq \mathbf H_t(0) + 3 \sqrt{\mathbf H_t(0) \log |\mathcal A_t[1,\mathtt N_t^-]|} \right) \\
    &+\mathtt P\left( \mathsf{PM}\left( |\mathcal A_t(\mathtt N_t)|, \mathbf H_t(\mathtt N_t) \right) \leq \mathbf H_t(\mathtt N_t) + \sqrt{\mathbf H_t(\mathtt N_t) \log |\mathcal A_t(\mathtt N_t)|} \right)  \\
    \leq\ & \exp(-|\mathcal A_t[1,\mathtt N_t^-]|^{1/3}) + \frac{1}{|\mathcal A_t(\mathtt N_t)|^3} \overset{\eqref{eq-At-ub-lb}}{\leq} o(\frac{1}{\log n})\,,
\end{align*}
which proves \eqref{eq-toolarge-layers-lead-small-prob}. 

\noindent{\bf Case 2:} $\alpha^t>2$. Define $\mathtt g_t=\frac{(\log n)^{\alpha^{t-1}-1}}{(4\log\log n)^{30^{t-1}}}$. In this case, by $\Ecal_{6,t}$, the maximum of $\mathbf X_t$ is unique and 
\begin{align*}
    (2\log \log n)^{-30^t} (\log n)^{\alpha^{t-1}-1} \leq \min\left\{ \lambda \geq 1:\mathcal A_t(\lambda) \neq \emptyset \right\} \leq (2\log \log n)^{3\cdot 30^t} (\log n)^{\alpha^{t-1}-1}  \,.
\end{align*}
Our goal is to prove that $\mathcal L(\mathbf X_{t+1}) = \mathcal L(\mathbf X_t)$ with probability $1-o(1)$. Without loss of generality, we may assume that $\mathcal L(\mathbf X_t)=\mathcal A_t(0)=\{ 1 \}$, where we have $\max_{k \in [n]}\eta^{(k)} = \eta^{(1)}$. Note that for $\mathtt g_t\leq \lambda \leq \frac{\mathbf M_t}{2}$, we have
\begin{align*}
    \mathbf H_t(0) - \mathbf H_t(\lambda) &= \frac{n(\mathbf M_t^\alpha - (\mathbf M_t - \lambda)^\alpha)}{\mathbf Z_t} \geq \frac{\lambda n}{\mathbf Z_t} \left( \frac{\mathbf M_t}{2} \right)^{\alpha-1} \\
    &\geq \frac{\lambda(\log n)^{\alpha^t - \alpha^{t-1}}}{(\log \log n)^{3\cdot 30^{t-1}}}+(\log \log n)^{30^t}\sqrt{\mathbf H_t(0)} \,.
\end{align*}
Therefore, we have
\begin{align}
    &\mathtt P\Big( \exists\ 1 \leq \lambda \leq \frac{\mathbf {M_t}}{2},\ \max_{ k \in \mathcal A_t(\lambda) } \{ \eta_{(k)} \} \geq \eta^{(1)} - \frac{\lambda(\log n)^{\alpha^t-\alpha^{t-1}}}{(\log \log n)^{4\cdot 30^{t-1}}} \Big) \nonumber  \\
    \leq\ & \mathtt P\Big( \eta^{(1)} \leq \mathbf H_t(0) - (\log\log n)^{30^t} \sqrt{\mathbf H_t(0)} \Big) \nonumber  \\
    &+ \sum_{\mathtt g_t \leq \lambda \leq \mathbf M_t/2} |\mathcal A_t(\lambda)| \cdot \mathtt P\Big( \mathsf{Pois}(\mathbf H_t(\lambda)) \geq \mathbf H_t(\lambda) + \tfrac{\lambda(\log n)^{\alpha^t-\alpha^{t-1}}}{2(\log \log n)^{3\cdot 30^{t-1}}} \Big) \nonumber \\
    \leq\ & o(1)+\sum_{\mathtt g_t \leq \lambda \leq \mathbf M_t/2} |\mathcal A_t(\lambda)|\exp(-\lambda^2 (\log n)^{\alpha^t-2\alpha^{t-1}+o(1)} ) \,.  \label{eq-prob-exist-small-level-excessively-large-1}
\end{align}
Since $\mathtt g_t = (\log n)^{\alpha^{t-1}-1+o(1)}$, we have
\begin{align*}
    &\lambda^2(\log n)^{\alpha^t - 2\alpha^{t-1}+o(1)} \geq \max\left\{ (\log n)^{o(1)}, \lambda(\log n)^{\alpha^t - \alpha^{t-1}-1+o(1)} \right\}  \\
    \geq\ & (\log n)^{\alpha^t-2} \max\left\{ (\log n)^{o(1)},\lambda(\log n)^{1-\alpha^{t-1}+o(1)} \right\} \geq (\log n)^{\Omega(1)} \cdot \log |\mathcal A_t(\lambda)|\,,
\end{align*}
where the first and the second inequality hold since $\alpha^t > 2$, and the third inequality holds by \eqref{eq-At-ub-lb}. Therefore, we have
\begin{align}
    \eqref{eq-prob-exist-small-level-excessively-large-1} &\leq o(1)+ \sum_{\mathtt g_t \leq \lambda \leq \mathbf M_t/2} \exp( -\frac{1}{2} \lambda^2(\log n)^{\alpha^t-2\alpha^{t-1}+o(1)} ) \nonumber \\
    &\leq o(1) + \exp(-(\log n)^{\alpha^t-2+o(1)}) = o(1)\,.\label{eq-prob-exist-small-level-excessively-large-2}
\end{align}
Next, for $\frac{\mathbf M_t}{2}<\lambda\leq \mathbf M_t$ we have
\begin{align*}
    \mathbf H_t(0) - \mathbf H_t(\lambda) \geq \mathbf H_t(0) - \mathbf H_t\left( \frac{\mathbf M_t}{2} \right) \geq \frac{\mathbf M_t(\log n)^{\alpha^t-\alpha^{t-1}}}{2(\log\log n)^{3\cdot 30^{t-1}}}+(\log \log n)^{30^t}\sqrt{\mathbf H_t(0)} \,.
\end{align*}
Therefore, 
we have
\begin{align}
    &\mathtt P\Big( \exists\ \frac{\mathbf {M_t}}{2} \leq \lambda \leq \mathbf M_t,\ \max_{k \in \mathcal A_t(\lambda)} \{ \eta^{(k)} \} \geq \eta^{(1)} - \tfrac{\lambda(\log n)^{\alpha^t-\alpha^{t-1}}}{(\log\log n)^{4\cdot 30^{t-1}}} \Big) \nonumber  \\
    \leq\ & \mathtt P\left( \eta^{(1)} \leq \mathbf H_t(0)-(\log\log n)^{30^t} \sqrt{\mathbf H_t(0)} \right) \nonumber  \\
    &+ n \max_{\mathbf M_t/2<\lambda\leq\mathbf M_t} \mathtt P\left( \mathsf{Pois}(\mathbf H_t(\lambda)) \geq \mathbf H_t(\lambda) + \tfrac{\mathbf M_t(\log n)^{\alpha^t-\alpha^{t-1}}}{2(\log\log n)^{3\cdot 30^{t-1}}} \right) \nonumber\\
    \leq\ & o(1) + n \sum_{ \mathbf M_t/2<\lambda\leq\mathbf M_t } \exp( -\frac{1}{16\mathbf H_t(\lambda)} \cdot \left( \tfrac{\mathbf M_t(\log n)^{\alpha^t-\alpha^{t-1}}}{(\log\log n)^{3\cdot 30^{t-1}}} \right)^2 ) \nonumber\\
    &+ n \sum_{ \mathbf M_t/2<\lambda\leq\mathbf M_t } \exp(-\frac{1}{8} \cdot\tfrac{\mathbf M_t(\log n)^{\alpha^t-\alpha^{t-1}}}{(\log\log n)^{3\cdot 30^{t-1}}} ) \nonumber \\
    \leq\ & o(1)+n\mathbf M_t\exp(-(\log n)^{\alpha^t}) = o(1)\,, \label{eq-prob-exist-small-level-excessively-large-3}
\end{align}
which yields that
\begin{align*}
    &\mathtt P\left( \mathsf{Pois}(\mathbf H_t(\lambda)) \geq \mathbf H_t(\lambda) + \tfrac{\mathbf M_t(\log n)^{\alpha^t-\alpha^{t-1}}}{2(\log \log n)^{3\cdot 30^{t-1}}} \right) \\
    \leq\ & \exp(-\frac{t^2_{\text{lemma}}}{2(t_{\text{lemma}} + \lambda_{\text{lemma}})}) \leq \exp(-\frac{t^2_{\text{lemma}}}{4t_{\text{lemma}} }) + \exp(-\frac{t^2_{\text{lemma}}}{4\lambda_{\text{lemma}} })\,.
\end{align*}
Combining \eqref{eq-prob-exist-small-level-excessively-large-2} and \eqref{eq-prob-exist-small-level-excessively-large-3}, we have
\begin{align}
    \mathtt P\left( \exists \lambda >0, \max_{ k \in \mathcal A_t(\lambda) } \eta^{(k)} \geq \eta^{(1)} - \frac{\lambda(\log n)^{\alpha^t - \alpha^{t-1}}}{(\log \log n)^{4\cdot 30^{t-1}}} \right) = o(1) \,,\label{eq-large-gap-when-t-large}
\end{align}
and therefore $\mathtt P(\mathcal L(\mathbf X_{t+1})=\mathcal L(\mathbf X_t))=1-o(1)$ from \eqref{eq-couple-error-at-most-4}. 

Finally, combining all the three cases, we have shown \eqref{eq-prob-E-3-fail}.

\subsubsection{Proof of (\ref{eq-prob-E-4-fail})}
Define
\begin{align}
    \mathcal A_{t+1}(\lambda,j) &= \left\{ k \in \mathcal A_{t}(\lambda): \mathbf X^{(k)}_{t+1} = \max_{i \in \mathcal A_t(\lambda)} \mathbf X_{t+1}^{(i)}-j \right\} \,.  \label{eq-def-mathcal-A-t-lambda-j}
\end{align}
In addition, recall \eqref{eq-couple-error-at-most-4} and \eqref{eq-couple-identical-at-least-half}, define
\begin{align}
    &\mathcal A'_{t+1}(j) = \left\{ k \in [n]: \mathbf \eta^{(k)} = \max_{i \in [n]} \{ \eta^{(i)}\} - j \right\} \,,  \label{eq-def-mathcal-A-t-j-prime}  \\
    &\mathcal A'_{t+1}(\lambda,j) = \left\{ k \in \mathcal A_{t}(\lambda): \eta^{(k)} = \max_{i \in \mathcal A_{t}(\lambda)} \{ \eta^{(i)} \} - j \right\} \,;  \label{eq-def-mathcal-A-t-lambda-j-prime}
\end{align}
Define $\Ecal'_{4,t+1}$ to be the event that 
\begin{equation}{\label{eq-At-prime-ub-lb}}
    |\mathcal A'_{t+1}(j)| 
    \begin{cases}
        \leq\exp( \frac{(\log\log n)\exp((40\alpha)^{t+1}\sqrt{\log \log n})}{2}), \ & 1 \leq j \leq 2\mathtt n_{t+1}^- \,; \\
        \leq\exp( \frac{(\log\log n)^{30^{t}}(\log n)^{1-\alpha^{t}} j}{2}), & j \geq 0.5\mathtt n_{t+1}^- \,; \\
        \geq \exp(2(\log\log n)^{-30^{t}}(\log n)^{1-\alpha^{t}}j), & 0.5\mathtt n_{t+1}^- \leq j \leq 2\mathtt n_{t+1}^+ \,.
    \end{cases}
\end{equation}
if $\alpha^{t}<2$, and to be the event that
\begin{align*}
    |\mathcal A_{t+1}'(j)| &\leq \exp(\frac{1}{2}(\log \log n) \exp((40\alpha)^{t+1}\sqrt{\log \log n})) \\
    &+ \exp(\frac{1}{2}(\log \log n)^{30^{t}}(\log n)^{1-\alpha^{t}}j)  \mbox{ for all } j \geq 1 
\end{align*}
if $\alpha^{t}\geq 2$. Note that if the events in \eqref{eq-couple-error-at-most-4} and \eqref{eq-couple-identical-at-least-half} hold, we have
\begin{align*}
    \mathcal A_{t+1}(\lambda) \subset \cup_{-8\leq u\leq 8} \mathcal A_{t+1}'(\lambda+u), \quad |\mathcal A_{t+1}(\lambda)|> \min_{-8\leq u\leq 8} \frac{1}{2}|\mathcal A_{t+1}'(\lambda+u)| \,.  
\end{align*}
Thus, by definition of $\Ecal_{4,t}$ and $\Ecal'_{4,t}$, we have
\begin{align*}
    \mathtt P\left( \Ecal_{4,t+1}^c \right) &\leq \mathtt P\left( ( \Ecal'_{4,t+1})^c \right) + o(1) \,,
\end{align*}
which shows that we only need to deal with $\Ecal'_{4,t+1}$. Again, we will split into two different cases.

\noindent{\bf Case 1}: $\alpha^{t}<2$. Note that in this case we have $2\mathtt n_{t}^- < \mathtt N_t^{-} < \mathtt N_t^+ < 0.5\mathtt n_t^{+}$. We start by proving the first inequality in \eqref{eq-At-prime-ub-lb}. Under the event in \eqref{eq-mathbf-Bt-small-prob}, for $0 \leq j \leq 2\mathtt n_{t+1}^-$ we have
\begin{align*}
    \forall k \in \cup_{\lambda>\mathtt N_t^+} \mathcal A_t(\lambda), \quad \eta^{(k)} < \max_{ 1 \leq i \leq n } \{ \eta^{(i)} \} - \varphi(\lambda) < \max_{ 1 \leq i \leq n } \{ \eta^{(i)} \} - j \,.
\end{align*}
Thus, we have
\begin{equation*}
    \mathcal A_{t+1}'(j) \subset \cup_{0 \leq \lambda \leq \mathtt N_t^+} \cup_{0 \leq j' \leq j} \mathcal A_{t+1}'(\lambda,j') \,.
\end{equation*}
For $\mathtt n_t^- < \lambda \leq \mathtt N_t^+$ and $1 \leq j \leq 2\mathtt n_{t+1}^-$ we have $\lambda \leq (\log n)^{1+\alpha^{t-1}-\alpha^t+o(1)} = (\log n)^{1-\Omega(1)}$. Thus, \eqref{eq-At-ub-lb} applies to $|\mathcal A_t(\lambda)|$, which yields that 
\begin{align*}
    &10\max\left\{ j\sqrt{ \tfrac{\log |\mathcal A_t(\lambda)|}{\mathbf H_t(\lambda)}},\ \exp(\sqrt{\log (\mathbf H_t(\lambda))}) \right\}   \\
    \leq\ & 10 \max\left\{ j\sqrt{\frac{\lambda(\log\log n)^{4\cdot 30^{t-1}} (\log n)^{1-\alpha^{t-1}}}{\mathbf M_t^\alpha}},\ \exp (\sqrt{\log (\mathbf M_t^{\alpha})} ) \right\} \\
    \leq\ & 10 \max\left\{ j (\log\log n)^{10\cdot 30^{t-1}} (\log n)^{1-\alpha^t}, \ \exp(\alpha^{(t+1)/2}\sqrt{\log\log n}) \right\}\\
    \leq\ & \exp( 0.2(40\alpha)^{t+1}\sqrt{\log\log n})\,,
\end{align*}
where the first inequality follows from $\Ecal_{1,t}$ and $\Ecal_{2,t}$, and the third inequality follows from $\Ecal_{2,t}$. In addition, define
\begin{align}{\label{eq-def-Core-t}}
    \mathtt{Core}_t = \left\{ \lambda \leq \mathtt N_t^{+} : |\mathcal A_t(\lambda)| \leq \exp(\exp((40\alpha)^t \sqrt{\log \log n})) \right\} \,.
\end{align}
For $0 \leq j \leq 2\mathtt n_{t+1}^-$, we have
\begin{align*}
    &\exp( \frac{1}{2}(\log\log n)\exp((40\alpha)^{t+1}\sqrt{\log\log n})) \\
    \geq\ &3\mathbf M_t \mathtt n_{t+1}^- \exp( (\log\log n) \exp( 0.5(40\alpha)^{t+1}\sqrt{\log \log n}))  \\
    \geq\ & 2\sum_{\substack{0 \leq \lambda \leq \mathtt n_{t}^-, 0 \leq j' \leq j}} |\mathcal A_t(\lambda)| + 2\sum_{\substack{\mathtt n_t^- < \lambda \leq \mathtt N_t^+ ,  \lambda \in \mathtt{Core}_t, 0 \leq j' \leq j }} |\mathcal A_t(\lambda)| \\
    &+ \sum_{\substack{\mathtt n_t^- < \lambda \leq \mathtt N_t^+ , \lambda \in \mathtt{Core}^c_t, 0 \leq j' \leq j }} \exp\bigg( 10 \max\left\{ j'\sqrt{ \frac{\log |\mathcal A_t(\lambda)|}{\mathbf H_t(\lambda)}}, \exp( \sqrt{\log(\mathbf H_t(\lambda))} ) \right\} \bigg) \,.
\end{align*}
Note that for all $\lambda \in \mathtt{Core}^c_t$, we have
\begin{align}
    \exp( \exp( 2\sqrt{\log(\mathbf H_t(\lambda))} ) ) &\overset{\Ecal_{1,t},\Ecal_{2,t}}{\leq} \exp(\exp(2\alpha^t\sqrt{\log \log n})) \nonumber \\
    &\leq |\mathcal A_t(\lambda)| \leq \exp( (\mathbf H_t(\lambda))^{\frac{2}{\alpha+1}} ) \,.  \label{eq-corelemma-conditions-satisfied}
\end{align}
Thus, all level sets $\mathcal A_t(\lambda)$ with $\lambda \in \mathtt{Core}_t^c$ satisfy the conditions of Lemma~\ref{lem-maximum-core-level-properties}. Thus, by Item (2) of Lemma~\ref{lem-maximum-core-level-properties}, we have
\begin{align}
    & \mathtt P \left( |\mathcal A_{t+1}'(j)| \geq \exp( \frac{1}{2}(\log \log n)\exp((40\alpha)^{t+1}\sqrt{\log\log n}) ); \eqref{eq-mathbf-Bt-small-prob} \mbox{ holds} \right) \nonumber \\
    \leq\ & \sum_{ \substack{ \lambda \in \mathtt{Core}_t^c , \mathtt n_t^- < \lambda \leq \mathbf M_t \\ 0 \leq j' \leq j} } \mathtt P\Big( |\mathcal A_{t+1}'(\lambda ,j')| \geq \exp( 10\max\bigg\{ j'\sqrt{ \frac{\log|\mathcal A_t(\lambda)|}{\mathbf H_t(\lambda)}}, \exp(\sqrt{\log(\mathbf H_t(\lambda))}) \bigg\} ) \Big) \nonumber\\
    \leq\ & 2(\log n)^{2\alpha^t+o(1)} \exp(-0.5\exp(\sqrt{\log \log n})) = o(1)\,, \label{eq-alpha-small-first-step-level-sets-decompose-1}
\end{align}
which proves the first inequality of \eqref{eq-At-prime-ub-lb}. 

Then, we prove the second inequality of \eqref{eq-At-prime-ub-lb}. Under the event in \eqref{eq-mathbf-Bt-small-prob}, for all $j<\varphi(\lambda)$ we have 
\begin{align*}
    \max_{k \in \mathcal A_t(\lambda)} \{ \eta^{(k)} \} \leq \max_{k \in [n]} \{ \eta^{(k)} \} - \varphi(\lambda) < \max_{k \in [n]} \{ \eta^{(k)} \} - j \,.
\end{align*}
Thus, we have
\begin{equation*}
    \mathcal A_{t+1}'(j) \subset \left( \cup_{0 \leq \lambda \leq \mathtt N_t^+} \cup_{0 \leq j' \leq j} \mathcal A_{t+1}'(\lambda,j') \right) \cup \cup_{\lambda \geq \mathtt N_t^+} \cup_{j\geq\varphi(\lambda)} \mathcal A_{t+1}'(\lambda,j')  \,.
\end{equation*}
For $\mathtt n_{t}^- < \lambda \leq \mathtt N_t^+$ and $j \geq 0.5\mathtt n_{t+1}^-$, \eqref{eq-At-ub-lb} applies to $|\mathcal A_t(\lambda)|$ and therefore
\begin{align*}
    & 10\max\left\{ j\sqrt{ \frac{\log|\mathcal A_t(\lambda)|}{\mathbf H_t(\lambda)}}, \exp( \sqrt{\log(\mathbf H_t(\lambda))} ) \right\}  \\
    \leq\ & 10\max\left\{ j\sqrt{\frac{\lambda(\log\log n)^{30^{t-1}} (\log n)^{1-\alpha^{t-1}}}{\mathbf M_t^\alpha}}, \exp(\sqrt{\log(\mathbf H_t(\lambda))} ) \right\}  \\
    \leq\ & 10\max\left\{ (\log\log n)^{7\cdot 30^{t-1}}j(\log n)^{1-\alpha^t}, \exp(\alpha^{(t+1)/2}\sqrt{\log\log n}) \right\}  \\
    \leq\ & 10(\log \log n)^{7\cdot 30^{t-1}}j(\log n)^{1-\alpha^t} +10\exp(\alpha^{t+1}\sqrt{\log \log n})\,,
\end{align*}
where the second inequality holds by $\lambda \leq \mathtt N_t^+$. In addition, for $\lambda>\mathtt N_t^+$ such that $\varphi(\lambda) \leq j$ we have 
\begin{align*}
    \log|\mathcal A_t(\lambda)| \leq \lambda(\log \log n)^{30^{t-1}}(\log n)^{1-\alpha^{t-1}} < j(\log \log n)^{4\cdot 30^{t-1}}(\log n)^{1-\alpha^t}\,.
\end{align*}
Therefore, for $j \geq 0.5\mathtt n_{t+1}^-$ we have (recall \eqref{eq-def-mathtt-n-t-plus-minus})
\begin{align*}
    &\exp(\frac{1}{2} \cdot j(\log n)^{1-\alpha^t}(\log \log n)^{30^t}) \\
    \geq\ & 4\mathbf M_t \mathtt n_t^- \left( \exp( j(\log n)^{1-\alpha^t}(\log\log n)^{10\cdot 30^{t-1}} ) + \exp((\log \log n)\exp((40\alpha)^t\sqrt{\log \log n})) \right) \nonumber \\
    \geq\ & 2\sum_{\substack{0 \leq \lambda \leq \mathtt n_t^- \\ 0 \leq j' \leq j}} |\mathcal A_t(\lambda)| + 2\sum_{\substack{\mathtt n_t^- < \lambda \leq \mathtt N_t^+, \lambda \in \mathtt{Core}_t \\ 0 \leq j' \leq j}} |\mathcal A_t(\lambda)| + 2\sum_{\substack{\lambda > \mathtt N_t^+\\ \varphi(\lambda) \leq j}}|\mathcal A_t(\lambda)|  \\
    &+ \sum_{\substack{\mathtt n_t^- < \lambda \leq \mathtt N_t^+, \lambda \in \mathtt{Core}^c_t \\ 0 \leq j' \leq j}} \exp\bigg( 10 \max\left\{ j'\sqrt{\frac{\log|\mathcal A_t(\lambda)|}{\mathbf H_t(\lambda)}}, \exp(\sqrt{\log(\mathbf H_t(\lambda))} ) \right\} \bigg) \,. 
\end{align*}
Thus, by Item (2) of Lemma~\ref{lem-maximum-core-level-properties}, we have
\begin{align}
    & \mathtt P\left( |\mathcal A_{t+1}'(j)| \geq \exp(\frac{1}{2} \cdot j(\log n)^{1-\alpha^t}(\log \log n)^{30^t}); \eqref{eq-mathbf-Bt-small-prob} \mbox{ holds} \right) \nonumber \\
    \leq\ & \sum_{ \substack{ \lambda \in \mathtt{Core}_t^c , \mathtt n_t^- < \lambda \leq \mathbf M_t \\ 0 \leq j' \leq j} } \mathtt P\Big( |\mathcal A_{t+1}'(\lambda ,j')| \geq \exp \Big( 10 \max\left\{ j'\sqrt{\frac{\log|\mathcal A_t(\lambda)|}{\mathbf H_t(\lambda)}}, \exp(\sqrt{\log(\mathbf H_t(\lambda))} ) \right\} \Big) \Big) \nonumber\\
    \leq\ & 2(\log n)^{2\alpha^t+o(1)} \exp(-0.5\exp(\sqrt{\log \log n})) =o(1)\,,\label{eq-alpha-small-first-step-level-sets-decompose-2}
\end{align}
which proves the second inequality of \eqref{eq-At-prime-ub-lb}. 

Finally we prove the last inequality of \eqref{eq-At-prime-ub-lb}. Note that for $0.5\mathtt n_{t+1}^- < j < 2\mathtt n^+_{t+1}$ and $\mathtt N_t^- \leq \lambda \leq \mathtt N_t^+$, \eqref{eq-At-ub-lb} applies to $|\mathcal A_t(\lambda)|$ and therefore
\begin{align}
2S(\mathbf H_t(\lambda), |\mathcal A_t(\lambda)|) &\leq \exp\big(\sqrt{\log \mathbf H_t(\lambda)}\big) \cdot \sqrt{\tfrac{\mathbf H_t(\lambda)}{\log |\mathcal A_t(\lambda)|}} \leq \exp(\alpha^{\tfrac{t+1}{2}} \sqrt{\log \log n} ) \cdot \sqrt{\tfrac{(\log n)^{\alpha^t}}{\lambda (\log n)^{1-\alpha^{t-1}} }} \nonumber \\
&\leq \exp(\tfrac{1}{2}\alpha^{t+1} \sqrt{\log \log n}) \cdot (\log n)^{\alpha^t - 1} < j\,, \label{eq-lem-core-lb-satisfied-1}\\
j \leq \tfrac{\log n}{(\log \log n)^{30^t}} &\leq \tfrac{1}{10}\sqrt{ (\log \log n)^{-10\cdot 30^{t-1}} \lambda (\log n)^{\alpha^t + 1 -\alpha^{t-1}} } \nonumber \\
&\leq \tfrac{1}{10}\sqrt{\mathbf H_t(\lambda) \cdot \log |\mathcal A_t(\lambda)|}\,.\label{eq-lem-core-lb-satisfied-2}
\end{align}
Combining \eqref{eq-lem-core-lb-satisfied-1} and \eqref{eq-lem-core-lb-satisfied-2} with \eqref{eq-corelemma-conditions-satisfied}, the conditions of Item (3) of Lemma~\ref{lem-maximum-core-level-properties} are satisfied and therefore
\begin{align}
&\mathtt P\Big( \mbox{ exists }0.5\mathtt n_{t+1}^- < j < 2\mathtt n_{t+1}' \,,|\mathcal A'_{t+1}(j)| < \exp\big(\tfrac{2(\log n)^{1-\alpha^{t}}j}{(\log \log n)^{30^{t}}}\big)  \Big) \nonumber \\
\leq\ & \eqref{eq-nextmax-in-mathtt-Nts} + \sum_{0.5\mathtt n_{t+1}^- < j < 2\mathtt n_{t+1}' } \sum_{\lambda \in [\mathtt N_t^- , \mathtt N_t^+]}\mathtt P\Big( |\mathcal A'_{t+1}(\lambda, j)| < \exp\big(\tfrac{1.5(\log n)^{1-\alpha^{t}}j}{(\log \log n)^{30^{t}}}\big) \Big) \nonumber \\
\leq\ & o(1) + \sum_{0.5\mathtt n_{t+1}^- < j < 2\mathtt n_{t+1}' }  \sum_{\lambda \in [\mathtt N_t^- , \mathtt N_t^+]}\mathtt P\Bigg( |\mathcal A'_{t+1}(\lambda, j)| < \exp\Big(\tfrac{j}{4} \sqrt{\tfrac{(\log n)^{2-\alpha^t}}{(\log \log n)^{10\cdot 30^{t-1}}\mathbf H_t(\lambda)}}\Big)\Bigg) \nonumber\\
\leq\ &  o(1) + \sum_{0.5\mathtt n_{t+1}^- < j < 2\mathtt n_{t+1}' } \sum_{\lambda \in [\mathtt N_t^- , \mathtt N_t^+]}\mathtt P\Bigg( |\mathcal A'_{t+1}(\lambda, j)| < \exp\Big(\tfrac{j}{4}\sqrt{\tfrac{\log |\mathcal A_t(\lambda)|}{\mathbf H_t(\lambda)}}\Big)\Bigg)\nonumber \\
\leq\ &  o(1) + 2(\log n)^{2\alpha^t+o(1)} \exp(-0.5\exp(\sqrt{\log \log n})) = o(1)\,,\label{eq-alpha-small-first-step-level-sets-decompose-4}
\end{align}
which shows the last inequality of \eqref{eq-At-prime-ub-lb}. 
Combining \eqref{eq-alpha-small-first-step-level-sets-decompose-1}, \eqref{eq-alpha-small-first-step-level-sets-decompose-2} and \eqref{eq-alpha-small-first-step-level-sets-decompose-4} implies that $\mathtt P( (\Ecal_{4,t+1}')^c)=o(1)$.

\noindent{\bf Case 2}: $\alpha^t=2$. In this case, define
\begin{align*}
    \mathtt{R}_j = [0,\mathtt n_{t}^-] \cup \left\{ 0 < \lambda \leq \mathbf M_t: \frac{\lambda(\log n)^{\alpha^t-\alpha^{t-1}}}{(\log\log n)^{3\cdot 30^{t-1}}} \leq j \right\} \mbox{ for } j \geq 0 \,.
\end{align*}
In the event $\Ecal_{2,t}$, we have $|\mathtt R_j| \leq \mathbf M_t = (\log n)^{\Omega(1)}$. In addition, on the event in \eqref{eq-mathbf-Bt-small-prob}, we have $\mathcal A_t(\lambda) \cap \mathcal A_{t+1}'(j) = \emptyset \mbox{ for all }\lambda \not\in \mathtt R_j$. Thus, in this case we have
\begin{align*}
    & |\mathcal A'_{t+1}(j)|<\sum_{\lambda \in \mathtt R_j} |\mathcal A_t(\lambda)| \\
    \leq\ & \mathtt n_t^-\cdot\left( \exp((\log\log n) \exp((40\alpha)^{t}\sqrt{\log \log n})) + \exp( (\log\log n)^{30^{t-1}}(\log n)^{1-\alpha^{t-1}}\mathtt n_t^-) \right)\\
    +\ & \sum_{\lambda\in \mathtt R_j \setminus [0,\mathtt N_t^+]} \left( \exp((\log\log n) \exp((40\alpha)^{t}\sqrt{\log \log n})) + \exp( (\log\log n)^{30^{t-1}}(\log n)^{1-\alpha^{t-1}}\lambda) \right) \\
    \leq\ & \exp(\tfrac{1}{4}(\log \log n)\exp((40\alpha)^{t+1}\sqrt{\log \log n}))\\
    + \ & \exp(\tfrac{1}{2}(\log \log n)^{30^{t}}(\log n)^{1-\alpha^{t}}j)\big(|\mathtt R_j|\cdot\mathbf 1_{\{j \leq (\log n)^{1-\alpha^t}\}} + \mathbf 1_{\{j > (\log n)^{1-\alpha^t}\}}\big)\\
    \leq\ & \exp(\tfrac{1}{2}(\log \log n)\exp((40\alpha)^{t+1}\sqrt{\log \log n}))
    + \exp(\tfrac{1}{2}(\log \log n)^{30^{t}}(\log n)^{1-\alpha^{t}}j)\,,
\end{align*}
where the second inequality holds by the definition of $\Ecal_{4,t}$.

\noindent{\bf Case 3}: $\alpha^t > 2$. In this case, define
\begin{align*}
    \mathtt{R}_j = \left\{ 0 < \lambda \leq \mathbf M_t: \frac{\lambda(\log n)^{\alpha^t-\alpha^{t-1}}}{(\log\log n)^{4\cdot 30^{t-1}}} \leq j \right\} \mbox{ for } j \geq 0 \,.
\end{align*}
On $\Ecal_{2,t}$, we have $|\mathtt R_j| \leq \mathbf M_t = (\log n)^{\Omega(1)}$. In addition, on the event in \eqref{eq-large-gap-when-t-large}, we have $\mathcal A_t(\lambda) \cap \mathcal A_{t+1}'(j) = \emptyset \mbox{ for all }\lambda \not\in \mathtt R_j$.
Thus, we have
\begin{align*}
    & |\mathcal A'_{t+1}(j)|<\sum_{\lambda \in \mathtt R_j} |\mathcal A_t(\lambda)| \\
    \leq\ & \sum_{\lambda\in \mathtt R_j} \left( \exp((\log\log n) \exp((40\alpha)^{t}\sqrt{\log \log n})) + \exp( (\log\log n)^{30^{t-1}}(\log n)^{1-\alpha^{t-1}}\lambda) \right) \\
    \leq\ & \exp(\tfrac{1}{4}(\log \log n)\exp((40\alpha)^{t+1}\sqrt{\log \log n})) \\
    +\ & \exp(\tfrac{1}{2}(\log \log n)^{30^{t}}(\log n)^{1-\alpha^{t}}j) \cdot \big(|\mathtt R_j|\cdot\mathbf 1_{\{j \leq (\log n)^{1-\alpha^t}\}} + \mathbf 1_{\{j > (\log n)^{1-\alpha^t}\}}\big) \\
    \leq\ & \exp(\tfrac{1}{2}(\log \log n)\exp((40\alpha)^{t+1}\sqrt{\log \log n}))
    + \exp(\tfrac{1}{2}(\log \log n)^{30^{t}}(\log n)^{1-\alpha^{t}}j)\,,
\end{align*}
where the second inequality holds both when $\alpha^{t-1} < 2$ and $\alpha^{t-1} \geq 2$ by the definition of $\Ecal_{4,t}$.

Combining the three cases above, we have proved \eqref{eq-prob-E-4-fail}.

\subsubsection{Proof of (\ref{eq-prob-E-5-fail})}

Again, we will split into three cases:

\noindent{\bf Case 1}: $\alpha^t < 2$. Then we have $\mathtt n_t^- \leq \mathtt N_t^- < \mathtt N_t^+$. Define the set 
\begin{align}
    \mathcal A_{\text{core}, t} = \cup_{\lambda \leq \mathtt N_t^+} \mathcal A_t(\lambda) = \mathcal A_t[0,\mathtt N_t^+] \,. \label{eq-def-A-core-set}
\end{align}
In addition, define
\begin{align*}
    \mathcal H_t = \Big\{ (\log \mathbf H_t(0) )^{-30^t} \leq \frac{ \max_{k \in \mathcal A_{\text{core}, t} } \{ \eta^{(k)} \} -\max_{k \in \mathcal A_{\text{core}, t}}^{(2)} \eta^{(k)} }{ \sqrt{ \mathbf H_t(0)/\log|A_{\text{core},t}| } } \leq (\log \mathbf H_t(0) )^{30^t} \Big\} \,.
\end{align*}
Then, by the fact that $\Ecal_{4,t}$ holds, we have
\begin{align}
    \log |\mathcal A_{\text{core}, t}| &\leq \log\left( \mathtt N_t^+ \exp( (\log\log n)^{11\cdot30^{t-1}}(\log n)^{1-\alpha^{t-1} +1+\alpha^{t-1}-\alpha^t} ) \right) \nonumber\\
    &\leq 2(\log\log n)^{11\cdot30^{t-1}}(\log n)^{1-\alpha^{t-1}+1+\alpha^{t-1}-\alpha^t}\nonumber \\
    &\leq (\log \log n)^{23\cdot30^{t-1}} \log\left( \exp((\log \log n)^{-11\cdot 30^{t-1}} (\log n)^{1-\alpha^{t-1}+1+\alpha^{t-1}-\alpha^t} ) \right)\nonumber\\
    &\leq (\log \mathbf H_t(0))^{30^{t}-1} \log|\mathcal A_t(\mathtt N_t^-)| \,, \label{eq-Acore-t-ub}
\end{align}
and
\begin{align}
    \frac{0.01\mathbf H_t(0)}{\log \mathbf H_t(0)} &> \log n\geq \log |\mathcal A_{\text{core}, t}| \geq \log|\mathcal A_t(\mathtt N_t^-)| \nonumber \\
    &\geq (\log \log n)^{-11\cdot 30^{t-1}} (\log n)^{2-\alpha^t} > \log \log \mathbf H_t(0)\,,\label{eq-Acore-t-lb}
\end{align}
Therefore, we have $\log|\mathcal A_{\text{core},t}| = (\log n)^{2-\alpha^t+o(1)}$ and thus $\sqrt{\tfrac{\mathbf H_t(\mathtt N_t^+)}{\log |A_{\text{core}, t}|} } \cdot (\log \mathbf H_t(\mathtt N_t^+) )^{30^t} = (\log n)^{\alpha^t-1+o(1)} < 0.01\log n$, which gives
\begin{align}
    &\mathtt P\Big( \mathcal H_t \cap \Big\{ \max_{k \in [n]} \{ \eta^{(k)} \} \neq \max_{k \in \mathcal A_{\text{core},t}} \{ \eta^{(k)} \} \mbox{ or } \max^{(2)}_{k \in [n]} \{ \eta^{(k)} \} \neq \max_{k \in \mathcal A_{\text{core},t}}^{(2)} \{ \eta^{(k)} \} \Big\} \Big)\nonumber\\
    \leq\ &\mathtt P\Big( \mathcal H_t \cap \Big\{ \max_{k \in \mathcal A_{\text{core},t}^c} \eta^{(k)} > \max_{k \in \mathcal A_{\text{core},t}} \{ \eta^{(k)} \} - \log n \Big\} \Big)=  \eqref{eq-mathbf-Bt-small-prob} = o(1)\,.\label{eq-bound-probability-outside-core-1}
\end{align}
Next, define
\begin{align*}
    \Ecal_{\mathsf{boundH}} = \Big\{ & \exists \lambda \in [0, \mathtt N_t^+], \max_{j \in \mathcal A_t(\lambda)} \{ \eta^{(j)} \} = \max_{k \in \mathcal A_{\text{core}, t}} \{ \eta^{(k)} \} \,, \\
    & \mathbf H_t(\lambda) \leq \mathbf H_t(\mathtt N_t^-) + (\log \mathbf H_t(0))^{\frac{30^t+1}{4}} \sqrt{\mathbf H_t(\mathtt N_t^-)} \Big\} \,.
\end{align*}
Then, we have
\begin{align}
    \mathtt P(\Ecal_{\mathsf{boundH}}) &\geq \mathtt P\Big( \max_{ k \in \mathcal A_{\text{core},t} } \{ \eta^{(k)} \} = \max_{ k \in \mathcal A_t[\mathtt N_t^-,\mathtt N_t^+] } \{ \eta^{(k)} \} \Big) \nonumber \\
    &\geq 1 - \eqref{eq-mathbf-Bt-small-prob} - \eqref{eq-toolarge-layers-lead-small-prob} = 1-o(1)\,,\label{eq-prob-Acore-have-max}
\end{align}
Combining \eqref{eq-Acore-t-ub}, \eqref{eq-Acore-t-lb} and \eqref{eq-prob-Acore-have-max}, given the fact that $\cap_{1\leq h \leq 6 , 1 \leq s \leq t}\Ecal_{h,s}$ holds, the conditions of Items (1) and (2) of Lemma~\ref{lem-Poisson-maximum-1st-minus-2nd} are satisfied with 
\begin{align*}
    (\lambda, m, \lambda_*, c) &= (\mathbf H_t(0),|\mathcal A_{\text{core}, t}|,\mathbf H_t(\mathtt N_t^-), 30^t-1)\,,
\end{align*}
Also, note that under the event in \eqref{eq-couple-error-at-most-4} we have
\begin{align*}
\Big|(\max_{k \in [n]} \{ \mathbf X_t^{(k)} \} - \max_{k \in [n]}^{(2)} \{ \mathbf X_t^{(k)} \}) - (\max_{k \in [n] } \{ \eta^{(k)} \} - \max_{k \in [n]}^{(2)} \{\eta^{(k)}\})\Big| \leq 8\,.
\end{align*}
Therefore, by Lemma~\ref{lem-Poisson-maximum-1st-minus-2nd}, \eqref{eq-bound-probability-outside-core-1}, \eqref{eq-couple-error-at-most-4}, and the fact that $\log \mathbf H_t(0) \leq \log ((\log n)^{\alpha^t+1}) \leq 3 \log \log n$ we have 
\begin{align*}
    &\mathtt P(\Ecal^c_{5,t+1}) 
    \leq \mathtt P\Big( \mathcal H_t \cap \left\{ \max_{k \in [n]} \{ \eta_{(k)} \} \neq \max_{k \in \mathcal A_{\text{core},t}} \{\eta^{(k)}\} \mbox{ or } \max_{k \in [n]}^{(2)} \{ \eta^{(k)} \} \neq \max_{k \in \mathcal A_{\text{core},t}} \{ \eta^{(k)} \} \right\} \Big)  \nonumber\\
    + \ & \mathtt P(\mathcal H_t^c)  + \mathtt P\left( |\eta^{(i)} - \mathbf X_{t+1}^{(i)}| > 4 \mbox{ for some } i \in [n] \right) \leq \eqref{eq-bound-probability-outside-core-1} + o(1) = o(1)\,.
\end{align*}

\noindent{\bf Case 2}: $\alpha^t = 2$. Then we have $\mathtt n_t^- > \mathtt N_t^-$. Define the set 
\begin{align}
    \mathcal A_{\text{core}, t} = \cup_{\lambda \leq \mathtt n_t^-} \mathcal A_t(\lambda) = \mathcal A_t[0,\mathtt n_t^-] \,. \label{eq-def-A-core-set-critical}
\end{align}
In addition, define
\begin{align*}
    \mathcal H_{\mathsf{lb};t} &= \Big\{ \tfrac{ \max_{k \in \mathcal A_{\text{core}, t} } \{ \eta^{(k)} \} - \max_{k \in\mathcal A_{\text{core}, t}}^{(2)} \eta^{(k)} }{ \sqrt{ \mathbf H_t(0)/\log|A_{\text{core},t}| } } \geq (\log \mathbf H_t(0) )^{-30^t} \Big\} \,,\\
    \mathcal H_{\mathsf{ub};t} &= \Big\{ \tfrac{ \max_{k \in \mathcal A_{\text{core}, t} } \{ \eta^{(k)} \} - \max_{k \in\mathcal A_{\text{core}, t}}^{(2)} \eta^{(k)} }{\log n } \leq (2\log \log n )^{3\cdot30^t+\sqrt{\log \log n}} \Big\} \,,
\end{align*}
where $\log|A_{\text{core},t}| \leq \log(\mathtt n_t^-)+(\log \log n)\exp((40\alpha)^t\sqrt{\log \log n}) \leq (\log \log n)^{0.1\sqrt{\log \log n}}$ by the definition of $\Ecal_{4,t}$. Then, we have
\begin{align}
\mathtt P(\Ecal^c_{5,t+1})\leq \mathtt P\left( \mathcal H_{\mathsf{lb};t} \cap \mathcal H_{\mathsf{ub};t} \cap \Ecal^c_{5,t+1} \right)+ \mathtt P(\mathcal H_{\mathsf{lb};t}^c) + \mathtt P(\mathcal H_{\mathsf{ub};t}^c)\,.\label{eq-Ecal-5-t+1-critical-reduction}
\end{align}
Next, note that we have
\begin{align}
    \frac{0.01\mathbf H_t(0)}{\log \mathbf H_t(0)} &> \log n,\label{eq-Acore-t-lb-critical}
\end{align}
thus, given the fact that $\cap_{1\leq h \leq 6 , 1 \leq s \leq t}\Ecal_{h,s}$ holds, the conditions of Item (1) of Lemma~\ref{lem-Poisson-maximum-1st-minus-2nd} are satisfied with $(\lambda, m) = (\mathbf H_t(0),|\mathcal A_{\text{core},t}|)$.
By Item (1) of Lemma~\ref{lem-Poisson-maximum-1st-minus-2nd} we have 
\begin{align}
    \mathtt P(\mathcal H_{\mathsf{lb};t}^c) = o(1)\,.\label{eq-critical-lb-c-bound}
\end{align} 
Next, to give an upper bound of $\mathtt P(\mathcal H_{\mathsf{ub};t}^c)$, we consider two cases: $\max_{k \in \mathcal A_{t}(\mathtt n_t^-)} \eta^{(k)} \neq \max_{k \in \mathcal A_{\text{core},t}} \eta^{(k)}$ and $\max_{k \in \mathcal A_{t}(\mathtt n_t^-)} \eta^{(k)} = \max_{k \in \mathcal A_{\text{core},t}} \eta^{(k)}$. For the case where $\max_{k \in \mathcal A_{t}(\mathtt n_t^-)} \eta^{(k)} \neq \max_{k \in \mathcal A_{\text{core},t}} \eta^{(k)}$, we define $\gamma_t = \log\log n \exp((40\alpha)^t\sqrt{\log \log n})$. Note that for any $0 \leq \lambda \leq \mathtt n_t^-$ we have $\gamma_t \geq \log |\mathcal A_t(\lambda)|$ and
\begin{align}
&\mathbf H_t(\lambda) - \mathbf H_t(\mathtt n_t^-) \leq \tfrac{n(\mathbf M_t^\alpha - (\mathbf M_t - \mathtt n_t^-)^\alpha)}{\mathbf Z_t} \leq \alpha\mathbf M_t^{\alpha - 1}\mathtt n_t^- \nonumber \\
\leq\ & 2^{\alpha^{t-1}}\alpha(\log n) \exp( 0.1(40\alpha)^{t}\sqrt{\log\log n} ) \nonumber \\
\leq \ & (\log n)\cdot(2\log \log n)^{3\cdot 30^t + \sqrt{\log \log n}}-10 \sqrt{\gamma_t\cdot\mathbf H_t(0)}\nonumber \\
\leq\ & (\log n)\cdot(2\log \log n)^{3\cdot 30^t + \sqrt{\log \log n}} - 3\sqrt{\gamma_t \cdot \mathbf H_t(\lambda)} - \sqrt{\log \log n \cdot \mathbf H_t(\mathtt n_t^-)}\,,
\end{align}
note that $|\mathcal A_t(\mathtt n_t^-)| \geq 1$ by definition of $\Ecal_{4,t}$. Therefore, we have
\begin{align}
&\mathtt P(\mathcal H_{\mathsf{ub};t}^c\cap \{\max_{k \in \mathcal A_{t}(\mathtt n_t^-)} \eta^{(k)} \neq \max_{k \in \mathcal A_{\text{core},t}} \eta^{(k)}\} ) \nonumber \\
\leq\ & \sum_{0 \leq \lambda < \mathtt n_t^-}\mathtt P\Big(\max_{k \in \mathcal A_{t}(\lambda)} \eta^{(k)} \geq \mathbf H_t(\lambda) + 3\sqrt{\gamma_t \cdot \mathbf H_t(\lambda)} \Big)\nonumber \\
+\ & \mathtt P\Big(\max_{k \in \mathcal A_{t}(\mathtt n_t^-)} \eta^{(k)} \leq \mathbf H_t(\mathtt n_t^-) - \sqrt{\log \log n \cdot \mathbf H_t(\mathtt n_t^-)} \Big) \nonumber \\
\leq\ & \sum_{0 \leq \lambda < \mathtt n_t^-} \mathtt P\Big( \mathsf{PM}(\exp(\gamma_t) , \mathbf H_t(\lambda))\geq \mathbf H_t(\lambda) + 3\sqrt{\gamma_t \cdot \mathbf H_t(\lambda)}\Big) \nonumber \\
+\ & \mathtt P\Big( \mathsf{Pois}(\mathbf H_t(\mathtt n_t^-)) \leq \mathbf H_t(\mathtt n_t^-) - \sqrt{\log \log n \cdot \mathbf H_t(\mathtt n_t^-)} \Big) \nonumber \\
\leq\ & \mathtt n_t^- \cdot \exp(-3\gamma_t) + \exp(-\tfrac{\log \log n}{3}) = o(1)\,.\label{eq-critical-ub-c-bound-1}
\end{align} 
Next, for the case where $\max_{k \in \mathcal A_{t}(\mathtt n_t^-)} \eta^{(k)} = \max_{k \in \mathcal A_{\text{core},t}} \eta^{(k)}$, since $|\mathcal A_t(\mathtt n_t^-)| \geq 2$ (by the definition of $\Ecal_{4,t}$),  
\begin{align}
&\mathtt P(\mathcal H_{\mathsf{ub};t}^c\cap \{\max_{k \in \mathcal A_{t}(\mathtt n_t^-)} \eta^{(k)} = \max_{k \in \mathcal A_{\text{core},t}} \eta^{(k)}\} ) \nonumber \\
\leq\ &\mathtt P(\{\max_{k \in \mathcal A_{t}(\mathtt n_t^-)} \eta^{(k)}-\min_{k \in \mathcal A_{t}(\mathtt n_t^-)} \eta^{(k)} \geq (\log n)\cdot (2\log \log n)^{3\cdot30^t+\sqrt{\log \log n}}\})\nonumber \\
\leq\ & 1-\mathtt P\Big(|\mathsf{Pois}(\mathbf H_t(\mathtt n_t^-)) - \mathbf H_t(\mathtt n_t^-)| \leq (\log \log n)^{3\cdot30^t+\sqrt{\log \log n}}\sqrt{\mathbf H_t(\mathtt n_t^-)}\Big)^{|\mathcal A_t(\mathtt n_t^-)|} \nonumber \\
\leq\ & 1 - \Big(1-\exp\big(-\tfrac{(\log \log n)^{6\cdot30^t+2\sqrt{\log \log n}}}{3}\big)\Big)^{\exp((\log \log n)^{\sqrt{\log \log n}})} = o(1)\,,\label{eq-critical-ub-c-bound-2}
\end{align}
Therefore, combining \eqref{eq-critical-ub-c-bound-1} and \eqref{eq-critical-ub-c-bound-2} we have
\begin{align}
\mathtt P(\mathcal H_{\mathsf{ub};t}^c)= \eqref{eq-critical-ub-c-bound-1} + \eqref{eq-critical-ub-c-bound-2} = o(1)\,.\label{eq-critical-ub-c-bound}
\end{align}
Next, note that under $\mathcal H_{\mathsf{lb};t}$ we have 
\begin{align}
    &\mathtt P\left( \mathcal H_{\mathsf{ub};t} \cap \mathcal H_{\mathsf{lb};t}\cap\Ecal_{5,t+1}^c \right)\nonumber\\
    \leq \ & \mathtt P\left(\left\{ \max_{k \in [n]} \{ \eta^{(k)} \}  \neq \max_{k \in \mathcal A_{\text{core},t}} \{ \eta^{(k)} \}  \right\}\right) + \mathtt P\Big(\Big\{\max_{k \in [n]} \{ \eta^{(k)} \}  = \max_{k \in \mathcal A_{\text{core},t}} \{ \eta^{(k)} \} \Big\}\nonumber \\
    \cap\ &\Big\{ \max_{k \in \mathcal A_{\text{core},t}^c} \eta^{(k)} > \max_{k \in \mathcal A_{\text{core},t}} \{ \eta^{(k)} \} - \sqrt{\tfrac{\mathbf H_t(0)}{\log |\mathcal A_{\text{core},t}|}}\cdot(\log \mathbf H_t(0))^{-30^t} \Big\} \Big)\nonumber \\
    \leq\ & \eqref{eq-mathbf-Bt-small-prob} = o(1)\,.\label{eq-bound-probability-outside-core-2}
\end{align}
Also, note that under the event in \eqref{eq-couple-error-at-most-4} we have
\begin{align*}
\left|(\max_{k \in [n]} \{ \mathbf X_t^{(k)} \} - \max_{k \in [n]}^{(2)} \{ \mathbf X_t^{(k)} \}) - (\max_{k \in [n] } \{ \eta^{(k)} \} - \max_{k \in [n]}^{(2)} \{\eta^{(k)}\})\right| \leq 8\,.
\end{align*}
Therefore, combining \eqref{eq-critical-lb-c-bound}, \eqref{eq-critical-ub-c-bound}, \eqref{eq-couple-error-at-most-4} and the fact that $\log \mathbf H_t(0) \leq \log ((\log n)^{\alpha^t+1}) \leq 3 \log \log n$ we have $\eqref{eq-Ecal-5-t+1-critical-reduction}\leq \eqref{eq-bound-probability-outside-core-2} + o(1)  = o(1)$.

\noindent\textbf{Case 3:} $\alpha^t > 2$. Suppose that $\mathbf X_t^{(l'_t)}$ is the second largest in $\mathbf X_t$ and $l'_t \in \mathcal A_t(\lambda')$ for some $\lambda' > 0$. Then, we have 
\begin{align*}
(\log n)^{\alpha^{t-1}-1} (2\log\log n)^{-\cdot 30^{t}-\sqrt{\log \log n}} \leq \lambda' \leq (\log n)^{\alpha^{t-1}-1} (2\log\log n)^{3\cdot 30^{t} + \sqrt{\log \log n}}
\end{align*}
by definition of $\Ecal_{5,t}$. Therefore, we have
\begin{align}
    \mathtt P\Big( \max_{k \in [n]} \{ \eta^{(k)} \} - \max_{k \in [n]}^{(2)} \{ \eta^{(k)} \} \leq \frac{(\log n)^{\alpha^t-1}}{(\log \log n)^{30^{t+1}+\sqrt{\log\log n}}} \Big) \leq \eqref{eq-large-gap-when-t-large} = o(1) \,. \label{eq-case2-Ecal-4,t+1-1}
\end{align}
Next, we have
\begin{align*}
    &\mathbf H_t(0) - \mathbf H_t(\lambda') \leq \frac{n\mathbf M_t^{\alpha - 1}}{\mathbf Z_t}\cdot (\log n)^{\alpha^{t-1}-1} (2\log\log n)^{3\cdot 30^{t} + \sqrt{\log \log n}} \\
    \leq\ & (\log n)^{\alpha^t - 1}(\log\log n)^{4\cdot 30^{t}+\sqrt{\log \log n}} \\
    \leq\ & 2(\log n)^{\alpha^t-1}(\log\log n)^{4\cdot 30^{t}+\sqrt{\log \log n}} - (\log \log n)^{10}\left( \sqrt{\mathbf H_t(0)} + \sqrt{\mathbf H_t(\lambda')} \right) \,.
\end{align*}
Therefore, by the Poisson Chernoff bound we have
\begin{align}
    &\mathtt P\left(\max_{k \in [n]} \{ \eta^{(k)} \} - \max_{k \in [n]}^{(2)} \{ \eta^{(k)} \}\geq (\log n)^{\alpha^t - 1} (\log \log n)^{30^{t+1} + \sqrt{\log \log n}} \right) \nonumber\\
    \leq\ & \eqref{eq-large-gap-when-t-large} + \mathtt P\left( \eta^{(l_t)} \geq \mathbf H_t(0) + (\log \log n)^{10}\sqrt{\mathbf H_t(0)} \right)\nonumber\\
    +\ & \mathtt P\left( \eta^{(l'_t)} \leq \mathbf H_t(\lambda') - (\log \log n)^{10}\sqrt{\mathbf H_t(\lambda')} \right) = o(1) \,. \label{eq-case2-Ecal-4,t+1-2}
\end{align}
Also, note that under the event in \eqref{eq-couple-error-at-most-4} we have
\begin{align*}
\left|(\max_{k \in [n]} \{ \mathbf X_t^{(k)} \} - \max_{k \in [n]}^{(2)} \{ \mathbf X_t^{(k)} \}) - (\max_{k \in [n] } \{ \eta^{(k)} \} - \max_{k \in [n]}^{(2)} \{\eta^{(k)}\})\right| \leq 8\,.
\end{align*}
Combining \eqref{eq-case2-Ecal-4,t+1-1}, \eqref{eq-case2-Ecal-4,t+1-2} and \eqref{eq-couple-error-at-most-4} yields $\mathtt P(\Ecal_{5,t+1}^c) = o(1)$.

\subsubsection{Proof of (\ref{eq-prob-E-6-fail})}

On the event $\mathcal E_{5,t+1}$, we have $\mathcal L(\mathbf X_{t+1})$ is a singleton. The rest of the arguments on $\mathcal L(\mathbf X_{t+1})$ stated by the event $\mathcal E_{5,t+1}$ are valid only when $\alpha^t \neq 2$. Therefore, we may split into two cases:

\noindent{\bf Case 1:} $\alpha^t<2$. Since we have already proved that with probability $1-o(1)$, the maximum among $\{\eta_{t+1}^{(i)}\}_{1 \leq i \leq n}$ comes only from the level sets $\mathcal A_t[\mathtt N_t^-,\mathtt N_t^+]$. Also, by the definition of $\Ecal_{5,s}$ for $s \in [t]$, we have $|\cup_{s \in [t]}\mathcal L(\mathbf X_s)| \leq t-1 + \exp((\log \log n)^3)$. As a result, since all the elements in the same $\mathcal A_t(\lambda)$ are symmetric given $\lambda$, $\mathcal L(\mathbf X_{t+1})$ is a singleton that differs from any element in $\cup_{s \in [t]}\mathcal L(\mathbf X_s)$ with probability at least
\begin{align*}
    1- \max_{ \lambda\in [\mathtt N_t^-,\mathtt N_t^+] } \Big\{ \frac{t-1 + \exp((\log \log n)^3)}{|\mathcal A_t(\lambda)|} \Big\} = 1-o(1) \,,
\end{align*}
where the last equality holds by the fact that $[\mathtt N_t^- , \mathtt N_t^+] \subset [\mathtt n_t^-,\mathtt n_t^+]$ when $\alpha^t < 2$, which gives $\min_{\lambda\in [\mathtt N_t^-,\mathtt N_t^+] }|\mathcal A_t(\lambda)| \geq \exp((\log n)^{1-\alpha^{t-1}}\mathtt N_t^-) = \exp((\log n)^{\Omega(1)})$ by the fact that $\Ecal_{4,t}$ holds.

\noindent{\bf Case 2:} $\alpha^t>2$. Since we have already proved that with probability $1-o(1)$, the leading candidate of the $(t+1)$-st round is identical to the leading candidate of the $t$-th round, i.e. $\mathcal L(\mathbf X_{t}) = \mathcal L(\mathbf X_{t+1})$.
\end{proof}

\section{Proof of Theorem~\ref{thm-slow-ordering-sub-critical}}{\label{sec:proof-subcritical-slow}}

In this section, we prove Theorem~\ref{thm-slow-ordering-sub-critical}. Intuitively, the absorption time is exponentially large with high probability if a candidate's victory requires an exponentially unlikely large-deviation event to happen. We formalize this by observing that earning a large fraction of votes in a round will cause a decrease in the expected number of votes in the next round when $\alpha<1$.

\begin{proof}[Proof of Theorem~\ref{thm-slow-ordering-sub-critical}]
    Assume that $n$ is a sufficiently large integer. For a state $\mathbf X=(\mathbf X^{(1)},\ldots,\mathbf X^{(n)})$, we say $\mathbf X$ is \emph{large} if 
    \begin{align*}
        \mathbf M_{\mathbf X}= \max_{1 \leq i \leq n} \left\{ \mathbf X^{(i)} \right\} > \frac{3}{4}n
    \end{align*}
    and \emph{small} otherwise. Then it suffices to show that for some constant $C=C(\alpha)>0$,
    \begin{align*}
        \Pb_n^\alpha\left( \mathbf X_{t+1} \mbox{ is large} \,;\mathbf X_{t} \mbox{ is small} \right) \leq \exp(-Cn)\,.
    \end{align*}
    Note that given that $\mathbf X_t$ is small, for all $1 \leq i \leq n$ we have (below we use $\alpha<1$)
    \begin{align*}
        \frac{ (\mathbf X_{t}^{(i)})^\alpha }{ \mathbf Z_t } \leq \frac{ (\mathbf X_{t}^{(i)})^\alpha \mathbf{1}_{\{\mathbf X_t^{(i)}>0\}}}{ (\mathbf X_{t}^{(i)})^\alpha + (n-\mathbf X_{t}^{(i)})^\alpha } \leq \frac{ \mathbf{1}_{\{\mathbf X_t^{(i)}>0\}} }{ 1+(\frac{n}{\frac{3n}{4}}-1)^\alpha } \leq \frac{1}{1+(1/3)^\alpha} < \frac{3}{4} \,.
    \end{align*}
    Therefore, defining $\widehat{C}=\widehat{C}_\alpha=\frac{3}{4}-\frac{1}{1+(1/3)^\alpha}>0$, we have
    \begin{align*}
        \Pb_n^\alpha\left( \mathbf X_{t+1}^{(i)} > \frac{3n}{4} \mid \mathbf X_t \mbox{ is small} \right) \leq \Pb_n^\alpha\Big( \mathsf{Bin}\Big( n,\frac{3}{4}-\widehat{C}_\alpha \Big) > \frac{3n}{4} \Big) \leq \exp(-\frac{n \widehat{C}_\alpha^2}{5})\,.
    \end{align*}
    Therefore, taking $C=\frac{\widehat{C}_\alpha^2}{6}$, by a union bound we have
    \begin{align*}
         &\Pb_n^\alpha\left( \mathbf X_{t+1} \mbox{ is large} \mid \mathbf X_t \mbox{ is small} \right) \leq n\exp(-\frac{n\widehat{C}_\alpha^2}{5}) \leq \exp(-Cn)\,,
    \end{align*}
    Note that the initial state is small, and absorption implies a first transition from small to large. Thus, for all $k \in \mathbb Z_+$ we have
    \begin{align*}
        \Pb_n^\alpha(\mathcal T\geq k) \geq \prod_{0\leq t \leq k-1}\Pb_n^\alpha\left( \mathbf X_{t+1} \mbox{ is small} \mid \mathbf X_t \mbox{ is small} \right) \geq (1-\exp(-Cn))^k\,,
    \end{align*}
    which proves our claim.
\end{proof}

\section*{Acknowledgement}
The model studied in this paper was inspired by a discussion with Francesco Maria Saettone after the passing of Pope Francis. We are very grateful to Ori Gurel-Gurevich for pointing out that the case $\alpha=1$ is equivalent to the iterated composition of random functions. We also thank Ofer Zeitouni for fruitful discussions. G.C. and Z.L. are partially supported by NSFC Major Program (Project No. 12595294), NSFC Key Program (Project No. 12231002), and the New Cornerstone Investigator Program (Project No. NCI202501).

\bibliographystyle{plain}
\bibliography{ref}

\end{document}